\numberwithin{equation}{section}
\newtheorem{lma}{Lemma}[section]
\newaliascnt{thmCt}{lma}
\newtheorem{thm}[thmCt]{Theorem}
\newaliascnt{corCt}{lma}
\newtheorem{cor}[corCt]{Corollary}
\newaliascnt{prpCt}{lma}
\newtheorem{prp}[prpCt]{Proposition}
\newtheorem*{thm*}{Theorem}
\newtheorem*{cor*}{Corollary}
\newtheorem*{prp*}{Proposition}
\theoremstyle{definition}
\newaliascnt{pgrCt}{lma}
\newtheorem{pgr}[pgrCt]{}
\newaliascnt{dfnCt}{lma}
\newtheorem{dfn}[dfnCt]{Definition}
\newaliascnt{rmkCt}{lma}
\newtheorem{rmk}[rmkCt]{Remark}
\newaliascnt{rmksCt}{lma}
\newaliascnt{qstCt}{lma}
\newtheorem{qst}[qstCt]{Question}
\newaliascnt{pbmCt}{lma}
\newaliascnt{exaCt}{lma}
\newtheorem{exa}[exaCt]{Example}
\newaliascnt{exasCt}{lma}
\newaliascnt{conjCt}{lma}
\newaliascnt{ntnCt}{lma}
\newtheorem{ntn}[ntnCt]{Notation}
\newcommand{\NN}{\mathbb{N}}
\newcommand{\ZZ}{\mathbb{Z}}
\newcommand{\ca}{$C^*$-al\-ge\-bra}
\newcommand{\Cs}{$C^*$-al\-ge\-bra}
\newcommand{\axiomO}[1]{(O#1)}
\DeclareMathOperator{\Cu}{Cu}
\DeclareMathOperator{\Lsc}{Lsc}
\newcommand{\CatCu}{\ensuremath{\mathrm{Cu}}}
\newcommand{\andSep}{\,\,\,\text{ and }\,\,\,}
\newcommand{\orSep}{\,\,\,\text{ or }\,\,\,}
\newcommand{\CuSgp}{$\CatCu$-sem\-i\-group}
\newcommand{\CuMor}{$\CatCu$-mor\-phism}
\newcounter{theoremintro}
\newtheorem{thmIntro}[theoremintro]{Theorem}
\DeclareMathOperator{\Int}{Int}
\newcommand\LscI{\Lsc ([0,1],\overline{\mathbb{N}})}
\newcommand{\Msubsection}[2]{\subsection{\texorpdfstring{#1}{#2}}}
\newcommand{\Msubsubsection}[2]{\subsubsection{\texorpdfstring{#1}{#2}}}
\title{A local characterization for the Cuntz semigroup of AI-algebras}
\author{Eduard Vilalta}
\address{E.~Vilalta, Departament de Matem\`{a}tiques,
Universitat Aut\`{o}noma de Barcelona,
08193 Bellaterra, Barcelona, Spain}
\email{evilalta@mat.uab.cat}
\thanks{
The author was partially supported by MINECO (grant No.\ PRE2018-083419 and No.\ MTM2017-83487-P), and by the Comissionat per Universitats i Recerca de la Generalitat de Catalunya (grant No.\ 2017SGR01725).
}
\subjclass[2010]%
{Primary
46L05, 
46L85, 
06F05. 
Secondary 
06A06, 
46L80, 
19K99.
}
\keywords{$C^*$-algebras, Cuntz semigroups, AI-algebras}
\date{\today}
\begin{document}

\begin{abstract}
We give a local characterization for the Cuntz semigroup of AI-algebras building upon Shen's characterization of dimension groups. Using this result, we provide an abstract characterization for the Cuntz semigroup of AI-algebras.
\end{abstract}

\maketitle


\section{Introduction}

The Cuntz semigroup of a \ca{} $A$, denoted by $\Cu (A)$, is a powerful invariant introduced by Cuntz in \cite{Cun78DimFct} that generalizes the construction of the Murray-von Neumann semigroup of projections. In fact, for the class of simple, nuclear $\mathcal{Z}$-stable \ca{s}, the Cuntz semigroup functor is, suitably interpreted, equivalent to the Elliott invariant (see \cite{AntDadPerSan14RecoverElliott}).

There are a number of regularity properties formulated for Cuntz semigroups that appear in the classification program of simple \ca{s}. Among them, almost unperforation stands out, as it appeared in Toms' example \cite{Tom08InfFamily} to distinguish two simple \ca{s} that otherwise agreed on their Elliott invariant and many reasonable extensions of it. It also features prominently in the Toms-Winter conjecture (see \cite{Win12NuclDimZstable}, \cite{Ror04StableRealRankZ}, \cite{CasEviTikWhiWin19arX:NucDimSimple}, \cite{KirRor14CentralSeq}, \cite{Sat12arx:TraceSpace} and \cite{TomWhiWin15ZStableFdBauer}). Recently, new structural properties for the Cuntz semigroup of \ca{s} of stable rank one have been uncovered, which lead to the solution of three open problems for this class \cite{AntPerRobThi21:CuntzSR1}.

In \cite{CowEllIva08CuInv}, some of the properties of $\Cu (A)$ were abstracted into a category of semigroups, termed $\Cu$, in order to reflect its continuous nature; see \autoref{pgr:O1O4} below. One of the relevant notions while studying the objects in $\Cu$, often called $\Cu$-semigroups, is that of compact containment, in symbols $\ll$, which is the analogue of the compact containment relation in the lattice of open sets of a compact topological space (see, for example, \cite[Proposition~I-1.22.1]{GieHof+03Domains}). Projections in a \ca{} are the natural examples (and in relevant cases the only examples) of compact elements, that is, elements that are $\ll$-below themselves. For a \ca{} $A$ of stable rank one, $\Cu (A)$ further satisfies a weaker form of cancellation, as proved in \cite[Theorem~4.3]{RorWin10ZRevisited}; see \autoref{pgr:O5O6}.

The Cuntz semigroup of a \ca{} is not usually algebraically ordered (it is,  for example, in the finite-dimensional situation). However, an appropriate substitute for this property was proved in \cite{RorWin10ZRevisited}, and has been termed since axiom \axiomO{5} (see also \cite[Proposition~5.1.1]{Rob13Cone}).

The Cuntz semigroup has also been succesfully used in the classification of certain non-simple \ca{s}. Namely, Robert used it in \cite{Rob12LimitsNCCW} to classify, up to approximate unitary equivalence, *-homomorphisms from a limit of one-dimensional NCCW complexes to a \ca{} of stable rank one. He subsequently obtained a classification of $1$-dimensional NCCW complexes with trivial $K_1$-group using their Cuntz semigroup.

 In \cite{CiupElli08}, Ciuperca and Elliott established a one-to-one  correspondence between  the so-called Thomsen semigroup and the Cuntz semigroup, which as a consequence yielded a classification of all separable AI-algebras by means of their Cuntz semigroup; see also \cite{Tho92:IndLimIntAlg}.

We focus in this paper on (separable) AI-algebras, and more concretely on the range problem for the Cuntz semigroup for this class, that is, to determine a natural set of properties that a \CuSgp{} $S$ must satisfy in order to be isomorphic to $\Cu (A)$ for such a \ca{} $A$. In order to ease the notation, we will say that a \ca{} is an AI-algebra if it is *-isomorphic to an inductive limit of the form $\lim_n C[0,1]\otimes F_n$ with $F_n$ finite dimensional for every $n$.

Our line of attack consists of adapting the strategy used to obtain a local characterization of dimension groups due to Shen \cite[Theorem~3.1]{Shen79}, which was then utilized in the Effros-Handelman-Shen theorem \cite[Theorem~2.2]{EffHanShe80}. More explicitly, recall that as a combination of these two results one obtains that a countable unperforated ordered (abelian) group $G$ is order isomorphic to the ordered $K_0$-group of an AF-algebra if and only if, for every order homomorphism $\varphi\colon\ZZ^r\to G$ and element $\alpha \in\ker (\varphi)$, there exist order homomorphisms $\theta ,\phi$ such that the diagram
 \[
  \xymatrix{
  \ZZ^{r} \ar[r]^-{\varphi} \ar[d]_-{\theta} & G\\
  \ZZ^{s} \ar[ru]_-{\phi} & 
  }
 \]
 commutes and $\alpha\in\ker (\theta)$.
 
Following the structure of the proof of the abovementioned result (but with additional care), we give a local characterization for the Cuntz semigroup of AI-algebras; see Theorem \ref{thmIntro:Main} below. We briefly discuss some of the details:

There are two key ingredients that lead to the proof of our main result. The first one is a suitable analogue for \CuSgp{s} of the well known fact that every element of a group is the image of $1\in\ZZ$ through a group homomorphism.

\begin{thmIntro}[cf \ref{lifting_th}]
Let $S$ be a \CuSgp{} satisfying (O5) and weak cancellation, and let $\LscI$ be the $\Cu$-semigroup of lower-semicontinuous functions $[0,1]\to\overline{\NN}$. Then, given an element $s$ and a compact element $p$ in $S$ such that $s\leq p$, there exists a $\Cu$-morphism $\LscI\to S$ mapping $\chi_{(0,1]}$ to $s$ and $1$ to $p$.
\end{thmIntro}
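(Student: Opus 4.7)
The goal is to produce a Cu-morphism $\alpha\colon\LscI\to S$ sending $\chi_{(0,1]}$ to $s$ and $1$ to $p$. My plan is to define $\alpha$ first on characteristic functions of open subsets of $[0,1]$ and then extend by the canonical level-set decomposition $f=\sum_{n\geq 1}\chi_{\{f\geq n\}}$, which writes every $f\in\LscI$ as an increasing countable supremum of finite sums of characteristic functions of open sets. Since each open $U\subseteq[0,1]$ is a countable disjoint union of open intervals and is the $\ll$-increasing union of finite unions of (half-)open intervals with compact closure inside $U$, it suffices to assign values $\alpha(\chi_I)$ to every (half-)open interval $I$, subject to monotonicity, continuity under increasing unions, preservation of compact containment, and the modular identity $\alpha(\chi_I)+\alpha(\chi_J)=\alpha(\chi_{I\cup J})+\alpha(\chi_{I\cap J})$.

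The core of the construction is an iterative splitting driven by axiom (O5). The key relation, obtained by applying the modular identity to the open cover $[0,b)\cup(a,1]=[0,1]$ with overlap $(a,b)$, reads $\alpha(\chi_{[0,b)})+\alpha(\chi_{(a,1]})=p+\alpha(\chi_{(a,b)})$. To produce the coarsest splitting, choose a $\ll$-increasing approximation $(s_n)$ of $s$ and apply (O5) to the inequality $s\leq p$: this yields complements $r_n\in S$ satisfying $s_n+r_n\leq p\leq s+r_n$. Compactness of $p$ (giving $p\ll p$) combined with weak cancellation allows us to absorb the slack in this two-sided inequality and upgrade it to an exact identity $\tilde s_n+\tilde r_n=p$ with $\tilde s_n\leq s$; these elements are the candidates for $\alpha(\chi_{(a_n,1]})$ and $\alpha(\chi_{[0,b_n)})$ at suitable rational endpoints $a_n,b_n$. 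Iterating the same procedure inside each subinterval produces, for every dyadic partition of $[0,1]$, compatible assignments on its (half-)open subintervals, which then extend by $\sup$ to arbitrary intervals, by additivity to finite disjoint unions of intervals, and by increasing approximation together with the level-set decomposition to all of $\LscI$. The Cu-morphism axioms (preservation of $0$, $+$, $\leq$, suprema of increasing sequences, and $\ll$) are verified directly from the way the assignment was built.

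The main obstacle is the coherence of the refinements. Each invocation of (O5) supplies only a two-sided approximate decomposition of $p$, so combining infinitely many such decompositions into a family that behaves well under refinement — so that the refined pieces genuinely sum to the coarser piece and satisfy the modular identity — demands a careful diagonal argument controlling the $\ll$-relations at every stage, together with systematic use of weak cancellation (where the compactness of $p$ is crucial, since it is what allows an inequality in which $p$ appears on one side to be converted into an equality after cancelling). A second, related subtlety is to check that the eventual value $\alpha(\chi_U)$ for a general open $U$ is independent of the inner approximation used to define it; this will again reduce, via weak cancellation, to the exactness of the dyadic decompositions produced in the core step.
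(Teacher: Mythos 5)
Your overall architecture---define $\alpha$ on indicator functions of intervals using (O5) and weak cancellation against the compact $p$, impose the modular identity, and extend by additivity, suprema and the level-set decomposition---is the same skeleton as the paper's argument (there the chain values $\alpha(\chi_{(t,1]})$ are obtained from the generator property of the sub-semigroup $\mathcal{G}$ of increasing functions, and the extension is \autoref{Lemma_Lifting_Tecnic} together with \autoref{Lifting_Morphisms}). However, the step you single out as the core of the construction fails as stated: from $s_n+r_n\leq p\leq s+r_n$ one cannot in general ``absorb the slack'' to obtain an exact decomposition $\tilde s_n+\tilde r_n=p$ with $\tilde s_n\leq s$ still a nontrivial approximant of $s$. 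Take $S=\Cu(\mathcal{Z})\cong\NN\sqcup(0,\infty]$, $p=1$ (compact) and $s=1'$ (the soft element below it); this $S$ satisfies (O5) and weak cancellation. Any sum with a nonzero soft summand is soft, so $\tilde s_n+\tilde r_n=1$ forces $\tilde s_n$ compact, and $\tilde s_n\leq 1'$ then forces $\tilde s_n=0$: no exact decompositions of $p$ can interpolate up to $s$. This is not an accident of the example. Already in $\LscI$ one has $\chi_{[0,t)}+\chi_{(s,1]}\neq 1$ for every $s,t$ (the sum either misses $[t,s]$ or double-counts $(s,t)$), so a $\Cu$-morphism cannot be required to satisfy $\alpha(\chi_{[0,b_n)})+\alpha(\chi_{(a_n,1]})=p$; for the same reason your coherence requirement that ``the refined pieces genuinely sum to the coarser piece'' is unachievable even in the model $\LscI$ itself, where $\chi_{(a,c)}+\chi_{(c,b)}\lneq\chi_{(a,b)}$.

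The repair is to keep the slack. The data to be constructed is a family satisfying the two-sided sandwich $\alpha(\chi_{[0,t)})+\alpha(\chi_{(s,1]})\leq p\leq \alpha(\chi_{[0,t')})+\alpha(\chi_{(s,1]})$ for all $t\leq s<t'$, which is precisely what the (O5)-complements deliver: this is the hypothesis of \autoref{Lemma_Lifting_Tecnic}, realized in \autoref{Lifting_Morphisms} by setting $\alpha(\chi_{[0,t)})=\sup_n x_n$ for complements $x_n$ attached to a strictly increasing approximation of $t$. Exact identities enter only in the form $p+\alpha(\chi_{(s,t)})=\alpha(\chi_{(s,1]})+\alpha(\chi_{[0,t)})$ for $s<t$, mirroring $1+\chi_{(s,t)}=\chi_{(s,1]}+\chi_{[0,t)}$ in $\LscI$; here the compact $p$ sits on the \emph{small} side of the inequality, so (O5) produces the complement and weak cancellation makes it unique. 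That uniqueness is what yields the modular identity and the independence of all auxiliary choices for free---no diagonal argument over refinements is needed. With this correction the remainder of your plan goes through and essentially reproduces the paper's proof.
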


Given a \CuSgp{} $S$, in \autoref{sec:Cauchy} we study Cauchy sequences $(\varphi_i)_i$ of \CuMor{s}, where $\varphi_i\colon \LscI\to S$ for each $i$, with respect to the distance introduced in \cite{CiupElli08} (see also \cite{CiupElliSant11} and \cite{RobSan10}). We show that, under certain conditions, such sequences have a unique limit.

\begin{thmIntro}[cf \ref{Gen_Cauchy}]
 Let $S$ be a weakly cancellative \CuSgp{} satisfying \axiomO{5}, and consider a sequence of \CuMor{s} $\varphi_{i}\colon  \LscI\to S$ such that $d(\varphi_{i},\varphi_{i+1})<\epsilon_{i}$ with $(\epsilon_{i})_i$ strictly decreasing and $\sum_{i=1}^{\infty} \epsilon_{i}<\infty$. Also, assume that $\varphi_{i}(1)=\varphi_{i+1}(1)$ for each $i$.
 
 Then, there exists a unique $\Cu$-morphism $\varphi\colon \LscI\to S$ with  $d(\varphi , \varphi_{i})\to 0$ and $\varphi (1)=\varphi_{i}(1)$ for every $i$.
\end{thmIntro}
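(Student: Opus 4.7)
The plan is to define $\varphi$ pointwise via a supremum expression exploiting the rapidly-convergent Cauchy condition, and then verify it is a $\Cu$-morphism using \axiomO{5} and weak cancellation of $S$.

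First I would unpack the Ciuperca--Elliott distance: the inequality $d(\varphi_i, \varphi_{i+1}) < \epsilon_i$ translates, on a generating family of $\LscI$ such as the indicators $\chi_{(t,1]}$, into two-sided estimates $\varphi_{i+1}(f) \leq \varphi_i(f_{\epsilon_i})$ and $\varphi_i(f) \leq \varphi_{i+1}(f_{\epsilon_i})$, where $f \mapsto f_\epsilon$ denotes a support enlargement with $f \ll f_\epsilon$. Telescoping and setting $\delta_i := \sum_{k \geq i} \epsilon_k$ (which tends to $0$ by hypothesis) yields the trapping estimate
\[
\varphi_j(f) \leq \varphi_i(f_{\delta_i}) \quad\text{and}\quad \varphi_i(f) \leq \varphi_j(f_{\delta_i})
\]
for all $j \geq i$. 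Using this trapping, I would define $\varphi(f) \in S$ as a supremum extracted from the $\varphi_i$ applied to $\ll$-approximants of $f$ --- concretely, the supremum of the directed family $\{s \in S : s \leq \varphi_i(f') \text{ for all sufficiently large } i, \text{ with } f' \ll f\}$, which exists in $S$ since the trapping estimate bounds this family above by $\varphi_1(f_{\delta_1})$ applied to a small enlargement of $f'$. Additivity, monotonicity, $\varphi(0)=0$, and $\varphi(1)=\varphi_i(1)$ (using compactness of $1 \in \LscI$ together with the hypothesis that $\varphi_i(1)$ is constant in $i$) are then routine.

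The substance of the proof lies in verifying the two nontrivial $\Cu$-morphism axioms. Preservation of suprema of increasing sequences follows from a supremum interchange once the definition is in place. Preservation of the way-below relation is the crux: given $f' \ll f$ in $\LscI$, one chooses $f' \ll f'' \ll f$ and an index $i$ large enough that $(f')_{\delta_i} \leq f''$; combining the trapping estimate with \axiomO{5} and weak cancellation in $S$ then produces an element $x \in S$ satisfying $\varphi(f') \leq x \ll \varphi_i(f'') \leq \varphi(f)$, which yields $\varphi(f') \ll \varphi(f)$.

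The estimate $d(\varphi, \varphi_i) \leq \delta_i \to 0$ is immediate from the trapping inequalities applied in the limit, and uniqueness follows from the triangle inequality together with the fact that the Ciuperca--Elliott distance separates distinct $\Cu$-morphisms out of $\LscI$. The main obstacle is the preservation of the way-below relation: converting the sequential approximation data from the Cauchy condition into the genuinely order-theoretic relation $\ll$ in $S$ requires exactly the combination of \axiomO{5} (to rearrange the trapped estimates and extract a compact-containment witness) and weak cancellation (to discard the common support arising from iteration of the estimates), and this is the step where neither hypothesis on $S$ can be dropped.
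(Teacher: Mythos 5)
Your overall strategy -- define the limit by suprema of shifted evaluations and let \axiomO{5} plus weak cancellation carry the verification of $\ll$-preservation -- is the same as the paper's (\autoref{prp_limit_Cauchy}), but there is a genuine gap at the very first step: the object $\varphi(f)$ is not shown to exist. You define $\varphi(f)$ as the supremum of the family $\{s\in S: s\leq\varphi_i(f')\text{ for all large }i,\ f'\ll f\}$ and justify its existence by saying the family is bounded above. In a $\Cu$-semigroup only suprema of \emph{increasing sequences} are guaranteed by \axiomO{1}; a bounded (even directed) family need not have a supremum. The family you describe does in fact admit a countable cofinal increasing subsequence -- essentially $\bigl(\varphi_{i_n}(f'_n)\bigr)_n$ for a suitable diagonal choice of indices, using your trapping estimate to show $\varphi_i(f')\leq\varphi_j(f'')$ for all $j\geq i$ once the $\delta_i$-enlargement of $f'$ sits below $f''\ll f$ -- but exhibiting that sequence is precisely the content of the definition, not an afterthought. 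The paper avoids the issue by writing down the explicit $\ll$-increasing sequence $\varphi_{i+1}\bigl(\chi_{(t+R-R_i,1]}\bigr)$ with $R=\sum\epsilon_i$, $R_i=\sum_{k\leq i}\epsilon_k$, and taking its supremum.

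A second, related underestimate: you call additivity and the identity $\varphi(1)=\varphi_i(1)$ ``routine.'' The paper only defines the limit directly on the indicators $\chi_{(t,1]}$ (equivalently on the subsemigroup $\mathcal{G}$ of increasing functions) and then invokes \autoref{Lifting_Morphisms} to extend to all of $\LscI$; the values on $\chi_{[0,t)}$ and $\chi_{(s,t)}$ are \emph{constructed} there via \axiomO{5} (producing the complement $x$ with $\phi(1)+x=\phi(\chi_{(s,1]})+\phi(\chi_{[0,t)})$) and shown well defined via weak cancellation against the compact $\varphi(1)$. Also note that your trapping estimate for general basic indicators (as opposed to the $\chi_{(t,1]}$, where it is immediate from the definition of the distance) already requires weak cancellation and $\varphi_i(1)=\varphi_{i+1}(1)$ (this is \autoref{basic_ind_dist}). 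So if you insist on defining $\varphi$ on all of $\LscI$ at once, you must redo this work rather than declare it routine; alternatively, define $\varphi$ only on $\{\chi_{(t,1]}\}_t$ and quote the lifting machinery, as the paper does. The remaining ingredients of your sketch -- the $\ll$-preservation argument, the estimate $d(\varphi,\varphi_i)\to 0$, and uniqueness from the metric property of the distance -- match the paper and are fine.
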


Since countably generated \CuSgp{s} are generally far bigger than countably generated groups (the former being usually uncountable and the latter being always countable), one cannot hope to obtain an exact analogue of Shen's theorem \cite[Theorem~3.1]{Shen79} with a commutative diagram. Instead, what one does get is an approximate version of it.

\begin{thmIntro}[{\ref{Main_EHS}, \ref{Main_OneRel}}]\label{thmIntro:Main}
 Let $S$ be a countably based $\Cu$-semigroup satisfying weak cancellation and (O5) where every compactly bounded element is bounded by a compact. Then, $S$ is $\Cu$-isomorphic to the Cuntz semigroup of an AI-algebra if and only if for every \CuMor{} $\varphi\colon \LscI^{r}\to S$, every $\epsilon >0$ and every triple $x,x', y$ in $\LscI^r$ such that $x\ll x'$ with $\varphi (x')\ll \varphi (y)$, there exist $\Cu$-morphisms $\theta ,\phi$ such that the diagram
 \[
  \xymatrix{
  \LscI^{r} \ar[r]^-{\varphi} \ar[d]_-{\theta} & S\\
  \LscI^{s} \ar[ru]_-{\phi} & 
  }
 \]
satisfies:
\begin{enumerate}[(i)]
 \item $d(\phi\theta ,\varphi )< \epsilon$.
 \item $\theta (x)\ll\theta (y)$.
 \item $\varphi (1_{j})=\phi\theta (1_{j})$ for every $1\leq j\leq r$.
\end{enumerate}
\end{thmIntro}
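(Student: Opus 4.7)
For the forward direction, suppose $S=\Cu(A)$ with $A=\varinjlim_n C[0,1]\otimes F_n$, so that $S=\varinjlim_n \LscI^{r_n}$ via the induced $\Cu$-morphisms $\phi_n\colon\LscI^{r_n}\to S$. Given a $\Cu$-morphism $\varphi\colon\LscI^r\to S$ and test data $x\ll x'$ with $\varphi(x')\ll\varphi(y)$, the plan is to produce $\theta$ by choosing $n$ large enough so that (a) a chosen compact approximant of $\varphi(x')$ lies $\ll$ a lift of $\varphi(y)$ already at the level $\Cu(C[0,1]\otimes F_n)=\LscI^{r_n}$, and (b) each compact generator $\varphi(1_j)$ lifts exactly to a compact element in $\LscI^{r_n}$ (using that the compacts in $\Cu(A)$ come from compacts at a finite stage, which is where the hypothesis ``compactly bounded implies bounded by a compact'' is used). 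The lifting theorem (\ref{lifting_th}) applied componentwise then produces the $\Cu$-morphism $\theta\colon\LscI^r\to\LscI^{r_n}$ sending the generators $\chi_{(0,1]}\cdot e_j$ and $1_j$ of $\LscI^r$ to chosen preimages; setting $\phi=\phi_n$ gives (ii) and (iii) by construction, while (i) follows by taking $n$ large enough.

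For the converse, which is the bulk of the work, the plan is to mimic the Effros--Handelman--Shen construction. Fix a countable basis $\{s_k\}$ for $S$ (in the sense that every $\ll$-below element is a supremum of basis elements below it), and enumerate all ``test data'' to be realized: pairs $(a,b)$ of basis elements with $a\ll b$, together with compact bounds on them. Construct inductively a sequence of $\Cu$-morphisms $\psi_k\colon\LscI^{n_k}\to S$ and transition $\Cu$-morphisms $\theta_k\colon\LscI^{n_k}\to\LscI^{n_{k+1}}$ such that:
\begin{enumerate}[(a)]
\item $d(\psi_k,\psi_{k+1}\theta_k)<2^{-k}$;
\item $\psi_{k+1}\theta_k(1_j)=\psi_k(1_j)$ for each $j$;
\item at stage $k$, the image of $\psi_k$ includes all $s_1,\ldots,s_k$ (obtained by taking direct sums with copies of $\LscI$ and applying \ref{lifting_th}); and
\item every relation $x\ll y$ in $\LscI^{n_k}$ with $\psi_k(x)\ll\psi_k(y)$ enumerated up to stage $k$ is lifted through $\theta_k$, i.e.\ $\theta_k(x)\ll\theta_k(y)$, using the local hypothesis of the statement with $\varepsilon=2^{-k}$.
\end{enumerate}

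Step (d) is the crucial use of the assumed local lifting property, applied to $\varphi=\psi_k$ and a finite batch of relations that can be absorbed one by one (this is presumably where the auxiliary Main\_OneRel statement enters). Having built the system, the $\Cu$-semigroup $T:=\varinjlim(\LscI^{n_k},\theta_k)$ receives an induced $\Cu$-morphism $\psi\colon T\to S$ from the family $(\psi_k)_k$ by \ref{Gen_Cauchy} (invoking the summability of the $2^{-k}$ errors and the exact agreement on compact units). Surjectivity of $\psi$ is guaranteed by (c), and the fact that $\psi$ reflects compact containment is guaranteed by (d), giving a $\Cu$-isomorphism $T\cong S$. Finally, each $\theta_k$ is a $\Cu$-morphism between Cuntz semigroups of $C[0,1]\otimes F_{n_k}$ and $C[0,1]\otimes F_{n_{k+1}}$, and so is realized by a $\ast$-homomorphism between these building blocks via Robert's classification of morphisms from limits of one-dimensional NCCW complexes (or Ciuperca--Elliott). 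The inductive limit of this system is then an AI-algebra $A$ with $\Cu(A)\cong T\cong S$.

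The main obstacle is the inductive step, specifically balancing the three requirements (a)--(d) simultaneously: enlarging the domain to accommodate new basis elements without destroying the exact agreement on the compact units $1_j$, while also using the local lifting hypothesis to absorb a growing set of relations with ever-smaller approximation errors. Care is also needed to verify that $\psi$ is indeed a $\Cu$-isomorphism rather than merely a $\Cu$-morphism onto a dense subset, which requires the interplay between (c), (d), and the weak cancellation and (O5) assumptions (used through \ref{Gen_Cauchy} and \ref{lifting_th}).
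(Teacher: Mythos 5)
Your converse direction is essentially the paper's own strategy (\autoref{Main_EHS} together with the one-relation reduction \autoref{Main_OneRel}): build an inductive system $(\LscI^{n_k},\theta_k)$ with morphisms $\psi_k$ into $S$, absorb a countable basis by summing on copies of $\LscI$ via \autoref{lifting_th}, absorb finitely many $\ll$-relations per stage via the local hypothesis, pass to the limit via the Cauchy machinery, and realize the limit system by an AI-algebra (\autoref{AI_Lsc}). There is, however, one concrete gap in the error control. Your condition (a), a flat tolerance $d(\psi_k,\psi_{k+1}\theta_k)<2^{-k}$, does not make the sequences $(\psi_j\theta_{j,i})_j$ Cauchy for a \emph{fixed} $i$: precomposition with a $\Cu$-morphism is uniformly continuous for the Ciuperca--Elliott distance but not contractive (this is exactly the content of \autoref{Finite} and \autoref{AI_Rmk}), so $d(\psi_j\theta_{j,i},\psi_{j+1}\theta_{j+1,i})$ need not be summable and \autoref{Gen_Cauchy}/\autoref{Limit_Morph} cannot be invoked as stated. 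The fix is to choose the tolerance at stage $k+1$ only after the composites $\theta_{k,i}$ are known, below the moduli $\delta_i$ furnished by \autoref{AI_Rmk} for each of them; this is precisely conditions (ii)$'$ and (iii)$'$ in the paper's proof. Two smaller points in the same vein: surjectivity does not follow from (c) alone but needs the quantitative estimate $d(\varphi_i\theta_i,\phi_i\theta_i)\leq 2\epsilon_i$ to see that each basis element itself (not merely nearby elements) is hit, and the order-embedding step requires transferring $\phi_i(z)\ll\phi_i(b)$ back to $\varphi_j\sigma_{j,i}$ for large $j$ via \autoref{WB_Maps} before your condition (d) applies.

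Your forward direction takes a genuinely different route. The paper lifts $\varphi$ to a ${}^*$-homomorphism $g\colon B\to A$ by Ciuperca--Elliott, uses semiprojectivity of interval algebras to push $g$ down to a finite stage within norm $\epsilon_F$, and applies $\Cu$; conditions (i)--(iii) then come from the norm-to-distance estimate and finite-stage detection of $\ll$. You instead stay at the semigroup level: lift grid values $\varphi(\chi^j_{(i/N,1]})$ and the compacts $\varphi(1_j)$ to a finite stage and apply \autoref{lifting_th} there. This can be completed (and avoids the $C^*$-algebraic input), but not as written: \autoref{lifting_th} only prescribes $\theta$ on the grid, so (i) requires \autoref{Partition}, and (ii) does not follow from choosing a compact approximant of $\varphi(x')$ below a lift of $\varphi(y)$ --- one must first obtain $\phi_n\theta(x'')\ll\phi_n\theta(y)$ for some $x''\gg x$ from the distance estimate via \autoref{WB_Maps}, and then compose $\theta$ with a connecting map to a later stage where this $\ll$-relation is witnessed.
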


Finally, in \autoref{sec:AbsCha} we introduce \emph{property I} (see \autoref{PropertyI}) and provide, using a discrete version of Theorem \ref{thmIntro:Main} (see \autoref{MainTh}), an abstract characterization for the Cuntz semigroup of  AI-algebras.

\begin{thmIntro}[\ref{thm:IiffAI}]
  Let $S$ be a \CuSgp{}. Then, $S$ is $\Cu$-isomorphic to the Cuntz semigroup of an AI-algebra if and only if $S$ is countably based, compactly bounded and it satisfies (O5), (O6), weak cancellation, and property $\rm{I}$.
\end{thmIntro}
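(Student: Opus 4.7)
The plan is to prove both implications by relating property I to the local characterization established in \autoref{thmIntro:Main}.

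For the forward direction, suppose $S \cong \Cu(A)$ for an AI-algebra $A \cong \lim_n C[0,1] \otimes F_n$. Axioms (O5) and (O6) hold for the Cuntz semigroup of any \ca{}. Since AI-algebras have stable rank one, weak cancellation follows from \cite{RorWin10ZRevisited}. Countable basedness comes from separability of $A$. Compact boundedness holds in each $\Cu(C[0,1] \otimes F_n)$, which is a finite power of $\LscI$, and passes to the inductive limit. Property I, being the discrete analog of the condition in \autoref{thmIntro:Main}, follows by applying that theorem to $\Cu(A)$ and specializing to step-function data witnessed by compact elements.

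For the backward direction, I would show that the listed axioms together with property I imply the approximate factorization hypothesis of \autoref{thmIntro:Main}; the conclusion then follows directly. Given $\varphi : \LscI^{r} \to S$, $\epsilon > 0$, and a triple $x \ll x'$ with $\varphi(x') \ll \varphi(y)$, the strategy is: first, use \axiomO{5}, \axiomO{6}, and countable basedness to approximate the relevant data inside $\LscI^r$ and the image of $\varphi$ by finite step-function data in $S$; second, apply the discrete version \autoref{MainTh} of property I repeatedly to obtain discrete factorizations at each level of approximation; third, promote these to \CuMor{s} $\theta_n : \LscI^r \to \LscI^{s_n}$ and $\phi_n : \LscI^{s_n} \to S$ using the lifting theorem \autoref{lifting_th}; and fourth, arrange the $\phi_n \theta_n$ into a Cauchy sequence with summable error terms, whose limit produced by \autoref{Gen_Cauchy} agrees with $\varphi$ within $\epsilon$ and exactly on compact generators.

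The main obstacle I anticipate is the last step: ensuring simultaneously that the Cauchy condition is met with strictly decreasing summable $\epsilon_n$, that the equality $\varphi(1_j) = \phi\theta(1_j)$ on compact generators holds exactly rather than approximately, and that the compact containment $\theta(x) \ll \theta(y)$ survives the limiting process. Weak cancellation is the crucial ingredient allowing one to upgrade approximate agreement on compact elements to exact agreement, and it guarantees uniqueness of the limit in \autoref{Gen_Cauchy}; \axiomO{5} propagates compact containment through the limit; \axiomO{6} supplies the interpolation at each finite approximation step; and the compactly bounded hypothesis ensures that every element of $S$ is reachable by the discrete data on which property I operates.
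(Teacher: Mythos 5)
There is a genuine gap in both directions, and the overall architecture differs from the paper's in a way that matters.

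In the backward direction, your plan never engages with what property $\rm{I}$ actually provides: chainable subsemigroups $H_k\subset S$ together with I-morphisms $F_k\colon X_{n_k}\to S$ and combinatorial relations $a_{i,s}^{(k)}\prec a_{i-1,s}^{(k)}$ and $a_k\prec b_k$ in the free monoids $X_{n_k}$. The entire content of the paper's argument is the bridge from this combinatorial data to actual $\Cu$-morphisms: \autoref{Prec_Convert} converts the $\prec$-relations into functions $f_{i,s}^{(k)}\ll f_{i-1,s}^{(k)}$ in $\LscI$ interleaved with the $F_k(a_{i,s}^{(k)})$ via the morphisms $\rho_{N_k}$ of \autoref{rho}, after which $\theta$ is built in one shot by \autoref{lifting_th} and $\phi=\oplus_k\rho_{N_k}$. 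This verifies the hypotheses of the \emph{discrete} criterion \autoref{MainTh} directly, with no limiting process: the Cauchy-sequence machinery of \autoref{Gen_Cauchy} lives entirely inside the proof of \autoref{Main_EHS} and is already discharged. Your proposal instead reintroduces a Cauchy sequence $\phi_n\theta_n$ and asks that $\theta(x)\ll\theta(y)$ ``survive the limiting process''; this is precisely the step that fails in general, since compact containment is not preserved under suprema of morphisms, and you correctly identify it as the main obstacle but do not resolve it. The paper's one-shot construction is designed exactly to avoid this. (Also, \axiomO{6} plays no interpolation role here; it appears in the statement only because it is a necessary condition.)

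In the forward direction, property $\rm{I}$ does not follow formally from \autoref{thmIntro:Main} by ``specializing to step-function data'': one must exhibit, inside $S$ itself, chainable subsemigroups satisfying the axioms of \autoref{chainable_subset}, including the divisibility condition (iii) involving $X_{mn}$ and the exactness condition $F(q)=F(t)$ for $q\simeq t$. The paper does this by direct computation in the building blocks, proving that $\LscI$ and $\overline{\NN}$ satisfy the stronger property $\rm{I}_0$ (\autoref{N_has_I0}, \autoref{Exa_Property_I0}, which rest on the analysis of $L_n^0$, the canonical elements $q_f$, and \autoref{L0_comp}), and then invoking permanence of property $\rm{I}$ under direct sums (\autoref{lma_I_sums}) and inductive limits (\autoref{lma_I_limits}). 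Note that property $\rm{I}_0$ itself is \emph{not} closed under direct sums (\autoref{exa_propI0_sums}), so the reduction to building blocks genuinely requires the weaker property $\rm{I}$; a derivation from the factorization theorem alone would still have to produce the chainable structure, which your sketch does not do.
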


\textbf{Acknowledgments.} This paper constitutes a part of the Ph.D. dissertation of the author. He is grateful to his advisor  Francesc Perera for his guidance and  many fruitful discussions on the subject.


\section{Preliminaries}

Let $S$ be a positively ordered monoid and let $x,y$ be elements in $S$. Recall that we write $x\ll y$, and say that $x$ is \emph{compactly contained in} (or \emph{way-below}) $y$, if for every increasing sequence $(z_n)_n$ whose supremum exists and is such that $y\leq \sup z_n$, we have $x\leq z_n$ for some $n\in\NN$.

\begin{pgr}\label{pgr:O1O4}
As defined in \cite{CowEllIva08CuInv}, a \emph{\CuSgp{}} $S$ is a positively ordered monoid that satisfies the following properties:
\begin{itemize}
 \item[\axiomO{1}] Every increasing sequence in $S$ has a supremum.
 \item[\axiomO{2}] Every element in $S$ can be written as the supremum of an $\ll$-increasing sequence.
 \item[\axiomO{3}] For every $x'\ll x$ and $y'\ll y$, we have $x'+y'\ll x+y$.
 \item[\axiomO{4}] For every pair of increasing sequences $(x_n)_n$ and $(y_n)_n$ we have $\sup_n x_n +\sup_n y_n = \sup_n (x_n+y_n)_n$.
\end{itemize}

The category $\Cu$ is defined as the subcategory of positively ordered monoids whose objects are \CuSgp{s} and whose morphisms are positively ordered monoid morphisms preserving the way-below relation and suprema of increasing sequences. The morphisms in $\Cu$ are usually referred to as \emph{\CuMor{s}}.
\end{pgr}

\begin{pgr}
It was proven in \cite{CowEllIva08CuInv} that there exists a functor between the category of \Cs{s} and $\Cu$. We briefly recall the construction here:

Given two positive elements $a,b$ in a \ca{} $A$, we say that $a$ is \emph{Cuntz subequivalent} to $b$, denoted by $a\precsim b$, if there exists a sequence $(r_n)_n$ in $A$ such that $a=\lim r_n b r_n^*$. The elements $a,b$ are said to be \emph{Cuntz equivalent} if $a\precsim b$ and $b\precsim a$.

The \emph{Cuntz semigroup} of $A$, denoted by $\Cu (A)$, is defined as the quotient $(A\otimes \mathcal{K})_+/\sim$. Writting the class of a positive element $a$ by $[a]$, the Cuntz semigroup of $A$ becomes a \CuSgp{} when endowed with the order induced by $\precsim$ and the addition induced by $[a]+[b]=\left[\begin{psmallmatrix}a&0\\ 0&b\end{psmallmatrix}\right]$.

Given a *-homomorphism $\varphi\colon A\to B$, let $\varphi$ also denote the amplification $\varphi\colon A\otimes \mathcal{K}\to B\otimes\mathcal{K}$. Then, $\varphi$ induces a \CuMor{} between $\Cu (A)$ and $\Cu (B)$, denoted by $\Cu(\varphi)$, by sending an element $[a]\in \Cu (A)$ to  $[\varphi (a)]$.

By \cite[Corollary~3.2.9]{AntoPereThie18}, the category $\Cu$ has inductive limits and the functor $\Cu\colon C^*\to \Cu$ is continuous.
\end{pgr}

\begin{pgr}\label{pgr:O5O6}
The Cuntz semigroup of a \ca{} always satisfies the following additional properties (see \cite[Proposition~4.6]{AntoPereThie18} and \cite[Proposition~5.1.1]{Rob13Cone} respectively):
\begin{enumerate}
 \item[\axiomO{5}] Given $x+y\leq z$, $x'\ll x$ and $y'\ll y$, there exists $c$ such that $x'+c\leq z\leq x+c$ and $y'\ll c$.
 \item[\axiomO{6}] Given $x'\ll x\leq y+z$ there exist elements $v\leq x,y$ and $w\leq x,z$ such that $x'\leq v+w$.
\end{enumerate}

A \CuSgp{} is said to be \emph{countably based} if it contains a countable sup-dense subset, that is to say a countable subset such that each element in the semigroup can be written as the supremum of an increasing sequence of elements in the subset. Every separable \ca{} has a countably based Cuntz semigroup; see, for example, \cite{AntPerSan11PullbacksCu}.

Also recall that a \CuSgp{} is \emph{weakly cancellative } if $x\ll y$ whenever $x+z\ll y+z$. It was proven in \cite[Theorem~4.3]{RorWin10ZRevisited} that stable rank one \ca{s} have weakly cancellative Cuntz semigroups.
\end{pgr}
\begin{pgr}
 We say that a \ca{} is an AI-algebra if it is *-isomorphic to an inductive limit of the form $\lim_n C[0,1]\otimes F_n$ with $F_n$ finite dimensional for every $n\in\NN$.
 
 AI-algebras were classified in \cite{CiupElli08} using the Cuntz semigroup. In fact, following the proof of \cite[Proposition~7.2.8]{LarLauRor00KThy} and using the result from \cite{CiupElli08} one can also prove the next theorem, where recall that $\Cu (C[0,1])$ is $\Cu$-isomorphic to $\LscI$, the \CuSgp{} of lower-semicontinuous functions $[0,1]\to\overline{\NN}$ (see, for example, \cite[Theorem~3.4]{AntPerSan11PullbacksCu}).

\begin{thm}\label{AI_Lsc}
 The Cuntz semigroup of an $AI$-algebra is $Cu$-isomorphic to the inductive limit of a system of the form $(\LscI^{k_i},\varphi_{i})$. Conversely, for every inductive system $(\LscI^{k_i},\varphi_{i})$ there exists an $AI$-algebra such that its Cuntz semigroup is $Cu$-isomorphic to the limit of the system.
\end{thm}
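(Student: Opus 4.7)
The forward direction is essentially functorial. If $A = \lim_n (C[0,1] \otimes F_n, \psi_n)$ with each $F_n = \bigoplus_{j=1}^{k_n} M_{n_j}$ finite-dimensional, then using that $\Cu$ is stable under matrix amplification and additive on direct sums one computes $\Cu(C[0,1] \otimes F_n) \cong \Cu(C[0,1])^{k_n} \cong \LscI^{k_n}$, where the last isomorphism is the one recalled before the statement. By continuity of the functor $\Cu\colon C^*\to\Cu$, setting $\varphi_n := \Cu(\psi_n)$ yields $\Cu(A) \cong \lim_n (\LscI^{k_n}, \varphi_n)$.

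The converse is the substantive part. Given an inductive system $(\LscI^{k_i}, \varphi_i)$, choose finite-dimensional algebras $F_i$ (say, $F_i = \CC^{k_i}$) with $k_i$ simple summands so that $\Cu(C[0,1] \otimes F_i) \cong \LscI^{k_i}$. Following the strategy of \cite[Proposition~7.2.8]{LarLauRor00KThy}, the plan is to lift each \CuMor{} $\varphi_i$ to a \stHom{} $\alpha_i\colon C[0,1] \otimes F_i \to C[0,1] \otimes F_{i+1}$ and to take the $C^*$-algebraic inductive limit, which will be an AI-algebra by construction.

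The essential input is the Ciuperca-Elliott classification \cite{CiupElli08}: every $\Cu$-morphism between Cuntz semigroups of the form $\Cu(C[0,1] \otimes F)$ (with $F$ finite-dimensional) is realized by a \stHom, and two such lifts of the same \CuMor{} are approximately unitarily equivalent. The existence clause lets us produce candidate lifts $\alpha_i$; however, naive composition need not yield a diagram commuting on the nose in $C^*$, only up to approximate unitary equivalence. One then runs an Elliott-type intertwining argument: fix a countable dense subset of each $C[0,1] \otimes F_i$ and a summable sequence of tolerances, and use the uniqueness clause to adjust each $\alpha_i$ by an inner automorphism of $C[0,1] \otimes F_{i+1}$ so that the composites commute to within the prescribed tolerance on the prescribed finite set. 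Telescoping if necessary to absorb the small discrepancies, one obtains an honest inductive system of \stHom{s} whose image under $\Cu$ is a cofinal subsystem of $(\LscI^{k_i}, \varphi_i)$. Continuity of $\Cu$ then delivers $\Cu(A) \cong \lim_i (\LscI^{k_i}, \varphi_i)$.

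The main obstacle is precisely the intertwining step: since Ciuperca-Elliott uniqueness controls lifts only up to approximate unitary equivalence, the adjustments have to be chosen compatibly along the whole sequence with summable tolerances, and one has to check that after passing to a telescope the resulting system still has $\Cu$-functor image cofinal with the prescribed one. This is the direct translation of the intertwining recipe from \cite[Proposition~7.2.8]{LarLauRor00KThy} to the AI-setting, with $\Cu$ in place of $K_0$ and the Ciuperca-Elliott theorem in place of the existence/uniqueness of lifts for AF-building blocks.
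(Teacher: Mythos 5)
Your proposal follows exactly the route the paper indicates (the paper gives no written proof of this theorem; it only points to the argument of \cite[Proposition~7.2.8]{LarLauRor00KThy} combined with the classification of \cite{CiupElli08}), and it is essentially correct. Two small adjustments: the parenthetical choice $F_i=\CC^{k_i}$ does not work in general, because a \stHom{} lifting $\varphi_i$ into $C[0,1]\otimes F_{i+1}$ exists only if the matrix sizes of $F_{i+1}$ are large enough to accommodate the multiplicities $\varphi_i(1_j)$ (these are compact, hence constant integer-valued, elements of $\LscI^{k_{i+1}}$), so the $F_i$ must be chosen inductively along the system. Moreover, once each one-step map $\varphi_i$ is lifted \emph{exactly} by the Ciuperca--Elliott existence theorem, the sequence $(\alpha_i)$ is already an honest inductive system with $\Cu(\alpha_i)=\varphi_i$, so continuity of the functor $\Cu$ finishes the argument and the approximate-intertwining step you describe is not actually needed for a one-sided sequential system.
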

\end{pgr}

It is readily checked that finite sums of elements of the form
\[
 \chi_{(t,1]}, \quad 
 \chi_{(s,t)}, \quad 
 \chi_{[0,t)}, \andSep
 1
\]
are a basis for $\LscI$.

\begin{dfn}\label{basic_ind}
 An element in $\LscI$ will be called a \emph{basic indicator function} if it is of the form $\chi_{(t,1]}$, $\chi_{(s,t)}$, $\chi_{[0,t)}$ or $1$ for some $s,t$.
 
 Also, we will say that an element $f\in \LscI$ is \emph{basic} if it is the finite sum of basic indicator functions.
 
 Given any $M\in\mathbb{N}$, we say that an element in $\LscI^{M}$ is a \emph{basic indicator function} (resp. \emph{basic element}) if each of its components is a basic indicator function (resp. basic element).
\end{dfn}

\begin{rmk}\label{Free_basic_el}
 Let $F$ be the free abelian semigroup generated by the basic indicator functions (as symbols) on $\LscI$. Given two elements $g,h\in F$, we write $g\sim_0 h$ if and only if there exist $f\in F$ and two open intervals $U,V$ such that  $g= f+ \chi_{U}+\chi_{V}$ and $h=f+\chi_{U\cup V}+\chi_{U\cap V}$. We write $f\sim g$ if $f=g$, $f\sim_0 g$ or if $g\sim_0 f$.
 
 Let $\simeq $ be the transitive relation induced by $\sim$. Then, the quotient $F/\simeq$ is isomorphic to the monoid of basic elements in $\LscI$, since for any $f,g\in\LscI$ we have $f+g=f\vee g+f\wedge g$. Indeed, this is related to distributive lattice ordered semigroups, as defined in \cite[Definition~4.1]{Vila20}, and is generally true in $\Lsc (X,\overline{\mathbb{N}})$.
\end{rmk}

\begin{ntn}\label{retract_notation}
Given $\epsilon>0$ and $0\leq a<b\leq 1$, we define the $\epsilon$-retraction of the intervals $(a,b)$, $(a,1]$ and $[0,b)$ as 
 \[
  (a+\epsilon ,b-\epsilon ), \quad 
  (a+\epsilon ,1],\andSep
  [0,b-\epsilon )
 \]
respectively.

Given an interval $U$, we denote by $R_{\epsilon }(U)$ its $\epsilon $-retraction. Also, given a finite disjoint union of intervals, we define its $\epsilon $-retraction to be the finite disjoint union of the $\epsilon $-retractions of the intervals.

Given a basic indicator function $\chi_{U}$, we define $R_{\epsilon }(\chi_{U})=\chi_{R_{\epsilon }(U)}$. 
Whenever we do not need to specify $\epsilon $, we will simply write $R(\chi_{U})$. Also, if $\epsilon>0$ is such that $R_{\epsilon }(U)=\emptyset$, it will be understood that $R_{\epsilon }(\chi_{U})=0$.
\end{ntn}

\section{Lifting morphisms}\label{sec:LiftMor}

Given a group $G$ and an element $g\in G$, there always exists a group morphism $\mathbb{Z}\to G$ mapping $1$ to $g$. Even though this is trivial, it is a key feature in both Shen's and Effros-Handelman-Shen's theorems (see \cite{Shen79} and \cite{EffHanShe80} respectively).

In this section, we study an analogue of such a property in the category $\Cu$. Namely, given a \CuSgp{} $S$ and an element $s\in S$, we wish to prove that, under the right assumptions, there exists a \CuMor{} $\LscI\to S$ mapping $\chi_{(0,1]}$ to $s$. In fact, we will prove more:

\begin{thm}\label{lifting_th}
Let $S$ be a weakly cancellative $\Cu$-semigroup satisfying (O5). Then, given a finite $\ll$-increasing sequence $s_{1}\ll\cdots\ll s_{n}$ and a compact element $p$ such that $s_{n}\leq p$, there exists a $\Cu$-morphism $\phi\colon \LscI\to S$ such that $\phi (\chi_{(n-k/n,1]})=s_{k}$ and $\phi (1)=p$.
\end{thm}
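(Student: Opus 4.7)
The plan is to construct $\phi$ by first defining a $\ll$-decreasing family of elements $h(t)\in S$ indexed by a dense set $D$ of dyadic rationals $t\in[0,1)$ refining the grid $\{(n-k)/n:0\leq k\leq n\}$, then extending $\phi$ to basic elements using the lattice structure of $\LscI$, and finally passing to suprema to obtain $\phi$ on all of $\LscI$. First observe that since $p$ is compact and $s_n\leq p$, one automatically has $s_n\ll p$. Setting $s_0:=0$ and $s_{n+1}:=p$, I get a chain $0=s_0\ll s_1\ll\cdots\ll s_n\ll p=s_{n+1}$. Using \axiomO{2} between consecutive pairs I insert interpolants, and performing this simultaneously along successively finer dyadic refinements $D_1\subseteq D_2\subseteq\cdots$ of the grid, I obtain a family $\{h(t)\}_{t\in D}$, with $D:=\bigcup_m D_m$, satisfying $h((n-k)/n)=s_k$, $h(t)\gg h(t')$ whenever $t<t'$ in $D$, and $h(t)=\sup_{t'>t}h(t')$.

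Next, I set $\phi(\chi_{(t,1]}):=h(t)$ for each $t\in D$. For a basic indicator $\chi_{(a,b)}$ with $a<b$ in $D$, I apply \axiomO{5} to $h(b)+0\leq h(a)$ with a small $\ll$-perturbation of $h(b)$ to obtain an element $d_{a,b}$ with $h(b)+d_{a,b}=h(a)$ up to controlled error; I then set $\phi(\chi_{(a,b)}):=d_{a,b}$, using weak cancellation to shrink the ambiguity to zero in the limit so that the choice is consistent. Indicators of the form $\chi_{[0,c)}$ are treated analogously, with $\phi(1)=p$ playing the role of the leftmost value. Via \autoref{Free_basic_el} and the identity $\chi_U+\chi_V=\chi_{U\cup V}+\chi_{U\cap V}$, this extends additively and order-preservingly to all basic elements whose endpoints lie in $D$. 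Every $f\in\LscI$ is the supremum of a $\ll$-increasing sequence of such basic elements, so I define $\phi(f):=\sup_i \phi(f_i)$; axioms \axiomO{1}--\axiomO{4} guarantee this is well-defined and yields a \CuMor{} satisfying $\phi(\chi_{((n-k)/n,1]})=s_k$, while compactness of $p$ forces $\phi(1)=p$ as a genuine equality.

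\emph{Main obstacle.} The delicate technical point is the interplay between the construction of the path and the definition of the slices: building $\{h(t)\}_{t\in D}$ coherently so that the elements $d_{a,b}$ produced by \axiomO{5} patch together into a well-defined additive assignment on basic elements. Each application of \axiomO{5} is only an existence statement with $\ll$-slack, and weak cancellation of $S$ is precisely what allows me to close up this slack in the limit and pin down unique values. Compactness of $p$ is essential both at the top of the chain (giving $s_n\ll p$ to start the interpolation) and at the end, in order to realize $\phi(1)=p$ exactly rather than merely as a supremum inequality.
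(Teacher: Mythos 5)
Your overall strategy — build a $\ll$-decreasing path $t\mapsto h(t)$ of values for the indicators $\chi_{(t,1]}$, then extend to the remaining basic indicators using \axiomO{5} and weak cancellation, then to basic elements via the modular identity of \autoref{Free_basic_el}, and finally to all of $\LscI$ by suprema — is the same skeleton as the paper's proof (which obtains the path from the generator property of the sub-semigroup $\mathcal{G}$ of increasing functions, and then lifts via \autoref{Lifting_Morphisms} and \autoref{Lemma_Lifting_Tecnic}). However, your treatment of the crucial step, the definition of $\phi(\chi_{(a,b)})$, has a genuine gap. You propose to produce $d_{a,b}$ with $h(b)+d_{a,b}=h(a)$ ``up to controlled error,'' and to ``shrink the ambiguity to zero'' by weak cancellation. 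First, an exact complement of $h(b)$ inside $h(a)$ is not the right target and need not exist: already for $S=\LscI$ and $\phi=\id$ there is no $d$ with $\chi_{(b,1]}+d=\chi_{(a,1]}$, since that would force $d=\chi_{(a,b]}$, which is not lower semicontinuous; the correct value $\chi_{(a,b)}$ satisfies only $\chi_{(b,1]}+\chi_{(a,b)}\lneq\chi_{(a,1]}$. Second, weak cancellation turns approximate identities into exact ones only when the cancelled element is compact; cancelling the non-compact $h(b)$ yields at best inequalities, so your limiting procedure does not pin down a unique, consistent value, and the additivity relation $\phi(\chi_U)+\phi(\chi_V)=\phi(\chi_{U\cup V})+\phi(\chi_{U\cap V})$ that you need for the extension over $\sim$ in \autoref{Free_basic_el} is left unverified.

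The paper resolves exactly this point by anchoring everything to the compact element $p=\phi(1)$: it first defines $\phi(\chi_{[0,t)})$ as the supremum of \axiomO{5}-complements of $\phi(\chi_{(s_n,1]})$ \emph{inside $p$} (with $s_n\uparrow t$), establishes the two-sided estimate $\phi(\chi_{[0,t)})+\phi(\chi_{(s,1]})\leq p\leq\phi(\chi_{[0,t')})+\phi(\chi_{(s,1]})$ for $t\leq s<t'$, and only then defines $\phi(\chi_{(s,t)})$ as the unique $x$ with $p+x=\phi(\chi_{(s,1]})+\phi(\chi_{[0,t)})$ — an exact equation whose existence uses \axiomO{5} with the compact $p$ and whose uniqueness uses weak cancellation against $p$. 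All subsequent verifications (the modular identity, order and $\ll$ preservation) are carried out by adding multiples of $p$ and cancelling. Your sketch treats $\chi_{[0,c)}$ only ``analogously,'' inverting this logical order and losing the compact anchor, which is where the argument would break down. A secondary, smaller issue: arranging the dyadic interpolants so that $h(t)=\sup_{t'>t}h(t')$ while keeping $h((n-k)/n)=s_k$ exactly requires an interleaving argument (this is the content of the cited generator property of $\mathcal{G}$); it is true but should not be taken for granted.
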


Recall that in a locally small category an object $G$ is called a \emph{generator} if for every pair of morphisms $g,f\colon X\to Y$ there exists a morphism $h\colon G\to X$ such that $g\circ h\neq f\circ h$.

The proof of \autoref{lifting_th} will rely on the fact that the sub-$\Cu$-semigroup of $\LscI$ defined as $\mathcal{G}=\{  f\in \LscI\mid f(0)=0, \text{ $f$ increasing}\}$ is a generator for the category $\Cu$ (see \cite[Section 5.2]{Scho18}). In particular, \cite[Proposition 2.10]{AntPerThi20:AbsBivariantCu} and \cite[Lemma 5.16]{Scho18} imply that, given any \CuSgp{} $S$ and any finite $\ll$-increasing sequence $s_{1}\ll\cdots\ll s_{n}$, there always exists a $\Cu$-morphism $\mathcal{G}\to S$ mapping $\chi_{(n-k/n,1]}$ to $s_{k}$ for every $k$.

Thus, the theorem will follow if we can prove that certain $\Cu$-morphisms $\mathcal{G}\to S$ can be lifted to $\Cu$-morphisms $\LscI\to S$. This is done in \autoref{Lifting_Morphisms}.

Some remarks are in order:
\begin{rmk}
$ $
 \begin{enumerate}
  \item A $\Cu$-morphism from $\mathcal{G}$ to a \CuSgp{} $S$ may not always be liftable.
  
  For example, set $S=\LscI$. Take $\infty\in \LscI$ and let $\alpha\colon \mathcal{G}\to S$ be a $\Cu$-morphism with $\alpha(\chi_{(0,1]})=\infty$. Recall that such a map must exist because $\mathcal{G}$ is a generator.
  
  If such a map could be lifted, one would have 
  \[
   \infty=\alpha(\chi_{[0,1)})\ll \alpha (1)\leq\infty,
  \]
  which would imply that $\infty$ is compact, a contradiction.
  
  \item Even if a map can be lifted, the lift may not necessarily be unique. As an example, for every $n\in\NN$ define the $\Cu$-morphism $i_{n}\colon \LscI\to\LscI$ as 
  \[
   i_{n}(\chi_{[0,b)})=(n-1)+\chi_{[0,b)},\quad 
   i_{n}(\chi_{(a,1]})=\chi_{(a,1]},\quad
   i_{n}(\chi_{(a,b)})=\chi_{(a,b)},
  \]
  and $i_{n}(1)=n$.
  
  Then, $i_{n}|_{\mathcal{G}}=i_{n+1}|_{\mathcal{G}}$ for every $n$, so each $i_n$ is a lift of the inclusion $\mathcal{G}\to\LscI$.
 \end{enumerate}
\end{rmk}

Given a $\Cu$-semigroup $S$ satisfying weak cancellation, we know that for any compact element $p$ and any pair of elements $a,b\in S$ such that $p+a\leq p+b$, we have $a\leq b$. Indeed, taking $a'\ll a$, one has $p+a'\ll p+b$. Applying weak cancellation, we obtain $a'\ll b$ for any $a'\ll a$ and, consequently, $a\leq b$. 

In particular, this implies that if $p+a=p+b$, we must have $a=b$. This fact will be used repeatedly in the proof of the following lemma.


We will also consider the ordered set $\{\chi_{(t,1]} ,\chi_{[0,t)},1\}_{t\in [0,1]}$ with the order inherited from $\LscI$. Note that in this set suprema of increasing sequences always exist. 

Indeed, every increasing sequence $(s_{n})_{n}$ is of one of the following forms
\[
 s_{n}=1 \text{ for every } n ,\quad 
 s_{n}=\chi_{(a_{n},1]} \text{ with } a_{n+1}\leq a_{n} ,\orSep
 s_{n}=\chi_{[0,b_{n})}\text{ with }
 b_{n}\leq b_{n+1}\, ,
\]
with supremum $1$, $\chi_{(\inf (b_{n}),1]}$ and $\chi_{[0,\sup (a_{n}))}$ respectively. Note that all three elements belong to our set.

\begin{lma}\label{Lemma_Lifting_Tecnic}
 Let $S$ be a $\Cu$-semigroup satisfying $\mathrm{(O5)}$ and weak cancellation, and let 
 \[
 \phi\colon \{\chi_{(t,1]} ,\chi_{[0,t)},1\}_{t\in [0,1]}\to S
 \]
 be an order and suprema preserving map such that for every $t\leq s<t'$ we have
 \[
  \phi (\chi_{[0,t)})+\phi (\chi_{(s,1]})\leq \phi (1)\ll \phi (1)\leq 
  \phi (\chi_{[0,t')})+\phi (\chi_{(s,1]}).
 \]
 
 Then, there is a unique $\Cu$-morphism from $\LscI$ to $S$ lifting $\phi$. Conversely, for any $\Cu$-morphism $\phi$ from $\LscI$ to $S$, the previous   condition holds.
\end{lma}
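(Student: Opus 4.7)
My plan is to extend $\phi$ in three stages: first to the basic indicator functions $\chi_{(s,t)}$, then to all basic elements (cf.\ Remark \ref{Free_basic_el}), and finally to $\LscI$ by continuity. For the first stage, fix $0 \leq s < t \leq 1$ and recall the identity $\chi_{(s,1]} + \chi_{[0,t)} = \chi_{(s,t)} + 1$ in $\LscI$. This forces any lift to satisfy $\phi(\chi_{(s,1]}) + \phi(\chi_{[0,t)}) = \phi(\chi_{(s,t)}) + \phi(1)$, so I must find a unique $c \in S$ with $c + \phi(1) = \phi(\chi_{[0,t)}) + \phi(\chi_{(s,1]})$. Applying \axiomO{5} to the inequality $\phi(1) + 0 \leq \phi(\chi_{[0,t)}) + \phi(\chi_{(s,1]})$ with $x' := \phi(1)$ (allowed by the compactness hypothesis $\phi(1) \ll \phi(1)$) yields $c$ with $\phi(1) + c \leq \phi(\chi_{[0,t)}) + \phi(\chi_{(s,1]}) \leq \phi(1) + c$; weak cancellation by the compact $\phi(1)$ gives equality and uniqueness. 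Set $\phi(\chi_{(s,t)}) := c$.

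Second, to extend $\phi$ additively to all basic elements through the presentation of Remark \ref{Free_basic_el}, I would verify the cancellation identity
\[
 \phi(\chi_U) + \phi(\chi_V) = \phi(\chi_{U \cup V}) + \phi(\chi_{U \cap V})
\]
for every pair of intervals $U, V$ of basic type. A case analysis on the positions of $U$ and $V$ reduces each configuration to the defining equation: substituting $\phi(\chi_{(a,b)}) + \phi(1) = \phi(\chi_{[0,b)}) + \phi(\chi_{(a,1]})$ everywhere an interior interval appears on either side and then cancelling suitable copies of the compact $\phi(1)$ by weak cancellation, both sides reduce to the same expression in $\phi$ evaluated at left-indicators, right-indicators, and $1$. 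Order preservation on basic indicators follows by the same mechanism; for instance, $\phi(\chi_{(s,t)}) \leq \phi(\chi_{[0,t)})$ is obtained from the defining equation and $\phi(\chi_{(s,1]}) \leq \phi(1)$ after cancelling $\phi(1)$. This case analysis---covering disjoint, nested, properly overlapping and endpoint-sharing configurations among the four types of basic intervals---is the main technical obstacle of the proof.

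For the third stage, given $f \in \LscI$ I take any $\ll$-increasing sequence $(f_n)_n$ of basic elements with $\sup f_n = f$ and set $\phi(f) := \sup_n \phi(f_n)$; independence of the sequence is immediate from order preservation on basic elements, and the $\Cu$-morphism axioms follow from \axiomO{1}--\axiomO{4} together with the corresponding properties on basic elements. Uniqueness of the lift is then forced: any $\Cu$-morphism $\psi$ agreeing with the given $\phi$ on $\{\chi_{(t,1]}, \chi_{[0,t)}, 1\}_t$ must, by preservation of $\chi_{(s,1]} + \chi_{[0,t)} = \chi_{(s,t)} + 1$ and weak cancellation, agree with my construction on every $\chi_{(s,t)}$, and hence on all of $\LscI$ by $\sup$-density of basic elements. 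The converse is routine: a $\Cu$-morphism $\LscI \to S$ sends the compact $1 \in \LscI$ to a compact element of $S$ and preserves the pointwise inequalities $\chi_{[0,t)} + \chi_{(s,1]} \leq 1 \leq \chi_{[0,t')} + \chi_{(s,1]}$ (valid for $t \leq s < t'$), giving the stated condition.
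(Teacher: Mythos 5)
Your stages 1 and 3, and the modular identity in stage 2, follow the paper's proof closely and are correct as far as they go: the definition of $\phi(\chi_{(s,t)})$ via \axiomO{5} and weak cancellation against the compact $\phi(1)$, the well-definedness of the additive extension to basic elements through the presentation of \autoref{Free_basic_el}, and the final sup-extension are all sound in outline. However, there is a genuine gap in stage 2. You only establish (and only sketch) order preservation for \emph{individual} basic indicator functions, but what stage 3 actually requires is that the additive extension preserves $\leq$ and $\ll$ on \emph{basic elements}, i.e.\ on finite sums of indicators. This does not follow "by the same mechanism." For instance, $\chi_{(0,1/2)}+\chi_{(1/2,1)}\leq \chi_{(0,1)}$ in $\LscI$, yet neither the modular identity (which is vacuous for disjoint intervals) nor any matching of summands against summands yields $\phi(\chi_{(0,1/2)})+\phi(\chi_{(1/2,1)})\leq\phi(\chi_{(0,1)})$. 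An additive extension of an order-preserving map on generators need not be order-preserving on the generated monoid, and without monotonicity on basic elements your stage-3 definition $\phi(f):=\sup_n\phi(f_n)$ is not even well defined, let alone a $\Cu$-morphism.

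The missing ingredient is the paper's Claim 2: for a partition $s_0<\cdots<s_n$ of $[0,1]$ one telescopes the hypothesis inequalities to get
\[
 \phi(\chi_{[0,s_0)})+\phi(\chi_{(s_0,s_1)})+\cdots+\phi(\chi_{(s_n,1]})\leq\phi(1)\leq
 \phi(\chi_{[0,s_1)})+\phi(\chi_{(s_0,s_2)})+\cdots+\phi(\chi_{(s_{n-1},1]}),
\]
after cancelling the appropriate multiples of the compact $\phi(1)$; then for finite disjoint unions $U\subset V$ (resp.\ $U\Subset V$) one compares $\phi(\chi_U)+\phi(\chi_{\Int([0,1]\setminus U)})\leq\phi(1)\leq\phi(\chi_V)+\phi(\chi_{\Int([0,1]\setminus U)})$ and cancels again to get $\phi(\chi_U)\leq\phi(\chi_V)$ (resp.\ $\ll$). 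Only after this, via the level-set description $f\leq g\Leftrightarrow\{f\geq k\}\subset\{g\geq k\}$, does one obtain order and $\ll$ preservation on all basic elements, which is what legitimises your third stage. Note that this step uses the two-sided hypothesis involving $\phi(1)$ in an essentially global, telescoped way, not merely pairwise as in your sketch.
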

\begin{proof}
 Necessity is clear, so we only need to prove the other implication. That is to say, we need to define a $\Cu$-morphism $\LscI\to S$, which we will also call $\phi$, extending the inital assignments. 
 
 First, note that $\phi (\chi_{(t,1]}),\phi(\chi_{[0,t)})\leq \phi (1)$ for every $t$. Also, given any $t'>t$ and $s'<s$, we can apply weak cancellation to the inequalities
 \[
  \begin{split}
   \phi (\chi_{[0,t)})+\phi (\chi_{(t,1]})\leq \phi (1)\ll \phi (1)\leq 
  \phi (\chi_{[0,t')})+\phi (\chi_{(t,1]}), \\
   \phi (\chi_{[0,s)})+\phi (\chi_{(s,1]})\leq \phi (1)\ll \phi (1)
   \leq 
   \phi (\chi_{[0,s)})+\phi (\chi_{(s',1]}),
  \end{split}
 \]
to obtain $\phi (\chi_{[0,t)})\ll \phi(\chi_{[0,t')})$ and $\phi (\chi_{(s,1]})\ll\phi(\chi_{(s',1]})$.
\vspace{0.2cm}

Let $s<t$. Then, since $\phi (1)\leq \phi (\chi_{(s,1]})+\phi (\chi_{[0,t)})$, we know by (O5) and using that $\phi (1)$ is compact that there exists an element $x\in S$ such that 
\[
 \phi (1)+x= \phi (\chi_{(s,1]})+\phi (\chi_{[0,t)}).
\]

Note that this element is unique by the remark preceding the lemma. Thus, we can define $\phi (\chi_{(s,t)})=x$.

Now let $s<t$ in $[0,1]$ and let $y\ll \phi (\chi_{(s,t)})$. Then, we have
\[
 \phi (1)+y\ll \phi(1)+\phi(\chi_{(s,t)})=
 \phi(\chi_{(s,1]}) + \phi(\chi_{[0,t)})
 \leq 
 \phi(\chi_{(s,1]})+\phi (1)\, , 
 \phi(\chi_{[0,t)})+\phi (1).
\]

By weak cancellation, one gets $y\ll \phi(\chi_{(s,1]}), \phi(\chi_{[0,t)})$. Taking the supremum on $y$, one obtains $\phi(\chi_{(s,t)})\leq  \phi(\chi_{(s,1]}), \phi(\chi_{[0,t)})$.

Also, given $s'<s<t<t'$, we have, using as proved above that $\phi (\chi_{(s,1]})\ll \phi (\chi_{(s',1]})$ and $\phi (\chi_{[0,t)})\ll \phi (\chi_{[0,t')})$,
\[
 \phi(1)+\phi(\chi_{(s,t)})=
  \phi(\chi_{(s,1]}) + \phi(\chi_{[0,t)}) \ll
  \phi(\chi_{(s',1]}) + \phi(\chi_{[0,t')})=
  \phi(1)+\phi(\chi_{(s',t')}).
\]

Applying weak cancellation once again, we get $\phi(\chi_{(s,t)})\ll \phi(\chi_{(s',t')})$.\vspace{0.2cm}

 Further, given any increasing sequence $t_{n}\to t$ and any decreasing sequence $s_{n}\to s$, we have 
 \[
  \begin{split}
   \phi(\chi_{(s,t)})+\phi (1)& =\phi (\chi_{[0,t)})+\phi (\chi_{(s,1]})=\sup_{n} (\phi (\chi_{[0,t_{n})})+\phi (\chi_{(s_{n},1]}))\\
   &= \sup_{n}(\phi (1)+\phi(\chi_{(s_{n},t_{n})}))=\phi (1)+\sup_{n}\phi(\chi_{(s_{n},t_{n})}).
  \end{split}
 \]
 
 By the remark preceding the lemma, one gets $\phi(\chi_{(s,t)})=\sup_{n}\phi(\chi_{(s_{n},t_{n})})$.
 \vspace{0.2cm}
 
 \noindent\textbf{Claim 1} \textit{Given any two open intervals $U,V$, one has $\phi (\chi_{U})+\phi (\chi_{V})=\phi (\chi_{U\cup V})+\phi (\chi_{U\cap V})$.}
 \begin{proof}
 Assume that $U,V$ are of the form $U=(s,t)$, $V=(s',t')$ with $s\leq s'\leq t\leq t'$.
 
 Then, one has
 \[
 \begin{split}
  \phi (\chi_{U})+\phi (\chi_{V})+2\phi (1)&=
  \phi (\chi_{[0,t)})+\phi (\chi_{(s,1]})+
  \phi (\chi_{[0,t')})+\phi (\chi_{(s',1]})\\
  &= \phi(\chi_{(s,t')}) + \phi (\chi_{(s',t)})+2\phi(1),
  \end{split}
 \]
which implies $\phi (\chi_{U})+\phi (\chi_{V})=\phi (\chi_{U\cup V})+\phi (\chi_{U\cap V})$, as required.

All the other cases are proven analogously.
\end{proof}
 
Let $B$ be the set of basic elements in $\LscI$. By \autoref{Free_basic_el} and Claim 1, we can now extend $\phi$ to a monoid morphism $\phi\colon B\to S$.

Since $\phi$ is additive and retractions  are applied to each connected component (see \autoref{retract_notation}), note that we have $\sup_{n}\phi (\chi_{R_{1/n}(U)})=\phi (\chi_{U})$ for every open subset $U\subset [0,1]$ that can be written as the finite disjoint union of intervals.
\vspace{0.2cm}

\noindent\textbf{Claim 2}
\textit{Given a finite strictly increasing sequence $s_{0}<\cdots <s_{n}$ in $[0,1]$, we have }
\[
\begin{split}
 \phi (\chi_{[0,s_{0}}))+\phi(\chi_{(s_{0},s_{1})})+\cdots +
 \phi(\chi_{(s_{n-1},s_{n})})+
 \phi(\chi_{(s_{n},1]})\leq \phi(1)\\
 \leq
 \phi (\chi_{[0,s_{1}}))+\phi(\chi_{(s_{0},s_{2})})+\cdots +
 \phi(\chi_{(s_{n-2},s_{n})})+
 \phi(\chi_{(s_{n-1},1]})
 \end{split}
\]

\textit{In particular, given $U, V$ open subsets that can be written as the finite disjoint union of open intervals, we have $\phi (\chi_{U})\leq\phi (\chi_{V})$ whenever $U\subset V$ and $\phi (\chi_{U})\ll\phi (\chi_{V})$ whenever $U\Subset V$.}
\begin{proof}
Using the equality $\phi (\chi_{(s,t)})+\phi (1)=\phi (\chi_{[0,t)})+\phi (\chi_{(s,1]})$ at the first step, and the inequality $\phi (\chi_{[0,s)})+\phi (\chi_{(s,1]})\leq \phi (1)$ at the second step, we get
\[
 \begin{split}
  \phi (\chi_{[0,s_{0}}))+\phi(\chi_{(s_{0},s_{1})})+\cdots +
 \phi(\chi_{(s_{n-1},s_{n})})+
 \phi(\chi_{(s_{n},1]})
 +n\phi (1)\\
 =\sum_{i=0}^{n}
 \phi (\chi_{[0,s_{i})})+\phi (\chi_{(s_{i},1]})
 \leq (n+1)\phi (1).
 \end{split}
\]

Similarly, but now using $\phi (\chi_{[0,t)})+\phi (\chi_{(s,1]})\geq \phi (1)$ for every $s<t$ at the second step, we have
\[
\begin{split}
 \phi (\chi_{[0,s_{1}}))+\phi(\chi_{(s_{0},s_{2})})+\cdots +
 \phi(\chi_{(s_{n-2},s_{n})})+
 \phi(\chi_{(s_{n-1},1]})+(n-1)\phi (1)\\
 =
 \sum_{i=1}^{n}
 \phi (\chi_{[0,s_{i+1})})+\phi (\chi_{(s_{i}-1,1]})\geq n\phi (1).
 \end{split}
\]

Since $\phi (1)$ is compact, we can apply  weak cancellation to cancel $n\phi (1)$ in the first inequality and $(n-1)\phi (1)$ in the second. This proves the first part of the claim.

Given a subset $Y\subset [0,1]$, let $\Int (Y)$ denote its interior. Then, given $U\Subset V$ as in the statement, and using the result above in both inequalities, one has 
\[
 \phi (\chi_{U})+\phi (\chi_{\Int ([0,1]-U)})\leq \phi (1)\leq 
 \phi (\chi_{V})+\phi (\chi_{\Int ([0,1]-U)}).
\]

Applying weak cancellation once again, we have $\phi (\chi_{U})\ll\phi (\chi_{V})$.

If $U\subset V$, take $n >0$ and note that $R_{1/n }(U)\Subset U\subset V$. Thus, we have $\phi (\chi_{R_{1/n }(U)})\ll \phi (\chi_{V})$. Taking the supremum on $n$ one obtains the required result.
\end{proof}

Now let $f,g\in B$, and note that $f\leq g$ (resp. $f\ll g$) if and only if $\{f\geq k\}\subset \{g\geq k\}$ for every $k\leq \sup (g)$ (resp. $\{f\geq k\}\Subset \{g\geq k\}$). Since $\phi$ is additive, it follows from Claim 2 that $\phi $ preserves both the order and the $\ll$-relation.

Take $f\in B$, which can be written as $f=\sum_{i} \chi_{U_{i}}$ with $U_{i}$ finite disjoint unions of intervals, and let $f_{n}$ be $\sum_{i} \chi_{R_{1/n}(U_{i})}$, which is still an element in $B$ with $f_n\ll f_{n+1}$ for all $n$. Clearly, since $\phi$ is additive and $\sup_{n}f_{n}=f$, we have that $\phi (f_{n})$ is an $\ll$-increasing sequence with supremum $\phi (f)$.

Thus, given an increasing sequence $(f'_{m})_m$ in $B$ with supremum $f$, we have that for every $n$ there exists some $m$ with
\[
 f_{n}\leq f'_{m}\leq f.
\]

Since $\phi$ preserves the order, one gets $\phi (f_{n})\leq \phi (f'_{m})\leq \phi (f)$. Taking suprema, it follows that $\sup_{m}\phi (f'_{m})=\phi (f)$ whenever $f'_{m}$ is an increasing sequence with supremum $f$.

Finally, since every element in $\LscI$ can be written as the supremum of an $\ll$-increasing sequence of elements in $B$, we can define $\phi\colon\LscI\to S$ as $\phi (g)=\sup_{n}\phi (g_{n})$ for $g_{n}$ $\ll$-increasing sequence in $B$ with supremum $g$.

It is now easy to check that $\phi $ is a $\Cu$-morphism (see, for example, the proof of  \cite[Theorem~4.40]{Vila20}). Note that it is a lift for our previously defined map $\phi\colon B\to S$ because we already know that $\sup_{m}\phi (f'_{m})=\phi (f)$ whenever $f'_{m}$ is an increasing sequence with supremum $f$.
\end{proof}

Recall that $\mathcal{G}$ is the sub-$\Cu$-semigroup of $\LscI$ defined as
\[\mathcal{G}=\{  f\in \LscI\mid f(0)=0, \text{ $f$ increasing}\}.
 \]

\begin{prp}\label{Lifting_Morphisms}
Let $S$ be a $\Cu$-semigroup satisfying (O5) and weak cancellation, and let $\alpha\colon \mathcal{G} \to S$ be a $\Cu$-morphism such that $\alpha (\chi_{(0,1]})\leq p$ with $p$ a compact element in $S$. 

 Then, there exists a unique $\Cu$-morphism $\phi\colon \LscI\to S$ extending $\alpha$ such that $\phi (1)=p$.
\end{prp}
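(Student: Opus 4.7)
The plan is to reduce the proposition to \autoref{Lemma_Lifting_Tecnic}, which constructs a $\Cu$-morphism $\LscI\to S$ from any suitable extension of $\alpha$ to the ordered subset $\Omega:=\{\chi_{(t,1]},\chi_{[0,t)},1\}_{t\in[0,1]}$. Setting $\phi(\chi_{(t,1]}):=\alpha(\chi_{(t,1]})$ and $\phi(1):=p$ is forced, and since $\alpha$ is a $\Cu$-morphism these assignments already preserve order and suprema of the corresponding increasing sequences in $\Omega$. Everything therefore reduces to defining $\phi(\chi_{[0,t)})$ for $t\in(0,1]$ and verifying the key sandwich inequality of that lemma.

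For each $t\in(0,1]$ I would fix a strictly increasing sequence $0<s_n\uparrow t$. Since $p$ is compact, $\alpha(\chi_{(s_n,1]})\leq p\ll p$, and since $s_{n+1}>s_n$, $\alpha(\chi_{(s_{n+1},1]})\ll \alpha(\chi_{(s_n,1]})$, so axiom \axiomO{5} applied to $\alpha(\chi_{(s_n,1]})+0\leq p$ yields $c_n\in S$ satisfying
\[
\alpha(\chi_{(s_{n+1},1]})+c_n\leq p\leq \alpha(\chi_{(s_n,1]})+c_n.
\]
Chaining the left-hand inequality for $c_n$ with the right-hand inequality for $c_{n+1}$ produces $\alpha(\chi_{(s_{n+1},1]})+c_n\leq p\ll p\leq\alpha(\chi_{(s_{n+1},1]})+c_{n+1}$, and weak cancellation then upgrades this to $c_n\ll c_{n+1}$. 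I would then set $\phi(\chi_{[0,t)}):=\sup_n c_n$ (and $\phi(\chi_{[0,0)}):=0$), which exists by \axiomO{1}.

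The remaining verifications all follow the same pattern: pick indices so that one $s$-parameter strictly interpolates between two others, use the defining inequalities of the relevant $c$'s to sandwich $p$ between two expressions of the form $\alpha(\chi_{(s,1]})+(\text{something})$, insert the crucial $p\ll p$ step, and apply weak cancellation. This yields in turn: independence of the choice of sequence $(s_n)$ (so $\phi(\chi_{[0,t)})$ is well-defined); order preservation in $t$; preservation of suprema for increasing sequences $t_n\uparrow t$; and both halves of the sandwich $\phi(\chi_{[0,t)})+\alpha(\chi_{(s,1]})\leq p\ll p\leq \phi(\chi_{[0,t')})+\alpha(\chi_{(s,1]})$ for $t\leq s<t'$, using \axiomO{4} to commute suprema with addition when passing from the $c_n$'s to their supremum.

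\autoref{Lemma_Lifting_Tecnic} then produces a $\Cu$-morphism $\phi\colon\LscI\to S$ with the prescribed values. Since $\phi$ and $\alpha$ are both $\Cu$-morphisms agreeing on the $\chi_{(t,1]}$'s, which sup-generate $\mathcal{G}$, they agree on $\mathcal{G}$; so $\phi$ extends $\alpha$. For uniqueness, given two lifts $\phi_1,\phi_2$ and $t^*<t$, the lemma yields $\phi_1(\chi_{[0,t^*)})+\alpha(\chi_{(s,1]})\leq p\ll p\leq \phi_2(\chi_{[0,t)})+\alpha(\chi_{(s,1]})$ for $t^*\leq s<t$, and weak cancellation gives $\phi_1(\chi_{[0,t^*)})\ll \phi_2(\chi_{[0,t)})$; letting $t^*\uparrow t$ forces $\phi_1(\chi_{[0,t)})\leq \phi_2(\chi_{[0,t)})$, so by symmetry $\phi_1=\phi_2$ on all $\chi_{[0,t)}$ and hence on $\LscI$. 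The main obstacle throughout is that weak cancellation only cancels $\ll$-inequalities, so every $\leq$ comparison must first be upgraded to a $\ll$ via the compactness of $p$; this is precisely where the hypothesis that $p$ be compact enters.
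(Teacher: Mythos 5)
Your proposal is correct and follows essentially the same route as the paper: reduce to \autoref{Lemma_Lifting_Tecnic}, use \axiomO{5} together with the compactness of $p$ to produce the complements $c_n$ of $\alpha(\chi_{(s_n,1]})$ in $p$, upgrade to a $\ll$-increasing sequence via weak cancellation, and define $\phi(\chi_{[0,t)})$ as the supremum. The only cosmetic difference is that you spell out the uniqueness argument directly on the generators $\chi_{[0,t)}$, whereas the paper delegates it to the uniqueness clause of \autoref{Lemma_Lifting_Tecnic}; both are fine.
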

\begin{proof}
We will show that there exists a map
 \[
 \phi\colon \{\chi_{(t,1]} ,\chi_{[0,t)},1\}_{t\in [0,1]}\to S
 \]
 satisfying the conditions in  \autoref{Lemma_Lifting_Tecnic}. The result will then follow from the application of this lemma.
 \vspace{0.1cm}
 
 First, set $\phi (1)=p$ and $\phi (\chi_{(s,1]})=\alpha (\chi_{(s,1]})$ for every $s$.
 
 Take $t\in (0,1]$ and consider a strictly increasing sequence $s_{n}\to t$. Since $\phi (\chi_{(s_{n+1},1]})\ll \phi (\chi_{(s_{n},1]})\ll \phi (1)$ for every $n$, we know by (O5) that there exist elements $x_{n}$ such that
 \[
  \phi (\chi_{(s_{n+1},1]}) + x_{n} \leq \phi(1) \leq \phi (\chi_{(s_{n},1]}) + x_{n}
 \]
 for every $n$. 
 
 Using $\phi(1)\ll \phi(1)$ and weak cancellation, one gets from
 \[
  \phi (\chi_{(s_{n+1},1]})+x_n\leq\phi (1)\leq \phi (\chi_{(s_{n+1},1]})+x_{n+1}
 \]
that $x_{n}\ll x_{n+1}$. Let $x=\sup x_{n}$.
 
 Now take $t'\geq t$, and consider an stricly increasing sequence $s'_{n}\to t'$. Using the same argument, we obtain an associated $\ll$-increasing sequence $(y_{n})_{n}$.
 
 Further, note that for every $n$ there must exist some $m\geq n$ such that $s'_{m}\geq s_{n+1}$. Using this at the third step we obtain
 \[
  \phi (\chi_{(s_{n+1},1]})+x_{n}\leq \phi (1)\leq
  \phi (\chi_{(s'_{m},1]})+y_{m}\leq 
  \phi (\chi_{(s_{n+1},1]}) +y_{m}.
 \]

 Applying weak cancellation and taking suprema over $m$, one obtains $x\leq \sup_{m} y_{m}$.
 
 This argument shows that we can define $\phi (\chi_{[0,t)})$ as $x$ (by taking $t'=t$), and that $\phi (\chi_{[0,t)})\leq \phi (\chi_{[0,t')})$ whenever $t'\geq t$. Thus, $\phi$ is order preserving.
 
 With the previous notation, note that, for any $t'<t$, we get
 \[
  \phi (\chi_{(t,1]}) + x_{n}\leq 
  \phi (\chi_{(s_{n+1},1]}) + x_{n}\leq 
  \phi (1)\leq 
  \phi (\chi_{(s_{n},1]}) + x_{n}\leq 
  \phi (\chi_{(t',1]})+x_{n},
 \]
 for some large enough $n$ such that $s_{n}>t'$.
 
This shows that $\phi (\chi_{(t,1]})+\phi (\chi_{[0,t)})\leq \phi (1)\leq \phi (\chi_{(t',1]})+\phi (\chi_{[0,t)})$ whenever $t'<t$.

Now take an increasing sequence $t_{m}\to t$. In order to show that $\sup_{m}\phi (\chi_{[0,t_{m})})=\phi (\chi_{[0,t)})$, we only need to prove  $\sup_{m}\phi (\chi_{[0,t_{m})})\geq \phi (\chi_{[0,t)})$, as we already know that $\phi $ is order preserving.

Using the same notation as above, let $s_{n}\to t$ be a strictly increasing sequence and let $x_{n}\in S$ be their associated elements. Since $s_{n}$ is strictly increasing, we must have that for every $n$ there exists $m\geq n$ such that $s_{n+1}< t_{m}$. Using that $\phi (1)\leq \phi (\chi_{(t',1]})+\phi (\chi_{[0,t)})$ whenever $t'<t$ in the second inequality, we get 
\[
 \phi (\chi_{(s_{n+1},1]})+x_{n}\leq \phi (1)\leq
  \phi (\chi_{(s_{n+1},1]})+\phi (\chi_{[0,t_{m})}).
\]

By weak cancellation and taking suprema, we have $\sup_{m}\phi (\chi_{[0,t_{m})})\geq \sup_{n} x_{n}=\phi (\chi_{[0,t)})$. This shows that $\phi $ also preserves suprema and, consequently, that \autoref{Lemma_Lifting_Tecnic} can be applied.

We now get a (unique) $\Cu$-morphism $\phi\colon \LscI\to S$ extending our map $\phi$. Since, by construction, $\phi (\chi_{(s,1]})=\alpha (\chi_{(s,1]})$ for every $s$ and the submonoid of $\mathcal{G}$ generated by $\{\chi_{(s,1]}\}_s$ is sup-dense in $\mathcal{G}$, it follows that $\phi$ extends $\alpha$ as desired.
\end{proof}


\section{Cauchy sequences and a local characterization}\label{sec:Cauchy}

We now turn our attention to the proof of the main result of this paper, \autoref{Main_EHS}, and its discrete counterpart, \autoref{MainTh}. Since countably based $\Cu$-semigroups are usually far bigger than countably generated groups (for instance, a countably based $\Cu$-semigroup is not generally countable), one cannot hope to get an exact analogue of \cite[Theorem 3.1]{Shen79}. Instead, one has to make do with an approximate version of it.

This is why we define a distance on the morphisms from $\LscI$ to a \CuSgp{} $S$.

\subsection{Distance between maps}

Let $\varphi_{1},\varphi_{2}\colon \LscI\to S$ be $\Cu$-morphisms with $\varphi_{1}(1)=\varphi_{2}(1)$. We  define the distance between $\varphi_1$ and $\varphi_2$ as
 \[
 d(\varphi_{1} ,\varphi_{2}):=\inf \{ \epsilon\in [0,1]\mid 
 \varphi_{1}(\chi_{(t+\epsilon ,1]})\leq  \varphi_{2}(\chi_{(t,1]})\,
  , \,  \varphi_{2}(\chi_{(t+\epsilon ,1]})\leq  \varphi_{1}(\chi_{(t,1]}) \quad \forall t\in [0,1]
 \}.
\]

Note that the distance between $\varphi_{1},\varphi_{2}$ is the distance between $\varphi_{1}|_{\Lsc((0,1],\overline{\NN}) },\varphi_{2}|_{\Lsc((0,1],\overline{\NN}) }$ considered in \cite{CiupElli08} and \cite{CiupElliSant11} (see also \cite{RobSan10}).

\begin{rmk}\label{rmk_dist_first}
 Let $s,t\in [0,1]$ with $s-t > d(\varphi_{1},\varphi_{2})$. Then, we have
 \[
  \varphi_{1}(\chi_{(s,1]})\ll  \varphi_{2}(\chi_{(t,1]})
  , \andSep  \varphi_{2}(\chi_{(s,1]})\ll  \varphi_{1}(\chi_{(t,1]}).
 \]
 
 Indeed, given $\eta >0$ such that $s-t> \eta >d(\varphi_1, \varphi_2)$, we have
 \[
  \varphi_{1}(\chi_{(s,1]})\ll \varphi_{1}(\chi_{(t+\eta ,1]})\leq \varphi_{2}(\chi_{(t ,1]}),\andSep
  \varphi_{2}(\chi_{(s,1]})\ll \varphi_{2}(\chi_{(t+\eta ,1]})\leq \varphi_{1}(\chi_{(t ,1]}).
 \]

 This remark will be used throughout the section.
\end{rmk}

Under the hypothesis of weak cancellation, Ciuperca and Elliott proved in \cite[Theorem 4.1]{CiupElli08} that their distance is a metric. To see that our distance is also a metric, we recall the following result:

\begin{lma}
 Let $S$ be a $\Cu$-semigroup satisfying weak cancellation and (O5). If two $\Cu$-morphisms $\varphi_{1}$, $\varphi_{2}$ from $\LscI$ to $S$ agree on $\Lsc((0,1],\overline{\NN})$ and $1$, they are the same.
\end{lma}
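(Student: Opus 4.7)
The strategy is to reduce the lemma to the uniqueness clause of \autoref{Lifting_Morphisms}. Set $p:=\varphi_{1}(1)=\varphi_{2}(1)$ and note first that $p$ is a compact element of $S$. This follows from the compactness of $1\in\LscI$: any increasing sequence $(f_{n})_{n}$ in $\LscI$ with $\sup_{n}f_{n}\geq 1$ pointwise yields an increasing open cover $V_{n}:=\{x\in[0,1]:f_{n}(x)\geq 1\}$ of $[0,1]$ (each $V_{n}$ is open by lower semicontinuity), and compactness of $[0,1]$ forces some $V_{n}$ to equal $[0,1]$, i.e.\ $f_{n}\geq 1$. Since $\Cu$-morphisms preserve the $\ll$-relation, compactness of $1$ in $\LscI$ passes to $p=\varphi_{i}(1)$ in $S$.

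Next, identify $\Lsc((0,1],\overline{\NN})$ with its canonical copy inside $\LscI$ (the lsc functions on $[0,1]$ vanishing at $0$). Under this identification, the sub-$\Cu$-semigroup $\mathcal{G}$ introduced in \autoref{sec:LiftMor} --- the lsc increasing functions $[0,1]\to\overline{\NN}$ with value $0$ at $0$ --- is contained in $\Lsc((0,1],\overline{\NN})$. The hypothesis that $\varphi_{1}$ and $\varphi_{2}$ agree on $\Lsc((0,1],\overline{\NN})$ therefore gives a common restriction $\alpha:=\varphi_{1}|_{\mathcal{G}}=\varphi_{2}|_{\mathcal{G}}$, and $\alpha(\chi_{(0,1]})\leq p$ follows from $\chi_{(0,1]}\leq 1$ in $\LscI$. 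Thus the hypotheses of \autoref{Lifting_Morphisms} are met by the pair $(\alpha,p)$.

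Applying that proposition yields a \emph{unique} $\Cu$-morphism $\LscI\to S$ extending $\alpha$ and sending $1$ to $p$. Both $\varphi_{1}$ and $\varphi_{2}$ are such extensions, so they must coincide.

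The only verification deserving any care is the compactness of $p$; the rest is a matter of recognising that \autoref{Lifting_Morphisms} already does the real work. If one avoided that proposition and tried a direct argument from the identity $\varphi_{i}(\chi_{[0,t)})+\varphi_{i}(\chi_{(s,1]})=\varphi_{i}(1)+\varphi_{i}(\chi_{(s,t)})$ (valid for $0<s<t$), the main obstacle would be cancelling the generally non-compact term $\varphi_{i}(\chi_{(s,1]})$ to conclude $\varphi_{1}(\chi_{[0,t)})=\varphi_{2}(\chi_{[0,t)})$; this cancellation essentially reruns the (O5)-plus-weak-cancellation construction already performed inside the proof of \autoref{Lifting_Morphisms}, which is why invoking uniqueness there is the cleaner route.
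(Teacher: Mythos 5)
Your proof is correct and follows essentially the same route as the paper's: restrict to the generator $\mathcal{G}\subset\Lsc((0,1],\overline{\NN})$, note that $\varphi_1(1)=\varphi_2(1)$ is compact, and invoke the uniqueness clause of \autoref{Lifting_Morphisms}. The only difference is that you spell out the compactness of $p$ and the containment $\mathcal{G}\subseteq\Lsc((0,1],\overline{\NN})$, which the paper leaves implicit.
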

\begin{proof}
 If $\varphi_{1},\varphi_{2}$ agree on  $\Lsc((0,1],\overline{\NN})$, then they also agree on the sub-$\Cu$-semigroup $\mathcal{G}$ consisting of increasing lower-semicontinuous functions.
 
 Thus, since $\varphi_{1}(1)= \varphi_{2}(1)$, we know from \autoref{Lifting_Morphisms} that there exists a unique lift for $\varphi_{1}|_{\mathcal{G}}$ sending $1$ to $\varphi_{1}(1)$, which, by uniqueness, must be the same for $\varphi_{2}|_{\mathcal{G}}$ that sends $1$ to $\varphi_{2}(1)$.
 
 By \autoref{Lifting_Morphisms}, this shows $\varphi_{1}=\varphi_{2}$.
\end{proof}

Following \cite{CiupElliSant11}, one can generalize  the previous notion of distance to pairs of morphisms from finite direct sums of $\LscI$ to $S$.

\begin{dfn}\label{General_Dist}
 Let $S$ be a $\Cu$-semigroup. Let $L=\oplus_{i=1}^{r} L_{i}$ with $L_{i}=\LscI$ for each $i$, and consider a pair of morphisms $\varphi_{1},\varphi_{2}\colon L\to S$ with $\varphi_{1}(1_{i})= \varphi_{2}(1_{i})$ for each $i\leq r$.
 
 We define the distance between $\varphi_1$ and $\varphi_2$ as 
 \[
  d(\varphi_{1} ,\varphi_{2}):=\sup_{1\leq i\leq r}d(\varphi_{1}|_{L_{i}} ,\varphi_{2}|_{L_{i}}) .
 \]
\end{dfn}

\begin{rmk}
 Note that $d(\cdot ,\cdot )$ is clearly a metric as well.
\end{rmk}

\begin{lma}\label{basic_ind_dist}
Let $\varphi_{1},\varphi_{2}\colon \LscI \to S$ with $S$ a weakly cancellative $\Cu$-semigroup. If $d(\varphi_{1},\varphi_{2})\leq \epsilon$ with $\varphi_{1}(1)=\varphi_{2}(1)$, then for every basic indicator function $f\in\LscI$, as defined in \autoref{basic_ind}, we have
  \[
   \varphi_{1}(R_{\epsilon}(f))\leq  \varphi_{2}(f), \andSep  \varphi_{2}(R_{\epsilon}(f))\leq  \varphi_{1}(f),
 \]
  where $R_{\epsilon}$ denotes the $\epsilon$-retraction of $f$; see \autoref{retract_notation}.
\end{lma}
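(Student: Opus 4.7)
The plan is to treat each type of basic indicator function $1$, $\chi_{(t,1]}$, $\chi_{[0,t)}$, $\chi_{(s,t)}$ in $\LscI$ separately, using only \autoref{Lemma_Lifting_Tecnic} and the definition of the distance; by the symmetric role of $\varphi_1$ and $\varphi_2$ it suffices to prove $\varphi_1(R_\epsilon(f)) \leq \varphi_2(f)$. The case $f = 1$ is immediate from $\varphi_1(1) = \varphi_2(1)$, and for $f = \chi_{(t,1]}$ the assumption $d(\varphi_1, \varphi_2) \leq \epsilon$ combined with the upward-closedness (in $\epsilon'$) of the defining set will give $\varphi_1(\chi_{(t+\epsilon+\delta, 1]}) \leq \varphi_2(\chi_{(t,1]})$ for every $\delta > 0$; passing to the supremum as $\delta \to 0^+$ (which $\varphi_1$ preserves because $\chi_{(t+\epsilon+\delta,1]} \nearrow \chi_{(t+\epsilon,1]}$) produces the desired inequality at $\epsilon$ itself.

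The main effort goes into $f = \chi_{[0,t)}$, which is not directly controlled by the distance. Assuming $t > \epsilon$ (else the retraction is $0$ and the claim is trivial) and setting $a = t - \epsilon$, I will take arbitrary $y \ll \varphi_1(\chi_{[0, a)})$ and use $\chi_{[0,\sigma)} \nearrow \chi_{[0,a)}$ as $\sigma \to a^-$ to pick $\sigma < a$ with $y \leq \varphi_1(\chi_{[0,\sigma)})$. Choosing $r$ with $\sigma + \epsilon < r < t$, the remark preceding \autoref{rmk_dist_first} yields $\varphi_2(\chi_{(r,1]}) \ll \varphi_1(\chi_{(\sigma,1]})$. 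Combining the two inequalities
\[
\varphi_1(\chi_{[0,\sigma)}) + \varphi_1(\chi_{(\sigma,1]}) \leq \varphi_1(1) \andSep \varphi_2(1) \leq \varphi_2(\chi_{[0,t)}) + \varphi_2(\chi_{(r,1]})
\]
from \autoref{Lemma_Lifting_Tecnic}, together with the compactness of $\varphi_1(1) = \varphi_2(1)$, will deliver the chain
\[
y + \varphi_2(\chi_{(r,1]}) \leq \varphi_2(1) \ll \varphi_2(1) \leq \varphi_2(\chi_{[0,t)}) + \varphi_2(\chi_{(r,1]}),
\]
after which weak cancellation produces $y \ll \varphi_2(\chi_{[0,t)})$; taking the supremum over $y$ closes this case. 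The main obstacle is the placement of the strict $\ll$ in the middle: without the compactness of $\varphi_i(1)$ upgrading $\leq \varphi_2(1)$ to $\ll \varphi_2(1)$, weak cancellation would not apply.

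The remaining case $f = \chi_{(s,t)}$ (nontrivial only when $s + \epsilon < t - \epsilon$) will reduce to the previous two: applying the identity $\varphi_i(\chi_{(s', t')}) + \varphi_i(1) = \varphi_i(\chi_{(s', 1]}) + \varphi_i(\chi_{[0, t')})$ from \autoref{Lemma_Lifting_Tecnic} to $\varphi_1$ at $(s + \epsilon, t - \epsilon)$ and to $\varphi_2$ at $(s, t)$, and substituting the already-established inequalities $\varphi_1(\chi_{(s + \epsilon, 1]}) \leq \varphi_2(\chi_{(s, 1]})$ and $\varphi_1(\chi_{[0, t - \epsilon)}) \leq \varphi_2(\chi_{[0, t)})$, one obtains $\varphi_1(\chi_{(s+\epsilon,t-\epsilon)}) + \varphi_1(1) \leq \varphi_2(\chi_{(s,t)}) + \varphi_1(1)$; cancelling $\varphi_1(1)$ via its compactness (as recalled just before \autoref{Lemma_Lifting_Tecnic}) finishes the proof.
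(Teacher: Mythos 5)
Your proof is correct, and it shares the paper's overall skeleton (a case analysis over the four types of basic indicator, with weak cancellation as the engine), but the two nontrivial cases are handled by genuinely different mechanisms. The paper treats $\chi_{(s,t)}$ and $\chi_{[0,t)}$ each by shrinking the retraction by an auxiliary $\eta>0$, arranging a common summand of the form $\varphi_i(\chi_{(t-\epsilon-\eta,1]})$ on both sides so that weak cancellation applies, and only then recovering the statement at $\epsilon$ by letting $\eta\to 0$ and taking suprema. You avoid the $\eta$-perturbation entirely: for $\chi_{[0,t)}$ you quantify over $y\ll\varphi_1(\chi_{[0,t-\epsilon)})$ and cancel the summand $\varphi_2(\chi_{(r,1]})$ after sandwiching through the compact $\varphi_1(1)=\varphi_2(1)$, and for $\chi_{(s,t)}$ you reduce to the two half-line cases via the identity $\chi_{(s,t)}+1=\chi_{(s,1]}+\chi_{[0,t)}$ (valid for any additive map out of $\LscI$) and then cancel the compact $\varphi_1(1)$ using the remark before \autoref{Lemma_Lifting_Tecnic}. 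This buys a cleaner open-interval case with no limiting argument, at the (harmless) cost of using the hypothesis $\varphi_1(1)=\varphi_2(1)$ in the $\chi_{(s,t)}$ case, which the paper's direct argument there does not need; you also correctly identify that the compactness of $\varphi_i(1)$ is what upgrades the middle inequality to $\ll$ so that weak cancellation applies. The only blemish is the reference ``the remark preceding \autoref{rmk_dist_first}'': the statement you actually invoke, namely $\varphi_2(\chi_{(r,1]})\ll\varphi_1(\chi_{(\sigma,1]})$ when $r-\sigma>d(\varphi_1,\varphi_2)$, is \autoref{rmk_dist_first} itself.
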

\begin{proof}
 We will only prove one of the inequalities, since the other one is proved analogously.

 If $f=1$ or $f=\chi_{(s,1]}$ for some $s\in [0,1]$, the result follows from the definition of distance.
 
 Else, if $f=\chi_{(s,t)}$ for some $s<t$, let $\eta >0$ and note that
 \[
 R_{\epsilon+\eta}(f)+\chi_{(t-\epsilon-\eta,1]}
  = \chi_{(s+\eta+\epsilon ,t-\eta -\epsilon)}+\chi_{(t-\eta-\epsilon,1]}\ll \chi_{(s+\epsilon ,1]}=
  R_{\epsilon }(\chi_{(s,1]}).
 \]

 Therefore, using that $\varphi_1,\varphi_2$ are $\Cu$-morphisms at the first and third steps and the assumption that $d(\varphi_1,\varphi_2)\leq \epsilon$ at the second and fourth steps, we get
 \begin{equation*}
 \begin{split}
   \varphi_{1}(R_{\epsilon+\eta}(f))+ \varphi_{1}(\chi_{(t-\epsilon-\eta,1]}) &\ll \varphi_1 (R_{\epsilon}(\chi_{(s,1]})\leq 
   \varphi_{2}(\chi_{(s,1]})\\
  & \leq 
  \varphi_{2}(\chi_{(s,t)}) +
  \varphi_{2}(\chi_{(t-\eta ,1]})\leq 
  \varphi_{2}(f) +
  \varphi_{1}(\chi_{(t-\epsilon -\eta,1]}).
 \end{split}
 \end{equation*}
 
Applying weak cancellation we get $\varphi_{1}(R_{\epsilon+\eta}(f))\ll \varphi_{2}(f)$. Since $\sup_{\eta\to 0} R_{\epsilon+\eta}(f)=R_{\epsilon }(f)$, it follows that $\varphi_{1}(R_{\epsilon }(f))\leq \varphi_{2}(f)$ as required.

Finally, if $f=\chi_{[0,t)}$ for some $t\in [0,1]$, let $\eta>0$ and note that, arguing as above and using $\varphi_1(1)=\varphi_2(1)$,
\[
\begin{split}
 \varphi_{1}(R_{\epsilon +\eta}(f) ) +
 \varphi_{2}(\chi_{(t-\eta ,1]}) &\leq 
 \varphi_{1}(R_{\epsilon +\eta}(f) ) +
 \varphi_{1}(\chi_{(t-\epsilon-\eta ,1]})\ll 
 \varphi_{1}(1)\\
 &=\varphi_{2}(1)\leq \varphi_{2}(\chi_{[0,t)}) + \varphi_{2}(\chi_{(t-\epsilon ,1]})= 
 \varphi_2(f)+\varphi_{2}(\chi_{(t-\epsilon ,1]})
\end{split}
\]

As before, we get $\varphi_1 (R_{\epsilon +\eta }(f))\ll \varphi_2 (f)$ for every $\eta >0$ and, consequently, $\varphi_1 (R_{\epsilon  }(f))\leq \varphi_2 (f)$.
\end{proof}

\subsection{Cauchy sequences and their limits}
 
In this section we will prove that suitable Cauchy sequences of $\Cu$-morphisms $\varphi_{i}\colon \LscI\to S$ have a limit. From this point onwards, and until the end of the section, $S$ will denote a \CuSgp{} satisfying (O5) and weak cancellation.

The following lemmas will be of importance:

\begin{lma}\label{Bound}
 Suppose that $f'\ll f$ in $\LscI$ and let $\phi\colon\LscI\to S$ be a $\Cu$-morphism. Then, there exists $\epsilon=\epsilon (f,f') >0$ and $f''\in \LscI$ such that 
 \begin{enumerate}[(i)]
  \item $f'\ll f''\ll f$.
  \item For every $\Cu$-morphism $\varphi\colon \LscI\to S$ with $\phi (1)=\varphi (1)$ and $d(\phi , \varphi)<\epsilon$, we have $\phi (f')\ll \varphi (f'')\ll \phi (f)$.
 \end{enumerate}
\end{lma}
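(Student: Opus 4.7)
The plan is to define $f''$ as a retraction of a basic element sandwiched between $f'$ and $f$, and then invoke \autoref{basic_ind_dist} in both directions. Using axiom (O2) together with the fact that sums of basic indicator functions are sup-dense in $\LscI$, I first interpolate basic elements $g_1, g_2$ satisfying $f' \ll g_1 \ll g_2 \ll f$. Because $g_1 \ll g_2$ is witnessed, on each connected component appearing in the decompositions, by a strict containment of intervals, there exists $\epsilon_0 > 0$ such that $g_1 \leq R_{2\epsilon}(g_2)$ for all $0 < \epsilon < \epsilon_0$ (take $\epsilon_0$ smaller than half the minimum gap between corresponding endpoints). Fix such an $\epsilon$ and set $f'' := R_\epsilon(g_2)$, which is again a basic element of $\LscI$. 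Condition (i) is then immediate from $f' \ll g_1 \leq R_{2\epsilon}(g_2) \leq R_\epsilon(g_2) = f'' \leq g_2 \ll f$.

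For (ii), let $\varphi$ be a \CuMor{} with $\varphi(1) = \phi(1)$ and $d(\phi, \varphi) < \epsilon$. Because $R_\epsilon$ and the morphisms are additive on basic elements, \autoref{basic_ind_dist} extends verbatim from basic indicator functions to basic elements. Applied with $\varphi_1 = \phi$ and $\varphi_2 = \varphi$ to the basic element $R_\epsilon(g_2)$, it yields $\phi(R_{2\epsilon}(g_2)) \leq \varphi(R_\epsilon(g_2)) = \varphi(f'')$, which combined with $f' \ll R_{2\epsilon}(g_2)$ produces $\phi(f') \ll \varphi(f'')$. With the roles swapped and the lemma applied to $g_2$ itself, one obtains $\varphi(f'') = \varphi(R_\epsilon(g_2)) \leq \phi(g_2)$, and since $g_2 \ll f$ implies $\phi(g_2) \ll \phi(f)$, this gives $\varphi(f'') \ll \phi(f)$.

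The main obstacle is the bookkeeping of retractions: a single parameter $\epsilon$ must simultaneously witness $g_1 \leq R_{2\epsilon}(g_2)$ on every component in the decomposition of $g_2$, and guarantee that $R_\epsilon(g_2)$ remains a meaningful basic element (non-collapsed intervals). This is a finite combinatorial condition on endpoint separations, so it is always achievable, and no further $\Cu$-theoretic input beyond the additive extension of \autoref{basic_ind_dist} (which already rests on (O5) and weak cancellation) is needed.
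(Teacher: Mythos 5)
Your argument is correct, but it is organized differently from the paper's. The paper proves the lemma first for each of the four types of basic indicator functions separately, hand-crafting a specific $\epsilon$ and $f''$ in each case (using \autoref{rmk_dist_first} for $\chi_{(s,1]}$, \autoref{basic_ind_dist} for $\chi_{[0,t)}$, and an explicit weak-cancellation computation with $\phi(1)+\phi(\chi_{(s,t)})=\phi(\chi_{(s,1]})+\phi(\chi_{[0,t)})$ for the interval case), and only then interpolates basic elements $g'=\sum h_i'\ll g=\sum h_i$ and takes the minimum of the resulting $\epsilon_i$. You instead interpolate $f'\ll g_1\ll g_2\ll f$ once, choose a single retraction parameter $\epsilon$ with $g_1\leq R_{2\epsilon}(g_2)$, set $f''=R_\epsilon(g_2)$, and apply the additive extension of \autoref{basic_ind_dist} in both directions (to $f''$ itself, yielding $\phi(R_{2\epsilon}(g_2))\leq\varphi(f'')$, and to $g_2$, yielding $\varphi(f'')\leq\phi(g_2)$). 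This is legitimate: \autoref{basic_ind_dist} already absorbs the case analysis and the weak-cancellation input, retraction commutes with the fixed decomposition of $g_2$ into basic indicators, and $R_\epsilon\circ R_\epsilon=R_{2\epsilon}$ componentwise, so the two displayed inequalities combine with $f'\ll g_1\leq R_{2\epsilon}(g_2)$ and $g_2\ll f$ to give exactly $\phi(f')\ll\varphi(f'')\ll\phi(f)$. What your route buys is uniformity — one parameter and no per-type construction — at the small cost of having to fix a decomposition of $g_2$ to make $R_\epsilon(g_2)$ unambiguous and of checking the finite endpoint-separation condition ensuring $g_1\leq R_{2\epsilon}(g_2)$; the paper's route avoids defining retractions of general basic elements but repeats part of the work already done in \autoref{basic_ind_dist}.
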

\begin{proof}
 First, let us prove the result when $f',f$ are basic indicator functions. If $f'=f=1$, the result follows trivially.
 
 If $f'=\chi_{(t,1]}$ and $f=\chi_{(s,1]}$ with $t>s$, let $\epsilon >0$ be such that $s<s+2\epsilon <t$. Then, if $d(\phi, \varphi)<\epsilon$, one has, using \autoref{rmk_dist_first} at the last step,
 \[
  \phi (\chi_{(t,1]})\ll \phi (\chi_{(s+2\epsilon ,1]})\leq \varphi (\chi_{(s+\epsilon ,1]})\ll \phi
  (\chi_{(s,1]}),
 \]
so we can set $f''=\chi_{(s+\epsilon ,1]}$. 

If $f=\chi_{[0,t)}$ and $f'=\chi_{[0,s)}$ with $t>s$, we can take $\epsilon $ as before to get, if $d(\phi,\varphi )<\epsilon$ and using \autoref{basic_ind_dist} at the first step, that
\[
 \phi (\chi_{[0,s)})\ll \varphi (\chi_{[0,s+\epsilon )})\ll \phi (\chi_{[0,s+2\epsilon )})\ll \phi (\chi_{[0,t)})
\]
whenever $d(\phi ,\varphi)<\epsilon$. This shows that we can set $f''=\chi_{[0,s+\epsilon )}$.

Finally, given $f=\chi_{(s,t)}$ and $f'=\chi_{(s',t')}$ with $s<s'<t'<t$, let $\epsilon >0$ be such that $s<s'-2\epsilon $ and $t'<t'+2\epsilon <t$. Then, if $d(\phi,\varphi )<\epsilon$, we have
\[
\begin{split}
 \phi (1) + \phi (\chi_{(s',t')})
 &=\phi (\chi_{(s',1]})+\phi (\chi_{[0,t')})
 \ll
 \varphi (\chi_{(s'-\epsilon ,1]})+\varphi (\chi_{[0,t'+\epsilon )})\\
 &=\varphi (1) + \varphi (\chi_{(s'-\epsilon,t'+\epsilon )})
 =\varphi (\chi_{(s'-\epsilon,1]})
 +\varphi (\chi_{[0,t'+\epsilon )})
 \\
 &\ll 
 \phi (\chi_{(s'-2\epsilon ,1]})+\phi (\chi_{[0,t'+2\epsilon )})\ll 
 \phi (\chi_{(s,1]})+\phi (\chi_{[0,t)})=\phi (1) + \phi (\chi_{(s,t)}).
\end{split}
\]

In particular, one gets
\[
 \phi (1) + \phi (f')\ll 
 \varphi (1) + \varphi (\chi_{(s'-\epsilon,t'+\epsilon )})\ll 
 \phi (1) + \phi (f).
\]

Applying weak cancellation, we have
\[
 \phi (f')\ll 
 \varphi (\chi_{(s'-\epsilon,t'+\epsilon)})\ll
 \phi (f)
\]
and in this case we set  $f''=\chi_{(s'-\epsilon,t'+\epsilon)}$.

Now, given any pair $f'\ll f$, let $g,g'\in\LscI$ be such that $f'\ll g'\ll g\ll f$ with $g'=\sum_{i=1}^{n} h_{i}'$, $g=\sum_{i=1}^{n} h_{i}$ and $h_{i}'\ll h_{i}$ basic indicators for each $i$. This can be done because the finite sums of basic indicators are a basis for $\LscI$.

Since $h_i'\ll h_i$, it follows from our previous case that there exist $\epsilon_i =\epsilon_i (h_i ,h_i' )$ and $h_i''$ with $h_i'\ll h_i''\ll h_i$ and such that, whenever $d (\varphi ,\phi )<\epsilon_i $ and $\varphi (1)=\phi (1)$, we have
\[
 \phi (h_i')\ll \varphi (h_i'')\ll \phi (h_i).
\]

Let $\epsilon=\epsilon (f,f')=\min (\epsilon_i )$ and set $f''=\sum_i h_i''$.

Now let $\varphi$ be a $\Cu$-morphism such that $d(\varphi ,\phi )<\epsilon $ and $\phi (1)=\varphi (1)$. Since $d(\varphi ,\phi )<\epsilon \leq \epsilon_i$ for every $i$, we have
\[
 \phi (h_i')\ll \varphi (h_i'')\ll \phi (h_i)
\]
for every $i$.

Thus, one gets
\[
 \phi(f')\ll \phi(g')=\sum_i\phi (h_i')\ll \sum_i\varphi(h_i'')=\varphi (f'')\ll \sum_i\phi (h_i)=\phi (g)\ll \phi (f)
\]
as required.
\end{proof}

\begin{cor}\label{WB_Maps}
 Let $f,g,h\in\LscI$ and let $\phi\colon\LscI\to S$ be such that $f\ll h$ and $\phi (h)\ll \phi (g)$. Then, there exists $\epsilon =\epsilon (f,g,h)>0$ such that, for every $\varphi\colon \LscI\to S$ with $\phi (1)=\varphi (1)$ and $d(\phi , \varphi)<\epsilon $, we have $\varphi (f)\ll \varphi (g)$.
 
 In fact, there exists $f''$ such that $f\ll f''$ and $\varphi (f'')\ll \varphi (g)$.
\end{cor}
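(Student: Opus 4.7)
The plan is to apply \autoref{Bound} twice and chain the resulting estimates. The first step is to use that $\phi$ preserves suprema in order to extract an intermediate element lying strictly below $g$. Writing $g$ as the supremum of a $\ll$-increasing sequence $(g_n)_n$ in $\LscI$, the hypothesis $\phi(h)\ll\phi(g)=\sup_n\phi(g_n)$ yields an index $n$ with $\phi(h)\leq\phi(g_n)$; set $g_0:=g_n$, so that $g_0\ll g$ and $\phi(h)\leq\phi(g_0)$.

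With this in place, I would invoke \autoref{Bound} on two separate pairs. Applied to $f\ll h$ it provides $\epsilon_1>0$ and $f''\in\LscI$ with $f\ll f''\ll h$ such that every $\Cu$-morphism $\varphi$ satisfying $\varphi(1)=\phi(1)$ and $d(\phi,\varphi)<\epsilon_1$ obeys $\phi(f)\ll\varphi(f'')\ll\phi(h)$. Applied to $g_0\ll g$ it provides $\epsilon_2>0$ and $g''\in\LscI$ with $g_0\ll g''\ll g$ such that the analogous closeness forces $\phi(g_0)\ll\varphi(g'')\ll\phi(g)$. Taking $\epsilon:=\min(\epsilon_1,\epsilon_2)$ makes both conclusions simultaneously available.

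The finish is purely a chain of inequalities. Since $g''\ll g$ and $\Cu$-morphisms preserve $\ll$, one has $\varphi(g'')\ll\varphi(g)$, and combining everything gives
\[
\varphi(f'')\ll\phi(h)\leq\phi(g_0)\ll\varphi(g'')\ll\varphi(g).
\]
Using that $a\leq b\ll c$ implies $a\ll c$, this collapses to $\varphi(f'')\ll\varphi(g)$, which together with $f\ll f''$ gives $\varphi(f)\leq\varphi(f'')\ll\varphi(g)$, establishing both assertions.

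I do not anticipate any genuine obstacle: the content of the corollary is essentially a bookkeeping consequence of \autoref{Bound} combined with sup-preservation of $\phi$. The one place requiring care is securing the intermediate element $g_0$ with $\phi(h)\leq\phi(g_0)$ and $g_0\ll g$, but this is immediate from axiom \axiomO{2} applied inside $\LscI$ together with the definition of $\ll$ in $S$.
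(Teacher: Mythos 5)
Your proof is correct and follows essentially the same route as the paper: both arguments apply \autoref{Bound} once to $f\ll h$ and once to an interpolating element below $g$ whose image dominates $\phi(h)$, and then chain the resulting estimates. The only difference is cosmetic --- the paper defines $\epsilon$ via a supremum over all $g'\ll g$ with $\phi(h)\ll\phi(g')$, whereas you fix a single such $g_0$ (obtained from sup-preservation of $\phi$) and take $\epsilon=\min(\epsilon_1,\epsilon_2)$, which is a slightly cleaner but equivalent choice.
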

\begin{proof}
 Let $\epsilon (h,f)>0$ and $f''\in S$ be the number and element given by \autoref{Bound} applied to $f\ll h$. Also, for every $g'\ll g$, let $\epsilon (g,g')>0$ and $g''$ be a choice of number and element given by \autoref{Bound} applied to $g'\ll g$.
 
 Set $\epsilon = \epsilon (f,g,h)=\min (\epsilon (h,f) \, , \,  \sup\{ \epsilon (g,g')\, \mid \, \phi (h)\ll \phi (g') \})$.
 
 Take a $\Cu$-morphism $\varphi\colon\LscI\to S$ with $\phi (1)=\varphi (1)$ and $d(\phi ,\varphi )<\epsilon $. Thus, there exists an element $g'\ll g$ with $\phi (h)\ll \phi (g')$ such that $d(\phi ,\varphi )<\epsilon (g,g'),\epsilon (h,f)$.
 
 Using \autoref{Bound} at the first and third step, we have
 \[
  \varphi (f'')\ll \phi (h)\ll \phi (g')\ll \varphi (g'')\ll \varphi (g)
 \]
as desired. 

Note, in particular, that $\varphi (f)\ll \varphi (g)$ because $f\ll f''$.
\end{proof}

\begin{lma}\label{Partition}
 Let $n\in\NN$ and $\epsilon\in (0,1]$. Also, let $t_{i}=i/n$ be a partition of $[0,1]$ and consider two \CuMor{s} $\varphi_{1},\varphi_{2}\colon \LscI \to S$ such that $\varphi_{1}(1)=\varphi_{2}(1)$,  
 \begin{equation*}
  \varphi_{1}(\chi_{(t_{i}+\epsilon ,1]})\leq \varphi_{2}(\chi_{(t_{i},1]}),
  \andSep \varphi_{2}(\chi_{(t_{i}+\epsilon ,1]})\leq \varphi_{1}(\chi_{(t_{i},1]})
 \end{equation*}
for every $0\leq i\leq n$.

Then, $d(\varphi_{1},\varphi_{2})\leq \epsilon + 1/n$.
\end{lma}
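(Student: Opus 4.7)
The plan is to use monotonicity of $\varphi_1,\varphi_2$ applied to the obvious nesting $\chi_{(s,1]}\leq \chi_{(s',1]}$ whenever $s\geq s'$, and sandwich an arbitrary $t\in[0,1]$ between two consecutive grid points so that the hypothesis evaluated at a grid point forces the desired estimate at $t$ up to an error of $1/n$.

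Concretely, I would fix $t\in [0,1]$ and choose the smallest $i\in\{0,1,\dots,n\}$ with $t\leq t_i$. Then $t_i-t\leq 1/n$, so $t_i+\epsilon \leq t+\epsilon +1/n$ and therefore $\chi_{(t+\epsilon+1/n,1]}\leq \chi_{(t_i+\epsilon,1]}$ in $\LscI$. Applying $\varphi_1$, using the hypothesis at index $i$, and using $t_i\geq t$ so that $\chi_{(t_i,1]}\leq \chi_{(t,1]}$, one obtains
\[
 \varphi_1(\chi_{(t+\epsilon+1/n,1]})\leq \varphi_1(\chi_{(t_i+\epsilon,1]})\leq \varphi_2(\chi_{(t_i,1]})\leq \varphi_2(\chi_{(t,1]}).
\]
The symmetric argument, swapping the roles of $\varphi_1$ and $\varphi_2$, yields $\varphi_2(\chi_{(t+\epsilon+1/n,1]})\leq \varphi_1(\chi_{(t,1]})$. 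Since $t$ was arbitrary, the definition of $d$ gives $d(\varphi_1,\varphi_2)\leq \epsilon+1/n$.

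The only mild care needed is for the boundary case $t+\epsilon +1/n>1$, where $\chi_{(t+\epsilon+1/n,1]}=0$ and the inequality is automatic, and for the choice of $i$ when $t=1$ (take $i=n$, so both sides are zero). No other subtleties arise: there is no need to invoke \axiomO{5}, weak cancellation, or \autoref{Bound}; the lemma is purely a statement about monotonicity of $\Cu$-morphisms applied to the one-parameter family $\{\chi_{(s,1]}\}_{s\in[0,1]}$. The step I expect to require the most attention is simply stating the choice of $i$ uniformly across $t\in[0,1]$, since this is what allows the same argument to work at every $t$ after discretization.
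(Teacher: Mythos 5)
Your argument is correct and is essentially identical to the paper's proof: the paper also picks the minimal $i$ with $t\leq t_i$, uses $t_i\leq t+1/n$ to get $\varphi_1(\chi_{(t+1/n+\epsilon,1]})\leq\varphi_1(\chi_{(t_i+\epsilon,1]})\leq\varphi_2(\chi_{(t_i,1]})\leq\varphi_2(\chi_{(t,1]})$, and concludes by symmetry and the definition of $d$. Your extra remarks about the boundary cases are fine but not needed beyond what the paper already does implicitly.
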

\begin{proof}
 Let $t\in [0,1]$ and take $i$ minimal such that $t\leq t_{i}$. Thus, $t_{i}\leq t+1/n$.
 
 We have, using our assumptions and that $\varphi_i$ are $\Cu$-morphisms, 
 \[
  \varphi_{1}(\chi_{(t+1/n+\epsilon ,1]})\leq 
  \varphi_{1}(\chi_{(t_{i}+\epsilon ,1]})\leq
  \varphi_{2}(\chi_{(t_{i},1]}) \leq
  \varphi_{2}(\chi_{(t,1]}).
 \]
 
 By an analogous argument, one sees that $\varphi_{2}(\chi_{(t+1/n+\epsilon ,1]})\leq \varphi_{1}(\chi_{(t,1]})$ as required.
\end{proof}

\begin{prp}\label{Finite}
 Let $\epsilon >0$ and let $\phi\colon \LscI\to \LscI$ be a $\Cu$-morphism. There exists $\epsilon '>0$  such that, for any pair of $\Cu$-morphisms $\varphi_{1},\varphi_{2}\colon \LscI\to S$ with $\varphi_{1}(1)=\varphi_{2}(1)$ and at distance at most $\epsilon '$, we have
 \[
  d(\varphi_1\phi ,\varphi_2\phi)<\epsilon .
 \]

\end{prp}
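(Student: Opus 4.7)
The plan is to use \autoref{Partition} to reduce the problem to a finite check, and for each test point apply the retraction bound provided by \autoref{basic_ind_dist} after interposing a basic element. Fix $n\in\NN$ with $1/n<\epsilon/2$ and let $t_{i}=i/n$ for $0\leq i\leq n$. If I can find $\epsilon'>0$, depending only on $\phi$, $\epsilon$ and $n$, such that whenever $\varphi_{1}(1)=\varphi_{2}(1)$ and $d(\varphi_{1},\varphi_{2})\leq \epsilon'$ one has
\[
 \varphi_{1}\phi(\chi_{(t_{i}+\epsilon/2,1]})\leq \varphi_{2}\phi(\chi_{(t_{i},1]})
 \andSep \varphi_{2}\phi(\chi_{(t_{i}+\epsilon/2,1]})\leq \varphi_{1}\phi(\chi_{(t_{i},1]})
\]
for every $i$, then \autoref{Partition} gives $d(\varphi_{1}\phi,\varphi_{2}\phi)\leq \epsilon/2+1/n<\epsilon$.

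For each $i$, set $f_{i}:=\phi(\chi_{(t_{i}+\epsilon/2,1]})$ and $g_{i}:=\phi(\chi_{(t_{i},1]})$ in $\LscI$, noting that $f_{i}\ll g_{i}$ because $\phi$ preserves $\ll$. I will double interpolate: choose $u_{i}\in(t_{i},t_{i}+\epsilon/2)$ to obtain $f_{i}\ll\phi(\chi_{(u_{i},1]})\ll g_{i}$, and then pick a basic element $h_{i}\in\LscI$ with $\phi(\chi_{(u_{i},1]})\leq h_{i}\ll g_{i}$, which exists because basic elements are sup-dense in $\LscI$. The result is
\[
 f_{i}\ll h_{i}\ll g_{i},
\]
with $h_{i}$ basic. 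Fix a decomposition $h_{i}=\sum_{j}\chi_{U_{i,j}}$ into basic indicators, so that $R_{\delta}(h_{i}):=\sum_{j}R_{\delta}(\chi_{U_{i,j}})$ is well defined and $\sup_{\delta\to 0^{+}}R_{\delta}(h_{i})=h_{i}$. Since $f_{i}\ll h_{i}$, I can select $\delta_{i}>0$ with $f_{i}\leq R_{\delta_{i}}(h_{i})$, and for every $\delta\leq \delta_{i}$ still $f_{i}\leq R_{\delta}(h_{i})$ (retractions grow as $\delta$ shrinks).

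Setting $\epsilon':=\min_{0\leq i\leq n}\delta_{i}>0$ and extending \autoref{basic_ind_dist} additively along the chosen decompositions, I obtain, for any pair $\varphi_{1},\varphi_{2}$ as in the statement with $d(\varphi_{1},\varphi_{2})\leq \epsilon'$, the chain
\[
 \varphi_{1}(f_{i})\leq \varphi_{1}(R_{\epsilon'}(h_{i}))\leq \varphi_{2}(h_{i})\leq \varphi_{2}(g_{i}),
\]
which is exactly one of the two inequalities needed at $t_{i}$; the symmetric one follows by exchanging the roles of $\varphi_{1}$ and $\varphi_{2}$ in the same argument. The main difficulty is producing $f_{i}\ll h_{i}$ strictly (rather than just $f_{i}\leq h_{i}$) with $h_{i}$ basic, since otherwise the retractions $R_{\delta}(h_{i})$ need not lie above $f_{i}$ and the bound of \autoref{basic_ind_dist} cannot be inserted between $\varphi_{1}(f_{i})$ and $\varphi_{2}(g_{i})$; this is precisely what the double interpolation through $\phi(\chi_{(u_{i},1]})$ delivers, using that $\phi$ preserves the way-below relation even though its image on a basic indicator need not be a basic element.
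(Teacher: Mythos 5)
Your proof is correct and follows essentially the same route as the paper: reduce to the finite grid $t_i=i/n$ via \autoref{Partition}, and at each grid point insert a retracted basic element between $\varphi_1\phi(\chi_{(t_i+\epsilon/2,1]})$ and $\varphi_2\phi(\chi_{(t_i,1]})$ using \autoref{basic_ind_dist}. The only (harmless) difference is that the paper first decomposes $\phi(\chi_{(t_i+\epsilon/2,1]})\ll\phi(\chi_{(t_i,1]})$ into pairs $f_j\ll g_j\ll 1$ before interposing sums of basic indicators, whereas you interpose a single basic element directly (secured on both sides by the extra interpolation through $\phi(\chi_{(u_i,1]})$) and split it into indicators only when applying the retraction estimate summand by summand.
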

\begin{proof}
We begin the proof with the following claim:

\noindent\textbf{Claim. }\textit{
 For every $t\in [0,1]$ there exists $\epsilon (t)>0$ such that, for every pair of $\Cu$-morphisms $\varphi_{1},\varphi_{2}\colon \LscI\to S$ with $\varphi_{1}(1)=\varphi_{2}(1)$ and at distance at most $\epsilon (t)$, we have
 \[
 \varphi_1 (\phi (\chi_{(t+\epsilon /2 ,1]}))\leq \varphi_2(\phi (\chi_{(t,1]} )) ,\andSep \varphi_2 (\phi (\chi_{(t+\epsilon /2 ,1]}))\leq \varphi_1(\phi (\chi_{(t,1]} )).
 \]}\vspace{-0.5cm}
\begin{proof}
 Let $t\in [0,1]$ and consider the basic indicators $\chi_{(t,1]}$ and $\chi_{(t+\epsilon /2 ,1]}$ of $\LscI$. Since $\chi_{(t,1]},\chi_{(t+\epsilon /2 ,1]}\ll 1$, it follows that $\phi (\chi_{(t,1]}),\phi (\chi_{(t+\epsilon /2 ,1]})\leq m $ for some $m\in \NN$.
  
 Thus, since we also know that $\phi (\chi_{(t+\epsilon /2,1]})\ll \phi (\chi_{(t,1]})$, there exist functions $f_i,g_i\in \LscI$ with $f_i\ll g_i\ll 1$ such that $\phi (\chi_{(t+\epsilon /2,1]})=\sum_{i\leq m} f_i$ and $\phi (\chi_{(t,1]})=\sum_{i\leq m} g_i$ (see, for example, \cite[Lemma~4.19]{Vila20}).

 Also, since $f_i\ll g_i\ll 1$ for every $i$, there exist basic indicators $a_{i,j}$ such that
 \[
  f_i\ll a_{i,1}+\cdots +a_{i,k(i)}\ll g_i
 \]
for every $i$.

Therefore, for every $i\leq m$ there exists $\delta_i$ such that
\[
 f_i\ll 
 R_{\delta_i}(a_{i,1})+\cdots +R_{\delta_i}(a_{i,k(i)})
 \ll a_{i,1}+\cdots +a_{i,k(i)}\ll g_i.
\]

Set $\epsilon (t) =\min (\delta_i) $. By \autoref{basic_ind_dist}, for every pair of $\Cu$-morphisms $\varphi_{1},\varphi_{2}$ with $\varphi_{1}(1)=\varphi_{2}(1)$ and at distance at most $\epsilon (t)$, we have 
\[
 \varphi_1 (f_i)\leq \varphi_1 (R_{\delta_i}(a_{i,1}))+\cdots +\varphi_1(R_{\delta_i}(a_{i,k(i)}))\leq 
 \varphi_2 (a_{i,1})+\cdots +\varphi_2 (a_{i,k(i)})\ll 
 \varphi_2 (g_i).
\]

 Adding these inequalities, one gets
 \[
  \varphi_1 (\phi (\chi_{(t+\epsilon /2 ,1]}))\leq \varphi_2(\phi (\chi_{(t,1]} ))
 \]
 and, by an analogous argument, we also have $\varphi_2 (\phi (\chi_{(t+\epsilon /2,1]}))\leq \varphi_1(\phi (\chi_{(t,1]} ))$ as required.
 \end{proof}
 
 Let $n\in \NN$ be such that $1/n\leq \epsilon /2$, and consider the partition $t_i =i/n$ of $[0,1]$.
 
 By the claim, for every $i$ there exists $\epsilon (t_i)$ such that
 \[
 \varphi_1 (\phi (\chi_{(t_i+\epsilon /2 ,1]}))\leq \varphi_2(\phi (\chi_{(t_i,1]} ))\, ,\, \varphi_2 (\phi (\chi_{(t_i+\epsilon /2 ,1]}))\leq \varphi_1(\phi (\chi_{(t_i,1]} ))
 \]
 whenever $d(\varphi_1 \phi ,\varphi_2\phi)<\epsilon (t_i)$.
 
 Define $\epsilon '=\min (\epsilon (t_i))$. Then, by \autoref{Partition}, we get $d(\varphi_1\phi,\varphi_2\phi )\leq \epsilon /2+1/n\leq \epsilon$ as desired.
\end{proof}

\begin{cor}\label{AI_Rmk}
 Let $L=\oplus_{i=1}^{r}\LscI$ and $L'=\oplus_{j=1}^{s}\LscI$. Then, for any $\epsilon >0$ and $\phi\colon L\to L'$, there exists $\epsilon '>0$ such that, for any pair of morphisms $\varphi_{1},\varphi_{2}\colon L'\to S$ with $d(\varphi_{1},\varphi_{2})<\epsilon '$ and $\varphi_{1}(1_{j})= \varphi_{2}(1_{j})$ for every $j$, we have
 \[
  d(\varphi_{1}\phi ,\varphi_{2}\phi )<\epsilon .
 \]
\end{cor}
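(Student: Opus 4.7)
The approach is to reduce to Proposition \ref{Finite} by working componentwise. Writing $\iota_i\colon\LscI\to L$ for the inclusion into the $i$-th summand and $\pi_j\colon L'\to\LscI$ for the projection onto the $j$-th summand, the maps $\phi_{i,j}:=\pi_j\circ\phi\circ\iota_i\colon \LscI\to\LscI$ are $\Cu$-morphisms (since suprema and the $\ll$-relation in finite direct sums are computed coordinatewise), and $\phi\circ\iota_i=(\phi_{i,1},\ldots,\phi_{i,s})$ for every $i$.

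For each pair $(i,j)$, I would apply Proposition \ref{Finite} to $\phi_{i,j}$ and $\epsilon$ to obtain $\epsilon'_{i,j}>0$ such that, whenever $\psi_1,\psi_2\colon\LscI\to S$ are $\Cu$-morphisms with $\psi_1(1)=\psi_2(1)$ and $d(\psi_1,\psi_2)<\epsilon'_{i,j}$, one has $d(\psi_1\phi_{i,j},\psi_2\phi_{i,j})<\epsilon$. Set $\epsilon'=\min_{i,j}\epsilon'_{i,j}$. Given $\varphi_1,\varphi_2\colon L'\to S$ with $d(\varphi_1,\varphi_2)<\epsilon'$ and $\varphi_1(1_j)=\varphi_2(1_j)$ for every $j$, Definition \ref{General_Dist} forces $d(\varphi_1|_{L'_j},\varphi_2|_{L'_j})<\epsilon'_{i,j}$ for all $i,j$, and the hypotheses of Proposition \ref{Finite} are met (the two restrictions agree at~$1$). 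Hence $d(\varphi_1|_{L'_j}\phi_{i,j},\varphi_2|_{L'_j}\phi_{i,j})<\epsilon$ for every $(i,j)$.

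Finally, for each $i$ the composite $\varphi_k\phi\circ\iota_i$ equals $\sum_{j=1}^s\varphi_k|_{L'_j}\phi_{i,j}$, so the distance $d(\varphi_1\phi\circ\iota_i,\varphi_2\phi\circ\iota_i)$ must be controlled by this sum. The one delicate step, and the main place where care is needed, is to turn $s$ separate strict inequalities into a single strict one: for each $j$ pick a witness $\epsilon^*_j<\epsilon$ to the $(i,j)$-th distance bound, and set $\epsilon^*=\max_j\epsilon^*_j<\epsilon$ (a max over a finite set). Then, for every $t$ and every $j$,
\[
\varphi_1|_{L'_j}\phi_{i,j}(\chi_{(t+\epsilon^*,1]})\leq\varphi_2|_{L'_j}\phi_{i,j}(\chi_{(t,1]}),
\]
and adding over $j$ yields the corresponding inequality for $\varphi_k\phi\circ\iota_i$, with the symmetric inequality following identically. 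Therefore $d(\varphi_1\phi\circ\iota_i,\varphi_2\phi\circ\iota_i)\leq\epsilon^*<\epsilon$ for every $i$, and Definition \ref{General_Dist} delivers $d(\varphi_1\phi,\varphi_2\phi)<\epsilon$, as desired.
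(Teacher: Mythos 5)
Your proof is correct and is exactly the componentwise reduction to Proposition \ref{Finite} that the paper intends (it states the corollary without proof); the decomposition $\phi\iota_i=(\phi_{i,1},\dots,\phi_{i,s})$, the minimum over the finitely many $\epsilon'_{i,j}$, and the careful max-of-witnesses step to keep the final inequality strict all check out. The only detail left implicit is that $d(\varphi_k\phi\iota_i,\cdot)$ is defined at all, i.e.\ $\varphi_1\phi(1_i)=\varphi_2\phi(1_i)$, which follows since $\phi(1_i)$ is a nonnegative integer combination of the $1_j$'s and $\varphi_1(1_j)=\varphi_2(1_j)$.
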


Now let $\varphi_{i}\colon \LscI\to S$ be such that $d(\varphi_{i},\varphi_{i+1})< \epsilon_{i}$ with $\epsilon_{i}$ strictly decreasing and $R:=\sum_{i=1}^{\infty} \epsilon_{i}<\infty$. Also, assume that $\varphi_{i}(1)=\varphi_{i+1}(1)$ for every $i$.
\vspace{0.1cm}

 Define $R_{i}:=\sum_{k=1}^{i}\epsilon_{k}$ and let $t\in [0,1]$. Using \autoref{rmk_dist_first} we have
 \begin{equation*}
  \varphi_{1} (\chi_{ (t+R,1] })\ll
  \varphi_{2} (\chi_{ (t+R-R_{1},1] })\ll
  \varphi_{3} (\chi_{ (t+R-R_{2},1] })\ll
  \cdots
  \ll
  \varphi_{i+1} (\chi_{ (t+R-R_{i},1] })\ll
  \cdots 
 \end{equation*}
 because $(t+R-R_{i}) - (t+R-R_{i+1})=\epsilon_{i}>d(\varphi_{i},\varphi_{i+1})$.\vspace{0.1cm}

 Thus, the sequence $( \varphi_{i+1} (\chi_{ (t+R-R_{i},1] }) )_{i}$ is $\ll$-increasing, and we can consider its supremum. For each $t$, we define $\varphi (\chi_{(t,1]}):= \sup_i \varphi_{i+1} (\chi_{ (t+R-R_{i},1] })$.
 \vspace{0.3cm}
 
 We will now see that $\varphi$ induces a $\Cu$-morphism, and that such morphism is the limit of our  sequence. That is to say, we will see that the Cauchy sequences with summable distances have a limit.
 
 \begin{prp}\label{prp_limit_Cauchy}
 Retain the above assumptions. Then:
 \begin{enumerate}[(i)]
  \item The sequence $(\varphi_{i})_{i}$ induces a $\Cu$-morphism $\varphi\colon\LscI\to S$.
  \item $d(\varphi , \varphi_{i})\to 0$ as $i$ tends to infinity.
 \end{enumerate}
\end{prp}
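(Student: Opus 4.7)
The plan is to first construct a $\Cu$-morphism $\alpha\colon\mathcal{G}\to S$ matching the stated formula on the indicators $\chi_{(t,1]}$, and then lift $\alpha$ via \autoref{Lifting_Morphisms} to obtain $\varphi\colon\LscI\to S$ with $\varphi(1)=\varphi_{1}(1)=\varphi_{i}(1)$, giving (i). Part (ii) will then follow from a direct bookkeeping estimate showing $d(\varphi,\varphi_{i})\leq R-R_{i-1}$, which tends to $0$ as $i\to\infty$.

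For the construction of $\alpha$, I would first check that $s_{t}:=\sup_{i}\varphi_{i+1}(\chi_{(t+R-R_{i},1]})$ defines an order-, $\ll$-, and sup-preserving assignment on the totally ordered set $\{\chi_{(t,1]}\}_{t\in[0,1]}$. Order preservation is immediate; for $\ll$-preservation, given $s<t$, choosing $j$ large enough that $R-R_{j}<(t-s)/2$ and applying the triangle inequality $d(\varphi_{i+1},\varphi_{j+1})\leq|R_{i}-R_{j}|$ together with \autoref{rmk_dist_first} (splitting into the cases $i\leq j$ and $i>j$), one checks $\varphi_{i+1}(\chi_{(t+R-R_{i},1]})\leq\varphi_{j+1}(\chi_{(s+R-R_{j},1]})$ for every $i$, so $s_{t}\leq\varphi_{j+1}(\chi_{(s+R-R_{j},1]})\ll s_{s}$; preservation of suprema along $t_{n}\searrow t$ follows by commuting the double supremum. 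One then extends $\alpha$ additively to finite sums $\sum_{k}\chi_{(t_{k},1]}$ (unambiguously, since such a sum is determined by its level-set data) and thence to all of $\mathcal{G}$ using \axiomO{2} and \axiomO{4}, the order and $\ll$ relations being inherited by additivity. Since $\alpha(\chi_{(0,1]})\leq\varphi_{1}(1)$, \autoref{Lifting_Morphisms} produces the desired $\varphi$.

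For (ii), fix $\eta>R-R_{i-1}$ and $t\in[0,1]$. For every $j$, the triangle inequality gives $d(\varphi_{i},\varphi_{j+1})\leq|R_{j}-R_{i-1}|$; combined with $R_{j}<R$ this yields $\eta+R-R_{j}>d(\varphi_{i},\varphi_{j+1})$, so \autoref{rmk_dist_first} produces $\varphi_{j+1}(\chi_{(t+\eta+R-R_{j},1]})\leq\varphi_{i}(\chi_{(t,1]})$, and taking the supremum on $j$ yields $\varphi(\chi_{(t+\eta,1]})\leq\varphi_{i}(\chi_{(t,1]})$. Conversely, for any $j\geq i$ the bound $\eta>R-R_{i-1}$ forces $\eta+R_{j}-R>R_{j}-R_{i-1}\geq d(\varphi_{i},\varphi_{j+1})$, so \autoref{rmk_dist_first} gives $\varphi_{i}(\chi_{(t+\eta,1]})\leq\varphi_{j+1}(\chi_{(t+R-R_{j},1]})\leq\varphi(\chi_{(t,1]})$. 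Hence $d(\varphi,\varphi_{i})\leq R-R_{i-1}\to 0$. The main delicacy is the bookkeeping of the retractions $R-R_{i}$ against the pairwise distances $|R_{i}-R_{j}|$ in the indicator step; once this is set up, the extension of $\alpha$ to $\mathcal{G}$, the lifting, and the distance estimate all flow routinely from \autoref{Lifting_Morphisms} and \autoref{rmk_dist_first}.
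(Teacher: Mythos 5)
Your proof is correct, and part (i) follows essentially the same route as the paper: the same formula $\varphi(\chi_{(t,1]})=\sup_i\varphi_{i+1}(\chi_{(t+R-R_i,1]})$, verification of order-, $\ll$- and sup-preservation on the indicators (your check of $\ll$ via the triangle-inequality bound $d(\varphi_{i+1},\varphi_{j+1})\leq|R_i-R_j|$ and your commutation of the double supremum are minor, clean variants of what the paper does), extension through $\mathcal{G}$, and the lift via \autoref{Lifting_Morphisms} using $\alpha(\chi_{(0,1]})\leq\varphi_1(1)$. Part (ii), however, is genuinely different and arguably sharper. The paper only establishes the ``hard'' inequality $\varphi(\chi_{(\cdot+\epsilon/2,1]})\leq\varphi_m(\chi_{(\cdot,1]})$ at the grid points $t_i=i/n$, by exploiting $\varphi(\chi_{(t_i+\epsilon/2,1]})\ll\varphi(\chi_{(t_i,1]})$ to pick a single term $\varphi_{j_i}$ of the defining supremum dominating it, and then invokes \autoref{Partition} to convert the finitely many grid inequalities into the bound $d(\varphi,\varphi_m)\leq\epsilon/2+1/n$. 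You instead observe that this direction can be checked term by term: every summand $\varphi_{j+1}(\chi_{(t+\eta+R-R_j,1]})$ of the supremum defining $\varphi(\chi_{(t+\eta,1]})$ is already below $\varphi_i(\chi_{(t,1]})$ whenever $\eta>R-R_{i-1}$, by \autoref{rmk_dist_first} and the estimate $d(\varphi_i,\varphi_{j+1})\leq|R_j-R_{i-1}|$. This avoids the partition lemma and the compact-containment step entirely and yields the explicit quantitative rate $d(\varphi,\varphi_i)\leq R-R_{i-1}$, which is stronger than the paper's qualitative conclusion $d(\varphi,\varphi_i)\to 0$; the paper's version buys nothing extra here except that its grid argument reuses machinery (\autoref{Partition}) needed elsewhere. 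The only place where you are as terse as the paper is the extension of $\alpha$ from single indicators to $\mathcal{G}$ (well-definedness on finite sums and inheritance of $\leq$ and $\ll$ require the small combinatorial matching of sorted cut points), but the paper handles this at the same level of detail by appealing to $\mathcal{G}$ being the sup-completion of the free abelian monoid on $\{\chi_{(t,1]}\}_t$.
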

\begin{proof} We prove each claim separately:

 \noindent (i) First we will see that the map $\varphi\colon\{\chi_{(t,1]}\}_{t}\to S$ preserves order, suprema and the way-below relation. Thus, let $\chi_{(s,1]}\leq \chi_{(t,1]}$, which happens if and only if $s\geq t$. Then, since $s+R-R_{i}\geq t+R-R_{i}$ for each $i$, we have
 \[
  \varphi_{i+1} (\chi_{ (s+R-R_{i},1] })\leq \varphi_{i+1} (\chi_{ (t+R-R_{i},1] })
 \]
 for every $i$, and hence $\varphi(\chi_{(s,1]})\leq \varphi (\chi_{(t,1]})$.
 \vspace{0.1cm}
 
 If $\chi_{(s,1]}\ll \chi_{(t,1]}$, we know that $s-t=d$ for some $d>0$.
 
 Thus, since $R-R_{i}\to 0$, there exists some $k\in\mathbb{N}$ such that $d> 2(R-R_{k-1})$. Also, note that for every $i> k$, one has
 \[
 \begin{split}
  d (\varphi_{i+1} , \varphi_{k})&\leq d(\varphi_{i+1},\varphi_{i})+\cdots + d(\varphi_{k+1},\varphi_{k})\\  
  &<\epsilon_{i}+\cdots + \epsilon_{k}=R_i -R_{k-1}\leq R-R_{k-1}.
  \end{split}
 \]

 Therefore, one has
 \[
  \varphi_{i+1}(\chi_{(s+R-R_{i},1]})=
  \varphi_{i+1}(\chi_{(t+d+R-R_{i},1]})
  \ll
  \varphi_{k}(\chi_{( t+d+R_{k-1}-R_{i} ,1]})\leq
  \varphi_{k}(\chi_{( t+R-R_{k-1},1]}),
 \]
where in the second step we have used $(t+d+R-R_{i})-(t+d+R_{k-1}-R_{i})=R-R_{k-1}>d(\varphi_{i},\varphi_{k})$, and in the third step we have used $d\geq 2(R-R_{k-1})\geq (R-R_{k-1}) + (R_{i}-R_{k-1})$.

This shows that $\varphi( \chi_{(s,1]})\leq \varphi_{k}(\chi_{( t+R-R_{k-1},1]})\ll \varphi( \chi_{(t,1]})$.\vspace{0.1cm}
 
 Now let $(t_{n})_{n}$ be a decreasing sequence converging to $t$. Since $t\leq t_{n}$ for each $n$, it follows that
 \[
  \sup_{n}\varphi (\chi_{(t_{n},1]})\leq
  \varphi (\chi_{(t,1]}).
 \]

 Conversely, let $k\in\mathbb{N}$ be fixed and take $i>k$. Recall from the previous argument that we have $d(\varphi_{i+1},\varphi_{k+1})< R_{i}-R_{k}$. Set $\epsilon= R_{i}-R_{k} - d(\varphi_{i+1},\varphi_{k+1})>0$.
 
 Since $t_{n}-t$ is positive and tends to zero, there exists $n$ such that $\epsilon > t_{n}-t$. In particular, one gets
 \[
  (t+R-R_{k}) - (t_{n}+R-R_{i})= (t-t_{n}) + R_{i}-R_{k}> -\epsilon +R_{i}-R_{k}= d(\varphi_{i+1},\varphi_{k+1}).
 \]

 Using \autoref{rmk_dist_first} we get
 \[
  \varphi_{k+1}(\chi_{( t+R-R_{k} ,1]})\ll
  \varphi_{i+1}(\chi_{( t_{n}+R-R_{i} ,1]})\leq \varphi (\chi_{(t_n ,1]})\leq \sup_n \varphi (\chi_{(t_n,1]}).
 \]

 This shows that $\varphi (\chi_{(t,1]})\leq \sup_{n} \varphi ( \chi_{(t_{n},1]} )$ and, consequently, 
 $\sup_{n} \varphi ( \chi_{(t_{n},1]} )=\varphi (\chi_{(t,1]})$.
 \vspace{0.1cm}
 
Now recall that $\mathcal{G}$ is the sub-$\Cu$-semigroup of increasing lower-semicontinuous functions in $(0,1]$ (see \cite[Section 5.2]{Scho18}). Using a similar argument to that of \autoref{Lemma_Lifting_Tecnic}, one can see that any order, suprema and $\ll$-preserving map $\{\chi_{(t,1]}\}_t\to S$ can be lifted uniquely to a $\Cu$-morphism $\mathcal{G}\to S$. In fact, note that in this case the argument is simpler, since $\mathcal{G}$ can be seen to be the sup-completion of the free abelian semigroup generated by 
$\{\chi_{(t,1]}\}_t$.

Thus, let $\varphi\colon\mathcal{G}\to S$ be the unique $\Cu$-morphism lifting our map $\varphi\colon \{\chi_{(t,1]}\}_t\to S$.

Finally, we know by \autoref{Lifting_Morphisms} that $\varphi$ has a unique lift $\varphi\colon \LscI\to S$ such that $\varphi (1)=\varphi_{i}(1)$ for all $i$.
 \vspace{0.3cm}
 
 (ii) Fix $\epsilon\in (0,1]$. We will find $m\in\NN$ such that $d(\varphi ,\varphi_m)\leq \epsilon $.
 
 Since $\lim R_j = R$, there exists $j_0\in \NN$ such that for every $j\geq j_0$ we have $0\leq R-R_j\leq \epsilon /2$. Thus, one gets
 \[
  \varphi_j(\chi_{(t+\epsilon /2,1]})\leq 
  \varphi_j(\chi_{(t+R-R_j,1]})
  \leq
  \varphi (\chi_{(t,1]})
 \]
for every $j\geq j_0$ and $t\in [0,1]$.

Now let $n\in\NN$ be such that $1/n\leq \epsilon /2$ and consider the elements $t_i=i/n$ of $[0,1]$. For every $i$ we have $\varphi (\chi_{(t_i +\epsilon/2 ,1]})\ll \varphi (\chi_{(t_i ,1]})$ and thus, by the definition of $\varphi$, there exists $j_i$ such that
\[
 \varphi (\chi_{(t_i +\epsilon/2 ,1]})\leq \varphi_{j_i}(\chi_{(t_i+R-R_{j_i -1} ,1]}).
\]

Let $m=\max ( j_0,j_1,\cdots ,j_n )$. Then, 
\[
 \begin{split}
  \varphi_{m}(\chi_{(t+\epsilon /2,1]}) &\leq \varphi (\chi_{(t,1]}),\text{ since }m\geq j_0,\andSep\\
  \varphi(\chi_{(t_{i}+\epsilon /2,1]}) &\leq 
  \varphi_{j_{i}} (\chi_{(t_{i}+R-R_{j_{i}-1},1]})\leq \varphi_{m} (\chi_{(t_{i}+R-R_{m-1},1]})\leq 
  \varphi_{m} (\chi_{(t_{i},1]}) .
 \end{split}
\]

Thus, we have by \autoref{Partition} that $d(\varphi ,\varphi_m)\leq \epsilon/2 +1/n\leq \epsilon $.
\end{proof}

Using \autoref{prp_limit_Cauchy}, we can prove the following result.

\begin{thm}\label{Gen_Cauchy}
 Let $L=\oplus_{j=1}^{r} L_{j}$ with $L_{j}=\LscI$ for each $j$, and let $\varphi_{i}\colon L\to S$ be a sequence of \CuMor{s} such that $d(\varphi_{i},\varphi_{i+1})<\epsilon_{i}$ with $(\epsilon_{i})$ strictly decreasing and $\sum_{i=1}^{\infty} \epsilon_{i}<\infty$. Also, assume that $\varphi_{i}(1_{j})=\varphi_{i+1}(1_{j})$ for each $i,j$.
 
 Then, there exists a unique $\Cu$-morphism $\varphi\colon L\to S$ satisfying $d(\varphi , \varphi_{i})\to 0$ and $\varphi (1_{j})=\varphi_{i}(1_{j})$ for every $j\leq r$. 
\end{thm}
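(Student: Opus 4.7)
My plan is to reduce the statement to the single-summand case already handled in \autoref{prp_limit_Cauchy}, which produces a limit $\Cu$-morphism $\LscI\to S$ for a Cauchy sequence with summable distances that agree on $1$. The finite direct sum structure will then let us assemble the component limits into a morphism on $L$, and a short uniqueness argument will close the theorem.

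\textbf{Existence.} For each $1\leq j\leq r$, the restrictions $\varphi_i|_{L_j}\colon \LscI\to S$ form a sequence satisfying the hypotheses of \autoref{prp_limit_Cauchy}: the summability condition $\sum \epsilon_i<\infty$ and the monotone decrease are inherited from the global sequence, and by \autoref{General_Dist} one has
\[
d(\varphi_i|_{L_j},\varphi_{i+1}|_{L_j})\leq d(\varphi_i,\varphi_{i+1})<\epsilon_i,
\]
while the coordinatewise agreement $\varphi_i(1_j)=\varphi_{i+1}(1_j)$ is hypothesized. Thus \autoref{prp_limit_Cauchy} yields a $\Cu$-morphism $\psi_j\colon \LscI\to S$ with $d(\psi_j,\varphi_i|_{L_j})\to 0$ and $\psi_j(1)=\varphi_i(1_j)$ for every $i$. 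Define $\varphi\colon L\to S$ on a tuple $(x_1,\dots,x_r)$ by $\varphi(x_1,\dots,x_r):=\sum_{j=1}^{r}\psi_j(x_j)$. Since coproducts in $\Cu$ are formed coordinatewise and each $\psi_j$ is a $\Cu$-morphism, $\varphi$ preserves suprema of increasing sequences, the order, the way-below relation, and addition, hence is a $\Cu$-morphism. By construction $\varphi(1_j)=\psi_j(1)=\varphi_i(1_j)$ for every $i$ and every $j$, so the distances $d(\varphi,\varphi_i)$ are defined, and
\[
d(\varphi,\varphi_i)=\sup_{1\leq j\leq r}d(\psi_j,\varphi_i|_{L_j})\xrightarrow{i\to\infty} 0,
\]
since $r$ is finite and each term tends to $0$.

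\textbf{Uniqueness.} Suppose $\varphi'\colon L\to S$ is another $\Cu$-morphism with $d(\varphi',\varphi_i)\to 0$ and $\varphi'(1_j)=\varphi_i(1_j)$ for every $i,j$. The triangle inequality for $d$ (immediate from its definition) gives $d(\varphi,\varphi')\leq d(\varphi,\varphi_i)+d(\varphi_i,\varphi')\to 0$, hence $d(\varphi,\varphi')=0$. By \autoref{General_Dist}, this means $d(\varphi|_{L_j},\varphi'|_{L_j})=0$ for every $j$. Unfolding the definition of the distance on $\LscI$, we obtain $\varphi|_{L_j}(\chi_{(t+\epsilon,1]})\leq \varphi'|_{L_j}(\chi_{(t,1]})$ for all $\epsilon>0$; taking the supremum over $\epsilon\to 0$ and using $\sup_{\epsilon>0}\chi_{(t+\epsilon,1]}=\chi_{(t,1]}$, together with the symmetric inequality, forces $\varphi|_{L_j}(\chi_{(t,1]})=\varphi'|_{L_j}(\chi_{(t,1]})$ for every $t$. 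Since every element of $\Lsc((0,1],\overline{\NN})$ is the supremum of finite sums of such basic indicators and $\Cu$-morphisms preserve sums and suprema, we conclude that $\varphi|_{L_j}$ and $\varphi'|_{L_j}$ agree on $\Lsc((0,1],\overline{\NN})$. Combined with the equality $\varphi|_{L_j}(1)=\varphi_i(1_j)=\varphi'|_{L_j}(1)$, the lemma preceding \autoref{General_Dist} gives $\varphi|_{L_j}=\varphi'|_{L_j}$ for each $j$, so $\varphi=\varphi'$.

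The main technical content has already been absorbed by \autoref{prp_limit_Cauchy}; the only delicate points here are to check that the coordinatewise hypotheses of that proposition are met (straightforward from \autoref{General_Dist}) and that the distance on $L$, being a finite supremum, correctly converts coordinatewise convergence into global convergence, which is exactly what makes the finite direct sum case trivially reducible to the single-summand case.
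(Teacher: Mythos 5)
Your proof is correct and follows essentially the same route as the paper: apply \autoref{prp_limit_Cauchy} componentwise via the canonical inclusions, assemble the limits into a direct sum morphism, and deduce uniqueness from the triangle inequality together with the fact that $d$ is a metric. The extra unfolding in your uniqueness step is harmless but unnecessary, since the paper has already established that $d(\varphi,\varphi')=0$ forces $\varphi=\varphi'$.
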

\begin{proof}
 For each fixed $j\leq r$, apply the previous proposition to the sequence $(\varphi_{i}\tau_{j})_{i}$, where $\tau_{j}\colon L_{j}\to L$ is the canonical inclusion. This produces a $\Cu$-morphism $\varphi^{(j)}$ such that $d(\varphi_{i}\tau_{j},\varphi^{(j)})\to 0$ for each $j$.

 The $\Cu$-morphism $\varphi:=\varphi^{(1)}\oplus\cdots\oplus \varphi^{(r)}\colon L\to S$ satisfies $d(\varphi , \varphi_{i})\to 0$ as required.

 To prove uniqueness, let $\phi\colon L\to S$ be a $\Cu$-morphism with $d(\varphi_{i},\phi)\to 0$ and $\phi (1_{j})=\varphi_{i}(1_{j})$ for every $j\leq r$. Using the triangle inequality, we obtain
 \[
  d(\varphi,\phi)\leq d(\varphi,\varphi_{i}) + d(\varphi_{i},\phi)\to 0.
 \]
This shows that $d(\varphi,\phi)=0$ and, consequently, $\varphi=\phi$.
\end{proof}

\subsection{A local characterization for AI-algebras}

In order to ease the notation, in this subsection we will denote the Cuntz semigroup $\LscI$ by $L$.

\begin{lma}\label{Limit_Morph}
 Let $S$ be a \CuSgp{} satisfying (O5) and weak cancellation. Let $(n_i)_i$ be a sequence in $\NN$ and consider a pair of sequences $\varphi_{i}\colon L^{n_{i}}\to S$ and $\sigma_{i+1,i}\colon L^{n_{i}}\to L^{n_{i+1}}$ of $\Cu$-morphisms. Let $\sigma_{i,j}$ denote the composition $\sigma_{j,j-1}\circ\cdots\circ \sigma_{i+1,i}$.
 
 If there exists a strictly decreasing sequence $(\epsilon_{i})_{i}$ with
 \[
  d(\varphi_{j}\sigma_{j,i}, \varphi_{j+1}\sigma_{j+1,i})< \epsilon_{i}/2^{j}, \andSep \varphi_{j+1}\sigma_{j+1,i}(1_{k})=\varphi_{j}\sigma_{j,i}(1_{k}) \text{ for each $k\leq n_{i}$},
 \]
 then we can find a $\Cu$-morphism $\phi\colon\lim (L^{n_{i}},\sigma_{i+1,i})\to S$ such that its canonical morphisms $\phi_{i}\colon L^{n_{i}}\to S$ are the limits of the sequences $(\varphi_{j}\sigma_{j,i})_{j}$.
\end{lma}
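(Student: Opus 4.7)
The plan is to apply Theorem \ref{Gen_Cauchy} for each fixed $i$ to build candidate morphisms $\phi_i\colon L^{n_i}\to S$, then verify that these are compatible with the connecting maps $\sigma_{i+1,i}$, and finally invoke the universal property of the inductive limit in $\Cu$.

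First, fix $i$ and consider the sequence $(\varphi_{j}\sigma_{j,i})_{j\geq i}$ of $\Cu$-morphisms from $L^{n_i}$ to $S$. By hypothesis
\[
d(\varphi_{j}\sigma_{j,i},\varphi_{j+1}\sigma_{j+1,i})<\epsilon_{i}/2^{j},
\]
the sequence $(\epsilon_{i}/2^{j})_{j}$ is strictly decreasing with $\sum_{j}\epsilon_{i}/2^{j}=\epsilon_{i}<\infty$, and $\varphi_{j+1}\sigma_{j+1,i}(1_{k})=\varphi_{j}\sigma_{j,i}(1_{k})$ for every $k\leq n_{i}$. Hence \autoref{Gen_Cauchy} applies and produces a unique $\Cu$-morphism $\phi_i\colon L^{n_i}\to S$ with $d(\phi_i,\varphi_{j}\sigma_{j,i})\to 0$ and $\phi_i(1_k)=\varphi_{j}\sigma_{j,i}(1_k)$ for all $j,k$.

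Second, I verify the compatibility $\phi_i=\phi_{i+1}\sigma_{i+1,i}$, which is the essential step that will allow factorization through the limit. Since $\sigma_{j,i}=\sigma_{j,i+1}\circ\sigma_{i+1,i}$ for $j\geq i+1$, we have $\varphi_{j}\sigma_{j,i}=(\varphi_{j}\sigma_{j,i+1})\sigma_{i+1,i}$. By construction $d(\varphi_{j}\sigma_{j,i+1},\phi_{i+1})\to 0$, and $\varphi_j\sigma_{j,i+1}$ and $\phi_{i+1}$ agree on each $1_k$. Applying \autoref{AI_Rmk} with the morphism $\sigma_{i+1,i}\colon L^{n_i}\to L^{n_{i+1}}$ yields $d((\varphi_{j}\sigma_{j,i+1})\sigma_{i+1,i},\phi_{i+1}\sigma_{i+1,i})\to 0$. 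Thus the sequence $(\varphi_{j}\sigma_{j,i})_{j}$ converges both to $\phi_i$ and to $\phi_{i+1}\sigma_{i+1,i}$. Since $d(\cdot,\cdot)$ is a metric (see \autoref{General_Dist} and the remark following it), limits are unique and $\phi_i=\phi_{i+1}\sigma_{i+1,i}$.

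Finally, the family $(\phi_i)_i$ is compatible with the inductive system $(L^{n_{i}},\sigma_{i+1,i})$, so the universal property of inductive limits in the category $\Cu$ (which exist by \cite[Corollary~3.2.9]{AntoPereThie18}) yields a unique $\Cu$-morphism $\phi\colon\lim(L^{n_{i}},\sigma_{i+1,i})\to S$ whose composition with each canonical morphism $L^{n_{i}}\to \lim(L^{n_{i}},\sigma_{i+1,i})$ equals $\phi_i$. By construction $\phi_i$ is the limit of $(\varphi_{j}\sigma_{j,i})_{j}$, as required.

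The only real technical content is the compatibility step: one must exchange the limit produced by \autoref{Gen_Cauchy} with pre-composition by the connecting map, and this is exactly what \autoref{AI_Rmk} provides; without the continuity of pre-composition in the metric $d$, the identification $\phi_i=\phi_{i+1}\sigma_{i+1,i}$ would not follow.
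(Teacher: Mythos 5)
Your proposal is correct and follows essentially the same route as the paper: apply \autoref{Gen_Cauchy} for each fixed $i$ to obtain $\phi_i$, use \autoref{AI_Rmk} to push convergence through pre-composition with $\sigma_{i+1,i}$, and conclude $\phi_i=\phi_{i+1}\sigma_{i+1,i}$ before invoking the universal property of the limit. The only cosmetic difference is that the paper verifies the compatibility by an explicit $\epsilon$--$\delta$ triangle-inequality estimate, whereas you phrase it as uniqueness of limits in the metric $d$; these are the same argument.
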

\begin{proof}
 By \autoref{Gen_Cauchy}, each sequence $(\varphi_{j}\sigma_{j,i})_{j}$ has a limit, which we denote by $\phi_{i}\colon L^{n_{i}}\to S$.
 
 We will now see that $\phi_{i+1}\sigma_{i+1,i}=\phi_{i}$ for each $i$. Thus, we will obtain a $\Cu$-morphism $\phi\colon\lim L^{n_{i}}\to S$, as required.
 
 Fix $i\in\mathbb{N}$ and take any $\epsilon >0$. Let $\delta>0$ be the distance given by \autoref{AI_Rmk} for the morphism $\sigma_{i+1,i}$ and distance $\epsilon$. Since $\phi_{i}$ is the limit of the  sequence $(\varphi_{j}\sigma_{j,i})_{j}$, we can take $j$ such that
\[
 d(\phi_{i}, \varphi_{j}\sigma_{j,i})<\epsilon
 ,\andSep
 d(\phi_{i+1}, \varphi_{j}\sigma_{j,i+1})<\delta.
\]

We have
\[
\begin{split}
 d (\phi_{i},\phi_{i+1}\sigma_{i+1,i})  & \leq 
 d (\phi_{i}, \varphi_{j}\sigma_{j,i})
 +
 d(\varphi_{j}\sigma_{j,i} ,\phi_{i+1}\sigma_{i+1,i})\\
 &=
 d (\phi_{i}, \varphi_{j}\sigma_{j,i})
 +
 d(\varphi_{j}\sigma_{j,i+1}\sigma_{i+1,i} ,\phi_{i+1}\sigma_{i+1,i})< 2\epsilon .
\end{split}
\]
Consequently, $\phi_{i}=\phi_{i+1}\sigma_{i+1,i}$ for every $i$ and this induces a morphism $\phi$ from the limit $\lim (L^{n_{i}},\sigma_{i+1,i})$ to $S$.
\end{proof}

We are now ready to prove the main result of the paper, which gives a characterization of the Cuntz semigroup of AI-algebras in terms of
 certain decompositions of suitable $\Cu$-morphisms.


\begin{dfn}\label{dfn_comp_bound}
 We will say that a \CuSgp{} $S$ is \emph{compactly bounded} if every compactly contained element in $S$ is bounded by a compact. That is, for every element $a$ such that $a\ll b$ for some $b\in S$, there exists a compact element $p\geq a$.
\end{dfn}

The following proof combines some ideas from \cite[Theorem 3.1]{Shen79} and \cite[Chapter 12, Section 3]{Nadl92}.

\begin{thm}\label{Main_EHS}
 Let $S$ be a countably based and compactly bounded  $\Cu$-semigroup satisfying weak cancellation and (O5). Then, $S$ is $\Cu$-isomorphic to the Cuntz semigroup of an AI-algebra if and only if for every $\Cu$-morphism $\varphi\colon \LscI^{r}\to S$, for every finite subset $F\subset \LscI^{r}$ and every $\epsilon >0$, there exist $s\geq 1$ and $\Cu$-morphisms $\theta\colon L^r\to L^s$, $\phi\colon L^s\to S$, such that the diagram
 \begin{equation*}
  \xymatrix{
  \LscI^{r} \ar[r]^-{\varphi} \ar[d]_-{\theta} & S\\
  \LscI^{s} \ar[ru]_-{\phi} & 
  }
 \end{equation*}
satisfies:
\begin{enumerate}[(i)]
 \item $d(\phi\theta ,\varphi )< \epsilon$.
 \item For every $x,x',y\in F$, we have $\theta (x)\ll\theta (y)$ whenever $x\ll x'$ and $\varphi (x')\ll \varphi (y)$.
 \item $\varphi (1_{j})=\phi\theta (1_{j})$ for every $1\leq j\leq r$.
\end{enumerate}
\end{thm}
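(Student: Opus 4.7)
The plan is to prove the two directions separately, with the forward (necessity) implication being comparatively quick and the converse carrying the bulk of the work. For necessity, suppose $S=\Cu(A)$ for an AI-algebra $A$, so by \autoref{AI_Lsc} one has $S=\lim(\LscI^{k_i},\mu_{i+1,i})$ with canonical morphisms $\mu_i\colon\LscI^{k_i}\to S$. Given $\varphi\colon\LscI^r\to S$, finite $F\subset\LscI^r$, and $\epsilon>0$, I would lift each coordinate restriction $\varphi\circ\tau_j\colon\LscI\to S$ through some $\LscI^{k_i}$ at a finite stage of the inductive system using \autoref{lifting_th}, arranging exact agreement on the compact unit $1_j$. \autoref{WB_Maps}, together with the fact that only finitely many $\ll$-relations arising from $F$ need be preserved, allows one to pass far enough down the system to secure condition (ii), while \autoref{AI_Rmk} handles the distance bound of condition (i). Taking $s=k_i$, $\phi=\mu_i$, and $\theta$ the resulting factorization yields the diagram.

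For sufficiency, the strategy is to assemble an inductive system $(\LscI^{k_n},\theta_{n+1,n})$ together with compatible morphisms $\phi_n\colon\LscI^{k_n}\to S$ whose limit, extracted via \autoref{Limit_Morph}, is a $\Cu$-isomorphism onto $S$. Since $S$ is countably based, fix a sequence $(s_n)_n$ that is sup-dense in $S$; since $S$ is compactly bounded, one may also fix compact elements $p_n\geq s_n$ with $p_n\leq p_{n+1}$. By \autoref{lifting_th} there exist $\Cu$-morphisms $\alpha_n\colon\LscI\to S$ with $\alpha_n(\chi_{(0,1]})=s_n$ and $\alpha_n(1)=p_n$, so that the partial sums $\Psi_n=\alpha_1\oplus\cdots\oplus\alpha_n\colon\LscI^n\to S$ collectively exhaust a sup-dense subset of $S$.

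Now iterate the hypothesis. Choose in advance a strictly decreasing summable sequence of tolerances $\epsilon_n>0$ adapted to the $\epsilon_i/2^j$ bound demanded by \autoref{Limit_Morph}, along with a nested exhausting sequence of finite sets $F_n\subset\LscI^n$ witnessing the $\ll$-relations to be preserved. Having constructed $\phi_n\colon\LscI^{k_n}\to S$, apply the local decomposition hypothesis to $\phi_n\oplus\alpha_{n+1}\colon\LscI^{k_n+1}\to S$ with tolerance $\epsilon_{n+1}$ and finite set $F_{n+1}$, obtaining $\theta'_{n+1}\colon\LscI^{k_n+1}\to\LscI^{k_{n+1}}$ and $\phi_{n+1}\colon\LscI^{k_{n+1}}\to S$ satisfying (i), (ii), (iii). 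Define $\theta_{n+1,n}\colon\LscI^{k_n}\to\LscI^{k_{n+1}}$ as $\theta'_{n+1}$ precomposed with the canonical coordinate inclusion; condition (i) then yields $d(\phi_{n+1}\theta_{n+1,n},\phi_n)<\epsilon_{n+1}$, and condition (iii) gives exact agreement on all compact units $1_j$, matching the hypotheses of \autoref{Limit_Morph}.

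The main obstacle is showing that the induced morphism $\Phi\colon\lim(\LscI^{k_n},\theta_{n+1,n})\to S$ is a $\Cu$-isomorphism. For surjectivity, each $s_n$ appears up to the summable tail $\sum_{k>n}\epsilon_k$ in the image of $\phi_m$ for all large $m$ by construction, so \autoref{Bound} places $s_n$ in the image of $\Phi$, and sup-density of $(s_n)$ then completes the argument. For $\Phi$ to be an order-embedding, given $x,y$ in the limit with $\Phi(x)\leq\Phi(y)$, approximate $x$ from below by some $x'\ll x$ coming from $\LscI^{k_n}$; \autoref{WB_Maps} upgrades $\Phi(x')\ll\Phi(y)$ to $\phi_m(x')\ll\phi_m(y)$ at a later stage, and invoking condition (ii) of the hypothesis applied at that stage pulls this back to a $\ll$-relation between the images of $x'$ and $y$ in some later $\LscI^{k_{m'}}$, forcing $x'\leq y$ in the limit and hence, by taking the supremum over $x'$, $x\leq y$. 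Combining the resulting $\Cu$-isomorphism with \autoref{AI_Lsc} exhibits $S$ as the Cuntz semigroup of an AI-algebra.
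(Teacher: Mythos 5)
Your overall architecture for sufficiency matches the paper's (iterate the hypothesis to build an inductive system, extract a limit via \autoref{Limit_Morph}, then prove surjectivity and order-embedding), but the surjectivity step has a genuine gap. At stage $n+1$ you feed only the single new morphism $\alpha_{n+1}$ into the hypothesis, so the only control you ever obtain on how well the limit captures $s_{n+1}$ is a one-time bound $d(\phi_{n+1}\theta'_{n+1}\eta,\alpha_{n+1})\lesssim \sum_{k>n}\epsilon_k$, a \emph{fixed} positive number that cannot be improved at later stages. Being within a fixed distance $\delta>0$ of $\alpha_{n+1}$ in the Ciuperca--Elliott metric only places in the image elements sandwiched between $\alpha_{n+1}(\chi_{(t+\delta,1]})$ and $\alpha_{n+1}(\chi_{(t-\delta,1]})$; since \autoref{lifting_th} prescribes only $\alpha_{n+1}(\chi_{(0,1]})=s_{n+1}$ and leaves the intermediate values $\alpha_{n+1}(\chi_{(t,1]})$, $t>0$, uncontrolled, these sandwiched elements need not be cofinal below $s_{n+1}$, and \autoref{Bound} does not convert ``within distance $\delta$ of $\alpha_{n+1}$'' into ``$s_{n+1}$ lies in the image of $\Phi$.'' This is precisely why the paper adjoins the entire sum $\rho_i=\psi_1\oplus\cdots\oplus\psi_i$ at \emph{every} stage $i$: each $\psi_i$ then reappears as a summand of the map being approximated at every later stage $k$, and for a given $x\ll s_i$ one waits until $3\epsilon_k$ is smaller than the gap in $x\leq\psi_i(\chi_{(t+2s,1]})\ll\psi_i(\chi_{(t,1]})$. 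Your construction needs this re-insertion device (or an equivalent) for surjectivity to go through.

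Two further points. For necessity you propose a purely semigroup-level factorization through a finite stage via \autoref{lifting_th}, whereas the paper lifts $\varphi$ to a ${}^*$-homomorphism using Ciuperca--Elliott's classification and then invokes projectivity of interval algebras; your route is plausible (lift the finite chain $\varphi(\chi^j_{(t_k,1]})$ over a partition to a finite stage and rebuild with \autoref{lifting_th}), but the resulting distance estimate comes from \autoref{Partition}, not from \autoref{AI_Rmk} as you claim, and you must also justify lifting the compacts $\varphi(1_j)$ exactly to a finite stage to secure condition (iii). For the order-embedding, ``a nested exhausting sequence of finite sets'' is not quite enough as stated: you need the set used at stage $j$ to contain the forward images under $\sigma_{j,r}$ of the basic elements of all earlier stages $r\leq j$ (the paper's condition (iv)'), and condition (ii) of the hypothesis requires the full triple $x\ll x'$ with $\varphi(x')\ll\varphi(y)$ inside that set, so the intermediate element must be carried along as well.
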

\begin{proof}
Let $S$ be isomorphic the Cuntz semigroup of an AI-algebra $A$, and let $\varphi$, $F$ and $\epsilon$ be as in the statement of the theorem. Since $S\cong \Cu (A)$, we know by \cite[Theorem 12.1]{CiupElli08} that this map lifts to a *-homomorphism $g\colon B\to A$, where $B$ is a direct sum of interval algebras and $A$ is the limit of an inductive system $(B_{i},f_{i+1,i})$ with $B_{i}$ a direct sum of interval algebras for each $i$.

By \autoref{WB_Maps}, for every triple of elements $x,x',y\in F$ with $x\ll x'$ and $\varphi (x')\ll \varphi (y)$, there exists $\epsilon( x,x',y )$ such that whenever $d(\varphi , \psi)<\epsilon( x,x',y )$ and $\varphi (1_{j})=\psi (1_{j})$ for every $j$, we have $\psi (x)\ll \psi (y)$.

Since the cardinality of $F$ is finite, so is the number of way-below relations between its elements. This means that the number $\epsilon_F=\min (\epsilon ,1 , \epsilon( x,x',y ))$ is strictly positive.

 Since $B_{i}$ is projective (see, for example, \cite[Section 3]{EffrKami86}), there is a *-homomorphism $h\colon B\to B_{i}$ such that $\Vert g(x)-f_{i}h(x)\Vert< \epsilon_F$ for every $x\in B$. Here, $f_{i}\colon B_{i}\to A$ is the canonical map. In particular, since $\epsilon_F <1$, it follows that $g(p)$ is Murray-Von Neumann equivalent to $f_{i}h(p)$ for every projection $p\in B$.

Applying the functor $\Cu$ we obtain the following diagram
\begin{equation*}
  \xymatrix{
  \LscI^{r} \ar[r]^-{\varphi} \ar[d]_-{\Cu (h)} & S\\
  \Cu (B_{i}) \ar[ru]_-{\Cu (f_{i})} & 
  }
 \end{equation*}
 
 Since the norm $\Vert g-f_{i}h\Vert$ is an upper bound for $d(\varphi , \Cu (h)\Cu (f_{i}))$ (see, for example, \cite[Lemma 1]{RobSan10}), we get $d(\varphi , \Cu (f_{i})\Cu (h))< \epsilon_F<\epsilon$. Moreover, since  being Murray-Von Neumann equivalent implies being Cuntz equivalent, one also gets $\varphi (1_k)=\Cu (f_{i})\Cu (h) (1_k)$ for every $k\leq r$.

 By the choice of $\epsilon_F$, we also have that $\Cu (f_{i})\Cu (h)(x)\ll \Cu (f_{i})\Cu (h)(y)$ for every triple $x,x',y\in F$ with $x\ll x'$ and $\varphi (x')\ll \varphi (y)$.
 
 Finally, note that $\Cu (f_{i})\colon \Cu (B_{i})\to S$ is the canonical morphism from $\Cu (B_{i})$ to the limit $\lim \Cu (B_{i})\cong \Cu (A)\cong S$. Thus, we know that $\Cu (f_{i})\Cu (h)(x)\ll \Cu (f_{i})\Cu (h)(y)$ if and only if there exists $i(x,y)\geq i$ with $\Cu (f_{i(x,y),i})\Cu (h)(x)\ll 
 \Cu (f_{i(x,y),i})\Cu (h)(y)$.
 
 Since $F$ is finite, so is the supremum $j$ of all the $i(x,y)$'s with $x,y\in F$. Setting $\theta:=\Cu (f_{j,i})\Cu (h)$ and $\phi:= \Cu (f_{j})$, we have
 \begin{enumerate}[(i)]
  \item $d(\phi\theta,\varphi)=
   d(\Cu (f_{j})\Cu (f_{j,i})\Cu (h),\varphi )= d(\Cu (f_i)\Cu (h),\varphi )<\epsilon$.
   \item For every triple $x,x',y\in F$ such that $x\ll x'$ and $\varphi (x')\ll \varphi (y)$, we get 
   \[
    \Cu (f_{i})\Cu (h)(x)\ll \Cu (f_{i})\Cu (h)(y).
   \]
   
   By our choice of $j$, it follows that 
   \[
   \theta (x)=\Cu (f_{j,i})\Cu (h)(x)\ll \Cu (f_{j,i})\Cu (h)(y)=\theta (y).
   \]
   \item $\phi\theta (1_k)=\Cu (f_i)\Cu (h) (1_k)=\varphi (1_k)$ for every $k\leq r$.
 \end{enumerate}
 as required.\vspace{0.3cm}
 
 We are now left to prove the other implication.\vspace{0.3cm}

 Let $s_{1},s_{2},\cdots$ be a countable basis for $S$, where we may assume $s_{i}\in S_{\ll}$ for each $i$, and consider a \CuMor{} $\psi_{i}\colon \LscI\to S$ such that $\psi_{i}(\chi_{(0,1]})=s_{i}$. Such a morphism can always be found by \autoref{lifting_th} and the fact that all compactly contained elements in $S$ are bounded by a compact. Also, denote by $\rho_{i}\colon\LscI^{i}\to S$ the direct sum $\rho_{i}=\psi_{1}\oplus\cdots\oplus \psi_{i}$.

 For every $j\in\mathbb{N}$, fix a countable and ordered basis for $L^{j}$. By "the first $i$ basic elements in $L^{j}$" we will mean the first $i$ elements of the fixed ordered basis of $L^{j}$.\vspace{0.1cm}
 
 The idea of the proof is as follows:
 \vspace{0.1cm}
 
 We will first define inductively $\Cu$-morphisms $\sigma_{i+1,i}\colon L^{n_{i}}\to L^{n_{i+1}}$ and $\varphi_{i}\colon L^{n_{i}}\to S$ such that:
 \begin{enumerate}[(i)']
  \item There exists a decreasing sequence of positive elements $(\epsilon_{i})$ tending to $0$ such that, for every $i$, there exists a $\Cu$-morphism $\theta_{i}\colon L^{n_{i-1}}\oplus L^{i}\to L^{n_{i}}$ with $d(\varphi_{i-1}\oplus\rho_{i},\varphi_{i}\theta_{i})<\epsilon_{i}$.
  \item For every fixed $i$, we have 
  \[
  d(\varphi_{j}\sigma_{j,i}, \varphi_{j+1}\sigma_{j+1,i})< \epsilon_{i}/2^{j}, \andSep \varphi_{j+1}\sigma_{j+1,i}(1_{k})=\varphi_{j}\sigma_{j,i}(1_{k})
  \]
 for each $k\leq n_{i}$. Here, $\sigma_{i,j}$ denotes the composition $\sigma_{j,j-1}\circ\cdots\circ \sigma_{i+1,i}$.
  \item For every fixed $k$, we also have $d(\varphi_{j}\sigma_{j,k}\theta_{k},\varphi_{j+1}\sigma_{j+1,k}\theta_{k})<\epsilon_{k}/2^{j}$.
  \item For each $i$, let $F_{i}$ be the finite set consisting of the images of the first $i$ basic elements of  $L^{n_{r}}$ through $\sigma_{i,r}$ for each $r\leq i$. Then, for every $x,x',y\in F_{i}$ satisfying $\varphi_{i}(x')\ll \varphi_{i}(y)$ with $x\ll x'$, we have $\sigma_{i+1,i}(x)\ll \sigma_{i+1,i}(y)$.
 \end{enumerate}

 Condition (ii)' and \autoref{Limit_Morph} will provide a limit morphism $\phi\colon\lim L^{n_{i}}\to S$ with the canonical morphisms $\phi_{i}\colon L^{n_{i}}\to S$ being the limits of the sequences $(\varphi_{j}\sigma_{j,i})_{j}$.
 
 Conditions (i)' and (iii)' will imply that $\phi$ is surjective. Condition (iv)' will be used to prove that $\phi$ is also an order embedding, thus showing the desired result.
 \vspace{0.3cm}
 
 Set $\varphi_{1}:=\rho_{1}$ and take some fixed $i\in\mathbb{N}$. Assume that for each $k\leq i-1$ the elements $\epsilon_{k}$, the morphisms $\sigma_{k,k-1}$, $\varphi_{k}$ and $\theta_{k}$ and the sets $F_{k-1}$ have been defined so that conditions (i)-(iv) above are satisfied.
 
 For every $k\leq i-1$, let $\delta_{k}$ be the distance given in \autoref{AI_Rmk} such that for any pair of morphisms $\zeta_{1},\zeta_{2}\colon L^{n_{i-1}}\to S$ at distance less than $\delta_{k}$, we have 
 \begin{equation}\label{dk}
  d( \zeta_{1}\sigma_{i-1,k}, \zeta_{2}\sigma_{i-1,k})< \epsilon_{k}/2^{i}, \andSep d(\zeta_{1}\sigma_{i-1,k}\theta_{k}, \zeta_{2}\sigma_{i-1,k}\theta_{k})< \epsilon_{k}/2^{i} .
 \end{equation}
 
 Set $\epsilon_{i}:=\min_{1\leq k\leq i-1}\{\delta_{k} ,\epsilon_{k}\}>0$. As defined above, let $F_{i-1}$ be the set that contains, for each $r\leq i-1$, the image through $\sigma_{i-1,r}$ of $i-1$ distinct basic elements of $L^{n_{r}}$.
 
 Let $\tau_{i-1}\colon L^{n_{i-1}}\to L^{n_{i-1}}\oplus L^{i}$ be the canonical inclusion, and let $F=\tau_{i-1}(F_{i-1})$.
 
 By our assumptions, we can find morphisms $\varphi_{i},\theta_{i}$ such that the diagram
 \begin{equation*}
 \xymatrixcolsep{4pc}
  \xymatrix{
  L^{n_{i-1}}\oplus L^{i} \ar[r]^-{\varphi_{i-1}\oplus \rho_{i}} \ar[d]_-{\theta_{i}} & S\\
  L^{n_{i}} \ar[ru]_-{\varphi_{i}} & 
  }
 \end{equation*}
satisfies conditions (i)-(iii) in the statement of the theorem with distance $\epsilon_{i}$ and finite set $F$.

Define $\sigma_{i,i-1}:=\theta_{i}\circ \tau_{i-1}$, and note that condition (i)' is immediately satisfied. Also, condition (iv)' is satisfied by construction.
\begin{equation*}
 \xymatrixcolsep{4pc}
  \xymatrix{
  L^{n_{i-1}} \ar[r]^-{\tau_{i-1}} 
  \ar@{-->}[rd]_-{\sigma_{i,i-1}}
  \ar@{-->}@/^2.5pc/[rr]^-{\varphi_{i-1}}
  & L^{n_{i-1}}\oplus L^{i} \ar[r]^-{\varphi_{i-1}\oplus \rho_{i}} \ar[d]_-{\theta_{i}} & S\\
  & L^{n_{i}} \ar[ru]_-{\varphi_{i}} & 
  }
 \end{equation*}

Furthermore, since $\tau_{i-1}$ is an inclusion, we have by \autoref{General_Dist} and $\rho_i\tau_{i-1}=0$ that
\begin{equation*}
 d(\varphi_{i}\sigma_{i,i-1},\varphi_{i-1})=
 d(\varphi_{i}\theta_{i}\tau_{i-1} , (\varphi_{i-1}\oplus\rho_{i})\tau_{i-1})
 \leq 
 d(\varphi_{i}\theta_{i}, \varphi_{i-1}\oplus\rho_{i})
 <\epsilon_{i}\leq \delta_{k}
\end{equation*}
for each $k$.

By the choice of $\delta_{k}$ made in (\ref{dk}), one gets
\[
 d(\varphi_{i}\sigma_{i,k},\varphi_{i-1}\sigma_{i-1,k})
 =d(\varphi_{i}\sigma_{i,i-1}\sigma_{i-1,k},\varphi_{i-1}\sigma_{i-1,k})<\epsilon_{k}/2^{i}.
\]

Moreover, as $\varphi_{i}\sigma_{i,i-1}(1_{j})=\varphi_{i-1}(1_{j})$ for every $j\leq n_{i-1}$, condition (ii)' also holds. An analogous argument shows that (iii)' holds.

This finishes the inductive argument.
\vspace{0.3cm}

By \autoref{Limit_Morph}, condition (ii)' induces a $\Cu$-morphism $\phi\colon \lim L^{n_{i}}\to S$ with the canonical morphisms $\phi_{i}\colon L^{n_{i}}\to S$ being the limits of the sequences $(\varphi_{j}\sigma_{j,i})_{j}$.
\vspace{0.3cm}

To see that $\phi$ is surjective, first note that, for every $i\in\mathbb{N}$ and for every $\epsilon>0$, there exists $j\in\mathbb{N}$ such that $d(\phi_{i}\theta_{i},\varphi_{j}\sigma_{j,i}\theta_{i})<\epsilon$. This is due to \autoref{AI_Rmk} and because $d(\phi_{i} ,\varphi_{j}\sigma_{j,i})$ tends to $0$ as $j$ tends to infinity.

Thus, we get
\[
\begin{split}
 d(\varphi_{i}\theta_{i},\phi_{i}\theta_{i}) &\leq 
 d(\varphi_{i}\theta_{i},\varphi_{i+1}\sigma_{i+1,i}\theta_{i}) + \cdots +
 d(\varphi_{j-1}\sigma_{j-1,i}\theta_{i},\varphi_{j}\sigma_{j,i}\theta_{i})+
 d(\varphi_{j}\sigma_{j,i}\theta_{i},\phi_{i}\theta_{i})
 \\
 &\leq
 \frac{\epsilon_{i}}{2^{i}}+\cdots +
 \frac{\epsilon_{i}}{2^{j-1}}+\epsilon
 \leq 
 2\epsilon_{i} +\epsilon ,
\end{split}
\]
where we have used property (iii)' to bound all but the last element.

Since this holds for every $\epsilon$, one gets $d(\varphi_{i}\theta_{i},\phi_{i}\theta_{i})\leq 2\epsilon_{i}$. In particular, the distance is strictly decreasing on $i$.

Now let $s_{i}$ be a basic element of $S$ and let $x\in S$ be such that $x\ll s_{i}$. We have $x \ll \psi_{i}(\chi_{(0,1]})$ and, consequently, there exists $s,t\in (0,1]$ such that
\[
 x\ll \psi_{i}(\chi_{(t+2s,1]})\ll
 \psi_{i}(\chi_{(t,1]})\ll 
 \psi_{i}(\chi_{(0,1]}).
\]

Note that, for every $k>i$, we have
\[
 d(\varphi_{k-1}\oplus \rho_{k},\phi_{k}\theta_{k})\leq
 d(\varphi_{k-1}\oplus \rho_{k},\varphi_{k}\theta_{k})
 +
 d(\varphi_{k}\theta_{k},\phi_{k}\theta_{k})<
 \epsilon_{k} + 2\epsilon_{k}=3\epsilon_{k},
\]
where in the previous bound we have used condition (i)' and the inequality obtained above.

Thus, there exists a large enough $k$ so that $d(\varphi_{k-1}\oplus \rho_{k},\phi_{k}\theta_{k})<s$. 

Since $k>i$, we know that $(\varphi_{k-1}\oplus \rho_{k})\eta_{i}=\psi_{i}$, where $\eta_{i}\colon L\to L^{n_{k-1}}\oplus L^{k}$ is the canonical inclusion in the $(n_{k-1}+i)$-th $L$-summand.

Thus, we have
\[
\begin{split}
 x &\ll \psi_{i}(\chi_{(t+2s,1]})=
 (\varphi_{k-1}\oplus \rho_{k})\eta_{i}(\chi_{(t+2s,1]})\ll
 \phi_{k}\theta_{k}\eta_{i}(\chi_{(t+s,1]})
 \\
 &\ll
 (\varphi_{k-1}\oplus \rho_{k})\eta_{i}(\chi_{(t,1]})= 
 \psi_{i}(\chi_{(t,1]}).
 \end{split}
\]

This shows that for every $i$ and every $x\ll s_{i}$, there exist some $k$ with $x\ll \phi_{k}(l)\ll s_{i}$ with $l\in L^{n_{k}}$. Consequently, $\phi$ is surjective.
\vspace{0.3cm}

We will now prove that $\phi$ is an order-embedding. To do this, we will denote by $[a]$ the elements in $\lim L^{n_{i}}$ coming from some block $L^{n_{i}}$ of the direct limit.

Take $x,y\in \lim L^{n_{i}}$ such that $\phi (x)\leq \phi (y)$. Let $a\in L^{n_{s}}$ be a basic element with $[a]\ll x$. Also, take $[z],[z']$ such that $[a]\ll [z']\ll [z]\ll x$ with $z,z'\in L^{n_{s'}}$ basic elements as well. Finally, take $b\in L^{n_{k}}$ a basic element such that $[b]\ll y$ and $\phi ([z])\ll \phi ([b])\ll\phi (y)$.

We can assume that, for a large enough $i$, we have $\sigma_{i,s}(a),\sigma_{i,k}(b),\sigma_{i,s'}(z),\sigma_{i,s'}(z') \in L^{n_{i}}$ with 
\[
 \sigma_{i,s}(a) \ll \sigma_{i,s'}(z')\ll \sigma_{i,s'}(z), \andSep
  \phi_{i}(\sigma_{i,s'}(z)) \ll \phi_{i} (\sigma_{i,k}(b)).
 \]

Thus, since we have $d(\phi_{i}, \varphi_{j}\sigma_{j,i})\to 0$, it follows from \autoref{WB_Maps} that 
\[
\varphi_{j}\sigma_{j,i}(\sigma_{i,s'}(z'))\ll \varphi_{j}\sigma_{j,i}(\sigma_{i,k}(b))
\]
for every sufficiently large $j$. 

Also, since $z,z',a,b$ are basic elements in their respective blocks, we can take $j$ large enough so that we also have $\sigma_{j,s'}(z),\sigma_{j,s'}(z'),\sigma_{j,s}(a),\sigma_{j,k}(b)\in F_j$.

Therefore, since $\sigma_{j,s}(a)\ll \sigma_{j,s'}(z')$ and $\varphi_{j} (\sigma_{j,s'}(z'))\ll \varphi_{j} (\sigma_{j,k}(b))$, it follows from condition (iv) that $\sigma_{j+1,j}(\sigma_{j,s}(a))\ll \sigma_{j+1,j}(\sigma_{j,k}(b))$. That is to say, we have
\[
 \sigma_{j+1,s}(a)\ll \sigma_{j+1,k}(b)
\]
and thus $[a]\ll [b]\ll y$.

Since $x$ can be written as the supremum of an $\ll$-increasing sequence $([x_n])$ with $x_n$ basic elements, it follows from the previous argument that $[x_n]\ll y$ for every $n$. Taking the supremum, one gets $x\leq y$ as required.

We now have $S\cong \lim_{i} L^{n_{i}}$, and the desired result follows from \autoref{AI_Lsc}.
\end{proof}

The following result shows that we only need to focus on one $\ll$-relation instead of working with a finite subset $F$ and all of its $\ll$-relations.

\begin{prp}\label{Main_OneRel}
 Let $S$ be a countably based and compactly bounded  $\Cu$-semigroup satisfying weak cancellation and (O5). Then, $S$ is $\Cu$-isomorphic to the Cuntz semigroup of an AI-algebra if and only if for every \CuMor{} $\varphi\colon \LscI^{r}\to S$, every $\epsilon >0$ and every triple $x,x', y$ in $\LscI^r$ such that $x\ll x'$ with $\varphi (x')\ll \varphi (y)$, there exist $\Cu$-morphisms $\theta ,\phi$ such that the diagram
 \[
  \xymatrix{
  \LscI^{r} \ar[r]^-{\varphi} \ar[d]_-{\theta} & S\\
  \LscI^{s} \ar[ru]_-{\phi} & 
  }
 \]
satisfies:
\begin{enumerate}[(i)]
 \item $d(\phi\theta ,\varphi )< \epsilon$.
 \item $\theta (x)\ll\theta (y)$.
 \item $\varphi (1_{j})=\phi\theta (1_{j})$ for every $1\leq j\leq r$.
\end{enumerate}
\end{prp}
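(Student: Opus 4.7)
The ``only if'' direction is immediate from \autoref{Main_EHS} applied with $F=\{x,x',y\}$, since the three conclusions for $F$ specialize to the three conclusions of the present statement. For the converse, the plan is to deduce the finite-set factorization condition of \autoref{Main_EHS} from the single-triple hypothesis by iterating it once for each way-below relation in $F$. Fix $\varphi\colon\LscI^{r}\to S$, a finite set $F\subset\LscI^{r}$, and $\epsilon>0$, and enumerate the finitely many triples $(x_k,x'_k,y_k)_{k=1}^N$ in $F$ with $x_k\ll x'_k$ and $\varphi(x'_k)\ll\varphi(y_k)$.

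Setting $L^{s_0}=\LscI^r$ and $\phi_0=\varphi$, I will inductively construct $\Cu$-morphisms $\theta_k\colon L^{s_{k-1}}\to L^{s_k}$ and $\phi_k\colon L^{s_k}\to S$; writing $\Theta_k=\theta_k\cdots\theta_1$ and $\Phi_k=\phi_k\Theta_k$, the inductive hypothesis at stage $k$ is that $\Theta_k(x_j)\ll\Theta_k(y_j)$ for every $j\le k$, that $\Phi_k(1_l)=\varphi(1_l)$ for every $l\le r$, and that $d(\Phi_k,\Phi_{k-1})$ is very small (made precise in the last paragraph). Before the $(k+1)$-st step I apply the enhanced form of \autoref{WB_Maps} to $\varphi$ with the triple $(x_{k+1},y_{k+1},x'_{k+1})$, obtaining a threshold $\mu_{k+1}>0$ and an element $f''_{k+1}\in\LscI^r$ with $x_{k+1}\ll f''_{k+1}$ such that $\psi(f''_{k+1})\ll\psi(y_{k+1})$ for every $\Cu$-morphism $\psi$ agreeing with $\varphi$ on the units and within distance $\mu_{k+1}$ of $\varphi$. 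Provided $d(\Phi_k,\varphi)<\mu_{k+1}$ (arranged below), this applies with $\psi=\Phi_k$, so $\Phi_k(f''_{k+1})\ll\Phi_k(y_{k+1})$; hence $(\Theta_k(x_{k+1}),\Theta_k(f''_{k+1}),\Theta_k(y_{k+1}))$ is a valid triple for $\phi_k$ (using that $\Theta_k$ preserves $\ll$), and the single-triple hypothesis applied to $\phi_k$ produces $\theta_{k+1},\phi_{k+1}$ with $\Theta_{k+1}(x_{k+1})\ll\Theta_{k+1}(y_{k+1})$ and $\phi_{k+1}\theta_{k+1}(1_l)=\phi_k(1_l)$ for every $l\le s_k$.

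Preservation of the inductive hypothesis at stage $k+1$ is then straightforward: the relations $\Theta_{k+1}(x_j)\ll\Theta_{k+1}(y_j)$ for $j\le k+1$ are immediate, the new one from the single-triple conclusion and the earlier ones from $\ll$-preservation by $\theta_{k+1}$. For the units, I use that the compacts of $\LscI$ are exactly the constant nonnegative integer functions, so the compacts of $L^{s_k}$ are precisely the finite $\NN$-linear combinations of the unit generators $1_l$; since $\Theta_k(1_j)$ is compact (each $\theta_i$ preserves $\ll$ hence compactness), it is such a combination, and additivity of $\phi_{k+1}\theta_{k+1}$ together with $\phi_{k+1}\theta_{k+1}(1_l)=\phi_k(1_l)$ yields $\Phi_{k+1}(1_j)=\phi_{k+1}\theta_{k+1}\Theta_k(1_j)=\phi_k\Theta_k(1_j)=\Phi_k(1_j)=\varphi(1_j)$.

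The main obstacle is the distance bookkeeping, since the thresholds $\mu_{k+1}$ must be met at every stage simultaneously with keeping the cumulative error below $\epsilon$. Setting $M=\min_{1\le k\le N}\mu_k>0$ and using \autoref{AI_Rmk} applied to $\Theta_k$, I choose at each application of the single-triple hypothesis a tolerance on $d(\phi_{k+1}\theta_{k+1},\phi_k)$ small enough to ensure $d(\Phi_{k+1},\Phi_k)<\min(\epsilon,M)\cdot 2^{-(k+1)}$. The triangle inequality then gives $d(\Phi_k,\varphi)<\min(\epsilon,M)\le\mu_{k+1}$ for every $k\le N-1$, validating each application of \autoref{WB_Maps}, and $d(\Phi_N,\varphi)<\epsilon$. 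Condition (i) of \autoref{Main_EHS} is $d(\phi_N\Theta_N,\varphi)<\epsilon$; condition (ii) holds because the stage-$N$ inductive hypothesis yields $\Theta_N(x_k)\ll\Theta_N(y_k)$ for every $k\le N$; and condition (iii) is $\Phi_N(1_j)=\varphi(1_j)$, so the proof reduces to \autoref{Main_EHS}.
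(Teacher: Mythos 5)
Your proof is correct and follows essentially the same route as the paper's: both reduce to \autoref{Main_EHS} by handling the finitely many way-below relations one at a time, using \autoref{WB_Maps} to make each relation robust under small perturbations and \autoref{AI_Rmk} to control the distance of the composed maps, and both verify unit preservation by expanding $\theta(1_j)$ as an $\NN$-combination of unit generators. The only difference is cosmetic (an explicit $N$-step iteration with a geometric error series versus the paper's induction on the cardinality of the set of triples with an $\epsilon/2+\epsilon/2$ split).
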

\begin{proof}
 One direction follows trivially from \autoref{Main_EHS}, so we only need to prove the other. To do this, we will see that the conditions in \autoref{Main_EHS} are satisfied.
 
 Let $\varphi\colon \LscI^{r}\to S$ and define the following set
 \[
  \mathcal{R}\varphi =\{ (x,x',y)\in (\LscI^{r})^3 \mid x\ll x'\text{ with } \varphi (x')\ll\varphi (y)\}.
 \]

Let $R=\{ (x_{i},x_{i}',y_{i}) \}$ be a finite subset of $\mathcal{R}\varphi$ (note that some $x_{i}$'s, $x_i'$'s and $y_{j}$'s might coincide). We will prove by induction on the cardinality of $R$ that given any $\varphi\colon \LscI^{r}\to S$, any $\epsilon >0$ and any finite subset $R\subset \mathcal{R}\varphi$ there exist morphisms $\theta ,\phi$ such that 
\begin{enumerate}[(i)]
 \item $d(\phi\theta ,\varphi )< \epsilon$.
 \item $\theta (x)\ll\theta (y)$ for every $(x,x',y)\in R$.
 \item $\varphi (1_{j})=\phi\theta (1_{j})$ for every $1\leq j\leq r$.
\end{enumerate}
 
Note that, given any finite subset $F\subset \LscI^{r}$, the set $F^3\cap \mathcal{R}\varphi$ is finite. Thus, our result will follow from this fact.

For $n=1$, the results holds by assumption.

Now assume that for every $k\leq n-1$, the desired result has been proven for every $\Cu$-morphism $\varphi\colon \LscI^{r}\to S$, for every $\epsilon >0$ and every finite subset $R\subset \mathcal{R}\varphi$ such that $\vert R\vert =k$. Fix any $\Cu$-morphism $\varphi\colon \LscI^{r}\to S$, any $\epsilon >0$ and any finite subset $R\subset \mathcal{R}\varphi$ with $\vert R\vert =n$.

As before, write $R=\{ (x_{1},x_1',y_{1}),\cdots ,(x_{n},x_n',y_{n}) \}$. By \autoref{WB_Maps}, there exist elements $x_i''$ with $x_i\ll x_i''\ll x_i$ and a bound $\delta >0$ such that, for any $\Cu$-morphism $\psi\colon \LscI^{r}\to S$ with $d(\psi,\varphi)<\delta$ and $\psi (1_{j})=\phi (1_{j})$ for each $j$, we have $\psi (x''_{i})\ll \psi (y_{i})$.

Set $\epsilon ':=\min \{\epsilon /2,\delta\}>0$, and apply the induction hypothesis to $\varphi$, $\epsilon '$ and the set $\{(x_{1},x_{1}'',y_{1})\}$. Thus, we get $\Cu$-morphisms 
\[
 \theta_{1}\colon \LscI^{r}\to \LscI^{t},\andSep 
 \phi_{1}\colon \LscI^{t}\to S
\]
such that 
\begin{enumerate}[(i)]
 \item $d(\phi_{1}\theta_{1} ,\varphi )< \epsilon '$.
 \item $\theta_{1} (x_{1})\ll\theta_{1} (y_{1})$.
 \item $\varphi (1_{j})=\phi_{1}\theta_{1} (1_{j})$ for every $1\leq j\leq r$.
\end{enumerate}

In particular, note that (i) implies that $(\theta_{1}(x_{i}),\theta_{1}(x_{i}''),\theta_{1}(y_{i}))\in \mathcal{R}\phi_{1}$ for every $i\leq n$. Indeed, we have $\phi_1\theta_1 (x_i)\ll \phi_1\theta_1 (x_i'')$ and $\varphi (x_i'')\ll \varphi (y_i)$. Since $d(\phi_{1}\theta_{1} ,\varphi )< \epsilon '<\delta$, we get $\phi_1\theta_1 (x_i'')\ll \phi_1\theta (y_i)$. Therefore, we have
\[
 \theta_1 (x_i)\ll \theta_1 (x_i''), \andSep 
 \phi_1\theta_1 (x_i'')\ll \phi_1\theta (y_i),
\]
which shows that $(\theta_{1}(x_{i}),\theta_{1}(x_{i}''),\theta_{1}(y_{i}))\in \mathcal{R}\phi_{1}$ as desired.

Now take the finite subset $R_{1}=\{ (\theta_{1}(x_{i}),\theta_{1}(x_{i}''),\theta_{1}(y_{i})) \}_{2\leq i\leq n}$, which has cardinality $n-1$. Let $\nu$ be the bound given in \autoref{AI_Rmk} for the morphism $\theta_{1}$ and the constant $\epsilon /2$.

Then, applying the induction hypothesis to $\phi_{1}$, $\nu >0$ and $R_{1}$ we get  $\Cu$-morphisms $\theta_{2},\phi_{2}$ with 
\begin{enumerate}[(i)]
 \item $d(\phi_{2}\theta_{2} ,\phi_{1} )<\nu$.
 \item $\theta_{2}\theta_{1} (x_{i})\ll\theta_{2}\theta_{1} (y_{i})$ for every $1\leq i\leq n$. For $i=1$, this is because $\theta_1 (x_{1})\ll \theta_1 (y_{1})$.
 \item $\phi_{1} (1_{j})=\phi_{2}\theta_{2} (1_{j})$ for every $1\leq j\leq t$.
\end{enumerate}

Since $d(\phi_2\theta_2 ,\phi_1)<\nu$, we have by \autoref{AI_Rmk} that $d(\phi_{2}\theta_{2} \theta_{1},\phi_{1} \theta_{1})<\epsilon /2$. In this situation, we get 
\[
d(\varphi ,\phi_{2}\theta_{2} \theta_{1})\leq d(\varphi ,\phi_{1}\theta_{1}) + d(\phi_{2}\theta_{2} \theta_{1},\phi_{1} \theta_{1})<\epsilon.
\]

Finally, given any $1_{j}\in \LscI^{r}$, write $\theta_{1}(1_{j})=k_{1}1_{1}+\cdots + k_{t}1_{t}\in\LscI^{t}$. We have
\[
\begin{split}
 \varphi(1_{j})&=\phi_{1}\theta_{1}(1_{j})=
 \phi_{1}(k_{1}1_{1}+\cdots + k_{t}1_{t})=
 k_{1}\phi_{1}(1_{1})+\cdots +k_{t}\phi_{1}(1_{t})\\
 &= k_{1}\phi_{2}\theta_{2}(1_{1})+\cdots +k_{t}\phi_{2}\theta_{2}(1_{t})=\phi_{2}\theta_{2}\theta_{1}(1_{j}).
\end{split}
\]

This shows that $\phi:= \phi_{2}$ and $\theta:=\theta_{2}\theta_{1}$ satisfy the required properties.
\end{proof}

We will now reduce the hypothesis of the theorem further, by showing that we can discretize \autoref{Main_OneRel} and that one only needs to focus on a particular kind of elements $x,x',y$. More explicitly, we will later prove the following:
\vspace{0.2cm}

Given any $l\in\mathbb{N}$, let $C_{l}^{M}=\{ \chi^{s}_{(i/l,1]} \}_{i,s}\cup\{ 1_{s}\}\subset \LscI^{M}$, where $1_{s}$ and $\chi^{s}_{(i/l,1]}$ denote the unit and the element $\chi_{(i/l,1]}$ in the $s$-th summand respectively for every $s\leq M$. Also, let $B_{l}^{M}$ be the additive span of the elements in $C_{l}^{M}$.

\begin{thm}\label{Triangular_Discrete}\label{MainTh}
Let $S$ be a countably based and compactly bounded $\Cu$-semigroup satisfying weak cancellation and (O5). Then, $S$ is the Cuntz semigroup of an AI-algebra if and only if for every \CuMor{}
 $\varphi\colon \LscI^{M}\to S$, every $l\in\mathbb{N}$ and every triple of elements $x, x', y\in B_{l}^{M}$  such that $x\ll x'$ with $\varphi (x')\ll \varphi (y)$, there exist $\Cu$-morphisms $\theta ,\phi$ such that the diagram
 \begin{equation*}
  \xymatrix{
  \LscI^{M} \ar[r]^-{\varphi} \ar[d]_-{\theta} & S\\
  \LscI^{N} \ar[ru]_-{\phi} & 
  }
 \end{equation*}
satisfies:
\begin{enumerate}[(i)]
 \item $\varphi (\chi^{s}_{((i+1)/l,1]})\ll \phi\theta(\chi^{s}_{(i/l,1]})$ and $\phi\theta(\chi^{s}_{((i+1)/l,1]})\ll \varphi (\chi^{s}_{(i/l,1]})$ for every $\chi^{s}_{(i/l,1]}\in C_{l}^{M}$.
 \item $\theta (x)\ll\theta (y)$.
 \item $\varphi (1_{s})=\phi\theta (1_{s})$ for every $s\leq M$.
\end{enumerate}
\end{thm}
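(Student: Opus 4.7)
We prove the theorem by showing that the discrete condition is equivalent to the (non-discrete) condition of \autoref{Main_OneRel}.

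\emph{Forward direction.} Assume $S$ is $\Cu$-isomorphic to the Cuntz semigroup of an AI-algebra. Given $l\in\NN$ and a triple $(x,x',y)\in B_l^M$ with $x\ll x'$ and $\varphi(x')\ll\varphi(y)$, \autoref{Main_OneRel} applied with any $\epsilon<1/l$ provides morphisms $\theta,\phi$ with $d(\phi\theta,\varphi)<1/l$, $\theta(x)\ll\theta(y)$, and $\varphi(1_s)=\phi\theta(1_s)$. Conditions (ii) and (iii) of the statement are then immediate, and condition (i) follows from \autoref{rmk_dist_first}: since $(i+1)/l-i/l=1/l>d(\phi\theta,\varphi)$, both way-below inequalities hold.

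\emph{Reverse direction.} Assuming the discrete condition, we establish that of \autoref{Main_OneRel}. Fix $\varphi\colon\LscI^r\to S$, $\epsilon>0$, and a triple with $x\ll x'$ and $\varphi(x')\ll\varphi(y)$. Using (O2) and the sup-density of basic elements, replace it with basic $\tilde{x},\tilde{x}',\tilde{y}$ satisfying $x\ll\tilde{x}\ll\tilde{x}'\ll x'$ and $\varphi(\tilde{x}')\ll\varphi(\tilde{y})\ll\varphi(y)$, additionally arranging (via further refinement) that all relevant interval endpoints are rational. Now build a morphism $\psi\colon\LscI^M\to\LscI^r$ via \autoref{lifting_th}: for each basic indicator $\chi^s_U$ appearing in $\tilde{x},\tilde{x}',\tilde{y}$, as well as for each coordinate $s$ of $L^r$, attach one summand of $L$ to $L^M$ together with a lift of the corresponding $\Cu$-morphism $L\to L^r$ (sending $\chi_{(0,1]}$ to $\chi^s_U$ and $1$ to $1_s$, or the canonical inclusion for the unit slots). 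Choose $l$ as a common denominator of the endpoints, large enough that $2/l<\epsilon$ and that there exist $\bar{x},\bar{x}',\bar{y}\in B_l^M$ with $\psi(\bar{x})=\tilde{x}$, $\psi(\bar{x}')=\tilde{x}'$, $\psi(\bar{y})=\tilde{y}$, and $\bar{x}\ll\bar{x}'$, $\varphi\psi(\bar{x}')\ll\varphi\psi(\bar{y})$.

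Apply the discrete hypothesis to $\varphi\psi\colon\LscI^M\to S$ with the triple $(\bar{x},\bar{x}',\bar{y})$ and parameter $l$, obtaining $\theta_1\colon L^M\to L^N$ and $\phi_1\colon L^N\to S$ satisfying conditions (i)--(iii) of the theorem. Set $\phi:=\phi_1$ and use $\theta_1$ together with the structure of $\psi$ to construct a $\Cu$-morphism $\theta\colon L^r\to L^N$ compatible with $\theta_1$ on the images of the designated generators. Condition (i) of \autoref{Main_OneRel} follows from \autoref{Partition} applied to the pointwise $\ll$-inequalities in the discrete (i), yielding $d(\phi\theta,\varphi)\leq 2/l<\epsilon$. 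Condition (ii), $\theta(x)\ll\theta(y)$, follows from $\theta_1(\bar{x})\ll\theta_1(\bar{y})$ via the sandwich $\theta(x)\leq\theta(\tilde{x})=\theta_1(\bar{x})\ll\theta_1(\bar{y})=\theta(\tilde{y})\leq\theta(y)$. Condition (iii), $\varphi(1_s)=\phi\theta(1_s)$, holds because $1_s=\psi(\bar{1}_s)$ for the designated unit $\bar{1}_s\in L^M$, giving $\phi\theta(1_s)=\phi_1\theta_1(\bar{1}_s)=\varphi\psi(\bar{1}_s)=\varphi(1_s)$.

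The key difficulty lies in the change-of-variables step: basic indicators in $\LscI^r$ can have arbitrary endpoints, while $B_l^M$ consists only of step functions with endpoints of the form $i/l$. This is resolved by first refining $\tilde{x},\tilde{x}',\tilde{y}$ to have rational endpoints (permissible given $x\ll\tilde{x}\ll\tilde{x}'\ll x'$), then taking $l$ as a common denominator, and using the full flexibility of \autoref{lifting_th} to define $\psi$ so that each designated $B_l^M$-generator maps precisely to its assigned indicator. A further subtlety is constructing $\theta$ as a bona fide $\Cu$-morphism on all of $L^r$, not merely on the image of $\psi$; this is handled by ensuring, through the unit-lifting coordinates, that $\psi$ is suitably rich, and then extending $\theta$ from its prescribed values on a generating set via the usual limit construction.
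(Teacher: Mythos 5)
Your forward direction is correct and matches the paper. The reverse direction, however, has a genuine gap at the change-of-variables step, and it is precisely the step you flag as the "key difficulty." After applying the discrete hypothesis to $\varphi\psi\colon\LscI^M\to S$ you obtain $\theta_1\colon\LscI^M\to\LscI^N$ with $\theta_1(\bar{x})\ll\theta_1(\bar{y})$, but this is a statement about $\theta_1$ evaluated at specific elements of $\LscI^M$, and $\theta_1$ does not factor through $\psi$. Your sandwich requires $\theta(\tilde{x})=\theta_1(\bar{x})$ and $\theta(\tilde{y})=\theta_1(\bar{y})$ for a $\Cu$-morphism $\theta\colon\LscI^r\to\LscI^N$ that simultaneously satisfies $d(\phi_1\theta,\varphi)<\epsilon$. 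The only natural candidate is $\theta=\theta_1\circ\iota$ with $\iota$ the inclusion of $\LscI^r$ into the unit slots of $\LscI^M$; this does give (i) and (iii), but then $\theta(\tilde{x})=\theta_1(\iota(\tilde{x}))$ and $\theta_1(\bar{x})$ are values of $\theta_1$ at two elements of $\LscI^M$ supported in disjoint blocks, with no order relation between them, so nothing forces $\theta(x)\ll\theta(y)$. Prescribing $\theta$ on the sparse set $\{\tilde{x},\tilde{x}',\tilde{y},1_1,\dots,1_r\}$ and "extending by the usual limit construction" does not produce a well-defined $\Cu$-morphism, and there is no reason the prescribed values would be compatible with closeness to $\varphi$ on all of $\LscI^r$.

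The missing ingredient is the paper's reduction \autoref{Further_red} (built on \autoref{Basic_reduc}): one first shows that it suffices to verify the conditions of \autoref{Main_OneRel} for triples of basic \emph{increasing} elements, i.e., finite sums of $1_s$ and $\chi^s_{(t,1]}$. Such elements (after an innocuous perturbation of the endpoints to rationals, using $x\ll x'$ and $\varphi(x')\ll\varphi(y)$) already lie in $B_l^M$ for a suitable common denominator $l$ with $2/l<\epsilon$, in the \emph{same} coordinates as $\varphi$, so the discrete hypothesis applies directly to $\varphi$ with no auxiliary $\psi$ and the transported-morphism problem never arises. That reduction is itself nontrivial — it uses \autoref{Basic_reduc} to trade arbitrary basic indicators $\chi_{(s,t)}$, $\chi_{[0,t)}$ for increasing ones by adding auxiliary increasing elements and cancelling them at the end via weak cancellation — and your argument has no substitute for it.
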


\begin{exa} 
 Let $\mathcal{Z}$ be the Jiang-Su algebra (as introduced in \cite{JiaSu99}) and let $Z$ be its Cuntz semigroup. It is known that $Z$ is $\Cu$-isomorphic to $\NN \sqcup (0,\infty ]$, where the elements in $\NN$ are compact and the ones in $(0,\infty ]$ are not. For more details, see \cite[Paragraph~7.3.2]{AntoPereThie18}.
 
 We will show that $Z$ does not satisfy the conditions in \autoref{Triangular_Discrete}. This is trivial since $\mathcal{Z}$ is not an AI-algebra, but we give here an explicit proof.
 
 Let $l\geq 3$ and let $\varphi\colon \LscI\to Z$ be any $\Cu$-morphism such that $\varphi (\chi_{(k/l,1]})=1'-k/l$ and $\varphi (1)=1$. At least one such morphism exists by \autoref{lifting_th}, since $(1'-k/l)_{k}\subset Z$ is a rapidly decreasing sequence bounded by the compact $1$.
 
 We have that $\varphi (1)=1\ll 3/2=3\varphi (\chi_{(1/2,1]})$. Then, if $Z$ were to satisfy the previous theorem, we would get morphisms $\theta\colon\LscI\to\LscI^{N}$ and $\phi\colon\LscI^{N}\to Z$ such that $d(\phi\theta ,\varphi)<1/2$, $\theta(1)\ll \theta (3\chi_{(1/2,1]} )$ and $\phi\theta (1)=\varphi (1)$.
 
 Note that one also has $\phi(\theta (1)\wedge 1)=\varphi (1)=1$, where the infimum is taken componentwise. Moreover,  since $\theta (1)\ll 3\theta (\chi_{(1/2,1]})$, we have $\theta (1)\wedge 1\ll 3\theta (\chi_{(1/2,1]})\wedge 1$ and, consequently, $\text{supp} (\theta (1)\wedge 1)\subset \text{supp} (\theta (\chi_{(1/2,1]})\wedge 1)$. This implies $\theta (\chi_{(1/2,1]})\wedge 1=\theta (1)\wedge 1$.
 
 But now we have
 \[
  \phi(\theta (\chi_{(1/2,1]})\wedge 1)=\phi (\theta (1)\wedge 1)=\varphi (1)\gg \varphi (\chi_{(0,1]})\gg \phi\theta (\chi_{(1/2,1]})
  \geq \phi(\theta (\chi_{(1/2,1]})\wedge 1),
 \]
where in the second inequality we have used $d(\varphi ,\phi\theta)<1/2$.

We obtain that $\varphi (\chi_{(0,1]})=1'$ is compact, a contradiction.
\end{exa}

Recall from \autoref{basic_ind} that an element in $\LscI$ is said to be basic if it can be written as a finite sum of basic indicator functions. In particular, a basic element $f$ satisfies $\min (f)=f(0)$ if and only if it can be written as a finite sum of elements of the form $1,\chi_{(\cdot ,1]}$ and $\chi_{(\cdot ,\cdot )}$. Similarly, $f$ is increasing if and only if it can be written as a finite sum of elements of the form $1$ and $\chi_{(\cdot ,1]}$.

\begin{lma}\label{Basic_reduc}
 Let $x\ll y$ be basic elements in $\LscI$. Then, there exists basic increasing elements $a$ and $d$ such that $x+a\ll d\ll y+a$.
 
 The same holds, component-wise, for elements $x\ll y$ in $\LscI^{M}_{\ll}$ for every $M\in\mathbb{N}$.
\end{lma}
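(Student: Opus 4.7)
My plan is to prove the lemma for $\LscI$; the componentwise statement for $\LscI^{M}$ then follows immediately. The first step would be to reduce to the case where $y=\chi_V$ for a single connected open interval $V\subseteq[0,1]$. Writing $y=\sum_{k\geq 1}\chi_{V_k}$ and $x=\sum_{k\geq 1}\chi_{U_k}$ for the superlevel sets $V_k=\{y\geq k\}$ and $U_k=\{x\geq k\}$, the relation $x\ll y$ unpacks to $U_k\Subset V_k$ in $[0,1]$ for every $k$. Since each connected component of $U_k$ has compact closure contained in $V_k$, it sits inside a unique component $V_{k,j}$ of $V_k$, yielding a finer decomposition $x=\sum_{k,j}\chi_{U^{*}_{k,j}}$ with $U^{*}_{k,j}\Subset V_{k,j}$. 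If I can exhibit increasing basic elements $a_{k,j},d_{k,j}$ as required for each pair $(\chi_{U^{*}_{k,j}},\chi_{V_{k,j}})$ (taking $a_{k,j}=d_{k,j}=0$ if $U^{*}_{k,j}=\emptyset$), then $a:=\sum a_{k,j}$ and $d:=\sum d_{k,j}$ remain increasing basic, and the chain $x+a\ll d\ll y+a$ follows by axiom (O3).

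In the single-interval case, $V$ has one of the four forms $(s,1]$, $[0,1]$, $(s,t)$, $[0,t)$, and $U\Subset V$ is a finite disjoint union of open intervals. If $V$ contains $1$, I take $a:=0$; then $d:=\chi_{(s',1]}$ with $s<s'<\inf\overline{U}$ works for $V=(s,1]$, while $d:=1$ works for $V=[0,1]$. Both $\ll$-relations reduce to straightforward containments of closed intervals in open ones, using compactness of $1$ in the second case. If $V$ does not contain $1$, I let $b_0:=\sup\overline{U}<t$, pick $\epsilon>0$ with $b_0+\epsilon<t$, and set $a:=\chi_{(b_0+\epsilon,1]}$. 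This ``lifts'' the target: $\{\chi_V+a\geq 1\}$ becomes either $[0,1]$ (when $V=[0,t)$) or $(s,1]$ (when $V=(s,t)$). The matching $d$---either $1$ or $\chi_{(s',1]}$ with $s<s'<\inf\overline{U}$---sandwiches between $\chi_U+a$ and $\chi_V+a$; since $U$ is disjoint from $(b_0+\epsilon,1]$, the function $\chi_U+a$ is $\{0,1\}$-valued, so the verification reduces to a direct level-set check.

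The hard part will be the subcase $V=(s,t)$ with $t<1$, where neither $\chi_V$ nor $\chi_V+a$ is increasing and so $d$ must be chosen non-constant. The choice $d=\chi_{(s',1]}$ must satisfy both $\overline{U}\subseteq(s',1]$ (so that $\chi_U+a\ll d$) and $s'>s$ (so that $d\ll\chi_V+a$); this is possible precisely because the compact containment $U\Subset V$ forces $\inf\overline{U}>s$. The remaining work---summing the $a_{k,j}$ and $d_{k,j}$ over the decomposition and verifying the chain via (O3)---is routine bookkeeping.
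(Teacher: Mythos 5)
Your proposal is correct and follows essentially the same route as the paper: reduce to the case where $y$ is a single basic indicator $\chi_V$ with $x=\chi_U$ for $U\Subset V$ a finite disjoint union of intervals, case-split on the four forms of $V$, take $a=0$ when $1\in V$ and $a=\chi_{(b,1]}$ with $\sup\overline{U}<b<t$ when $V$ ends at $t<1$, choose $d$ to be $1$ or $\chi_{(s',1]}$ accordingly, and sum the pieces using \axiomO{3}. The only cosmetic difference is that you obtain the decomposition of the pair $x\ll y$ via superlevel sets and connected components, whereas the paper simply asserts the matching decomposition $x=\sum_i\chi_{U_i}$, $y=\sum_i\chi_{V_i}$ with $\chi_{U_i}\ll\chi_{V_i}$; your version is a more explicit justification of the same step.
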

\begin{proof}
 We will first prove the following claim.
 
\noindent\textbf{Claim} \textit{Given a basic element $x$ and a basic indicator $y$ such that $x\ll y\leq 1$ there exist basic increasing elements $a,d$ such that $x+a\ll d\ll y+a$.}
\begin{proof}
Write $x=\sum_{i} \chi_{U_{i}}$ and $y=\chi_{V}$ with $U_{i}, V$ intervals. Note that, since $x\leq 1$, we may assume the intervals $U_{i}$ to be pairwise disjoint.

If $V=[0,1]$ (i.e. $y=1$), we can take $a=0$ and $d=y$, so we assume otherwise.

If $V$ is of the form $[0,t)$ for some $t$, let $\epsilon >0$ be small enough so that $\sqcup_{i}U_{i}\Subset [0,t-\epsilon )$. Set $d=1$ and $a=\chi_{(t-\epsilon ,1]}$. One clearly has $\sum_{i}\chi_{U_{i}}+a\ll d\ll \chi_{V}+a$.

If there exists some $s$ so that $V=(s,1]$, let $\epsilon>0$ be such that $\sum_{i}\chi_{U_{i}}\ll \chi_{(s+\epsilon,1]}$. Set $a=0$ and $d=\chi_{(s+\epsilon,1]}$. 

Finally, if $V=(s,t)$ for some $s<t$, take $\epsilon,\delta >0$ small enough so that
\[
 \sqcup_{i}U_{i}\Subset (s+\epsilon,1], \andSep (\sqcup_{i}U_{i})\cap (t-\epsilon ,1]=\emptyset.
\]

Set $a=(t-\epsilon ,1]$ and $d=(s+\epsilon,1]$. By construction, one gets $\sum_{i}\chi_{U_{i}}+a\ll d\ll y+a$ as required.
\end{proof}

Given two basic elements $x\ll y$ in $\LscI$, we know that these can be written as $x=\sum_{i}\chi_{U_{i}}$ and $y=\sum_{i}\chi_{V_{i}}$ with $\chi_{U_{i}}\ll \chi_{V_{i}}$, where $\chi_{U_{i}}$ is a basic element (since $U_{i}$ may not be an interval) and $\chi_{V_{i}}$ is a basic indicator function.

For every $i$, the previous claim gives us basic increasing elements $a_{i},d_{i}$ such that 
\[
\chi_{U_{i}}+a_{i}\ll d_{i}\ll \chi_{V_{i}}+a_{i}.
\]

Set $a=\sum_{i} a_{i}$ and $d=\sum_{i} d_{i}$, where note that $x+a\ll d\ll y+a$. Since every sum of basic increasing elements is again basic and increasing, the result follows.

 To see that the same result holds in $\LscI^{M}$ for every $M$, simply apply everything componentwise.
\end{proof}
\begin{prp}\label{Further_red}
 Let $S$ be a countably based and compactly bounded $\Cu$-semigroup satisfying (O5) and weak cancellation.

 If conditions (i)-(iii) of \autoref{Main_OneRel} are satisfied for every \CuMor{} $\varphi$, for every $\epsilon>0$ and any triple of basic increasing elements $a,a',b$ such that $a\ll a'$ with $\varphi (a')\ll \varphi (b)$, then $S$ is $\Cu$-isomorphic to the Cuntz semigroup of an AI-algebra.
\end{prp}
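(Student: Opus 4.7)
The plan is to verify the full hypothesis of \autoref{Main_OneRel} from the restricted hypothesis of this proposition, from which the conclusion follows immediately. So fix a \CuMor{} $\varphi\colon \LscI^r \to S$, an $\epsilon > 0$, and a triple $(x, x', y)$ in $\LscI^r$ with $x \ll x'$ and $\varphi(x') \ll \varphi(y)$; the aim is to construct $\theta$ and $\phi$ satisfying conditions (i)--(iii) of \autoref{Main_OneRel}. The first step is to interpolate basic elements: using (O2), find basic $u \ll u'$ in $\LscI^r$ with $x \ll u \ll u' \ll x'$, and, using that basic elements are $\ll$-dense beneath $y$ together with $\varphi(u') \ll \varphi(x') \ll \varphi(y)$, find basic $v \ll v' \ll y$ with $\varphi(u') \ll \varphi(v)$.

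Next, apply \autoref{Basic_reduc} componentwise to the pairs $u \ll u'$ and $v \ll v'$ to obtain basic increasing elements $c_1, d_1, c_2, d_2$ in $\LscI^r$ with
\[
 u + c_1 \ll d_1 \ll u' + c_1 \andSep v + c_2 \ll d_2 \ll v' + c_2.
\]
Setting $c := c_1 + c_2$, the basic increasing elements $a := d_1 + c_2$ and $b := d_2 + c_1$ satisfy $a \geq u + c \geq x + c$ and $b \leq v' + c \leq y + c$, together with the chain $\varphi(a) \leq \varphi(u' + c) \leq \varphi(v + c) \leq \varphi(b)$ (using $\varphi(u') \leq \varphi(v)$ from $\varphi(u') \ll \varphi(v)$, and $v + c_2 \leq d_2$ from $v + c_2 \ll d_2$). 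To upgrade this to an admissible input for the hypothesis, I would apply \autoref{Basic_reduc} once more to the basic pair $d_1 \ll u' + c_1$ to produce basic increasing $c_3$ and $d_3$ with $d_1 + c_3 \ll d_3 \ll u' + c_1 + c_3$, and combine this with axiom (O3) and interpolants $c_i^- \ll c_i$ among basic increasing elements to promote the relevant $\leq$-relations to $\ll$-relations; this yields a basic increasing triple $(a, a', b)$ (with $a$ and $b$ possibly enlarged by basic increasing terms) satisfying $a \ll a'$, $\varphi(a') \ll \varphi(b)$, and still obeying $a \geq x + c$ and $b \leq y + c$ for a common basic increasing $c$.

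Applying the proposition's hypothesis to $(a, a', b)$, the morphism $\varphi$, and the tolerance $\epsilon$ produces \CuMor{s} $\theta\colon \LscI^r \to \LscI^s$ and $\phi\colon \LscI^s \to S$ with $d(\phi\theta, \varphi) < \epsilon$, $\theta(a) \ll \theta(b)$, and $\varphi(1_j) = \phi\theta(1_j)$ for every $j$. Chaining the resulting inequalities on the image side gives
\[
 \theta(x) + \theta(c) \;=\; \theta(x + c) \;\leq\; \theta(a) \;\ll\; \theta(b) \;\leq\; \theta(y + c) \;=\; \theta(y) + \theta(c).
\]
Since $\LscI$ is the Cuntz semigroup of $C[0,1]$, a \Cs{} of stable rank one, $\LscI$ and hence $\LscI^s$ are weakly cancellative; applying weak cancellation in $\LscI^s$ strips off $\theta(c)$ and yields $\theta(x) \ll \theta(y)$, which is condition (ii) of \autoref{Main_OneRel}. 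Conditions (i) and (iii) are inherited verbatim from the hypothesis, so \autoref{Main_OneRel} applies and $S$ is $\Cu$-isomorphic to the Cuntz semigroup of an AI-algebra.

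The main obstacle is the bookkeeping in the construction of the triple $(a, a', b)$: the relation $a \ll a'$ required by the hypothesis forces one to weaken certain auxiliary terms from $c_i$ to strict interpolants $c_i^- \ll c_i$ via (O3), whereas the final weak cancellation step demands that the \emph{same} auxiliary $c$ appears on both the $a$-side and the $b$-side so that no spurious remainder of the form $\theta(c_i) - \theta(c_i^-)$ survives. Arranging where each strict interpolant is inserted, and possibly invoking an additional \autoref{Basic_reduc} step or an iterative refinement, is the technical heart of the proof.
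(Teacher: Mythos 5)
Your overall strategy coincides with the paper's: interpolate basic elements, use \autoref{Basic_reduc} to trade the given triple for a triple of basic increasing elements at the cost of a common additive term $c$, feed that triple to the hypothesis, and strip $\theta(c)$ off at the end by weak cancellation in $\LscI^s$. The final cancellation step and the inheritance of conditions (i) and (iii) are handled correctly.

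However, the step you yourself flag as ``the technical heart'' is a genuine gap, and it is exactly the step the paper has to work for. After your first application of \autoref{Basic_reduc} you only have $\varphi(a)\leq\varphi(b)$, whereas the hypothesis needs a basic increasing $a'$ with $a\ll a'$ and $\varphi(a')\ll\varphi(b)$; the strict containment cannot be produced by adding the same auxiliary term to both sides, since $\varphi(u')\ll\varphi(v)$ together with $w\leq w$ only yields $\varphi(u')+w\leq\varphi(v)+w$ --- axiom (O3) requires compact containment in \emph{both} summands. The paper resolves this with retractions (\autoref{retract_notation}) rather than a further round of \autoref{Basic_reduc}: writing $x+a\ll d\ll x'+a$ and $y'+b\ll f\ll y+b$ (with $y'\ll y$ chosen so that $\varphi(x')\ll\varphi(y')$), it replaces $x'+a$ by $x'+R_{2t}(a)$ and $y+b$ by $y+R_{2r}(b)$, and then gets $\varphi(d+b)\leq\varphi(x'+R_{2t}(a)+b)\leq\varphi(y'+R_{2t}(a)+b)\ll\varphi(f+R_{t}(a))$, the last step being (O3) applied to the two genuine compact containments $y'+b\ll f$ and $R_{2t}(a)\ll R_{t}(a)$. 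The admissible triple is $\bigl(R(d)+R_{r}(b),\ d+b,\ f+R_{t}(a)\bigr)$, and after applying the hypothesis one obtains $\theta(x+a+R_{2r}(b))\ll\theta(y+a+R_{2r}(b))$ with the \emph{same} term $a+R_{2r}(b)$ on both sides, so weak cancellation gives $\theta(x)\ll\theta(y)$. Your proposed iteration with interpolants $c_i^-\ll c_i$ does not obviously terminate in such a configuration; the retraction chain $R_{2t}(a)\ll R_{t}(a)\leq a$ inside the already-constructed increasing elements is the device that closes it.
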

\begin{proof}
Let $\varphi\colon \LscI^{r}\to S$ be a $\Cu$-morphism and take $\epsilon >0$.

 Also let $x,x',y$ be basic elements with $x\ll x'$ and $\varphi (x')\ll \varphi (y)$. We will prove that conditions (i)-(iii) in \autoref{Main_OneRel} hold for these elements. Since basic elements are sup-dense, this will show that $S$ is $\Cu$-isomorphic to the Cuntz semigroup of an AI-algebra.
 
 Take $y'\ll y$ with $\varphi (x')\ll \varphi (y')$. Now apply \autoref{Basic_reduc} to obtain basic increasing elements $d,f,a,b$ such that
 \[
   x+a\ll d\ll x'+a, \andSep y'+b\ll f\ll y+b,
 \]
where by an increasing element we mean an element that is increasing in each component.
 
 Thus, there exists $r,t>0$ such that $R_{2t}(a),R_{2r}(b)$ are still basic increasing elements with
 \[
   x+a\ll R(d)\ll d \leq x'+R_{2t}(a),\andSep 
   y'+b\ll f\leq y+R_{2r}(b)
 \]
for some retraction $R(d)$ of $d$.
 
Thus, we have the following
\[
 \varphi (d+b)\leq \varphi (x'+R_{2t}(a)+b)\leq 
 \varphi (y'+R_{2t}(a)+b) \ll \varphi (f+R_{t}(a)).
\]

Note that the elements $R(d)+R_{r}(b)$, $d+b$ and $f+R_{t}(a)$ are a triple of basic increasing elements such that
\[
 R(d)+R_{r}(b)\ll d+b,\andSep \varphi (d+b)\ll \varphi (f+R_{t}(a)).
\]

Thus, we get by assumption that there exist $\Cu$-morphisms $\theta$ and $\phi$ satisfying conditions (i)-(iii) in \autoref{Main_OneRel} for $x=R(d)+R_{r}(b)$ and $y=f+R_{t}(a)$. Using this at the second step, one obtains
\[
 \theta (x+a+R_{2r}(b))\ll \theta (R(d)+R_{r}(b))
 \ll \theta (f+R_{t}(a))
 \leq \theta (y+R_{2r}(b) +a).
\]

Applying weak cancellation to the previous inequality, we get $\theta (x)\ll \theta (y)$ as required.
\end{proof}
\begin{proof} 

[of \autoref{Triangular_Discrete}]

The forward implication follows trivially from \autoref{Main_OneRel} by taking $\epsilon <1/l$ and applying \autoref{basic_ind_dist} to prove condition (i).

To show the converse, we know by \autoref{Further_red} that it is enough to show that  conditions (i)-(iii) of \autoref{Main_OneRel} are satisfied for every morphism $\varphi\colon\LscI^M\to S$, for every $\epsilon>0$ and every triple of basic increasing elements $x,x',y$ such that $x\ll x'$ with $\varphi (x')\ll \varphi (y)$.

 Let $l\in\NN$ be large enough such that $2/l<\epsilon$ and $x,x',y\in B^M_l$. By assumption, we obtain $\Cu$-morphisms $\phi$ and $\theta$ satisfying conditions (i)-(iii) in \autoref{Triangular_Discrete}. In particular, condition (i) states that
 \[
  \varphi (\chi^{s}_{((i+1)/l,1]})\ll \phi\theta(\chi^{s}_{(i/l,1]}),\andSep 
  \phi\theta(\chi^{s}_{((i+1)/l,1]})\ll \varphi (\chi^{s}_{(i/l,1]})
 \]
for every $\chi^{s}_{(i/l,1]}\in C_{l}^{M}$.

Thus, it follows from \autoref{Partition} that $d(\varphi ,\phi\theta )< 1/l+1/l=2/l<\epsilon$. This shows that condition (i) of \autoref{Main_OneRel} is satisfied.

Conditions (ii) and (iii) from \autoref{Main_OneRel} and \autoref{Triangular_Discrete} coincide. Using \autoref{Further_red}, we have that $S$ is $\Cu$-isomorphic to the Cuntz semigroup of an AI-algebra, as desired.
\end{proof}

\section{An abstract characterization}\label{sec:AbsCha}

The aim of this section is to provide an abstract characterization for the Cuntz semigroups of AI-algebras using \autoref{MainTh}. The property used in this characterization will be Property $\rm{I}$, as defined below;  see \autoref{PropertyI}. One could possibly use \autoref{MainTh} to prove other, maybe simpler, characterizations; see \autoref{qst:SimplerCharac}.

\Msubsection{The sets $\Omega_{n}$ and $X_{n}$}{The sets On and Xn}
Let $\Omega_{n}=\{ -\infty ,0,\cdots ,n,\infty \}$. For $\alpha ,\alpha'\in\Omega_n$, we define $\alpha '\prec \alpha$ if $\alpha'=\alpha=-\infty$, or $\alpha '=\alpha=\infty$ or $\alpha '\leq \alpha$ with $\alpha '\neq \alpha$

Let $\text{udiag}(\Omega_{n}\times \Omega_{n})$ be the subset of $\Omega_{n}\times \Omega_{n}$ consisting of the pairs $(\alpha,\beta)$ with $\alpha\lneq \beta$. Define $X_{n}$ as  the free abelian monoid on $\text{udiag}(\Omega_{n}\times \Omega_{n})$. In $X_{n}$ we denote the unit by $(0,0)$.

\vspace{0.2cm}
Given $w,(\alpha ,\beta)\in X_{n}$, write $w\prec (\alpha,\beta)$ if and only if $w=(0,0)$ or else there exist elements $(\alpha_{i},\beta_{i})$ in $\text{udiag}(\Omega_{n}\times \Omega_{n})$ such that 
\[
\alpha\prec \alpha_{1}\prec \beta_{1}\prec \alpha_{2}\prec \cdots \prec \alpha_{m}\prec \beta_{m}\prec \beta
\]
and $w=\sum (\alpha_{i},\beta_{i})$. In particular, $(\alpha ',\beta ')\prec (\alpha ,\beta )$ if and only if $\alpha\prec \alpha '\prec \beta '\prec \beta$.

Also, set $(0,0)\prec (0,0)$.

More generally, given $w,v\in X_{n}$, we write $w\prec v$ if there exist (possibly repeated and zero) elements $w_{i}\in X_{n}$ and $(\alpha_{i},\beta_{i})\in \text{udiag}(\Omega_{n}\times \Omega_{n})\cup\{ (0,0)\}$  such that $w=\sum w_{i}$, $v=\sum (\alpha_{i},\beta_{i} )$ and $w_{i}\prec (\alpha_{i},\beta_{i} )$ as above for every $i$.

Note that $\prec$ is trivially compatible with addition.

\begin{lma}\label{Max_Elements}
 Let $n\in \mathbb{N}$ and let $w,v,(\alpha ,\beta )$ be elements in $X_n$. We have
 \begin{enumerate}[(i)]
 \item If $w=\sum (\gamma_j ,\delta_j)\prec (\alpha ,\beta )$, there exists at most one $\gamma_j$ with $\alpha= \gamma_j=-\infty $ (resp. at most one $\delta_j$ with $\beta =\delta_j=\infty $).
  \item $w+(-\infty ,\infty)\prec (-\infty ,\infty )$ if and only if $w=(0,0)$.
  \item $w+(-\infty ,\infty )\prec v+(-\infty ,\infty )$ if and only if $w\prec v$.
 \end{enumerate}
\end{lma}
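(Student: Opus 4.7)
The plan is to prove the three claims in order, since (ii) uses (i) and (iii) uses both.

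For (i), the strategy is to analyze the chain structure. By uniqueness of decomposition in the free abelian monoid, the expression $w=\sum(\gamma_j,\delta_j)$ must match (up to reindexing) the chain decomposition $w=\sum(\alpha_i,\beta_i)$ with $\alpha\prec\alpha_1\prec\beta_1\prec\alpha_2\prec\cdots\prec\beta_m\prec\beta$. I would then observe that whenever $\beta_i\prec\alpha_{i+1}$ with $\beta_i>-\infty$ (which holds as soon as $i\geq 1$, because $(\alpha_i,\beta_i)\in\text{udiag}$ forces $\beta_i>\alpha_i$), the definition of $\prec$ between \emph{distinct} elements forces $\alpha_{i+1}>\beta_i>-\infty$. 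Hence only $\alpha_1$ can possibly equal $-\infty$ (and only when $\alpha=-\infty$, by the self-relation). The symmetric argument handles $\delta_j=\infty$.

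For (ii), the backward direction is immediate from the self-relations $-\infty\prec-\infty$ and $\infty\prec\infty$. For the forward direction, write $w=\sum(\gamma_j,\delta_j)$. In the combined decomposition $w+(-\infty,\infty)$, the summand $(-\infty,\infty)$ already accounts for the unique allowed factor with first coordinate $-\infty$ (by (i) applied to the chain from $-\infty$ to $\infty$), so no $\gamma_j$ may be $-\infty$, and symmetrically no $\delta_j$ may be $\infty$. In the chain $\alpha_1\prec\beta_1\prec\cdots\prec\beta_m\prec\infty$, the unique index with $\alpha_i=-\infty$ must be $i=1$; but then the corresponding $\beta_1$ must equal $\infty$ (it is the unique factor with that property), forcing $m=1$ and $w=(0,0)$.

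For (iii), the backward implication follows by taking a witnessing decomposition $w=\sum w_i$, $v=\sum(\alpha_i,\beta_i)$ with $w_i\prec(\alpha_i,\beta_i)$ and appending the new relation $(-\infty,\infty)\prec(-\infty,\infty)$. The forward direction is the main obstacle, because the relation $w_i\prec(\alpha_i,\beta_i)$ does not obviously let us ``cancel'' the added generator. The key lemma I would extract is: if $(\alpha_i,\beta_i)\neq(-\infty,\infty)$, then $(-\infty,\infty)$ cannot appear as a summand of $w_i$. Indeed, by (i) this would force $\alpha_i=-\infty$ and $\beta_i=\infty$. Combined with (ii), whenever $i$ is such that $(\alpha_i,\beta_i)=(-\infty,\infty)$ and $(-\infty,\infty)$ does appear in $w_i$, one has $w_i=(-\infty,\infty)$ exactly. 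A multiplicity count of $(-\infty,\infty)$ on both sides of $\sum w_i = w+(-\infty,\infty)$ then shows at least one index $i_0$ has $(\alpha_{i_0},\beta_{i_0})=(-\infty,\infty)=w_{i_0}$; removing this matched pair from the two sums yields the desired witnessing decomposition of $w\prec v$.
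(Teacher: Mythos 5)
Your proof is correct and follows essentially the same route as the paper: you use the chain structure to show that a first coordinate equal to $-\infty$ (resp.\ a second coordinate equal to $\infty$) can only occur at the first (resp.\ last) position, deduce that a summand $(-\infty,\infty)$ collapses the whole chain, and then cancel the distinguished copy of $(-\infty,\infty)$ to obtain (iii). Your multiplicity-count phrasing in (iii) is just a cleaner wording of the paper's ``assume the added $(-\infty,\infty)$ lands in $w_1$'' step, so no substantive difference.
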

\begin{proof}
 For (i), we may assume that $w=\sum_{j\leq m} (\gamma_j ,\delta_j)\prec (\alpha ,\beta )$ with $\alpha\prec\gamma_{1}\prec\cdots\prec \delta_{m}\prec\beta$ and $(\gamma_j ,\delta_j)\neq (0,0)$ for every $j$. If some $\gamma_j$ is such that $\gamma_j=-\infty$, we have 
 \[
 \alpha=\gamma_1=\delta_1=\cdots =\gamma_j=-\infty.
 \]
 However, we know that $\gamma_1\neq \delta_1$. This implies that $j=1$ and, consequently, that $\gamma_j$ is unique.
 
 An analogous argument proves that if $\delta_j=\infty$ for some $j$, we must have $j=m$.
 
 In particular this implies that, whenever $w\prec (\alpha ,\beta)$, $w$ has a summand of the form $(-\infty ,\infty )$ if and only if $w=(-\infty ,\infty )=(\alpha ,\beta )$.
 
 Thus, it follows that $w+(-\infty ,\infty )\prec (-\infty ,\infty )$ if and only if $w=(0,0)$.
 
 Finally, to prove (iii), let $v'=v+(-\infty ,\infty )$ and write $v'=\sum_{i\leq k}(\alpha_i ,\beta_i )$ and $w=\sum_{i\leq k} w_i$ in such a way that $w_{1}+(-\infty ,\infty )\prec (\alpha_1,\beta_1)$ and $w_i\prec (\alpha_i,\beta_i)$ for $i\geq 2$.
 
 From the first inequality and the definition of $\prec$ we have $w_1=(0,0)$ and $(\alpha_1,\beta_1)=(-\infty ,\infty )$. Therefore, we get $v=\sum_{i\geq 2}(\alpha_i ,\beta_i)$.
 
 Using that $\prec$ is trivially compatible with addition, it follows that 
 \[
 w=\sum_{i\geq 1}w_i=\sum_{i\geq 2}w_i\prec \sum_{i\geq 2}v_i=v
 \]
 as required.
\end{proof}

\begin{lma}\label{lma_prec_properties}
 With the above notation, $\prec $ is a transitive and antisymmetric relation.
\end{lma}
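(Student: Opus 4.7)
The plan is to prove transitivity by concatenating chain decompositions, and antisymmetry by exhibiting a strictly monotone numerical invariant $\mu$ on $X_n$.

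First I would verify that the relation $\prec$ on $\Omega_n$ itself is transitive. Unpacking the definition, $a\prec b$ means either $a<b$ strictly in the standard order on $\Omega_n$ or $a=b\in\{-\infty,\infty\}$; a brief case-split on whether consecutive endpoints coincide handles transitivity.

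For transitivity on $X_n$, suppose $u\prec w\prec v$. Write $v=\sum_i v_i$ for its (essentially unique) free-monoid decomposition with $v_i\in\text{udiag}(\Omega_n\times\Omega_n)$, and pick $w=\sum_i w_i$ with $w_i\prec v_i$. Writing $v_i=(\alpha_i,\beta_i)$, each nonzero $w_i$ unfolds as a chain $w_i=\sum_k(\alpha_i^{(k)},\beta_i^{(k)})$ with $\alpha_i\prec\alpha_i^{(1)}\prec\beta_i^{(1)}\prec\cdots\prec\beta_i^{(m_i)}\prec\beta_i$; this displays the atomic decomposition $w=\sum_{i,k}(\alpha_i^{(k)},\beta_i^{(k)})$, and the hypothesis $u\prec w$ now supplies $u=\sum_{i,k}u_{i,k}$ with $u_{i,k}\prec(\alpha_i^{(k)},\beta_i^{(k)})$. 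Setting $u_i:=\sum_k u_{i,k}$, I would concatenate the inner chain witnessing each $u_{i,k}\prec(\alpha_i^{(k)},\beta_i^{(k)})$ into the outer chain for $w_i$, using transitivity of $\prec$ on $\Omega_n$ to bridge consecutive indices $\beta_i^{(k)},\alpha_i^{(k+1)}$ and to accommodate any blocks with $u_{i,k}=(0,0)$. The resulting chain witnesses $u_i\prec v_i$, and summing over $i$ gives $u\prec v$.

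For antisymmetry, I would introduce a strict monovariant. Identify $\Omega_n$ with the integer interval $\{-1,0,1,\ldots,n+1\}$ via the evident order-preserving bijection $\iota$, define $\mu((\alpha,\beta)):=\iota(\beta)-\iota(\alpha)$ on $\text{udiag}(\Omega_n\times\Omega_n)$ and $\mu((0,0)):=0$, and extend $\mu$ additively to $X_n$ (well-defined by freeness). The key atom-level claim is: for $v=(\alpha,\beta)\in\text{udiag}(\Omega_n\times\Omega_n)$ and $u\prec v$ with $u=\sum_k(\alpha_k,\beta_k)$, the chain $\alpha\prec\alpha_1\prec\beta_1\prec\cdots\prec\beta_m\prec\beta$ gives under $\iota$ a weakly increasing sequence in $\mathbb{Z}$, whence by a telescoping computation
\[
\mu(v)-\mu(u)=(\iota(\alpha_1)-\iota(\alpha))+\sum_{k=1}^{m-1}(\iota(\alpha_{k+1})-\iota(\beta_k))+(\iota(\beta)-\iota(\beta_m))\geq 0.
\]
Equality forces every gap to vanish, and since $\alpha=\alpha_1$ combined with $\alpha\prec\alpha_1$ forces $\alpha=\alpha_1\in\{-\infty,\infty\}$ (and analogously for each interior equality), a short case analysis pins down $u=v$. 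Extending additively over the atomic decomposition of $v$ yields $\mu(u)\leq\mu(v)$ with equality iff $u=v$. Antisymmetry is then immediate: $u\prec v\prec u$ implies $\mu(u)=\mu(v)$, hence $u=v$.

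The main obstacle will be the equality analysis in the atom-level claim, particularly ruling out chains of length $m\geq 2$ when $v=(-\infty,\infty)$. There, any interior equality $\beta_k=\alpha_{k+1}$ would force $\beta_k=\alpha_{k+1}\in\{-\infty,\infty\}$, but combining this with the requirement $\alpha_k\neq\beta_k$ and $\alpha_{k+1}\neq\beta_{k+1}$ inherent to membership in $\text{udiag}$ produces a contradiction at one of the endpoints, leaving only the single-atom case $u=v=(-\infty,\infty)$.
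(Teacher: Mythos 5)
Your proof is correct. The transitivity half is essentially the paper's argument: both reduce to the atomic decomposition of the middle element (using freeness of $X_n$ to match the two decompositions of $w$), splice the inner chains witnessing $u_{i,k}\prec(\alpha_i^{(k)},\beta_i^{(k)})$ into the outer chain for $w_i\prec v_i$, and invoke transitivity of $\prec$ on $\Omega_n$ to bridge the junctions and absorb the $(0,0)$ blocks. For antisymmetry you take a genuinely different route. The paper argues combinatorially: it first settles the case where one side is a single atom using \autoref{Max_Elements}, then inducts on the number of summands, building an ordering $i_1,\dots,i_n$ whose associated chain of atoms must close into a cycle $(\alpha_{i_l},\beta_{i_l})\prec(\gamma_{j_l},\delta_{j_l})\prec(\alpha_{i_l},\beta_{i_l})$, which forces a $(-\infty,\infty)$ summand that is then cancelled via \autoref{Max_Elements}~(iii). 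Your additive invariant $\mu$ replaces all of this: the telescoping identity shows $\mu$ is non-decreasing along $\prec$ at the level of atoms (since $a\prec b$ gives $\iota(a)\le\iota(b)$), the equality analysis correctly rules out interior equalities $\beta_k=\alpha_{k+1}$ when $m\ge 2$ and pins down $u=v=(-\infty,\infty)$ when $m=1$, and additivity transports the conclusion to all of $X_n$. This is shorter and avoids the delicate cycle-chasing; as a bonus, applying your equality case to $u\prec u$ immediately recovers the paper's sharper statement that the only $\prec$-self-related elements are the multiples of $(-\infty,\infty)$ (including $0$). What the paper's route buys is that it is developed hand in hand with \autoref{Max_Elements}, whose cancellation statements are reused elsewhere; your invariant is self-contained but does not by itself produce those auxiliary facts.
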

\begin{proof}
First note that, given any two pairs $w_{1}\prec v_{1}$ and $w_{2}\prec v_{2}$, one clearly has $w_{1}+w_{2}\prec v_{1}+v_{2}$.
 
 Thus, let $w\prec v$ and $v\prec u$, which by definition means that $w=\sum w_{i}$, $v=\sum (\alpha_{i},\beta_{i})= \sum_{j}v_{j}$ and $u=\sum_{j}(\gamma_{j},\delta_{j})$ such that
 \[
  w_{i}\prec (\alpha_{i},\beta_{i}),\andSep
  v_{j}\prec (\gamma_{j},\delta_{j})
 \]
for every $i,j$, and where we may assume $(\alpha_{i},\beta_{i}), (\gamma_{j},\delta_{j})$ non-zero for every $i,j$.

Also, for every $w_{i}\neq 0$, write $w_{i}=\sum_{k} (\zeta_{k,i}, \eta_{k,i})$. Define $\min(w_{i})$ and $\max(w_{i})$ as the minimum of the $\zeta_{k,i}$ and the maximum  of the $\eta_{k,i}$ with respect to $\prec$ respectively. Note that such elements exist by the definition of $w_{i}\prec (\alpha_{i},\beta_{i})$.

For every $j$, let $I_{j}$ be such that $\sum_{i\in I_{j}}(\alpha_{i},\beta_{i})=v_{j}$, and define $\tilde{I_{j}}=\{ i\in I_{j}\mid w_{i}\neq 0 \}$. 
Using our first observation, we get 
\[
 \sum_{i\in \tilde{I}_{j}}w_{i}=\sum_{i\in I_{j}}w_{i}\prec \sum_{i\in I_{j}}(\alpha_{i},\beta_{i})\prec (\gamma_{j},\delta_{j}).
\]

Note that if $I_{j}$ is empty, so is $\tilde{I}_{j}$, and so $\sum_{i\in I_{j}}w_{i}=(0,0)\prec (\gamma_{j},\delta_{j})$. Else, if $I_{j}$ is not empty, there exists an ordering $i_{1},\cdots ,i_{r}$ of $\tilde{I}_{j}$ such that
\[
\begin{split}
 \gamma_{j}&\prec
 \alpha_{i_{1}}\prec
 \min(w_{i_{1}})\prec 
 \zeta_{k,i_{1}}\prec \eta_{k,i_{1}}
 \prec
 \max(w_{i_{1}})\prec
 \beta_{i_{1}}\\
 &\prec
 \cdots
 \prec \alpha_{i_{r}}\prec
 \min(w_{i_{r}})\prec 
 \zeta_{k,i_{r}}\prec \eta_{k,i_{r}}
 \prec
 \max(w_{i_{r}})\prec
 \beta_{i_{r}}\prec
 \delta_{j}
 \end{split}
\]

Thus, given $\tilde{w_{j}}=\sum_{i\in I_{j}}w_{i}$, we have $\tilde{w_{j}}\prec (\gamma_{j},\delta_{j})$. Since $w=\sum \tilde{w_{j}}$, it follows that $w\prec u$. This shows that $\prec$ is transitive.

\vspace{0.2cm}
To see that $\prec$ is antisymmetric first note that, by transitivity and (i)-(ii) in \autoref{Max_Elements}, given elements $w=(\alpha ,\beta)$ and $v\in X_{n}$, we have $w\prec v\prec w$ if and only if $w=v=(-\infty , \infty )$ or if $w=v=0$.

Let $w\in X_{n}$ and let $m$ be the number of non-zero summands of $w$. We will now prove by induction on $m$ that $w\prec v\prec w$ if and only if $w=v=m(-\infty ,\infty )$. Thus, assume that for a fixed $m$ we have proven that $w\prec v\prec w$ implies $w=v=(m-1)(-\infty ,\infty )$ for any $w$ having $m-1$ non-zero summands and any $v$.

Take $w=\sum_{i\leq m}(\alpha_{i},\beta_{i})$ and $v=\sum_{j\leq k}(\gamma_{j},\delta_{j})$, where we assume that all summands are non-zero. Then, set $i_{1}=1$ and find $j_{1}$ such that $(\alpha_{i_{1}},\beta_{i_{1}})\prec (\gamma_{j_{1}},\delta_{j_{1}})$. This can be done because $w\prec v$.

Now let $i_{2}$ be such that
\[
 (\alpha_{i_{1}},\beta_{i_{1}})\prec (\gamma_{j_{1}},\delta_{j_{1}})\prec 
 (\alpha_{i_{2}},\beta_{i_{2}}),
\]
which exists because $v\prec w$. Note that, if $i_{2}=i_{1}$, we would have 
\[
 (\alpha_{i_{2}},\beta_{i_{2}})\prec
(\gamma_{j_{1}},\delta_{j_{1}})\prec
(\alpha_{i_{2}},\beta_{i_{2}}).
\]

This in turn would imply $(\alpha_{i_{2}},\beta_{i_{2}})=(\gamma_{j_{1}},\delta_{j_{1}})=(-\infty ,\infty )$. By \autoref{Max_Elements} (iii), we could cancel this summand from $w$ and $v$ and apply induction to conclude that $w=v=m(-\infty ,\infty )$.

Thus, we assume $i_{2}\neq i_{1}$. Following this construction, one obtains an ordering $i_{1},\cdots ,i_{n}$ of $\{ 1,\cdots ,n \}$ and pairwise different integers $j_{1},\cdots ,j_{n}$ such that
\[
 (\alpha_{i_{1}},\beta_{i_{1}})\prec (\gamma_{j_{1}},\delta_{j_{1}})\prec
 \cdots\prec
 (\alpha_{i_{n}},\beta_{i_{n}})\prec (\gamma_{j_{n}},\delta_{j_{n}}).
\]

Since $v\prec w$, there must exists some $s\leq n$ such that $(\gamma_{j_{n}},\delta_{j_{n}})\prec (\alpha_{s},\beta_{s})$ and, since $i_{1},\cdots ,i_{n}$ is an ordering of $\{ 1,\cdots ,n \}$, we must have $s=j_{l}$ for some $l\leq n$.

This implies
\[
 (\alpha_{i_{l}},\beta_{i_{l}})\prec (\gamma_{j_{l}},\delta_{j_{l}})\prec
 (\alpha_{i_{l}},\beta_{i_{l}})
\]
and, by the same argument as before, we get $w=v=m(-\infty ,\infty )$ as desired.
\end{proof}

In $X_{n}$, we write $w\approxeq v$ if $w=v$ or else there exist $z\in X_{n}$ and $\alpha\prec\gamma\prec\beta\prec\delta $ in $\Omega_{n}$ such that
\[
w=z+(\alpha ,\beta)+(\gamma ,\delta ),\andSep  v= z+ (\alpha ,\delta )+(\gamma ,\beta )
\]
or
\[
 v=z+(\alpha ,\beta)+(\gamma ,\delta ) ,\andSep  w= z+ (\alpha ,\delta )+(\gamma ,\beta ).
\]

Note that, with $z=0$, this yields $(\alpha ,\beta)+(\gamma ,\delta )\approxeq (\alpha ,\delta )+(\gamma ,\beta )$.

Given two elements $w,v\in X_{n}$, we write $w\simeq v$ if and only if there exists $w_{1},\cdots ,w_{m}\in X_{n}$ such that $w\approxeq w_{1}\approxeq\cdots\approxeq w_{m}\approxeq v$.

This construction tries to mimic the relation defined in \autoref{Free_basic_el}.

\begin{lma}\label{lma_simeq_properties}
 $\simeq$ is an equivalence relation compatible with addition.
\end{lma}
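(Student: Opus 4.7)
The proof is essentially formal and splits naturally into two tasks: verifying the three equivalence-relation axioms, and verifying compatibility with addition. I would handle both by first reducing everything to the corresponding statement for the single-step relation $\approxeq$, which is the actual content to check.

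First I would observe that $\approxeq$ is \emph{already} symmetric by its very definition, since the defining clause lists the two orderings of the swap $(\alpha,\beta)+(\gamma,\delta) \leftrightarrow (\alpha,\delta)+(\gamma,\beta)$ symmetrically, and it contains the clause $w=v$. From this, the equivalence-relation axioms for $\simeq$ follow immediately: \emph{Reflexivity} $w\simeq w$ is witnessed by the length-zero chain (or $w\approxeq w$). \emph{Symmetry} follows by reversing any chain $w\approxeq w_1\approxeq\cdots\approxeq w_m\approxeq v$ to obtain $v\approxeq w_m\approxeq\cdots\approxeq w_1\approxeq w$. \emph{Transitivity} is obtained by concatenation: if $w\approxeq w_1\approxeq\cdots\approxeq w_m\approxeq v$ and $v\approxeq v_1\approxeq\cdots\approxeq v_k\approxeq u$, then stringing these together yields a chain from $w$ to $u$.

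For compatibility with addition, the key step is to show that $\approxeq$ is itself compatible with $+$. Suppose $w\approxeq v$ and let $u\in X_n$. If $w=v$ then $w+u=v+u$, so we are done. Otherwise we may write (up to swapping the roles of $w$ and $v$, which is allowed by symmetry)
\[
w=z+(\alpha,\beta)+(\gamma,\delta),\qquad v=z+(\alpha,\delta)+(\gamma,\beta)
\]
for some $z\in X_n$ and $\alpha\prec\gamma\prec\beta\prec\delta$ in $\Omega_n$. Setting $z':=z+u$ in $X_n$, the same clause applied with base $z'$ gives
\[
w+u=z'+(\alpha,\beta)+(\gamma,\delta),\qquad v+u=z'+(\alpha,\delta)+(\gamma,\beta),
\]
so $w+u\approxeq v+u$.

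The compatibility of $\simeq$ with addition is then immediate by iteration: given a chain $w\approxeq w_1\approxeq\cdots\approxeq w_m\approxeq v$ witnessing $w\simeq v$, adding $u$ termwise yields $w+u\approxeq w_1+u\approxeq\cdots\approxeq w_m+u\approxeq v+u$, so $w+u\simeq v+u$. There is no real obstacle here; the only point worth checking carefully is that the witness $z$ in the definition of $\approxeq$ can absorb the extra summand $u$, which is a one-line observation in the free abelian monoid $X_n$.
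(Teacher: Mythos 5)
Your proof is correct and follows essentially the same route as the paper: the equivalence-relation axioms come for free from the chain definition, and compatibility reduces to the one-step observation that the witness $z$ in the definition of $\approxeq$ absorbs the extra summand. The only cosmetic difference is that the paper proves the two-variable congruence form ($w\simeq v$ and $w'\simeq v'$ imply $w+w'\simeq v+v'$) directly by padding chains to equal length, whereas you prove the one-variable form $w+u\simeq v+u$; the two are equivalent via the transitivity you have already established.
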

\begin{proof}
 First note that $\simeq$ is transitive,  symmetric and reflexive by construction, so we only need to prove that it is compatible with addition.
 
 To see this, first take $w\approxeq v$ with $w\neq v$, where we may assume that there exist $z\in X_{n}$ and $\alpha\prec\gamma\prec\beta\prec\delta $ in $\Omega_{n}$ such that
 \[
   w=z+(\alpha ,\beta)+(\gamma ,\delta ),\andSep  v= z+ (\alpha ,\delta )+(\gamma ,\beta ).
 \]

 Thus, given any $w'\in X_{n}$, we have
 \[
   w+w'=(z+w')+(\alpha ,\beta)+(\gamma ,\delta ),\andSep  v+w'= (z+w')+ (\alpha ,\delta )+(\gamma ,\beta ),
 \]
 which shows $w+w'\approxeq v+w'$. Trivially, if $w=v$, we also have $w+w'\approxeq v+w'$ for any $w'$.
 
 Therefore, given $w\approxeq v$ and $w'\approxeq v'$, we get $w+w'\approxeq v+w'\approxeq v+v'$, which implies $w+w'\simeq v+v'$.
 
 Now let $w\simeq v$ and $w'\simeq v'$. Then, there exist $w_1,\cdots ,w_m$ and $w'_1,\cdots ,w'_{m'}$ such that
 \[
   w\approxeq w_{1}\approxeq\cdots\approxeq w_{m}\approxeq v, \andSep
   w'\approxeq w'_{1}\approxeq\cdots\approxeq w'_{m'}\approxeq v'.
 \]
 
Since $w\approxeq v$ whenever $w=v$, we may assume $m=m'$. Thus, we have
\[
 w+w'\approxeq w_1+w_1'\approxeq\cdots\approxeq w_m+w_m'\approxeq v+v',
\]
which implies $w+w'\simeq v+v'$ as required.
\end{proof}

\subsection{Chainable subsets}
Given a \CuSgp{} $S$ with weak cancellation, we will say that an additive map $F \colon X_{n}\to S$ is an I-morphism if $F (q)\ll F (t)$ whenever $q\prec t$ and $F(q)=F(t)$ whenever $q\simeq t$.

Note that this implies $F((0,0))+F((-\infty ,\infty))=F((-\infty ,\infty)) \ll F((-\infty ,\infty))$ and, by weak cancellation, $F((0,0))=0$.

\begin{dfn}\label{chainable_subset}
Let $H$ be a subsemigroup of a \CuSgp{} $S$, and let $e$ be a compact element in $H$. We say that $H$ is an $(n,e)$-chainable subset if there exists a surjective I-morphism $F\colon X_{n}\to S$ with $F(X_n)\subset H$ satisfying the following properties:
\begin{enumerate}[(i)]
 \item $F((-\infty ,\infty ))=e$
 \item $F((\alpha ,\beta ))+ F((\beta ,\infty ))\leq F((\alpha ,\infty ))$
 \item For every $m\geq 1$ there exists an I-morphism $F'\colon X_{mn}\to S$ such that $F'((m\alpha ,m\beta ))=F((\alpha ,\beta ))$ for every $\alpha ,\beta\in\Omega_{n}$ with $\alpha\prec\beta$.
\end{enumerate}
\end{dfn}

\begin{rmk}\label{morphisms}
 Given a Cu-morphism $\varphi\colon S\to T$ and an $(n,e)$-chainable subsemigroup $H$ of $S$, it is clear that $\varphi (H)$ is a $(n,\varphi (e))$-chainable subset of $T$.
\end{rmk}

\begin{rmk}
 Let $H$ be a subsemigroup of $S$. Then, $H$ is $(n,e)$-chainable if and only if $H$ is the additive span of a finite subset $H_{0}$, containing and bounded by $e$, such that there exists a surjective map $F_{0}\colon \text{udiag}(\Omega_{n}\times\Omega_{n})\cup\{(0,0)\}\to H_{0}$ satisfying the following properties:
 \begin{enumerate}[(i)]
  \item $F_{0}(-\infty ,\infty )=e$.
  \item $F_{0}((\alpha ,\beta ))+ F_{0}((\beta ,\infty ))\leq F_{0}((\alpha ,\infty ))$.
  \item $F_{0}((\alpha ,\beta ))+F_{0}((\gamma ,\delta ))=F_{0}((\alpha ,\delta ))+F_{0}((\gamma ,\beta))$ whenever $\alpha\prec\gamma\prec\beta\prec\delta $.
  \item $\sum_{i=1}^{m}F_{0}((\alpha_{i},\beta_{i}))\ll F_{0}((\alpha ,\beta))$ whenever $\sum_{i=1}^{m} (\alpha_{i},\beta_{i})\prec (\alpha ,b)$.
  \item For every $m\geq 1$ there exists an I-morphism $F'\colon X_{mn}\to S$ such that $F'((m\alpha ,m\beta ))=F((\alpha ,\beta ))$ for every $\alpha ,\beta\in\Omega_{n}$.
 \end{enumerate}
\end{rmk}

\subsubsection{Examples}
\begin{exa}\label{algebraic}
 The additive span of a compact element $e$ (i.e. the set of finite multiples of $e$) is $(n,e)$-chainable for every $n$.
 \end{exa}
 \begin{proof}
  Given any $n\geq 0$, one can define the additive map $F\colon X_{n}\to H$ as $F((\alpha ,\beta ))=e$ if $\beta =\infty$ and $F((\alpha ,\beta ))=0$ otherwise.

  Using \autoref{Max_Elements} (i), it is easy to see that, if $w\prec v$, then the number of summands of the form $(\alpha ,\infty )$ of $w$ must be less than or equal to that of $v$. This shows $F(w)\ll F(v)$ whenever $w\prec v$.

  Also, given $\alpha\prec \gamma\prec \beta\prec \delta$ in $\Omega_{n}$, either $\delta \neq \infty$, in which case
  \[
   F((\alpha ,\beta)+(\gamma ,\delta ))=0=F((\alpha ,\delta )+(\gamma ,\beta )),
  \]
 or $\delta =\infty$ and
 \[
   F((\alpha ,\beta)+(\gamma ,\delta ))=e=F((\alpha ,\delta )+(\gamma ,\beta )).
  \]
  This implies $F(w)=F(v)$ whenever $w\simeq v$ and, consequently, that $F$ is an I-morphism.
  
  Note that properties (i) and (ii) of  \autoref{chainable_subset} follow by construction and that, for property (iii), we can simply define $F'\colon X_{nm}\to H$ analogously to $F$.
 \end{proof}

Let $S=\LscI$, and let $L_{n}$ be the additive span of $\{ 1,\chi_{(i/n,j/n)},\chi_{(i/n,1]},\chi_{[0,j/n)}\}_{i,j}$. We will now show that $L_{n}$ is $(n,1)$-chainable.

\begin{ntn}\label{Notation}
 Given two elements $\alpha\prec\beta$ in $\Omega_{n}$, we denote by $(\alpha/n,\beta/n)$ the interval 
 \[
 (\alpha/n,\beta/n)\cap [0,1]
 \]
 in $[0,1]$. 
 
 For example, $(\alpha/n,\infty /n)$ corresponds to $(\alpha/n,1]$ whenever $\alpha\neq -\infty$ and $(-\infty /n,\infty /n)$ corresponds to $[0,1]$.
\end{ntn}

\begin{prp}\label{Linear_span}
 Let $S=\LscI$ and let $n\in\mathbb{N}$. Then, $L_{n}$ is $(n,1)$-chainable.
 \end{prp}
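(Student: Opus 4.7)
The natural plan is to declare, on the generators of the free abelian monoid $X_n$, the map
\[
F((\alpha,\beta)) := \chi_{(\alpha/n,\beta/n)} \in \LscI,
\]
using the convention of \autoref{Notation} (so $F((-\infty,l)) = \chi_{[0,l/n)}$, $F((k,\infty)) = \chi_{(k/n,1]}$, $F((k,l)) = \chi_{(k/n,l/n)}$ for finite $0\leq k<l\leq n$, and $F((-\infty,\infty)) = 1$), and then to extend additively. The image is exactly $L_n$, since the four families of generators of $L_n$ correspond bijectively to the basic generators of $X_n$ of type $(k,l)$, $(k,\infty)$, $(-\infty,l)$, and $(-\infty,\infty)$. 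Condition (i) of \autoref{chainable_subset} is built in, and (ii) amounts to $\chi_{(\alpha/n,\beta/n)}+\chi_{(\beta/n,1]} \leq \chi_{(\alpha/n,1]}$, valid because the two summands have disjoint supports whose union lies in $(\alpha/n,1]$.

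The bulk of the verification is that $F$ is an I-morphism. For the relation $\simeq$ I would reduce to a single elementary move $(\alpha,\beta)+(\gamma,\delta) \approxeq (\alpha,\delta)+(\gamma,\beta)$ with $\alpha \prec \gamma \prec \beta \prec \delta$; setting $U=(\alpha/n,\beta/n)\cap[0,1]$ and $V=(\gamma/n,\delta/n)\cap[0,1]$, the nesting gives $U\cup V=(\alpha/n,\delta/n)\cap[0,1]$ and $U\cap V=(\gamma/n,\beta/n)\cap[0,1]$, so the identity $\chi_U + \chi_V = \chi_{U\cup V} + \chi_{U\cap V}$ recalled in \autoref{Free_basic_el} yields $F(w) = F(v)$. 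For compatibility with $\prec$, by additivity of the way-below relation it suffices to treat $w \prec (\alpha,\beta)$; writing $w=\sum_{i=1}^m(\alpha_i,\beta_i)$ with the chain $\alpha \prec \alpha_1 \prec \beta_1 \prec \cdots \prec \beta_m \prec \beta$, strictness of $\prec$ at finite values forces the supports of the $F((\alpha_i,\beta_i))$ to be pairwise disjoint, so $F(w)=\chi_W$ for their union $W$, and one has to certify that $\overline{W}$ sits inside the open support of $F((\alpha,\beta))$.

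Condition (iii) is then obtained by defining $F'\colon X_{mn}\to \LscI$ by the same rule with $mn$ in place of $n$: the same analysis shows $F'$ is an I-morphism, and by construction $F'((m\alpha,m\beta)) = \chi_{(m\alpha/(mn),m\beta/(mn))} = F((\alpha,\beta))$ for every generator $(\alpha,\beta)$ of $X_n$. The main obstacle throughout is combinatorial rather than conceptual: one must track the boundary behavior induced by the reflexive instances $-\infty\prec-\infty$ and $\infty\prec\infty$ allowed by the definition of $\prec$. By \autoref{Max_Elements}(i), at most one $\alpha_i$ can equal $-\infty$ and at most one $\beta_i$ can equal $\infty$, so the half-open pieces appear only at the two extremes of the chain and take the form $[0,\beta_1/n)$ or $(\alpha_m/n,1]$; their closures $[0,\beta_1/n]$ and $[\alpha_m/n,1]$ are compact in $[0,1]$ and sit inside the support of $F((\alpha,\beta))$ by $\beta_1<\beta$ or $\alpha<\alpha_m$, and in the extreme instance $(\alpha,\beta)=(-\infty,\infty)$ the containment reduces to $\chi_W \ll 1$, which holds because $1$ is compact in $\LscI$.
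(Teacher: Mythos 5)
Your proposal is correct and follows essentially the same route as the paper: the same assignment $F((\alpha,\beta))=\chi_{(\alpha/n,\beta/n)}$ extended additively, the reduction of $\simeq$ to the single move $(\alpha,\beta)+(\gamma,\delta)\approxeq(\alpha,\delta)+(\gamma,\beta)$ via the lattice identity $\chi_U+\chi_V=\chi_{U\cup V}+\chi_{U\cap V}$, the disjoint-support compact-containment argument for $\prec$, and the analogous map $F'\colon X_{mn}\to L_{mn}$ for condition (iii). Your explicit tracking of the half-open pieces at $-\infty$ and $\infty$ via \autoref{Max_Elements}(i) is a slightly more careful rendering of the paper's assertion that $\cup_i(\alpha_i/n,\beta_i/n)\Subset(\alpha/n,\beta/n)$, but it is the same argument.
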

 \begin{proof}
 Following \autoref{Notation}, define the additive map $F\colon X_{n}\to L_{n}$ as $F((\alpha ,\beta))=\chi_{(\alpha /n,\beta/n)}$. We will now check that $F$ is an I-morphism: 
 
 First, note that given $\alpha,\gamma\prec \beta ,\delta$ with $F((\alpha ,\beta)+(\gamma ,\delta ))=F((\alpha ,\delta )+(\gamma ,\beta))$ if and only if 
 \[
 \begin{split}
  (\alpha /n,\beta/n)\cup (\gamma /n,\delta /n) &=(\alpha /n ,\delta /n)\cup (\gamma /n,\beta /n),\\
  (\alpha /n,\beta /n)\cap (\gamma /n,\delta /n) &=(\alpha /n ,\delta /n)\cap (\gamma /n,\beta /n).
 \end{split}
 \]
 
 Using that $F$ is additive, one can check that this is equivalent to  
 $(\alpha ,\beta)+(\gamma ,\delta )\approxeq (\alpha ,\delta )+(\gamma ,\beta)$. Thus, using the additivity of $F$ once again, we get $F(q)=F(t)$ whenever $q\simeq t$.
 
 Now, let $\sum_{i=1}^{m} (\alpha_{i},\beta_{i})\prec (\alpha,\beta )$, which by definition implies that, after a possible reordering, we have $\alpha\prec \alpha_{1}\prec \beta_{1}\prec \alpha_{2}\prec\cdots\prec \alpha_{m}\prec \beta_{m} \prec \beta$. This implies 
 \[
 \cup (\alpha_{i}/n,\beta_{i}/n)\Subset (\alpha/n,\beta /n) \andSep (\alpha_{i}/n,\beta_{i}/n)\cap (\alpha_{j}/n,\beta_{j}/n)=\emptyset
 \]
 for every pair $i\neq j$.
 
 Thus, one gets
 \[
  F\left(\sum_{i=1}^{m} (\alpha_{i},\beta_{i})\right)=
  \sum_{i=1}^{m}\chi_{(\alpha_{i}/n,\beta_{i}/n)}\ll
  \chi_{(\alpha /n,\beta /n)}
  = F((\alpha ,\beta )),
 \]
 which shows that $F(q)\ll F(t)$ whenever $q\prec t$. Indeed, we know by definition that $q\prec t$ if and only if $q=\sum_{i\leq k} w_i$ and $t=\sum_{i\leq k} (\alpha_i ,\beta_i)$ with $w_i\prec (\alpha_i ,\beta_i)$ for every $i$. By our previous argument we have $F(w_i)\ll v_i$ and, since $F$ is additive, it follows that $F(q)\ll F(t)$.
 \vspace{0.2cm}
 
Finally, note that conditions (i)(for $e=1$) and (ii) in \autoref{chainable_subset} are satisfied by construction, and that for every $m\geq 1$ we can define the map $F'\colon X_{nm}\to L_{nm}$ analogously to $F$. This shows that $L_{n}$ is $(n,1)$-chainable.
\end{proof} 

Our aim now is to show that the I-morphism $F$ defined in the previous proposition satisfies $F(q)=F(t)$ if and only if $q\simeq t$.

\begin{lma}\label{lma_cup_cap}
 Let $U,V$ be open sets of $[0,1]$ such that
 \[
  U=\sqcup_{i\leq N}(\alpha_{i}/n,\beta_{i}/n),\andSep
  V=\sqcup_{j\leq M}(\gamma_{j}/n,\delta_{j}/n)
 \]
 with $\alpha_{i},\beta_{i},\gamma_{j},\delta_{j}\in\Omega_{n}$ and the convention of \autoref{Notation}.
 
 Then, given the finite number of elements $((\epsilon_{k}, \zeta_{k}))_{k}, ((\eta_{l}, \theta_{l}))_{l}\in X_{n}$ such that
 \[
   U\cup 
     V=
   \sqcup_{k} (\epsilon_{k}/n, \zeta_{k}/n),\andSep 
   U\cap 
   V=
   \sqcup_{l} (\eta_{l}/n, \theta_{l}/n),
 \]
we have $\sum_{i}(\alpha_{i},\beta_{i})+\sum_{j}(\gamma_{j},\delta_{j})\simeq \sum_{k}(\epsilon_{k}, \zeta_{k})+\sum_{l}(\eta_{l}, \theta_{l})$.
\end{lma}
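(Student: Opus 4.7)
The plan is to reduce both sides of the claimed equivalence to a common ``laminar'' normal form via iterated use of the swap relation $\approxeq$. First observe that, as integer-valued functions on $[0,1]$,
\[
\sum_i \chi_{(\alpha_i/n,\beta_i/n)} + \sum_j \chi_{(\gamma_j/n,\delta_j/n)} = \chi_U+\chi_V = \chi_{U\cup V}+\chi_{U\cap V} = \sum_k \chi_{(\epsilon_k/n,\zeta_k/n)} + \sum_l \chi_{(\eta_l/n,\theta_l/n)},
\]
and since the intervals within $U$ (resp.\ within $V$) are pairwise disjoint, this common ``cover function'' is bounded above by $2$. It therefore suffices to prove the following sharper statement: \emph{any two formal sums in $X_n$ whose associated multisets of nonempty subintervals of $[0,1]$ produce the same pointwise sum of indicator functions, and for which this sum takes values in $\{0,1,2\}$, are $\simeq$-equivalent}. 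Granted this, both sides of the lemma are $\simeq$-equivalent to any fixed normal form, and hence to each other.

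I would prove the italicised statement by induction on the \emph{crossing number} $c(\mathcal{F})$ of a family $\mathcal{F}$ of intervals, defined as the number of unordered pairs in $\mathcal{F}$ that overlap without one containing the other. The base case $c(\mathcal{F})=0$ means $\mathcal{F}$ is laminar: any two of its members are either disjoint or nested. Combined with the cover bound, this forces $\mathcal{F}$ to split uniquely into pairwise disjoint \emph{outer} intervals whose union is the support of the cover function, together with pairwise disjoint \emph{inner} intervals, each strictly contained in a unique outer interval (no deeper nesting is possible, as this would raise the cover to at least $3$). A short argument then identifies the outer intervals with the connected components of $\{\mathrm{cover}\ge 1\}=U\cup V$ and the inner intervals with those of $\{\mathrm{cover}=2\}=U\cap V$. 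Hence, up to reordering of summands, the corresponding element of $X_n$ is exactly $\sum_k(\epsilon_k,\zeta_k)+\sum_l(\eta_l,\theta_l)$.

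For the inductive step, pick a crossing pair $I=(\alpha/n,\beta/n)$, $J=(\gamma/n,\delta/n)$ in $\mathcal{F}$ with $\alpha\prec\gamma\prec\beta\prec\delta$, and use the defining instance of $\approxeq$ to replace $(\alpha,\beta)+(\gamma,\delta)$ by $(\alpha,\delta)+(\gamma,\beta)$, whose associated intervals are $I\cup J$ and $I\cap J$; these are nested. The cover function is unchanged and the crossing pair $\{I,J\}$ has been eliminated. The main obstacle is to verify that no other pair in $\mathcal{F}$ gains a crossing through the swap. Here the bound $\mathrm{cover}\le 2$ is essential: since $I\cap J=(\gamma/n,\beta/n)$ already accounts for the full cover of $2$ at each of its points, any third interval $K\in\mathcal{F}$ must be disjoint from $I\cap J$, and is therefore confined either to the region left of $\gamma/n$ or to the region right of $\beta/n$. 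A direct inspection of these two symmetric cases shows that the number of crossings of $K$ with $\{I\cup J,\, I\cap J\}$ equals the number of its crossings with $\{I,J\}$. Consequently $c(\mathcal{F})$ strictly decreases, closing the induction. The case analysis is routine, but the cover-$\le 2$ hypothesis must be invoked carefully — it is precisely what excludes $K$ from straddling the common overlap $I\cap J$ and thereby generating new crossings, and the extended endpoints $\pm\infty$ in $\Omega_n$ require a brief separate check.
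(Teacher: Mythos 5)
Your proof is correct, but it takes a genuinely different route from the paper's. The paper inducts on $M$, the number of components of $V$: for $M=1$ it isolates the (at most one) component of $U$ straddling the left endpoint of $V$ and the (at most one) straddling its right endpoint and performs two explicit swaps; for the inductive step it applies the hypothesis first to $(U,V')$ with $V'$ the first $M-1$ components and then to $(U\cup V')$ against the last component, splicing the two $\simeq$-chains together. You instead prove the stronger invariance statement that the $\simeq$-class of a family of nonempty intervals with cover function bounded by $2$ depends only on that cover function, by descending induction on the crossing number down to a laminar normal form. This buys a more symmetric and conceptual argument: both sides of the lemma reduce to the same normal form, the identification of the outer and inner intervals with the components of $\{\mathrm{cover}\geq 1\}=U\cup V$ and $\{\mathrm{cover}\geq 2\}=U\cap V$ explains where $\sum_k(\epsilon_k,\zeta_k)+\sum_l(\eta_l,\theta_l)$ comes from, and your normal form is essentially the element $q_f$ of \autoref{Canonical_element}, so the two lemmas get partially unified. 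The price is the crux you correctly flag: one must check that a swap creates no new crossings, and this genuinely needs the cover bound --- your observation that a third interval $K$ must avoid $I\cap J$ and hence lies entirely to one side of it is exactly the right point, and the case analysis then goes through. When writing it up, do make explicit that under the convention of \autoref{Notation} distinct generators can represent empty subsets (e.g.\ $(n,\infty)\mapsto (1,1]=\emptyset$), so the restriction to nonempty intervals and the injectivity of $(\alpha,\beta)\mapsto(\alpha/n,\beta/n)\cap[0,1]$ on nonempty images matter, and that a geometrically crossing pair does yield a $\prec$-chain $\alpha\prec\gamma\prec\beta\prec\delta$ in $\Omega_n$ so that the defining instance of $\approxeq$ applies; both checks are routine.
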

\begin{proof}
 We will prove the lemma by induction on $M$, the number of connected components of $V$.
 
 Thus, first assume that $M=1$. If $U\cap V=\emptyset $, $U\subset V$ or $V\subset U$, the result follows trivially (and we actually get an equality instead of $\simeq$).
 
 If none of the above cases happens, note that there is at most one pair $(\alpha_{s},\beta_{s})$ such that
 \[
  \alpha_{s}\prec \gamma_{1}\prec \beta_{s}\prec \delta_{1}
 \]
and at most one pair $(\alpha_r ,\beta_r)$ such that
\[
 \gamma_{1}\prec \alpha_{r}\prec \delta_{1} \prec
 \beta_{r}.
\]

We will prove the result assuming that both such pairs exist, since the remaining cases can be checked analogously. Also, without loss of generality, we can assume that our first pair is
$(\alpha_{1},\beta_{1})$. We let the other pair be $(\alpha_{r},\beta_{r})$.

Then, by the definition of $\simeq$ and since it is compatible with addition (see \autoref{lma_simeq_properties}), we have 
\[
\begin{split}
 (\alpha_{1},\beta_{1})+(\gamma_{1},\delta_{1})+(\alpha_{r},\beta_{r})
 &\simeq
 (\alpha_{1},\delta_{1}) + (\gamma_{1},\beta_{1}) + (\alpha_{r},\beta_{r})\\
 &\simeq
 (\alpha_{1},\beta_{r}) + (\alpha_{r},\delta_{1}) + (\gamma_{1},\beta_{1})
 \end{split}
\]

Thus, we get
\[
 \sum_{i=1}^{N} (\alpha_{i},\beta_{i})+(\gamma_{1},\delta_{1})\simeq 
 \left( \sum_{i>r}(\alpha_{i},\beta_{i}) +(\alpha_{1},\beta_{r}) \right)+\left( 
 \sum_{1<i<r}(\alpha_{i},\beta_{i}) +
 (\gamma_{1},\beta_{1})+(\alpha_{r},\delta_{1} )
 \right)
\]
which gives us our required elements.
 
 Now fix some $M$ and assume that the result holds for every $U$ and $V$ as in the statement of the lemma with $V$ having at most $M-1$ connected components.
 
 Consider $U=\sqcup_{i\leq N}(\alpha_{i}/n,\beta_{i}/n)$ and $V=\sqcup_{j\leq M}(\gamma_{j}/n,\delta_{j}/n)$. Apply the induction hypothesis to $U$ and $V'=\sqcup_{j\leq M-1}(\gamma_{j}/n,\delta_{j}/n)$, so as to obtain elements $((\epsilon_{k}, \zeta_{k}))_{k}, ((\eta_{l}, \theta_{l}))_{l}$ in $X_{n}$ such that
 \[
   U\cup 
     V' =
   \sqcup_{k} (\epsilon_{k}/n, \zeta_{k}/n),\andSep 
   U\cap 
   V'= \sqcup_{l} (\eta_{l}/n, \theta_{l}/n),
 \]
 satisfying $\sum (\alpha_i,\beta_i) +\sum (\gamma_j ,\delta_j )\simeq \sum (\epsilon_k, \zeta_{k})+ \sum ((\eta_{l}, \theta_{l}))$, which we call relation (1).
 
 Note that $(\eta_{l}/n, \theta_{l}/n)\cap (\gamma_{M}/n,\delta_{M}/n)=\emptyset $ for every $l$, as $V'\cap (\gamma_{M}/n,\delta_{M}/n)=\emptyset$.
 
 Apply now the induction hypothesis to $\sqcup_{k}(\epsilon_{k}/n, \zeta_{k}/n)$ and $(\gamma_{M}/n,\delta_{M}/n)$. Thus, we obtain elements $((\epsilon'_{k'}, \zeta'_{k'}))_{k'}, ((\eta'_{l'}, \theta'_{l'}))_{l'}$ satisfying 
 \[
  \begin{split}
   \Big( \sqcup_{k}(\epsilon_{k}/n,\zeta_{k}/n)\Big)\cup (\gamma_{M}/n,\delta_{M}/n)&=
   \sqcup_{k'} (\epsilon'_{k'}/n, \zeta'_{k'}/n),
   \\
   \Big( \sqcup_{k}(\epsilon_{k}/n,\zeta_{k}/n)\Big)\cap (\gamma_{M}/n,\delta_{M}/n)&=
   \sqcup_{l'} (\eta'_{l'}/n, \theta'_{l'}/n),
  \end{split}
 \]
 and $(\gamma_{M},\delta_{M})+ \sum (\epsilon_{k},\zeta_{k})\simeq\sum  (\epsilon'_{k'}, \zeta'_{k'})+\sum (\eta'_{l'}, \theta'_{l'})$, which we will refer to as relation (2).
 
 Therefore, we get
 \[
  U\cup 
     V
     =
     \Big( \sqcup_{k}(\epsilon_{k}/n,\zeta_{k}/n)\Big)\cup (\gamma_{M}/n,\delta_{M}/n)\\
     =
     \sqcup_{k'}(\epsilon'_{k'}/n, \zeta'_{k'}/n)
 \]
 and 
 \[
 \begin{split}
     U\cap 
   V
   &=
  \Big(\sqcup_{l} (\eta_{l}/n, \theta_{l}/n)\Big) \sqcup    
  \Big( U\cap (\gamma_{M}/n,\delta_{M}/n) \Big)\\
     &=
     \Big(\sqcup_{l} (\eta_{l}/n, \theta_{l}/n)\Big) \sqcup
     \Big( (U\cup V')\cap (\gamma_{M}/n,\delta_{M}/n) \Big)\\
     &=
     \Big(\sqcup_{l} (\eta_{l}/n, \theta_{l}/n)\Big)\sqcup
  \Big(\sqcup_{l'}(\eta'_{l'}/n, \theta'_{l'}/n)\Big).
 \end{split}
\]
Now using relations (1), (2) and the fact that $\simeq $ is compatible with addition, we get:
\[
 \begin{split}
  \sum_{i}(\alpha_{i},\beta_{i})+\sum_{j<M}(\gamma_{j},\delta_{j}) + (\gamma_{M},\delta_{M}) &\simeq 
  \sum_{k}(\epsilon_{k}, \zeta_{k})+\sum_{l}(\eta_{l}, \theta_{l}) + (\gamma_{M},\delta_{M})\\
  &\simeq \sum_{k'}(\epsilon'_{k'}, \zeta'_{k'}) + \left(\sum_{l'}(\eta'_{l'}, \theta'_{l'}) + \sum_{l}(\eta_{l}, \theta_{l}) \right),
  \end{split}
 \]
 as desired.
\end{proof}

Recall that $L_n$ is the additive span of $\{ \chi_{(\alpha /n,\beta /n)}\}_{\alpha,\beta\in\Omega_n}$. Thus, given any element $f\in L_n$ and any $k\leq \sup (f)$, there exist $\alpha_{i,k},\beta_{i,k}\in\Omega_{n}$ such that
\[
 \{ f\geq k\}=\sqcup_{i} (\alpha_{i,k}/n,\beta_{i,k}/n).
\]

For every $f\in L_n$, we define the element $q_{f}\in X_{n}$ as $q_{f}=\sum_{i,k} (\alpha_{i,k},\beta_{i,k})$.

Also recall that, by \autoref{Linear_span}, $L_n$ is $(n,1)$-chainable and that the I-morphism $F\colon X_n\to L_n$ defined by $F((\alpha ,\beta ))=\chi_{(\alpha /n,\beta /n)}$ is surjective.

\begin{lma}\label{Canonical_element}
  With the above notation, let $q\in X_n$ and $f=F(q)$. Then, $q\simeq q_{f}$. 
 
 In particular, since $\simeq$ is transitive, one has $F(q)=F(t)$ if and only if $q\simeq t$.
\end{lma}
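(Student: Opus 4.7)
The plan is to induct on $N$, the number of summands of $q=\sum_{j=1}^{N}(\alpha_{j},\beta_{j})\in X_{n}$.

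For the base case $N=1$, write $q=(\alpha,\beta)$. Then $f=F(q)=\chi_{(\alpha/n,\beta/n)}$, so $\{f\geq 1\}=(\alpha/n,\beta/n)$ and $\{f\geq k\}=\emptyset$ for $k\geq 2$. Thus $q_{f}=(\alpha,\beta)=q$.

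For the inductive step with $N\geq 2$, split $q=(\alpha_{1},\beta_{1})+q'$ with $q'=\sum_{j=2}^{N}(\alpha_{j},\beta_{j})$, and set $U=(\alpha_{1}/n,\beta_{1}/n)$, $f'=F(q')$, so that $f=\chi_{U}+f'$. The induction hypothesis gives $q'\simeq q_{f'}$, and since $\simeq$ is compatible with addition by \autoref{lma_simeq_properties}, we have
\[
q\,\simeq\,(\alpha_{1},\beta_{1})+q_{f'}.
\]
So it suffices to show that $(\alpha_{1},\beta_{1})+q_{f'}\simeq q_{f}$. To do so, write $q_{f'}=\sum_{k=1}^{M'}q_{k}'$, where $q_{k}'$ is the canonical decomposition of the level set $V_{k}':=\{f'\geq k\}$, and let $V_{k}:=\{f\geq k\}$. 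The key pointwise identity is
\[
V_{k}=V_{k}'\cup(U\cap V_{k-1}'),\qquad k\geq 1,
\]
where we set $V_{0}'=[0,1]$; this follows at once from $f=f'+\chi_{U}$.

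The main computation is then an iterative application of \autoref{lma_cup_cap}. At step $k=1$, apply the lemma to $U$ (viewed as a single-interval disjoint union) and $V_{1}'$: since $U\cup V_{1}'=V_{1}$ and setting $s_{1}$ to be the canonical decomposition of $U\cap V_{1}'$, we get $(\alpha_{1},\beta_{1})+q_{1}'\simeq q_{1}+s_{1}$. Inductively, at step $k\geq 2$ apply the lemma to $s_{k-1}$ (the canonical decomposition of $U\cap V_{k-1}'$) and $q_{k}'$ (the canonical decomposition of $V_{k}'$); since $V_{k}'\subseteq V_{k-1}'$, one has $(U\cap V_{k-1}')\cup V_{k}'=V_{k}$ and $(U\cap V_{k-1}')\cap V_{k}'=U\cap V_{k}'$, so $s_{k-1}+q_{k}'\simeq q_{k}+s_{k}$ with $s_{k}$ the canonical decomposition of $U\cap V_{k}'$. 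Using compatibility of $\simeq$ with addition at each stage, after $M'$ steps we obtain
\[
(\alpha_{1},\beta_{1})+q_{f'}\,\simeq\,q_{1}+q_{2}+\cdots+q_{M'}+s_{M'}.
\]
Finally, distinguish two cases: if $U\cap V_{M'}'=\emptyset$ then $s_{M'}=0$ and $\sup(f)=M'$, so the right-hand side is exactly $q_{f}$; otherwise $\sup(f)=M'+1$ and $V_{M'+1}=U\cap V_{M'}'$, so $s_{M'}$ is the canonical decomposition of $V_{M'+1}$ and again the right-hand side equals $q_{f}$. This closes the induction.

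The ``in particular'' statement follows immediately: one direction is by definition of an I-morphism (Proposition \ref{Linear_span}), and for the converse, if $F(q)=F(t)=f$ then $q\simeq q_{f}\simeq t$ by what we have just proved, together with the transitivity of $\simeq$ from \autoref{lma_simeq_properties}. The main obstacle is the bookkeeping in the inductive step, specifically verifying that the iterative cup/cap identity $V_{k}=V_{k}'\cup(U\cap V_{k-1}')$ aligns each application of \autoref{lma_cup_cap} so that the output of step $k$ is precisely the level-$k$ piece of $q_{f}$ plus a remainder corresponding to $U\cap V_{k}'$.
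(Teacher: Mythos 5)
Your proposal is correct and follows essentially the same route as the paper: induction on the number of summands, splitting off one pair, invoking the compatibility of $\simeq$ with addition, and then iterating \autoref{lma_cup_cap} level by level using the identity $\{f\geq k\}=\{f'\geq k\}\cup\bigl(U\cap\{f'\geq k-1\}\bigr)$. Your bookkeeping at the final step (the case distinction on whether $U\cap V_{M'}'$ is empty) is in fact slightly more explicit than the paper's.
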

\begin{proof} 
 Let $q=\sum_{i\leq m} (\epsilon_{i},\zeta_{i})$ and $f=F(q)$. As above, let $\alpha_{i,k},\beta_{i,k}\in\Omega_{n}$ be such that
 \[
 \{ f\geq k\}=\sqcup_{i} (\alpha_{i,k}/n,\beta_{i,k}/n).
\]
for every $k$. We will prove the result by induction on $m$, the amount of summands of $q$:
 
 For $m=1$, the result is trivial since $q=q_{f}$. Thus, fix some $m$ and assume that the result has been proven for every element with $m-1$ summands. Then, let $q=\sum_{i\leq m} (\epsilon_{i},\zeta_{i})$ and, since $\simeq$ is compatible with  addition (see \autoref{lma_simeq_properties}), note that
 \[
  q\simeq q_{F(q')}+ (\epsilon_{1},\zeta_{1}),
 \]
where $q'=\sum_{1<i\leq m} (\epsilon_{i},\zeta_{i})$.

By construction, the element $q_{F(q')}$ can be written as a sum $\sum_{k}\sum_{j\in J_{k}}(\gamma_{j,k},\delta_{j,k})$ such that 
\[
\{F(q')\geq k\} = \sqcup_{j\in J_{k}}(\gamma_{j,k}/n,\delta_{j,k}/n).
\]

Set $v_{k}=\sum_{j\in J_{k}}(\gamma_{j,k},\delta_{j,k})$. By \autoref{lma_cup_cap} applied to $U=\{F(q')\geq 1\}$ and $V=(\epsilon_{1}/n,\zeta_{1}/n)$, we get elements  $w_{1}$ and $u_{1}$, corresponding to $U\cup V$ and $U\cap V$ respectively, such that 
\[
v_{1}+(\epsilon_{1},\zeta_{1})\simeq
w_{1}+u_{1}.
\]

Thus, we have $q\simeq q_{F(q')}+(\epsilon_1 ,\zeta_1 )=v_1+(\epsilon_1 ,\zeta_1 )+\sum_{1<k}v_k\simeq w_{1}+u_{1}+\sum_{1<k}v_k$.

Applying $F$ to $q\simeq q_{F(q')}+(\epsilon_1,\zeta_1)$, one gets $f=F(q')+\chi_{(\epsilon_1/n,\zeta_1/n )}$. Consequently $U\cup V= \{f\geq 1\}$ and therefore $w_{1}=\sum_{i}(\alpha_{i,1},\beta_{i,1})$. We also  get  $F(u_{1})=\chi_{U\cap V}=\chi_{\{F(q')\geq 1\}\cap (\epsilon_{1}/n,\zeta_{1}/n)}$.

Now apply \autoref{lma_cup_cap} again to $U'=U\cap V$ and $V'=\{ F(q')\geq 2\}$, thus getting $w_{2},u_{2}$, corresponding to $U'\cup V'$ and $U'\cap V'$ respectively, such that $u_1+v_2\simeq w_2+u_2$. Once again, we  have $w_{2}=\sum_{i}(\alpha_{i,2},\beta_{i,2})$ 
 because 
 \[
  U'\cup V'=\{ F(q')\geq 2\}\cup (\{F(q')\geq 1\}\cap (\epsilon_{1}/n,\zeta_{1}/n))
  =\{ f\geq 2\},
 \]
and therefore $q\simeq w_1+u_1+v_2+\sum_{k>2}v_k\simeq w_1+w_2+u_2+\sum_{k>2}v_k$.

Repeating this process a finite number of times, one gets
\[
\begin{split}
 q &\simeq q_{F(q')}+ (\epsilon_{1},\zeta_{1})
 \simeq w_{1}+u_{1}+\sum_{k\geq 2}v_{k}\\
 &\simeq
 w_{1} + w_{2} + u_{2}+\sum_{k\geq 3}v_{k}
 \simeq \cdots \simeq 
 w_{1}+\cdots +w_{k+1}= q_{f}
 \end{split}
\]
as desired.
\end{proof}

\begin{rmk}
With the notation of \autoref{Linear_span}, given $q,t$ with $F(q)\ll F(t)$, we do not necessarily have that $q\simeq q'\prec t'\simeq t$.
 
 Consider, for example, the elements $q=(1,2)+(2,3)$ and $t=(0,4)$ in $X_{4}$. Clearly, we have $F(q)\ll F(t)$ but it is not true that $q\simeq q'\prec t'\simeq t$.
\end{rmk}

Following the previous remark, we will now define a subset $L_{n}^{0}$ of $L_{n}$ such that, whenever $q,t\in X_{n}$ satisfy $F(q)\ll F(t)$, we have $q\simeq q'\prec t'\simeq t$.

First, we note the following:
\begin{rmk}\label{Rmk_Retraction}
 Let $\chi_{U},\chi_{V}\in L_{n}$ with $U,V$ intervals (i.e. of the form $(\alpha /n,\beta /n)$ for some $\alpha ,\beta\in\Omega_{n}$) and let $0< \epsilon<1/2n$. It is easy to check that $R_{\epsilon }(U\cup V)=R_{\epsilon }(U)\cup R_{\epsilon }(V)$ and $R_{\epsilon }(U\cap V)=R_{\epsilon }(U)\cap R_{\epsilon }(V)$, where $R_{\epsilon}(\cdot )$ denotes the $\epsilon$-retraction of unions of intervals as defined in \autoref{retract_notation}.
 
 Thus, given any $n>0$ and $f\in L_{n}$ with $f=\sum\chi_{U_{i}}$, we have that
 \[
 \cup_{\vert I\vert =k}\cap_{i\in I}R_{\epsilon } (U_{i})=
 R_{\epsilon }(\cup_{\vert I\vert =k}\cap_{i\in I}U_{i})=
 R_{\epsilon }(\{ f\geq k\}),
 \]
 and, consequently,
 \[
 \sum_i\chi_{R_{\epsilon } (U_{i})}=
 \sum_k \chi_{\cup_{\vert I\vert =k}\cap_{i\in I}R_{\epsilon } (U_{i})}=
   \sum_{k}\chi_{R_{\epsilon }(\{ f\geq k\})}
 \]

 This implies that, for every $0< \epsilon <1/2n$, the function $R_{\epsilon }(f)=\sum\chi_{R_{\epsilon } (U_{i})}$ does not depend on the expression of $f$ as $\sum \chi_{U_{i}}$. Note, however, that $R_{\epsilon }(f)$ may no longer belong to  $L_{n}$.
 
 Also note that, for every $f,g\in L_{n}$, we have $R_{\epsilon }(f)+R_{\epsilon }(g)=R_{\epsilon }(f+g)$.
\end{rmk}

\begin{dfn}
 For every $n>0$, define $L_{n}^{0}$ as the subset of $L_{n}$ consisting of the functions $f\in L_{n}$ such that, for every $k\leq \sup (f)$, the connected components (i.e. intervals) of the open subset $\{ f\geq k \}$ are at pairwise distance at least $2/n$.
\end{dfn}
\begin{lma}\label{Red_L0}
 Let $f$ be an element in $L_{n}$. For every rational $0< \epsilon<1/2n$, there exists some $m\in\mathbb{N}$ such that $R_{\epsilon }(f)\in L_{m}^{0}$.
\end{lma}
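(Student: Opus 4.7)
I would write the rational $\epsilon$ as $p/q$ with $p,q\in\mathbb{N}$ and show that $m:=nq$ works. Since $f\in L_n$, each superlevel set $\{f\ge k\}$ (for $k\le\sup(f)$) decomposes as a disjoint union $\sqcup_i (\alpha_{i,k}/n,\beta_{i,k}/n)$ with $\alpha_{i,k},\beta_{i,k}\in\Omega_n$ and $\alpha_{i,k}\prec\beta_{i,k}$, following the convention of \autoref{Notation}. Components both of whose endpoints are finite have width at least $1/n$, while components touching $0$ or $1$ retract on only one side, so the hypothesis $\epsilon<1/(2n)$ guarantees that no retracted interval becomes empty. By \autoref{Rmk_Retraction},
\[
R_\epsilon(f)=\sum_k\chi_{R_\epsilon(\{f\ge k\})}=\sum_k\sum_i\chi_{R_\epsilon((\alpha_{i,k}/n,\beta_{i,k}/n))}.
\]

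Next I would track the endpoints: every endpoint of a retracted interval is either $0$, $1$, or of the form $\alpha/n\pm\epsilon=(\alpha q\pm np)/(nq)$, hence a rational with denominator dividing $nq$ lying in $[0,1]$. This shows $R_\epsilon(f)\in L_{nq}=L_m$.

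For the distance estimate, I would take two distinct components $U$, $V$ of $\{f\ge k\}$ with $V$ lying to the right of $U$. A short case analysis (distinguishing whether the facing endpoints of $U$ and $V$ lie at $0$, $1$, or in the interior) shows that retraction moves the facing endpoints inward by $\epsilon$ each, so
\[
\mathrm{dist}\bigl(R_\epsilon(U),R_\epsilon(V)\bigr)=\mathrm{dist}(U,V)+2\epsilon\ge 2\epsilon.
\]
Since $2\epsilon=2np/(nq)\ge 2/(nq)=2/m$, every pairwise distance between components of $\{R_\epsilon(f)\ge k\}=R_\epsilon(\{f\ge k\})$ is at least $2/m$, and $R_\epsilon(f)\in L_m^0$ follows.

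I do not foresee any substantive obstacle; the argument is routine bookkeeping, the only subtleties being the verification that retractions do not produce empty intervals and that boundary cases do not spoil the distance computation---both of which are controlled by $\epsilon<1/(2n)$.
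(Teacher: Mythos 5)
Your proposal is correct and follows essentially the same route as the paper: write $\epsilon=p/q$, take $m=nq$, observe that the retracted endpoints lie in $\tfrac{1}{nq}\mathbb{Z}\cap[0,1]$ so that $R_\epsilon(f)\in L_m$, and note that retraction pushes the facing endpoints of distinct components apart by $2\epsilon\geq 2/m$. Your bookkeeping is in fact slightly more careful than the paper's (which asserts a pairwise distance of $4\epsilon$ where $2\epsilon$ is what the argument yields, though either bound suffices).
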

\begin{proof}
 Since $f\in L_{n}$, we know that $\{f\geq k\}$ is a finite disjoint union of open intervals for every $k\leq \sup (f)$. Thus, given any $\epsilon >0$, the $\epsilon $-retractions of these intervals are at pairwise distance at least $2 \epsilon $.
 
 If, additionally, $\epsilon <1/2n$ and rational, we can write $\epsilon =m_{1}/m_{2}$ for some $m_{1}$ and $m_{2}>1$. Set $m=nm_{2}>n$, and note that $R_{\epsilon }(f)\in L_{m}$.
 
 Further, the connected components of $\{R_{\epsilon }(f)\geq k\}$ are at pairwise distance at least $4\epsilon =4m_{1}/m_{2}>2/m$, as required.
\end{proof}

\begin{lma}\label{L0_comp}
Let $f,g\in L_{n}^{0}$. Then, $f\ll g$ if and only if $q_{f}\prec q_{g}$, where $q_{f},q_{g}$ are the elements defined in \autoref{Canonical_element}.
\end{lma}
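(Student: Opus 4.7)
The plan is to handle the two implications separately. For the forward direction, if $q_f \prec q_g$, then since $F\colon X_n \to L_n$ is an I-morphism (as shown in the proof of \autoref{Linear_span}) one immediately has $F(q_f) \ll F(q_g)$; but by the construction of the canonical element in \autoref{Canonical_element} one has $F(q_h) = h$ for every $h \in L_n$, so $f \ll g$.

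For the converse, I would start from the fact that $f \ll g$ in $\LscI$ means $\overline{\{f \geq k\}} \subset \{g \geq k\}$ for every $k \geq 1$ with $\{f \geq k\}$ nonempty. Writing $\{f \geq k\} = \sqcup_{i} (\alpha_{i,k}/n, \beta_{i,k}/n)$ and $\{g \geq k\} = \sqcup_{j} (\gamma_{j,k}/n, \delta_{j,k}/n)$ following \autoref{Notation}, each connected component of $\{f \geq k\}$ sits inside a unique connected component of $\{g \geq k\}$. I would partition the summands of $q_f^{(k)} := \sum_i (\alpha_{i,k}, \beta_{i,k})$ by which $j$-component they are contained in, producing for each $j$ a piece $w_j \in X_n$ (with $w_j = (0,0)$ when no $f$-component lands in the $j$-th $g$-component, which is harmless since $(0,0) \prec (\gamma_{j,k}, \delta_{j,k})$ by definition). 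The main step is then to show $w_j \prec (\gamma_{j,k}, \delta_{j,k})$; summing over $j$ and then over $k$ and using compatibility of $\prec$ with addition yields $q_f \prec q_g$.

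The verification of $w_j \prec (\gamma_{j,k}, \delta_{j,k})$ relies on two inputs. At the endpoints, compact containment forces $\gamma_{j,k} \prec \alpha_{i_1,k}$ and $\beta_{i_m,k} \prec \delta_{j,k}$ for the leftmost and rightmost $f$-components inside the $j$-th $g$-component, with some care needed at $\pm\infty$: if $\alpha_{i_1,k} = -\infty$ then that $f$-component reaches $0$, so the enclosing $g$-component must also start at $0$, meaning $\gamma_{j,k} = -\infty$, and indeed $-\infty \prec -\infty$ is allowed by the definition of $\prec$ on $\Omega_n$ (and symmetrically at $\infty$). Between consecutive $f$-components inside the same $g$-component, I would invoke the hypothesis $f \in L_n^0$: it guarantees pairwise distance at least $2/n$, so $\alpha_{i_{s+1},k} - \beta_{i_s,k} \geq 2$, producing an integer strictly between them and hence $\beta_{i_s,k} \prec \alpha_{i_{s+1},k}$ in $\Omega_n$.

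The main obstacle I anticipate is the careful bookkeeping around the $\pm\infty$ endpoints — both making sure the definition of $\prec$ still applies when endpoints coincide at infinity, and pinpointing where the $L_n^0$ hypothesis enters. Without $f \in L_n^0$, two consecutive components of $\{f \geq k\}$ could lie at distance $1/n$ inside a single component of $\{g \geq k\}$; then $\alpha_{i_{s+1},k}$ and $\beta_{i_s,k}$ would be consecutive integers with no element of $\Omega_n$ strictly between them, and the chain of inequalities required by the definition of $\prec$ would break down. Once this step is in place the rest is routine summation.
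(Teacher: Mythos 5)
Your argument is correct and follows essentially the same route as the paper: the direction $q_f\prec q_g\Rightarrow f\ll g$ via $F(q_f)=f$ and the I-morphism property of $F$, and the converse by grouping the components of $\{f\geq k\}$ according to the component of $\{g\geq k\}$ containing them, handling the $\pm\infty$ endpoints, and using $f\in L_n^0$ to separate consecutive interior endpoints. One minor quibble on your closing remark: since $\prec$ between finite elements of $\Omega_n$ is just strict inequality (no intermediate element is required), the genuine failure absent the $L_n^0$ hypothesis occurs when two components of $\{f\geq k\}$ are adjacent, i.e.\ at distance $0$ with $\beta_{i_s,k}=\alpha_{i_{s+1},k}$, not at distance $1/n$ --- but this does not affect your proof, which only needs $\beta_{i_s,k}\neq\alpha_{i_{s+1},k}$, amply guaranteed by the $2/n$ separation.
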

\begin{proof}
 Let $f,g\in L_n^0$ and write, for every $k\leq \max\{\sup (f),\sup (g)\}$,
 \[
   \{ f\geq k\}=\sqcup_{i\in I_{k}} (\alpha_{i,k}/n,\beta_{i,k}/n),\andSep 
   \{ g\geq k\}=\sqcup_{j\leq J_{k}} (\gamma_{j,k}/n,\delta_{j,k}/n)
 \]
with $I_k, J_k$ finite sets and $\alpha_{i,k}$, $\beta_{i,k}$, $\gamma_{j,k}$, $\delta_{j,k}$ elements of $X_n$.
  
  Assume that $f\ll g$, which implies (see, for instance, \cite[Lemma~4.19]{Vila20}) that $\{ f\geq k\}\Subset\{ g\geq k\}$ for every $k$. Thus, for every fixed $k$ there exists a partition $\sqcup_{j\in J_{k}}B_{j,k}=I_{k}$ such that
  \[
   \sqcup_{i\in B_{j,k}} (\alpha_{i,k}/n,\beta_{i,k}/n) \Subset (\gamma_{j,k}/n,\delta_{j,k}/n)
  \]
 for every $j\in J_{k}$.
 
 This implies that there is an ordering $i_{1},\cdots ,i_{\vert B_{j,k}\vert }$ of $B_{j,k}$ such that
\[
 \gamma_{j,k}\prec \alpha_{i_{1},k}\prec  \beta_{i_{1},k} \leq \alpha_{i_{2},k} \prec\cdots \prec \beta_{i_{\vert B_{j,k}\vert -1},k}\leq \alpha_{i_{\vert B_{j,k}\vert },k}\prec  \beta_{i_{\vert B_{j,k}\vert },k}\prec 
 \delta_{j,k}.
\]
  
  Additionally, since $f\in L_{n}^{0}$, we know that we cannot have $\beta_{i_{s},k} = \alpha_{i_{s+1},k}$ for any $s< \vert B_{j,k}\vert $. Thus, one gets 
  \[
 \gamma_{j,k}\prec \alpha_{i_{1},k}\prec  \beta_{i_{1},k} \prec \alpha_{i_{2},k} \prec\cdots \prec \beta_{i_{\vert B_{j,k}\vert -1},k}\prec \alpha_{i_{\vert B_{j,k}\vert },k}\prec  \beta_{i_{\vert B_{j,k}\vert },k}\prec 
 \delta_{j,k}
\]
  and, consequently,
  \[
   \sum_{i\in B_{j,k}} (\alpha_{i,k},\beta_{i,k}) \prec (\gamma_{j,k},\delta_{j,k}).
  \]
 
 Since $\prec$ is compatible with  addition (see \autoref{lma_prec_properties}), we get
 \[ 
 \sum_{i\in I_{k}}(\alpha_{i,k},\beta_{i,k})
 =
 \sum_{j\in J_{k}}\sum_{i\in B_{j,k}} (\alpha_{i,k},\beta_{i,k}) 
 \prec \sum_{j\in J_k } (\gamma_{j,k},\delta_{j,k})
 \]
 and thus $q_{f}\prec q_{g}$ as desired.
 
 The other implication follows from \autoref{Canonical_element} and the fact that the map $F\colon X_{n}\to L_{n}$ defined in \autoref{Linear_span} is an I-morphism.
 \end{proof}
 
 \begin{cor}
   Let $n\in\NN$ and take $q,t\in X_{n}$ such that $F(q),F(t)\in L_{n}^{0}$. Then, $F(q)\ll F(t)$ if and only if $q\simeq q_{f}\prec q_{t}\simeq t$.
 \end{cor}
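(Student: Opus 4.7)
The plan is to deduce this corollary directly by combining \autoref{Canonical_element} (which relates arbitrary elements of $X_n$ to canonical ones via $\simeq$) with \autoref{L0_comp} (which characterizes compact containment in $L_n^0$ via $\prec$ between canonical elements). Reading the statement, I interpret $q_t$ as $q_{F(t)}$, so that the claim is: if we set $f=F(q)$ and $g=F(t)$ with $f,g \in L_n^0$, then $F(q)\ll F(t)$ if and only if $q\simeq q_f \prec q_g \simeq t$.

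For the forward direction, I would first invoke \autoref{Canonical_element} twice to obtain $q\simeq q_f$ and $t\simeq q_g$. Since $f=F(q)$ and $g=F(t)$ both lie in $L_n^0$ by hypothesis, and $f\ll g$ by assumption, \autoref{L0_comp} immediately yields $q_f \prec q_g$. Concatenating these relations gives $q\simeq q_f \prec q_g \simeq t$.

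For the converse, suppose $q\simeq q_f \prec q_g \simeq t$. Because $F\colon X_n \to L_n$ is an I-morphism (as established in the proof of \autoref{Linear_span}), it respects $\simeq$ as equality and $\prec$ as compact containment. Therefore $F(q)=F(q_f)$, $F(q_f)\ll F(q_g)$, and $F(q_g)=F(t)$, which chain together to give $F(q)\ll F(t)$.

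There is no real obstacle here: the two prior lemmas do all the work, and the corollary is essentially a repackaging under the additional assumption that both $F(q)$ and $F(t)$ lie in the well-separated set $L_n^0$. The only subtlety worth flagging is that the restriction to $L_n^0$ is used in precisely one place, namely to ensure the bi-implication in \autoref{L0_comp}; without this restriction, the implication $F(q)\ll F(t)\Rightarrow q_f \prec q_g$ may fail (as in the example $q=(1,2)+(2,3)$, $t=(0,4)$ in $X_4$ noted in the remark before the definition of $L_n^0$).
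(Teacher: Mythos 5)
Your proof is correct and follows exactly the intended route: the paper leaves this corollary unproved precisely because it is the immediate combination of \autoref{Canonical_element} and \autoref{L0_comp} that you spell out (with $q_t$ indeed meaning $q_{F(t)}$). Your remark on where the $L_n^0$ hypothesis is actually used matches the paper's own remark preceding the definition of $L_n^0$.
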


\subsubsection{Properties of chainable subsets}

Throughout this subsubsection, we will denote by $H$ an $(n,e)$-chainable subsemigroup of a \CuSgp{} $S$ and $F$ will be its associated I-morphism.

\begin{dfn}\label{rho}
 Given any $m\geq 1$, let $F\colon X_{m}\to L_{m}$ be the I-morphism defined in \autoref{Linear_span}. We will denote by $\rho_{m}$ the $\Cu$-morphism $\rho_{m}\colon\LscI\to S$ such that
\begin{equation*}
 \rho_{m}(\chi_{(\alpha /nm,1]})=F'((\alpha ,\infty ))
\end{equation*}
for every $\alpha\in\Omega_{nm}$. Here, $F'\colon X_{nm}\to S$ denotes the map given by (iii) in \autoref{chainable_subset}.
\end{dfn}

Observe that $\rho_m$ as defined above exists by \autoref{lifting_th}, as the sequence $(F'((\alpha ,\infty)))_{\alpha}$ is bounded by $e$ and is $\ll$-decreasing. This is because $(\alpha ,\infty )\prec (\alpha-1,\infty )$ for every $\alpha$.

Note that $\rho_{m}$ depends on $F'$ (and not only on $m$) but, since this will not be used in the notes, we omit it in order to ease the notation. 

Note that, using (iii) in \autoref{chainable_subset}, one has
\[
 \rho_{m}(\chi_{(\alpha /n,1]})= \rho_{m}(\chi_{(m\alpha /mn,1]})= F'((m\alpha ,\infty ))= F((\alpha ,\infty ))
\]
for every $m>0$ and $\alpha\in\Omega_{n}$.

\begin{lma}\label{TeBelow}
Let $\epsilon >0$ and let $m\geq \epsilon^{-1}$. Then, for any $\alpha ,\beta \leq n$ we have
 \begin{enumerate}[(i)]
 \item $\rho_{m} (\chi_{(\alpha /n+\epsilon ,1]})\ll F((\alpha ,\infty ))\ll \rho_{m} (\chi_{(\alpha /n-\epsilon ,1]}) $, trivially.
  \item $\rho_{m} (\chi_{(\alpha /n+\epsilon ,\beta /n-\epsilon )}) \ll F((\alpha ,\beta ))\ll \rho_{m} (\chi_{(\alpha /n-\epsilon ,\beta /n+\epsilon )})$.
  \item $\rho_{m} (\chi_{[0,\beta /n-\epsilon )})\ll F((-\infty ,\beta ))\ll \rho_{m} (\chi_{[0,\beta /n+\epsilon  )})$
 \end{enumerate}
\end{lma}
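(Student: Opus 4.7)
Part (i) is immediate: the identity $\rho_m(\chi_{(\alpha/n,1]})=F((\alpha,\infty))$ noted after \autoref{rho}, combined with the way-below relations $\chi_{(\alpha/n+\epsilon,1]}\ll\chi_{(\alpha/n,1]}\ll\chi_{(\alpha/n-\epsilon,1]}$ in $\LscI$, yields the conclusion after applying the \CuMor{} $\rho_m$.

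For part (iii), the plan is to combine three ingredients. First, \autoref{Lemma_Lifting_Tecnic} supplies $\rho_m(\chi_{[0,t)})+\rho_m(\chi_{(s,1]})\leq e$ for $s\geq t$ and $e\leq\rho_m(\chi_{[0,t')})+\rho_m(\chi_{(s,1]})$ for $s<t'$, where $e=\rho_m(1)$. Second, axiom (ii) of chainable subsets gives $F((-\infty,\beta))+F((\beta,\infty))\leq e$, and the $\approxeq$-relation $(-\infty,\beta)+(\alpha',\infty)\approxeq(-\infty,\infty)+(\alpha',\beta)$, applied via the I-morphism $F'\colon X_{nm}\to S$ coming from (iii) of \autoref{chainable_subset}, yields $F((-\infty,\beta))+F'((\alpha',\infty))=e+F'((\alpha',m\beta))$ for any $0\leq\alpha'<m\beta$; in particular $e\leq F((-\infty,\beta))+F'((\alpha',\infty))$. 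Third, the compactness of $e$ together with weak cancellation will let me pass from $\leq$ to $\ll$. For the upper bound $F((-\infty,\beta))\ll\rho_m(\chi_{[0,\beta/n+\epsilon)})$, I will start from $e\leq\rho_m(\chi_{[0,\beta/n+\epsilon)})+F((\beta,\infty))$, use compactness of $e$ to choose $a\ll\rho_m(\chi_{[0,\beta/n+\epsilon)})$ and $b\ll F((\beta,\infty))$ with $e\leq a+b$, then chain $F((-\infty,\beta))+b\ll F((-\infty,\beta))+F((\beta,\infty))\leq e\leq a+b$ and cancel $b$ by weak cancellation to get $F((-\infty,\beta))\ll a$. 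For the lower bound $\rho_m(\chi_{[0,\beta/n-\epsilon)})\ll F((-\infty,\beta))$, I will pick an integer $\alpha'$ with $\max(0,m\beta-mn\epsilon)\leq\alpha'<m\beta$; then $F'((\alpha',\infty))=\rho_m(\chi_{(\alpha'/(nm),1]})\leq\rho_m(\chi_{(\beta/n-\epsilon,1]})$, so $e\leq F((-\infty,\beta))+\rho_m(\chi_{(\beta/n-\epsilon,1]})$, and compactness of $e$ now provides $c\ll F((-\infty,\beta))$ and $d\ll\rho_m(\chi_{(\beta/n-\epsilon,1]})$ with $e\leq c+d$; combining with $\rho_m(\chi_{[0,\beta/n-\epsilon)})+\rho_m(\chi_{(\beta/n-\epsilon,1]})\leq e$ gives $\rho_m(\chi_{[0,\beta/n-\epsilon)})+d\ll c+d$, and weak cancellation delivers the claim.

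Part (ii) will then follow quickly from (i) and (iii). Applying $\rho_m$ to the $\LscI$-identity $\chi_{(a,b)}+1=\chi_{(a,1]}+\chi_{[0,b)}$ gives $\rho_m(\chi_{(a,b)})+e=\rho_m(\chi_{(a,1]})+\rho_m(\chi_{[0,b)})$, while the $\approxeq$-identity $(\alpha,\beta)+(-\infty,\infty)\approxeq(\alpha,\infty)+(-\infty,\beta)$ yields $F((\alpha,\beta))+e=F((\alpha,\infty))+F((-\infty,\beta))$. Adding the relevant $\ll$-inequalities from (i) and (iii) via axiom (O3) produces $\rho_m(\chi_{(\alpha/n+\epsilon,\beta/n-\epsilon)})+e\ll F((\alpha,\beta))+e$ and, symmetrically, $F((\alpha,\beta))+e\ll\rho_m(\chi_{(\alpha/n-\epsilon,\beta/n+\epsilon)})+e$; cancelling the compact $e$ by weak cancellation then delivers (ii).

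The main obstacle is the selection of the refined index $\alpha'\in X_{nm}$ in the lower bound of (iii), which must simultaneously satisfy $0\leq\alpha'<m\beta$ (so that the $\approxeq$-identity applies) and $\alpha'/(nm)\geq\beta/n-\epsilon$ (so that $F'((\alpha',\infty))\leq\rho_m(\chi_{(\beta/n-\epsilon,1]})$); such an integer exists precisely because the hypothesis $m\geq\epsilon^{-1}$ forces $mn\epsilon\geq n\geq 1$. Boundary cases where $\alpha=-\infty$ or $\beta\in\{-\infty,0,\infty\}$ cause the $\approxeq$-identity or the choice of $\alpha'$ to degenerate and are handled directly; for instance $\chi_{[0,-\epsilon)}=0\ll F((-\infty,0))$ trivially when $\beta=0$, and when $\beta=\infty$ both sides of the claimed inequality equal $e$, so the claim reduces to $e\ll e$, which holds by compactness of $e$.
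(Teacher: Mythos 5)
Your proof is correct and rests on the same essential ingredients as the paper's: axiom (ii) of \autoref{chainable_subset} together with the identity $\rho_m(\chi_{(\beta/n,1]})=F((\beta,\infty))$ for the upper bounds, the $\approxeq$-identity in $X_{nm}$ applied through $F'$ with a refined index $\alpha'$ (the paper takes $\alpha'=m\beta-n$, which works for the same reason as your choice, namely $1/m\leq\epsilon$) for the lower bounds, and weak cancellation against the compact $e$ throughout. The only organizational difference is that the paper proves (ii) directly and declares (iii) analogous, whereas you prove (iii) first and recover (ii) from (i) and (iii) via the two additive identities; both routes are equally valid and of comparable length. One step is misstated: in the chain $F((-\infty,\beta))+b\ll F((-\infty,\beta))+F((\beta,\infty))\leq e\leq a+b$ the first $\ll$ is not justified, since in a $\Cu$-semigroup $b\ll c$ does not imply $x+b\ll x+c$ unless $x$ is compact (take $x=\infty$ and $b=0\ll 1=c$ in $\overline{\NN}$). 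The chain is repaired by placing the $\ll$ where you yourself say it comes from, the compactness of $e$: one has $F((-\infty,\beta))+b\leq F((-\infty,\beta))+F((\beta,\infty))\leq e\ll e\leq a+b$, after which weak cancellation of $b$ gives $F((-\infty,\beta))\ll a$ as intended; the same remark applies to the symmetric chain in your lower bound for (iii).
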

\begin{proof}
 We only prove (ii), since (i) is trivial and (iii) follows similarly:
 
 We know from (ii) in \autoref{chainable_subset}, and the observation prior to this lemma applied to $(\beta ,\infty )$, that $F((\alpha ,\beta ))+\rho_{m} (\chi_{(\beta /n,1]})\leq \rho_{m} (\chi_{(\alpha /n,1]})$. Thus, one gets
 \[
 \begin{split}
  F((\alpha ,\beta ))+\rho_{m} (\chi_{(\beta /n,1]})&\leq \rho_{m} (\chi_{(\alpha /n,1]})\ll \rho_{m} (\chi_{(\alpha /n-\epsilon ,1]})\\
  &\leq \rho_{m} (\chi_{(\alpha /n-\epsilon ,\beta /n+\epsilon )}) + \rho_{m} (\chi_{(\beta /n,1]})
 \end{split}
 \]
 for any $\epsilon >0$.
 
By weak cancellation, we have $F((\alpha ,\beta ))\ll \rho_{m} (\chi_{(\alpha /n-\epsilon ,\beta /n+\epsilon )})$.

To see the other inequality, note that, since 
\[
 (m\alpha ,m\beta )+(m\beta -n,\infty )\simeq (m\alpha ,\infty )+(m\beta -n,m\beta ),
\]
one has 
\[
\begin{split}
F((\alpha ,\beta ))+F'((m\beta -n,\infty ))&= F'((m\alpha ,m\beta ))+F'((m\beta -n,\infty ))\\
&=F'((m\alpha ,\infty ))+
F'(m\beta -n,m\beta )\\
&\geq 
F'((m\alpha ,\infty ))=F((\alpha ,\infty )).
\end{split}
\]

Using the fact that $\rho_m$ is a $\Cu$-morphism at the first step and the above inequality at the third step, we get
\[
\begin{split}
 \rho_{m} (\chi_{(\alpha /n+1/m,\beta /n-1/m )}) +\rho_{m} (\chi_{(\beta /n-1/m ,1]}) &\ll \rho_{m} (\chi_{(\alpha /n,1]})
 =F((\alpha ,\infty ))\\
 &\leq F((\alpha ,\beta ))+\rho_{m} (\chi_{(\beta /n -1/m,1]}).
 \end{split}
\]

Applying weak cancellation, one gets 
\[
F((\alpha ,\beta ))\gg \rho_{m} (\chi_{(\alpha /n+1/m,\beta /n-1/m )})\geq \rho_{m} (\chi_{(\alpha /n+\epsilon ,\beta /n-\epsilon )}),
\]
since $\epsilon >1/m$.
\end{proof}

Let $f\in X_{n}$ and write $f$ as  $\sum_{i=1}^{k}(\alpha_{i},\beta_{i})$. Also, consider $g=\sum_{i=1}^{k}\chi_{U_{i}}\in\LscI$ with $U_{i}=(\alpha_{i}/n,\beta_{i}/n)$. Then, the previous lemma shows that for every $\epsilon >0$ there exists a sufficiently large $m$ such that $\rho_{m}(R_{\epsilon }(g))\ll F(f)\ll \rho_{m}(N_{\epsilon }(g))$, where $R_{\epsilon }(g)$ and $N_{\epsilon }(g)$ denote the functions $\sum_{i=1}^{k}\chi_{R_{\epsilon }(U_{i})}$ and $\sum_{i=1}^{k}\chi_{N_{\epsilon } (U_{i})}$ respectively. Recall that $R_{\epsilon }(a,b)=(a+\epsilon ,b-\epsilon )$ and $N_{\epsilon }(a,b)=(a-\epsilon ,b+\epsilon )$.

\begin{lma}\label{Prec_Convert}
 Let $S$ be a \CuSgp{} and $H$ an $(n,e)$-chainable subset of $S$ with associated I-morphism $F\colon X_{n}\to H$. Then, given any finite family $\{q_{i}\prec t_{i}\}_{i=1,\cdots , N}$ in $X_{n}$, there exist functions $f_{i},g_{i}\in \LscI$ for $i\leq N$ such that
 \begin{enumerate}[(i)]
  \item $f_{i}\ll g_{i}$ for all $i$.
  \item $\rho_{m}(f_{i})\ll F(q_{i})\ll 
  \rho_{m}(g_{i})\ll F(t_{i})$ for all $i$ and for every sufficiently large $m$.
  \item $f_{i}=f_{j}$ (resp. $f_{i}=g_{j}$,  $g_{i}=g_{j}$) whenever $q_{i}=q_{j}$ (resp. $q_{i}=t_{j}$, $t_{i}=t_{j}$) for $i,j\leq N$.
 \end{enumerate}
 
 In fact, given the I-morphism $G\colon X_{n}\to\LscI$ defined in \autoref{Linear_span}, one can take $f_{i}=R_{\epsilon }(G(q_{i}))$ and $g_{i}=R_{\epsilon }(G(t_{i}))$ for $\epsilon <1/4n$.
\end{lma}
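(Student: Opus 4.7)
Following the parenthetical in the statement, I set $f_i := R_\epsilon(G(q_i))$ and $g_i := R_\epsilon(G(t_i))$ for a fixed rational $\epsilon \in (0, 1/4n)$, where $G\colon X_n \to \LscI$ is the I-morphism from \autoref{Linear_span} sending $(\alpha, \beta)$ to $\chi_{(\alpha/n, \beta/n)}$. Condition (iii) is then immediate: $f_i$ (respectively $g_i$) depends only on $q_i$ (respectively $t_i$). For condition (i), since $G$ is an I-morphism, $q_i \prec t_i$ gives $G(q_i) \ll G(t_i)$, and hence $\{G(q_i) \geq k\} \Subset \{G(t_i) \geq k\}$ for every $k \geq 1$. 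Both sets are disjoint unions of open intervals whose endpoints lie in $\tfrac{1}{n}\Omega_n$, so each nested component is separated from the boundary of its containing component by at least $1/n$, and distinct components of $\{G(q_i) \geq k\}$ inside the same containing interval are at distance at least $1/n$ from each other. Commuting $R_\epsilon$ with the level-set construction via \autoref{Rmk_Retraction}, and using $\epsilon < 1/4n$, the compact containments $\{f_i \geq k\} \Subset \{g_i \geq k\}$ persist, giving $f_i \ll g_i$.

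Condition (ii) is where \autoref{TeBelow} takes center stage. Fix an auxiliary rational $\epsilon' \in (0, 1/n - \epsilon)$ and choose $m_0 > \max(1/\epsilon, 1/\epsilon')$. For any $m \geq m_0$ and any pair $(\alpha, \beta) \in \mathrm{udiag}(\Omega_n \times \Omega_n)$, \autoref{TeBelow} yields
\[
\rho_m(\chi_{R_\epsilon((\alpha/n, \beta/n))}) \ll F((\alpha, \beta)) \ll \rho_m(\chi_{N_{\epsilon'}((\alpha/n, \beta/n))}),
\]
where $N_{\epsilon'}$ denotes the symmetric $\epsilon'$-enlargement (with endpoints clamped to $[0,1]$). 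Summing the leftmost inequality over the summands of $q_i$ (respectively $t_i$) produces $\rho_m(f_i) \ll F(q_i)$ (respectively $\rho_m(g_i) \ll F(t_i)$).

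It remains to prove the middle inequality $F(q_i) \ll \rho_m(g_i)$. Using the definition of $\prec$, write $t_i = \sum_\ell (\gamma_\ell, \delta_\ell)$ and $q_i = \sum_\ell w_\ell$ with $w_\ell = \sum_{k \in B_\ell}(\alpha_k, \beta_k) \prec (\gamma_\ell, \delta_\ell)$. The chain $\gamma_\ell \prec \alpha_{k_1} \prec \beta_{k_1} \prec \cdots \prec \beta_{k_r} \prec \delta_\ell$ supplies integer gaps of at least $1$ at every step, so since $\epsilon + \epsilon' < 1/n$, the intervals $N_{\epsilon'}((\alpha_k/n, \beta_k/n))$ for $k \in B_\ell$ are pairwise disjoint and their union has closure inside $R_\epsilon((\gamma_\ell/n, \delta_\ell/n))$. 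This gives
\[
\sum_{k \in B_\ell} \chi_{N_{\epsilon'}((\alpha_k/n, \beta_k/n))} \ll \chi_{R_\epsilon((\gamma_\ell/n, \delta_\ell/n))} \text{ in } \LscI.
\]
Applying $\rho_m$ (which preserves $\ll$) and combining with the right-hand \autoref{TeBelow} estimate above yields $\sum_{k \in B_\ell} F((\alpha_k, \beta_k)) \ll \rho_m(\chi_{R_\epsilon((\gamma_\ell/n, \delta_\ell/n))})$; summing over $\ell$ produces $F(q_i) \ll \rho_m(g_i)$. The endpoint cases where $\pm\infty$ appear (in particular $(\gamma_\ell, \delta_\ell) = (-\infty, \infty)$, where $\rho_m(1) = e$) are handled identically using parts (i) and (iii) of \autoref{TeBelow}. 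Taking $m_0$ large enough to work for all finitely many pairs $(q_i, t_i)$ at once gives the uniform statement. The main obstacle is the book-keeping: one must identify the buffer $\epsilon + \epsilon' < 1/n$ and check that the $\ll$-chains survive each retraction/enlargement operation across the various endpoint cases.
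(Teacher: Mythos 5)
Your argument is essentially the paper's own proof: both take $f_i=R_{\epsilon}(G(q_i))$ and $g_i=R_{\epsilon}(G(t_i))$, get the outer inequalities in (ii) by summing the retraction half of \autoref{TeBelow} over the summands of $q_i$ and $t_i$, and get the middle inequality $F(q_i)\ll\rho_m(g_i)$ by bounding $F$ of each summand of $q_i$ above by $\rho_m$ of an enlarged interval and then absorbing the (disjoint) enlarged intervals into the retracted containing interval; the paper simply uses the same $\epsilon$ for the enlargement (i.e.\ $N_\epsilon$) instead of a separate $\epsilon'$, and derives (i) directly from the interval combinatorics rather than via level sets. One quantitative slip: pairwise disjointness of the intervals $N_{\epsilon'}((\alpha_k/n,\beta_k/n))$, $k\in B_\ell$, needs $2\epsilon'<1/n$, which your constraint $\epsilon'<1/n-\epsilon$ does not give (with $\epsilon$ tiny and $\epsilon'$ close to $1/n$, consecutive enlargements overlap, the level-$2$ set of $\sum_{k}\chi_{N_{\epsilon'}(\cdot)}$ becomes nonempty, and the claimed compact containment into $\chi_{R_\epsilon((\gamma_\ell/n,\delta_\ell/n))}$ fails); taking $\epsilon'=\epsilon<1/4n$ as in the paper repairs this immediately.
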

\begin{proof}
 For every $i$, write $q_{i}=\sum_{j\leq M,k\leq K}(\alpha_{j,k}^{i},\beta_{j,k}^{i})$ and $t_{i}=\sum_{j\leq M}(\alpha_{j}^{i},\beta_{j}^{i})$ such that
 \[
  \alpha_{j}^{i}\prec \alpha_{j,1}^{i}\prec \beta_{j,1}^{i}\prec\cdots\prec \beta_{j,K}^{i}\prec \beta_{j}^{i}
 \]
for every $j$, where it is understood that $K$ depends on $i$ and $j$, and $M$ depends on $i$ (see the definitions before \autoref{Max_Elements}).

Define the open subsets $U_{j,k}^{i}=(\alpha_{j,k}^{i}/n,\beta_{j,k}^{i}/n)$ and $U_{j}^{i}=(\alpha_{j}^{i}/n,\beta_{j}^{i}/n)$. Let $G\colon X_n\to\LscI$ be the I-morphism obtained in \autoref{Linear_span}, which is defined as $G(\alpha ,\beta )=\chi_{(\alpha /n ,\beta /n )}$.

Since 
\[
\sum_{k}(\alpha_{j,k}^{i},\beta_{j,k}^{i})\prec (\alpha_{j}^{i},\beta_{j}^{i}),
\]
we can apply $G$ to obtain
\[
 \sum_{k}\chi_{U_{j,k}^{i}}\ll \chi_{U_{j}^{i}}
\]
for every $i,j$.

Set $\epsilon<1/4n$, and consider $V_{j}^{i}=R_{\epsilon}(U_{j}^{i})$, the $\epsilon$-retraction of $U_{j}^{i}$. Note that, for every $\alpha\prec \alpha '\prec \beta '\prec \beta$ in $\Omega_{n}$, we have $(\alpha '/n-\epsilon ,\beta '/n+\epsilon )\Subset (\alpha /n+\epsilon , \beta /n-\epsilon )$ in $[0,1]$. This implies 
\[
 \sum_{k}\chi_{U_{j,k}^{i}}\ll \sum_{k}\chi_{N_{\epsilon}(U_{j,k}^{i})}\ll \chi_{V_{j}^{i}}
\]
for every $i,j$.

Now set $V_{j,k}^{i}=R_{\epsilon}(U_{j,k}^{i})$ and let $m\geq 4n$. Then, using \autoref{TeBelow} at the first, second, and last steps, and the fact that $\rho_{m}$ is a $\Cu$-morphism at the third, we get
\[
 \sum_{k}\rho_{m}(\chi_{V_{j,k}^{i}})\ll 
 \sum_{k}F((\alpha_{j,k}^{i},\beta_{j,k}^{i}))\ll
 \sum_{k}\rho_{m}(\chi_{N_{\epsilon}(U_{j,k}^{i})})\ll
 \rho_{m}(\chi_{V_{j}^{i}})\ll F((\alpha_{j}^{i},\beta_{j}^{i})).
\]

Define $f_{i}=\sum_{j\leq M,k\leq K}\chi_{V_{j,k}^{i}}$ and $g_{i}=\sum_{j\leq M}\chi_{V_{j}^{i}}$. By adding on $j$ in the previous inequality, one gets
\[
 \rho_{m}(f_{i})\ll F(q_{i})\ll \rho_{m}(g_{i})\ll F(t_{i}),
\]
which is condition (ii).

By the comments above, we also have $\sum_{k}\chi_{V_{j,k}^{i}}\ll \sum_{k}\chi_{U_{j,k}^{i}}\ll \chi_{V_{j}^{i}}$, which implies condition (i) in the same fashion.

Condition (iii) follows by construction, since we have applied the same retraction to all the elements.
\end{proof}

\begin{rmk}\label{Retraction_eq}
 Given $\sum_{i} (\alpha_{i},\beta_{i})\simeq \sum_{j} (\gamma_{j},\delta_{j})$ in $X_{n}$, we have 
 \[
 \sum_{i} \chi_{(\alpha_{i}/n+\epsilon ,\beta_{i}/n-\epsilon )}=\sum_{j} \chi_{(\gamma_{j}/n+\epsilon ,\delta_{j}/n-\epsilon )}
 \]
 in $\LscI$ for every positive $\epsilon <1/2n$.
 
 Indeed, $\sum_{i} (\alpha_{i}\beta_{i})\simeq \sum_{j} (\gamma_{j},\delta_{j})$ implies $\sum_{i} \chi_{(\alpha_{i}/n ,\beta_{i}/n )}=\sum_{j} \chi_{(\gamma_{j}/n ,\delta_{j}/n )}$ in $\LscI$ by \autoref{Linear_span}.

 Since $R_{\epsilon }(f)$ is well defined for any $f\in L_{n}$, we get 
 \[
  \sum_{i} \chi_{(\alpha_{i}/n+\epsilon ,\beta_{i}/n-\epsilon )}=
  R_{\epsilon }\left(\sum_{i} \chi_{(\alpha_{i}/n ,\beta_{i}/n )}\right)
  =
   R_{\epsilon }\left(\sum_{j} (\gamma_{j},\delta_{j})\right) =
   \sum_{j} \chi_{(\gamma_{j}/n+\epsilon ,\delta_{j}/n-\epsilon )}.
 \]
\end{rmk}
\Msubsection{Properties $\rm{I}_{0}$ and $\rm{I}$}{Properties I0 and I}

In this last section we introduce property $\rm{I}_{0}$, which provides a characterization for when a \CuSgp{} is $\Cu$-isomorphic to an inductive limit of the form $\lim_n\LscI$. We also introduce  property $\rm{I}$, which leads to a characterization of when a \CuSgp{} is $\Cu$-isomorphic to the Cuntz semigroup of an AI-algebra.

\Msubsubsection{Property $\rm{I}_{0}$}{Property I0}
\begin{dfn}\label{dfn_prop_I0}
 We say that a \CuSgp{} $S$ has the \emph{reduced  $\rm{I}_{0}$} property if, given any sequence $0=z_{l}\ll z_{l-1}\ll z_{l-2}\ll\cdots \ll z_{0}\ll z_{-\infty}= p\ll p$ and $r\ll t$ such that $r=\sum_{i\in I}z_{i}$ and $t=\sum_{j\in J}z_{j}$ with $I,J$ multisets of $\{-\infty ,0,\cdots ,l\}$,  there exists $k\in\NN$ and an $(n,e)$-chainable subsemigroup  $H\subset S$ with associated I-morphism $F$ and $p=ke$ such that:
 \begin{enumerate}[(i)]
  \item There exists a sequence $a_{l-1}\prec\cdots\prec a_{0}\prec k(-\infty ,\infty )$ in $X_{n}$ such that $z_{i}\ll F(a_{i-1})\ll z_{i-1}$ for every $i$.
  \item There exist $a, b\in X_{n}$ such that $\sum_{i\in I}a_{i}\simeq a\prec b\simeq \sum_{j\in J}a_{j}$.
 \end{enumerate}
\end{dfn}

Recall that given any $l,M\in\mathbb{N}$, the subset $C_l^M$ of $\LscI^{M}$ is defined as  $C_{l}^{M}=\{ \chi^{s}_{(i/l,1]} \}_{i,s}\cup\{ 1_{s}\}$, where $1_{s}$ and $\chi^{s}_{(i/l,1]}$ denote the unit and the element $\chi_{(i/l,1]}$ in the $s$-th summand respectively. Also recall that we denote by $B_{l}^{M}$  the additive span of the elements in $C_{l}^{M}$.

\begin{prp}\label{Easy_case}
 Let $\varphi\colon \LscI\to S$ be a Cu-morphism, and assume that $S$ has the reduced  $\rm{I}_{0}$ property. Then, for every $l\in\mathbb{N}$ and $x,x',y\in B_{l}^{1}$ with $x\ll x'$ and $\varphi (x')\ll \varphi (y)$, there exist $\Cu$-morphisms $\theta\colon \LscI\to \LscI$ and $\phi \colon \LscI\to S$ satisfying the conditions in \autoref{MainTh}. That is to say, we have
 \begin{enumerate}[(i)]
  \item $\varphi (\chi_{((i+1)/l,1]})\ll 
  \phi\theta (\chi_{(i/l,1]})$ and $\phi\theta (\chi_{((i+1)/l,1]})\ll 
  \varphi (\chi_{(i/l,1]})$ for every $i<l$.
  \item $\theta (x)\ll \theta (y)$.
  \item $\varphi (1)=\phi\theta (1)$.
 \end{enumerate}
\end{prp}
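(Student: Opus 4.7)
The plan is to apply the reduced $\rm{I}_0$ property to the sequence $0 = z_l \ll z_{l-1} \ll \cdots \ll z_0 \ll z_{-\infty} = p$ coming from the $\varphi$-images of the basis of $B_l^1$, and then build $\phi$ and $\theta$ out of the associated $\Cu$-morphism $\rho_m$ from \autoref{rho} together with a lift obtained via \autoref{lifting_th}.

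Set $p := \varphi(1)$, which is compact because $1 \ll 1$ in $\LscI$ and $\Cu$-morphisms preserve compact containment. Define $z_{-\infty} := p$, $z_i := \varphi(\chi_{(i/l,1]})$ for $0 \leq i \leq l-1$, and $z_l := 0$; the required chain holds because $\chi_{(i/l,1]} \ll \chi_{((i-1)/l,1]}$ in $\LscI$ for $i \geq 1$, and $z_0 \leq p \ll p$ forces $z_0 \ll p$. Decomposing $x = \sum_{i \in I_x^\circ} \chi_{(i/l,1]} + k_x\cdot 1$ and similarly for $y$, set $I := I_x^\circ \cup \{-\infty\}^{k_x}$ and $J := J_y^\circ \cup \{-\infty\}^{k_y}$ so that $r := \varphi(x) = \sum_{i \in I} z_i$ and $t := \varphi(y) = \sum_{j \in J} z_j$; then $r \leq \varphi(x') \ll \varphi(y) = t$ gives $r \ll t$. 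Invoking reduced $\rm{I}_0$ now yields $k \in \NN$, an $(n,e)$-chainable subsemigroup $H \subset S$ with I-morphism $F \colon X_n \to H$ and $p = ke$, a chain $a_{l-1} \prec \cdots \prec a_0 \prec k(-\infty, \infty) =: a_{-\infty}$ in $X_n$ satisfying $z_i \ll F(a_{i-1}) \ll z_{i-1}$, and elements $a, b \in X_n$ with $\sum_{i \in I} a_i \simeq a \prec b \simeq \sum_{j \in J} a_j$.

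I would then set $\phi := \rho_m$ for $m$ sufficiently large, and use \autoref{lifting_th} to produce $\theta \colon \LscI \to \LscI$ with $\theta(1) := k$ and $\theta(\chi_{(i/l, 1]}) := G(a_i)$, where $G \colon X_n \to L_n \subset \LscI$ is the I-morphism from \autoref{Linear_span}; this is a valid lift because $a_i \prec a_{i-1}$ gives $G(a_i) \ll G(a_{i-1})$ and $a_0 \prec k(-\infty, \infty)$ gives $G(a_0) \ll G(a_{-\infty}) = k$. Condition (iii) is then immediate from $\phi\theta(1) = k \cdot \rho_m(1) = ke = p = \varphi(1)$. Condition (ii) follows from the additivity of $G$, the identity $G(a_{-\infty}) = k$, and the I-morphism property of $G$ (via \autoref{Canonical_element}, which converts $\simeq$ into equality under $G$): one rewrites $\theta(x) = G(\sum_{i \in I}a_i)$ and $\theta(y) = G(\sum_{j \in J}a_j)$, and then $\theta(x) \ll \theta(y)$ is the image under $G$ of $\sum_{i \in I}a_i \prec \sum_{j \in J}a_j$.

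The main obstacle is condition (i): one needs both $z_{i+1} \ll \rho_m(G(a_i))$ and $\rho_m(G(a_{i+1})) \ll z_i$. Writing $G(a_i)$ as the supremum of its retractions $R_\epsilon(G(a_i))$ and invoking \autoref{Prec_Convert} gives $\rho_m(G(a_i)) \leq F(a_i)$; combined with the reduced $\rm{I}_0$ sandwich $F(a_{i+1}) \ll z_{i+1}$ this immediately yields the upper inequality $\rho_m(G(a_{i+1})) \ll z_i$. The lower inequality $z_{i+1} \ll \rho_m(G(a_i))$ is the delicate point and hinges on upgrading the bound $\rho_m(G(a_i)) \leq F(a_i)$ to the equality $\rho_m(G(a_i)) = F(a_i)$. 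I expect this equality to follow from the $\LscI$-identity $\chi_{(\alpha/n, 1]} + \chi_{[0, \beta/n)} = 1 + \chi_{(\alpha/n, \beta/n)}$ (verified pointwise), the compatibility $\rho_m(\chi_{(\alpha/n, 1]}) = F((\alpha, \infty))$ recorded after \autoref{rho}, property (ii) of the chainable subsemigroup, and a weak-cancellation argument analogous to the one in the proof of \autoref{TeBelow}; once applied summand by summand, additivity of $F$ and of $\rho_m \circ G$ would then promote this to the full equality $\rho_m(G(a_i)) = F(a_i)$ and close the argument.
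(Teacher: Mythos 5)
Your setup (extracting the chain $z_i$ from $\varphi$, invoking reduced $\rm{I}_0$, taking $\phi=\rho_m$ and building $\theta$ by \autoref{lifting_th}) matches the paper's strategy, but the step you yourself flag as delicate is where the argument genuinely breaks: the equality $\rho_m(G(a_i))=F(a_i)$ is not obtainable from the definition of a chainable subsemigroup. Condition (ii) of \autoref{chainable_subset} is only an inequality, and $\rho_m$ is pinned down only on the functions $\chi_{(\alpha/nm,1]}$; everything else (in particular $\rho_m(\chi_{[0,\beta/n)})$ and hence $\rho_m(\chi_{(\alpha/n,\beta/n)})$) is determined only up to a $\ll$-sandwich. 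What \autoref{TeBelow} and \autoref{Prec_Convert} actually give, for a fixed $m$, is the one-sided interleaving $\rho_m(R_\epsilon(G(a_i)))\ll F(a_i)\ll \rho_m(R_\epsilon(G(a_{i-1})))$ --- note the unavoidable shift to $a_{i-1}$ in the upper bound, and note that even your ``immediate'' estimate $\rho_m(G(a_i))\leq F(a_i)$ fails for fixed $m$, since $\rho_m(G(a_i))=\sup_\epsilon\rho_m(R_\epsilon(G(a_i)))$ and the bound $\rho_m(R_\epsilon(G(a_i)))\ll F(a_i)$ is only available for $\epsilon\gtrsim 1/m$. Chasing this through your indexing, the best you can conclude is $\varphi(\chi_{((i+1)/l,1]})=z_{i+1}\ll F(a_i)\ll\rho_m(R_\epsilon(G(a_{i-1})))=\phi\theta(\chi_{((i-1)/l,1]})$, a shift of two steps where condition (i) demands a shift of one.

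The paper absorbs exactly this shift by two devices you are missing. First, it applies the reduced $\rm{I}_0$ property not to the $l$-point chain but to the refined chain $z_i=\varphi(\chi_{(i/2l,1]})$, $0\leq i\leq 2l$, so that $x_i=z_{2i}$ and a two-step shift among the $a_j$'s becomes a one-step shift among the $x_i$'s: one gets $x_i\ll F(a_{2i-1})\ll\rho_m(f_{2(i-1)})\ll F(a_{2(i-1)})\ll x_{i-1}$. Second, $\theta$ is defined on the retractions, $\theta(\chi_{(i/l,1]}):=f_{2i}=R_\epsilon(G(a_{2i}))$ rather than $G(a_i)$, which is what makes \autoref{Prec_Convert} applicable; conditions (ii) and (iii) then go through as you describe, using \autoref{Retraction_eq} to see $\sum_{i\in I}f_{2i}=R_\epsilon(G(a))\ll R_\epsilon(G(b))=\sum_{j\in J}f_{2j}$. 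Without the $2l$ refinement and the retractions, condition (i) cannot be established, so as written the proposal does not close.
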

\begin{proof}
 Given a Cu-morphism $\varphi\colon \LscI\to S$ and a fixed $l\in\mathbb{N}$, consider the elements $x_{i}=\varphi (\chi_{(i/l,1]})$ and set $x_{-\infty }=\varphi (1)$. Also,
 set $r=\varphi (x')$ and $t=\varphi (y)$.
 
 Let $I,J$ be the multisets of $\{-\infty ,0,\cdots ,l\}$ such that $x'=\sum_{i\in I}\chi_{(i/l,1]}$ and $y=\sum_{j\in J}\chi_{(j/l,1]}$, where it is understood that $\chi_{(-\infty /l,1]}=1$. Thus, we can write $r,t$ as $r=\sum_{i\in I}x_{i}$ and $t=\sum_{j\in J}x_{j}$.
 
 Let $z_{i}=\varphi (\chi_{(i/2l,1]})$ for every $i\leq 2l$, and let $p=x_{-\infty }=\varphi (1)$, so that we have
 \[
  0=x_{l}=z_{2l}\ll z_{2l-1}\ll\cdots\ll z_{0}=x_{0}\ll x_{-\infty }=p.
 \]
 
Since $z_{2i}=x_{i}$ for every $i\leq l$, we also get $r=\sum_{i\in I}z_{2i}$ and $t=\sum_{j\in J}z_{2j}$.
 
 By the reduced  $\rm{I}_{0}$ property applied to $\{ z_{i}\}_{0\leq i\leq 2l}$ and $r\ll t$, there exist $k\in\NN$, a compact element $e$ such that $p=ke$, an $(n,e)$-chainable subsemigroup $H$ and an associated I-morphism $F$ with a sequence $a_{2l}\prec\cdots\prec a_{0}$ in $X_{n}$ such that 
 \[
 z_{i}\ll F(a_{i-1})\ll z_{i-1}.
 \]
 
 By condition (ii) in \autoref{dfn_prop_I0}, there also exist $a, b\in X_{n}$ with $\sum_{i\in I}a_{2i}\simeq a\prec b\simeq \sum_{j\in J}a_{2j}$. 
 
 Note, in particular, that
 \[
 x_{i}=z_{2i}\ll F(a_{2i-1})\ll F(a_{2(i-1)})\ll z_{2(i-1)}=x_{i-1}
 \]
 for every $i$.
 
 Consider the finite family of $\prec$-relations
 \[
 \{
 a_{i}\prec a_{i-1}
 \}_{i} \cup
 \{ a_{0}\prec k(-\infty ,\infty ) \}\cup
 \{
 a\prec b
 \}.
 \]
 
 By \autoref{Prec_Convert}, if $G\colon X_n\to\LscI$ is the I-morphism defined in \autoref{Linear_span}, there exists a large enough $m\in\NN$ such that the functions  $f_i=R_{\epsilon }(G(a_i))$, $f=R_{\epsilon }(G(a))$ and $g=R_{\epsilon }(G(b))$ satisfy
 \[
 \begin{split}
 f_{i}\ll f_{i-1}\ll k1,\\
  \rho_{m}(f_{i})\ll F(a_{i})\ll \rho_{m}(f_{i-1})\ll F(a_{i-1}),\\
  f\ll g,\\
  \rho_{m}(f )\ll F(a)\ll  \rho_{m}(g)\ll F(b)
 \end{split}
 \]
for every $i$, where recall that $\rho_m$ is the $\Cu$-morphism defined in \autoref{rho}.

Furthermore, since $\sum_{i\in I}a_{2i}\simeq a$, we get $\sum G(a_{2i} )=G(a)$. By the argument in \autoref{Retraction_eq} we get $\sum R_{\epsilon }(G(a_{2i} ))=R_{\epsilon }(G(a))$, that is, $\sum_{i\in I} f_{2i}=f$. Similarly, we also have $\sum_{j\in J} f_{2j}=g$.

Thus, for every $i\leq n$, one gets
\[
x_{i}\ll F(a_{2i-1})\ll \rho_{m}(f_{2(i-1)})\ll F(a_{2(i-1)})\ll x_{i-1}.
\]

Define the $\Cu$-morphism $\theta\colon\LscI\to \LscI$ as $\theta (\chi_{(i/l,1]})= f_{2i}$ and $\theta (1)=k1$. This can be done by \autoref{lifting_th} because $\{ f_{2i}\}_{i}$ is a decreasing sequence bounded by $k1$.

Note that we have
\[
\theta (x)\ll \theta (x')=\sum_{i\in I}f_{2i}=f\ll g=\sum_{j\in J}f_{2j}=\theta (y),
\]
and thus condition (ii) of the proposition is satisfied.

Set $\phi =\rho_{m}$, so that we get $\phi\theta (1)=k\rho_{m}(1)=ke=p=\varphi (1)$. By construction, one also has
\[
\begin{split}
 \varphi (\chi_{(i/l,1]})&=x_{i}\ll F(a_{2i-1})\ll \rho_{m}(f_{2(i-1)})\\
 &=\phi (\theta (\chi_{((i-1)/l,1]}))\ll F(a_{2(i-1)})\ll x_{i-1}=\varphi (\chi_{((i-1)/l,1]}),
\end{split}
\]
which implies condition (i) in the proposition.
\end{proof}

We now strengthen the previous property.
\begin{dfn}\label{dfn_I0}
We say that $S$ satisfies \emph{property $\rm{I}_{0}$} if, given any finite number of decreasing sequences of the form $0=z_{l,s}\ll z_{l-1,s}\ll\cdots \ll z_{0,s}\ll z_{-\infty ,s}=p_{s}\ll p_{s}$, $s\leq M$, and any pair $r\ll t$ such that $r=\sum_{s\leq M}\sum_{i\in I}z_{i,s}$ and $t=\sum_{s\leq M}\sum_{j\in J}z_{j,s}$ with $I,J$  (possibly empty) multisets of $\{-\infty ,0,\cdots ,l\}$, there exists an $(n,e)$-chainable subsemigroup $H$ with associated I-morphism $F$:
 \begin{enumerate}[(i)]
 \item For each $s$ there exists $k_{s}\in\NN$ such that $p_s =k_s e$.
  \item There exist sequences $a_{l-1,s}\prec\cdots\prec a_{0,s}\prec k_{s}(-\infty ,\infty )$ in $X_{n}$ such that $z_{i,s}\ll F(a_{i-1,s})\ll z_{i-1,s}$ for every $i,s$.
  
  \item There exist $a, b\in X_{n}$ such that $\sum_{s\leq M, i\in I}a_{i,s}\simeq a\prec b\simeq \sum_{s\leq M,j\in J}a_{j,s}$.
 \end{enumerate}
\end{dfn}

\begin{lma}\label{I0_InductiveLimits}
 Property $\rm{I}_{0}$ goes through inductive limits.
 \end{lma}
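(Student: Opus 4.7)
The plan is to reduce property $\rm{I}_0$ for $S$ to property $\rm{I}_0$ at a single finite stage, via a lift--apply--push strategy. Write $S=\varinjlim(S_k,\sigma_{l,k})$ with each $S_k$ satisfying $\rm{I}_0$, and let $\sigma_k\colon S_k\to S$ denote the canonical morphisms. Given data as in \autoref{dfn_I0} in $S$ --- chains $0=z_{l,s}\ll z_{l-1,s}\ll\cdots\ll z_{0,s}\ll p_s$ with $p_s$ compact for $s\le M$, together with a way-below relation $r\ll t$ decomposed via multisets $I,J$ --- my goal is to lift this finite collection of elements and $\ll$-relations to a common stage $S_{k_0}$, apply $\rm{I}_0$ there, and push the resulting chainable subsemigroup forward via \autoref{morphisms}.

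The heart of the argument is the lifting, which will be the main obstacle. First I would introduce auxiliary interpolants $z_{k,s}\ll z''_{k,s}\ll z'_{k,s}\ll z_{k-1,s}$ and $r\ll r'\ll t$; such interpolation is always possible for finitely many $\ll$-relations. Because $p_s\ll p_s$, going sufficiently far along the system I can choose $k_0$ together with compact lifts $\tilde p_s\in S_{k_0}$ satisfying $\sigma_{k_0}(\tilde p_s)=p_s$, and lifts $\tilde z_{k,s}\in S_{k_0}$ of the interpolants that can be arranged (by the standard fact that finitely many way-below relations in a $\Cu$-inductive limit are witnessed at a common finite stage) to form strictly $\ll$-decreasing chains with $\tilde z_{l,s}=0$ and $\tilde z_{0,s}\ll\tilde p_s$, and to satisfy $z_{k+1,s}\ll\sigma_{k_0}(\tilde z_{k,s})\ll z_{k,s}$ for all $k,s$. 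At the same time I would ensure that $\tilde r:=\sum_{s,i\in I}\tilde z_{i,s}\ll\sum_{s,j\in J}\tilde z_{j,s}=:\tilde t$ in $S_{k_0}$, using $r\ll r'\ll t$ to force the way-below at the lifted level. The delicate part is making all these requirements compatible at a single $k_0$; the interpolations introduced beforehand in $S$ are precisely what reduces this to a finite list of way-below relations, each of which can be realised individually at some finite stage and hence all of them at a common later stage.

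Finally, applying property $\rm{I}_0$ at $S_{k_0}$ to this lifted data produces an $(n,\tilde e)$-chainable subsemigroup $\tilde H\subset S_{k_0}$ with I-morphism $\tilde F\colon X_n\to\tilde H$, integers $k_s$ satisfying $\tilde p_s=k_s\tilde e$, sequences $a_{l-1,s}\prec\cdots\prec a_{0,s}\prec k_s(-\infty,\infty)$ in $X_n$ with $\tilde z_{k,s}\ll\tilde F(a_{k-1,s})\ll\tilde z_{k-1,s}$, and elements $a,b\in X_n$ with $\sum_{s,i\in I}a_{i,s}\simeq a\prec b\simeq\sum_{s,j\in J}a_{j,s}$. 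Set $e=\sigma_{k_0}(\tilde e)$, $H=\sigma_{k_0}(\tilde H)$ and $F=\sigma_{k_0}\circ\tilde F$. By \autoref{morphisms}, $H$ is $(n,e)$-chainable with associated I-morphism $F$, and $p_s=k_se$. Applying $\sigma_{k_0}$ to the inequalities just obtained and combining with the sandwiches $z_{k+1,s}\ll\sigma_{k_0}(\tilde z_{k,s})\ll z_{k,s}$ yields $z_{k,s}\ll F(a_{k-1,s})\ll z_{k-1,s}$, which is condition (ii) of \autoref{dfn_I0} in $S$; condition (iii) transfers verbatim with the same $a,b\in X_n$, completing the argument.
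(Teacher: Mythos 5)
Your overall strategy --- lift the finite data to a common finite stage, apply property $\rm{I}_{0}$ there, and push the resulting chainable subsemigroup forward via \autoref{morphisms} --- is exactly the paper's, and the lifting of finitely many $\ll$-relations, of the compacts $p_s$, and of the relation $r\ll t$ is handled correctly. The problem is the final combination step. Your lifts satisfy $z_{k+1,s}\ll\sigma_{k_0}(\tilde z_{k,s})\ll z_{k,s}$, so each $\sigma_{k_0}(\tilde z_{k,s})$ lies in the gap between $z_{k+1,s}$ and $z_{k,s}$. Applying $\rm{I}_{0}$ at stage $k_0$ places $\tilde F(a_{k-1,s})$ between $\tilde z_{k,s}$ and $\tilde z_{k-1,s}$, so after pushing forward all you can conclude is
\[
 z_{k+1,s}\ll\sigma_{k_0}(\tilde z_{k,s})\ll F(a_{k-1,s})\ll\sigma_{k_0}(\tilde z_{k-1,s})\ll z_{k-1,s},
\]
that is, $F(a_{k-1,s})$ is pinned down only to the union of two consecutive gaps of the original chain. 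The required lower bound $z_{k,s}\ll F(a_{k-1,s})$ does not follow, and no reindexing of the $a$'s repairs this, because consecutive lifts necessarily map into different gaps.

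The missing idea is to lift an interleaved refinement of the chain rather than a chain of the same length. The paper chooses $g_{i,s}\ll f_{i,s}\ll g_{i-1,s}\ll f_{i-1,s}\ll e_s$ at a finite stage with $z_{i,s}\ll [g_{i-1,s}]\ll[f_{i-1,s}]\ll z_{i-1,s}$ --- so that \emph{both} $g_{i-1,s}$ and $f_{i-1,s}$ map into the single gap between $z_{i,s}$ and $z_{i-1,s}$ --- and applies $\rm{I}_{0}$ to this doubled chain, obtaining $g_{i,s}\ll F(a_{i,s})\ll f_{i,s}\ll F(b_{i,s})\ll g_{i-1,s}$. The relevant output $F(a_{i-1,s})$ is then squeezed between two elements whose images both lie in the correct gap, which yields $z_{i,s}\ll[F(a_{i-1,s})]\ll z_{i-1,s}$; the subchain of $a$'s (discarding the $b$'s) is still $\prec$-decreasing, and condition (iii) of \autoref{dfn_I0} transfers verbatim as you say. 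Your interpolants $z''_{k,s}, z'_{k,s}$ are the right raw material for this, but you must actually lift them and feed the doubled chain into $\rm{I}_{0}$, not merely use them to witness the existence of the lifts.
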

 \begin{proof}
 Let $S=\lim_{m} S_{m}$ be an inductive limit of Cu-semigroups $S_{m}$ satisfying property $\rm{I}_{0}$, and let $z_{i,s},p_{s},r,t$ be as in \autoref{dfn_I0}. Given $f\in S_{m}$, we will denote by $[f]$ the image of $f$ through the limit morphism from $S_{m}$ to $S$. 
 
 Then, there exists a large enough $m$ and elements $g_{i,s},f_{i,s}, e_{s}\in S_{m}$ such that:
 \[
  \begin{split}
  g_{i,s}\ll f_{i,s}\ll g_{i-1,s}\ll f_{i-1,s}\ll e_{s}\ll e_{s},\\
   z_{i,s}\ll [g_{i-1,s}]\ll [f_{i-1,s}]\ll z_{i-1,s}\,\, ,\, p_{s}=[e_{s}],\\
   \sum_s\sum_{i\in I}f_{i,s}\ll \sum_s\sum_{j\in J}f_{j,s},\\
   \sum_s\sum_{i\in I}[f_{i,s}]\ll r\ll
   \sum_s\sum_{j\in J}[f_{j,s}]
   \ll t.
  \end{split}
 \]

 Since $S_{m}$ satisfies property $\rm{I}_{0}$, we can find an $(n,e)$-chainable subset $H$ with $e_{s}=k_{s}e$ such that:
 \begin{enumerate}[(i)]
  \item There exist sequences $a_{l-1,s}\prec b_{l-1,s}\prec a_{l-2,s}\prec\cdots\prec a_{0,s}\prec k_{s}(-\infty ,\infty )$ in $X_{n}$ such that 
  \[
  g_{i,s}\ll F(a_{i,s})\ll f_{i,s}\ll F(b_{i,s})\ll g_{i-1,s}
  \] 
  in $S_m$ for every $i,s$.
  
  \item There exist $a, b\in X_{n}$ such that $\sum_{s\leq M, i\in I}a_{i,s}\simeq a\prec b\simeq \sum_{s\leq M,j\in J}a_{j,s}$.
 \end{enumerate}
 
 Therefore, one gets $f_{i,s}\ll [F(a_{i-1,s})]\ll f_{i-1,s}$ for every $i,s$ and $p_{s}=k_{s}[e]$ for every $s$.
 
 By \autoref{morphisms}, $[H]$ is an $(n,[e])$-chainable subset of $S$, and it satisfies the required properties by the previous considerations.
\end{proof}

\begin{exa}\label{N_has_I0}
 Let $S$ be an inductive limit of the form $S=\lim_{m}S_{m}$ , with $S_{m}=\NN$ for every $m$. Then $S$ satisfies property $\rm{I}_{0}$.
 
 To prove this, it is enough to show by \autoref{I0_InductiveLimits} that $\NN$ satisfies property $\rm{I}_{0}$.
 
 Thus, let $z_{i,s},p_{s},r,t$ be as in \autoref{dfn_I0}, and note that there exist positive integers $k_{i,s}\leq k_{i-1,s}$ such that $z_{i,s}=k_{i,s}1$ for every $i,s$. There also exist integers $k_{s}$ such that $p_{s}=k_{s}1$.
 
 Set $H=\mathbb{N}$, which is the additive span of $1$. Then, it follows from \autoref{algebraic} that $H$ is $(n,1)$-chainable for every $n$. Take $n=1$ and let $F\colon X_{1}\to H$ be as in \autoref{algebraic}, where note that condition (i) in \autoref{dfn_I0} is satisfied by construction.
 
 Consider the elements $a_{i,s}=k_{i-1,s}(-\infty ,\infty )$, whose image through $F$ is $k_{i-1,s}1$. This implies condition (ii) in \autoref{dfn_I0}.
 
 Also, since $\sum_{s\leq M, i\in I} k_{i,s}\leq \sum_{s\leq M, j\in J} k_{j,s}$, we clearly have 
 \[
 \sum_{s\leq M, i\in I} k_{i,s}(-\infty ,\infty )\prec  \sum_{s\leq M, j\in J} k_{j,s}(-\infty ,\infty ).
  \]

  Letting $a=\sum_{s\leq M, i\in I} k_{i,s}(-\infty ,\infty )$ and $b= \sum_{s\leq M, j\in J} k_{j,s}(-\infty ,\infty )$, condition (iii) also follows.
\end{exa}

\begin{exa}\label{Exa_Property_I0}
 Every \CuSgp{} $S$ of the form $S=\lim_{m}S_{m}$ , with $S_{m}=\LscI$ for every $m$, satisfies property $\rm{I}_{0}$.
 
 As in \autoref{N_has_I0}, by \autoref{I0_InductiveLimits} we only need to prove that $\LscI$ satisfies  property $\rm{I}_{0}$.
 
 Thus, take sequences 
 \[
 0=z_{l,s}\ll z_{l-1,s}\ll\cdots \ll z_{0,s}\ll z_{-\infty ,s}=p_{s}\ll p_{s},\, s\leq M
 \]
 and a pair $r\ll t$ with $r=\sum_{s\leq M, i\in I}z_{i,s}$ and $t=\sum_{s\leq M, j\in J}z_{j,s}$ in $\LscI$.
 
 Let $L_m$ be the subsets of $\LscI$ defined above \autoref{Notation}. Then, since $\cup_{m}L_{m}$ is dense in $\LscI$, there exist decreasing sequences $f_{l-1,s}\ll \cdots\ll f_{0,s}$ in $L_{m}$ such that
 \[
 z_{i,s}\ll f_{i-1,s}\ll z_{i-1,s}
 \]
 and $\sum_{s\leq M, i\in I}f_{i,s}\ll \sum_{s\leq M, j\in J}f_{j,s}$. Also take $k_{s}\in\mathbb{N}$ such that $k_{s}1=p_{s}$.
 
 Take a positive rational $\epsilon<1/2n$ small enough so that 
 \[
  z_{i,s}\ll R_{\epsilon} (f_{i-1,s})\ll z_{i-1,s}
 \]
and 
\[
 \sum_{s\leq M, i\in I}R_{\epsilon}(f_{i,s})=R_{\epsilon}\left(\sum_{s\leq M, i\in I} f_{i,s}\right)\ll R_{\epsilon}\left(\sum_{s\leq M, j\in J} f_{j,s}\right)
 =\sum_{s\leq M, j\in J} R_{\epsilon}(f_{j,s}),
\]
where the equalities follow from \autoref{Rmk_Retraction}.

Define $g_{i,s}=R_{\epsilon }(f_{i,s})$ and note that, by \autoref{Red_L0}, there exists some $n\in\mathbb{N}$ such that
\[
 g_{i,s}, \sum_{s\leq M, i\in I}g_{i,s}, \sum_{s\leq M, j\in J} g_{j,s}\in L_{n}^{0}.
\]
Recall from \autoref{Linear_span} that $L_n$ is $(n,1)$-chainable, and let $F\colon X_n\to L_n$ be the I-morphism obtained in its proof, that is, the I-morphism induced by $F((\alpha ,\beta ))=\chi_{(\alpha /n ,\beta /n)}$.

Set $G_{I}=\sum_{s\leq M, i\in I}g_{i,s}$ and $G_{J}=\sum_{s\leq M, j\in J} g_{j,s}$, so that $G_I\ll G_J$. By \autoref{L0_comp}, the elements $q_{g_{i,s}}$, $q_{G_{I}}$, $q_{ G_{J}}\in X_{n}$ satisfy
\[
 q_{g_{i,s}}\prec q_{g_{i-1,s}},
 \andSep
 q_{G_{I}}\prec q_{ G_{J}}.
\]

Set $a_{i,s}=q_{g_{i,s}}$, $a=q_{G_{I}}$ and $b=q_{ G_{J}}$. By \autoref{Canonical_element}, $F(a_{i,s})=g_{i,s}$, $F(a)=G_I$ and $F(b)=G_J$.

Using this, we now have 
$F(a)=\sum_{s\leq M, i\in I} F(a_{i,s})$, $F(b)=\sum_{s\leq M, j\in J} F(a_{j,s})$ and $z_{i,s}\ll g_{i-1,s}=F(a_{i-1,s})\ll z_{i-1,s}$ for every $i,s$. Since $F(\sum_{s,i}a_{i,s})=G_I$ and $F(\sum_{s,j}a_{j,s})=G_J$, it follows from \autoref{Canonical_element} that $\sum_{s, i} a_{i,s}\simeq a$ and, by \autoref{L0_comp}, that $\sum_{s, j} a_{j,s}\prec b$. Therefore, we have
\[
 \sum_{s\leq M, i\in I} a_{i,s}\simeq a\prec
 b\simeq\sum_{s\leq M, j\in J} a_{j,s},
\]
which finishes the proof.
\end{exa}

\begin{exa}\label{exa_propI0_sums}
 The \CuSgp{} $\LscI\oplus\LscI$ does not satisfy property $\rm{I}_{0}$. To see this, simply consider the sequences $(0,0)\ll (1,0)$ and $(0,0)\ll (0,1)$ and set $r=t=(0,0)$. 
 
 If $\LscI\oplus\LscI$ were to satisfy property $\rm{I}_{0}$, in particular it would have a compact element $e$ such that $(1,0)=k_1 e$ and $(0,1)= k_2 e$ for some $k_1,k_2\in\NN$. Clearly, this is not possible.
 
 Note that the same argument works for $\overline{\NN}\oplus \overline{\NN}$.
\end{exa}

\begin{thm}
 Let $S$ be a \CuSgp{}. Then, $S$ is $\Cu$-isomorphic to an inductive limit of the form $\lim \LscI$ if and only if $S$
  is countably based, compactly bounded (see \autoref{dfn_comp_bound}) and it satisfies (O5), (O6), weak cancellation, and  property $\rm{I}_{0}$.
\end{thm}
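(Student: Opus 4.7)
Suppose $S\cong\lim\LscI$. The semigroup $\LscI$ is countably based (finite sums of rational-endpoint basic indicator functions form a countable sup-dense subset), compactly bounded (if $f\ll g$ in $\LscI$ then $f$ takes finitely many values, so $f\leq(\sup f)\cdot 1$, and $n\cdot 1$ is compact for every $n\in\NN$), and as the Cuntz semigroup of $C[0,1]$ it satisfies $\axiomO{5}$, $\axiomO{6}$ and weak cancellation. Property $\rm{I}_0$ for $\LscI$ is \autoref{Exa_Property_I0}. Each of these properties is preserved by sequential inductive limits in $\Cu$: for $\axiomO{5}$, $\axiomO{6}$, weak cancellation and countable basedness this is standard; compact boundedness passes because $x\ll y$ in $\lim S_m$ forces $x$ to be dominated by some $\iota_m(y_m)$ with $y_m\in S_m$, and an $\ll$-approximation argument coupled with compact boundedness at a stage yields a compact bound for $x$ in the limit; finally, property $\rm{I}_0$ passes by \autoref{I0_InductiveLimits}.

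\textbf{The backward direction: one-copy version of \autoref{MainTh}.} Suppose $S$ satisfies all the listed properties. The plan is to establish a single-copy analogue of \autoref{MainTh}: for every \CuMor{} $\varphi\colon\LscI^M\to S$, every $l\in\NN$, and every triple $x,x',y\in B_l^M$ with $x\ll x'$ and $\varphi(x')\ll\varphi(y)$, there exist morphisms $\theta\colon\LscI^M\to\LscI$ and $\phi\colon\LscI\to S$ (with intermediate a \emph{single} copy of $\LscI$) satisfying the analogues of (i)--(iii) in \autoref{MainTh}. This is the direct generalization of \autoref{Easy_case} from $M=1$ to arbitrary $M$; the proof runs identically after setting $z_{i,s}:=\varphi(\chi^s_{(i/2l,1]})$, $p_s:=\varphi(1_s)$, $r:=\varphi(x')$, $t:=\varphi(y)$, and invoking property $\rm{I}_0$ on the resulting $M$ parallel decreasing sequences together with the pair $r\ll t$. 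The crucial output is a single compact $e\in S$ with $p_s=k_s e$ for every $s\leq M$, an $(n,e)$-chainable subsemigroup $H$, and elements $a_{i,s},a,b\in X_n$ encoding the needed order data. Using the associated $\rho_m\colon\LscI\to S$ of \autoref{rho} and the translation \autoref{Prec_Convert}, we define $\theta$ by $\theta(\chi^s_{(i/l,1]}):=f_{2i,s}$ and $\theta(1_s):=k_s\cdot 1$ (existence via \autoref{lifting_th}), and set $\phi:=\rho_m$.

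\textbf{Building the limit.} With this $\LscI$-valued refinement of \autoref{MainTh} in hand, the inductive construction of \autoref{Main_EHS} adapts so that all intermediate objects are single copies of $\LscI$. Given a countable basis $(s_i)_i\subset S_{\ll}$ of $S$ and $\Cu$-morphisms $\psi_i\colon\LscI\to S$ furnished by \autoref{lifting_th} (using compact boundedness to dominate each $s_i$ by a compact), set $\varphi_1:=\psi_1$; inductively, apply the single-copy statement to $\varphi_i\oplus\psi_{i+1}\colon\LscI\oplus\LscI\to S$ to obtain $\theta_{i+1}\colon\LscI^2\to\LscI$ and $\varphi_{i+1}\colon\LscI\to S$ with $\varphi_{i+1}\theta_{i+1}$ approximating $\varphi_i\oplus\psi_{i+1}$. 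Taking $\sigma_{i+1,i}:=\theta_{i+1}\circ\tau_i$ for the left inclusion $\tau_i$, and controlling the approximation constants via \autoref{AI_Rmk}, we meet the hypotheses of \autoref{Gen_Cauchy} along each tail, producing a limit morphism $\phi\colon\lim\LscI\to S$. The surjectivity and order-embedding arguments in the proof of \autoref{Main_EHS} then transfer essentially verbatim, yielding $S\cong\lim\LscI$.

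\textbf{Main obstacle.} The most delicate part is the passage from the one-$\prec$-relation version in property $\rm{I}_0$ to a finite-family version that simultaneously preserves all previously tracked $\ll$-relations during the inductive step, paralleling the reduction from \autoref{Main_OneRel} to \autoref{Main_EHS}. Iterating property $\rm{I}_0$ requires keeping all $(n,e)$-chainable subsemigroups over a common compact element, and each iteration may force refinement of $n$ or replacement of $e$ by a multiple in order to absorb new relations while remaining single-$\LscI$-valued. Careful bookkeeping is needed to certify that the resulting inductive system has connecting maps between single copies of $\LscI$ rather than $\LscI^{k_i}$, which is precisely what distinguishes this characterization from that of general AI-algebras (cf.\ \autoref{exa_propI0_sums}).
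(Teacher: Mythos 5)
Your proposal is correct and follows essentially the same route as the paper, which handles the forward direction via \autoref{Exa_Property_I0} together with permanence of the listed axioms under inductive limits, and the backward direction as the $M$-variable generalization of \autoref{Easy_case} (the single $(n,e)$-chainable subsemigroup and single compact $e$ provided by property $\mathrm{I}_0$ being exactly what keeps every intermediate object equal to one copy of $\LscI$, in contrast to property $\mathrm{I}$). One small correction to your third paragraph: in the inductive step you should apply the single-copy statement to $\varphi_i\oplus\rho_{i+1}$ with $\rho_{i+1}=\psi_1\oplus\cdots\oplus\psi_{i+1}$ rather than to $\varphi_i\oplus\psi_{i+1}$ alone, since the surjectivity argument of \autoref{Main_EHS} requires each $\psi_j$ to reappear as a direct summand at arbitrarily late stages so that the error $d(\varphi_{k-1}\oplus\rho_k,\phi_k\theta_k)<3\epsilon_k$ can be made smaller than any prescribed gap below $s_j$.
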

\begin{proof}
 The proof of one implication is analogous to the proof of \autoref{Easy_case}, so we omit it.
 
 The other implication is the previous example, together with the well known fact that (O5), (O6), weak cancellation and being countably based go through inductive limits (see, for example, \cite[Chapter 4]{AntoPereThie18}). Clearly, being compactly bounded also goes through inductive limits, which finishes the proof.
 \end{proof}

\Msubsubsection{Property $\rm{I}$}{Property I}
 
 \begin{dfn}\label{PropertyI}
  We say that a $\Cu$-semigroup $S$ satisfies \emph{property $\rm{I}$} if, given any finite number of decreasing sequences of the form $0=z_{l,s}\ll z_{l-1,s}\ll\cdots \ll z_{0,s}\ll z_{-\infty ,s}=p_{s}\ll p_{s}$, $1\leq s\leq M$, and any pair $r\ll t$ with $r=\sum_{s\leq M, i\in I}z_{i,s}$ and $t=\sum_{s\leq M, j\in J}z_{j,s}$, there exist finitely many subsemigroups $H_k\subset S$ such that each $H_k$ is $(n_{k},e_{k})$-chainable with associated I-morphism $F_k$ and:
 \begin{enumerate}[(i)]
  \item Each compact $p_{s}$ can be written as a  linear combination of $\{ e_{k}\}_{k}$, $p_{s}=\sum_{k}m_{k,s}e_{k}$.
  \item There exist sequences $a_{l-1,s}^{(k)}\prec\cdots\prec a_{0,s}^{(k)}\prec m_{k,s}(-\infty ,\infty )$ in $X_{n_{k}}$ such that
  \[
   z_{i,s}\ll \sum_{k}F_{k}(a_{i-1,s}^{(k)})\ll z_{i-1,s}
  \]
  for every $i,s$.
  \item For each $k$, there exist $a_{k}, b_{k}\in X_{n_k}$ such that 
  \[
  \sum_{s\leq M, i\in I}a_{i,s}^{(k)}\simeq a_{k}\prec b_{k}\simeq \sum_{s\leq M, j\in J}a_{j,s}^{(k)}.
  \]
 \end{enumerate}
 \end{dfn}
 
 \begin{rmk}
  If a \CuSgp{} satisfies property $\rm{I}_{0}$, then it clearly also satisfies property $\rm{I}$.
 \end{rmk}

 \begin{lma}\label{lma_I_limits}
  Property $\rm{I}$ goes through inductive limits.
 \end{lma}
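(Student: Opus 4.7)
The plan is to imitate \emph{verbatim} the proof of \autoref{I0_InductiveLimits}, the only change being that property $\rm{I}$ produces a \emph{finite family} of chainable subsemigroups rather than a single one, so the bookkeeping must be done for each $k$ in parallel.

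Let $S=\lim_m S_m$ with each $S_m$ satisfying property $\rm{I}$, and write $[\cdot]\colon S_m\to S$ for the canonical morphisms. Given data $0=z_{l,s}\ll\cdots\ll z_{0,s}\ll p_s\ll p_s$ ($s\leq M$) and $r\ll t$ with $r=\sum_{s,i\in I}z_{i,s}$, $t=\sum_{s,j\in J}z_{j,s}$ in $S$, the first step is to perform exactly the lifting used in the $\rm{I}_0$ proof: choose $m$ large enough, compact $e_s\in S_m$ with $[e_s]=p_s$, and elements $g_{i,s}\ll f_{i,s}$ in $S_m$ satisfying the interleaved chain
\[
g_{i,s}\ll f_{i,s}\ll g_{i-1,s}\ll f_{i-1,s}\ll e_s\ll e_s,
\]
together with $z_{i,s}\ll [g_{i-1,s}]\ll [f_{i-1,s}]\ll z_{i-1,s}$, $\sum_{s,i\in I}f_{i,s}\ll\sum_{s,j\in J}f_{j,s}$ in $S_m$, and $\sum_{s,i\in I}[f_{i,s}]\ll r\ll \sum_{s,j\in J}[f_{j,s}]\ll t$ in $S$.

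Now apply property $\rm{I}$ in $S_m$ to the sequences $(g_{i,s},f_{i,s})_{i,s}$ (viewed as one long interleaved chain for each $s$) and the pair $\sum_{s,i\in I}f_{i,s}\ll\sum_{s,j\in J}f_{j,s}$. This yields finitely many $(n_k,e_k)$-chainable subsemigroups $H_k\subset S_m$ with I-morphisms $F_k$ such that $e_s=\sum_k m_{k,s}e_k$, together with sequences $a_{l-1,s}^{(k)}\prec b_{l-1,s}^{(k)}\prec a_{l-2,s}^{(k)}\prec\cdots\prec a_{0,s}^{(k)}\prec m_{k,s}(-\infty,\infty)$ in $X_{n_k}$ giving
\[
g_{i,s}\ll\sum_k F_k(a_{i,s}^{(k)})\ll f_{i,s}\ll\sum_k F_k(b_{i,s}^{(k)})\ll g_{i-1,s}
\]
for every $i,s$, and elements $a_k,b_k\in X_{n_k}$ with $\sum_{s,i\in I}a_{i,s}^{(k)}\simeq a_k\prec b_k\simeq \sum_{s,j\in J}a_{j,s}^{(k)}$.

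By \autoref{morphisms}, each $[H_k]\subset S$ is an $(n_k,[e_k])$-chainable subsemigroup with I-morphism $[\cdot]\circ F_k$. Pushing the inequality chain forward gives $z_{i,s}\ll [g_{i-1,s}]\ll \sum_k ([\cdot]\circ F_k)(a_{i-1,s}^{(k)})\ll [f_{i-1,s}]\ll z_{i-1,s}$, verifying condition (ii) of \autoref{PropertyI}; condition (i) follows from $p_s=[e_s]=\sum_k m_{k,s}[e_k]$; and condition (iii) is inherited unchanged because the relations $\sum_{s,i\in I}a_{i,s}^{(k)}\simeq a_k\prec b_k\simeq \sum_{s,j\in J}a_{j,s}^{(k)}$ are purely internal to $X_{n_k}$. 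The only genuinely delicate point, identical to that in \autoref{I0_InductiveLimits}, is producing compact lifts $e_s\in S_m$ with $[e_s]=p_s$ exactly; once this is granted the rest is formal, and the argument reduces to indexing everything over the finite family $\{H_k\}_k$.
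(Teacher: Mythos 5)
Your proposal is correct and follows essentially the same route as the paper's proof: lift the data to a finite stage $S_m$ with interleaved $\ll$-chains and compact lifts of the $p_s$, apply property $\rm{I}$ there, and push the resulting chainable subsemigroups forward using \autoref{morphisms}. The paper's argument is exactly this generalization of \autoref{I0_InductiveLimits} indexed over the finite family $\{H_k\}_k$.
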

 \begin{proof}
  We follow the proof of \autoref{I0_InductiveLimits}:
  \vspace{0.2cm}
 
  Let $S=\lim S_m$ with $S_m$ $\Cu$-semigroups satisfying property $\rm{I}$.
  
  As in \autoref{PropertyI}, consider decreasing sequences of the form 
  \[
   0=z_{l,s}\ll z_{l-1,s}\ll\cdots \ll z_{0,s}\ll z_{-\infty ,s}=p_{s}\ll p_{s}, \,1\leq s\leq M,
  \]
 and a pair of elements $r\ll t$ with $r=\sum_{s\leq M, i\in I}z_{i,s}$ and $t=\sum_{s\leq M, j\in J}z_{j,s}$.
 
 Then, for a large enough $m$, there exist elements $f_{i,s},g_{i,s},q_{s}\in S_m$ such that
 \[
  \begin{split}
   g_{i,s}\ll f_{i,s}\ll g_{i-1,s}\ll f_{i-1,s}\ll q_{s}\ll q_{s},\\
   z_{i,s}\ll [g_{i-1,s}]\ll [f_{i-1,s}]\ll z_{i-1,s}\,\, ,\, p_{s}=[q_{s}],\\
   \sum_s\sum_{i\in I}f_{i,s}\ll \sum_s\sum_{j\in J}f_{j,s},\\
   \sum_s\sum_{i\in I}[f_{i,s}]\ll r\ll
   \sum_s\sum_{j\in J}[f_{j,s}]
   \ll t,
  \end{split}
 \]
 where as in \autoref{I0_InductiveLimits} $[f]$ denotes the image of $f$ through the limit morphism from $S_{m}$ to $S$.

 Since $S_{m}$ satisfies property $\rm{I}$, we can find a $(n_k,e_k)$-chainable subsets $H_k$ such that:
 \begin{enumerate}[(i)]
 \item Each compact $q_{s}$ can be written as a  linear combination of $\{ e_{k}\}_{k}$, $q_{s}=\sum_{k}m_{k,s}e_{k}$.
  \item There exist sequences $a_{l-1,s}^{(k)}\prec b_{l-1,s}^{(k)}\prec a_{l-2,s}^{(k)}\prec\cdots\prec a_{0,s}^{(k)}\prec m_{k,s}(-\infty ,\infty )$ in $X_{n_k}$ such that 
  \[
  g_{i,s}\ll \sum_{k} F_k (a_{i,s}^{(k)})\ll f_{i,s}\ll \sum_{k}F_k (b_{i,s}^{(k)})\ll g_{i-1,s}
  \] 
  in $S_m$ for every $i,s$.
  
  \item There exist $a_k, b_k\in X_{n_k}$ such that $\sum_{s\leq M, i\in I}a_{i,s}^{(k)}\simeq a\prec b\simeq \sum_{s\leq M,j\in J}a_{j,s}^{(k)}$.
 \end{enumerate}
 
 It is now easy to check that the semigroups $[H_k]$, the elements $a_{i,s}^{(k)}$ and the compacts $[e_k]$ satisfy the desired properties for our original pair of elements and sequences.
 
 Recall that $[H_k]$ is an $(n_k,[e_k])$-chainable subsemigroup of $S$ by \autoref{morphisms}.
 \end{proof}

 The following lemma exemplifies one of the differences between property $\rm{I}_0$ and property $\rm{I}$, since, from \autoref{exa_propI0_sums}, we know that property $\rm{I}_0$ is not preserved under direct sums.
 
 \begin{lma}\label{lma_I_sums}
  If $S,T$ are \CuSgp{s} satisfying property $\rm{I}$, their direct sum $S\oplus T$ also satisfies property $\rm{I}$.
 \end{lma}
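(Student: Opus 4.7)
The plan is to split the given data in $S\oplus T$ coordinatewise, apply property $\rm{I}$ separately in $S$ and in $T$, and assemble the resulting chainable subsemigroups inside $S\oplus T$ via the canonical coordinate inclusions. This is natural because addition, order, the way-below relation, and hence also compactness in the direct sum $\Cu$-semigroup $S\oplus T$ are all componentwise.

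First, I would write $z_{i,s}=(z_{i,s}^{(1)},z_{i,s}^{(2)})$, $p_s=(p_s^{(1)},p_s^{(2)})$, $r=(r^{(1)},r^{(2)})$ and $t=(t^{(1)},t^{(2)})$, and observe that the hypotheses of property $\rm{I}$ split into two analogous hypothesis packages: one in $S$, with the sequences $(z_{i,s}^{(1)})$ and the pair $r^{(1)}\ll t^{(1)}$, and one in $T$, with $(z_{i,s}^{(2)})$ and $r^{(2)}\ll t^{(2)}$; the index sets $I,J$ are the same on both sides. If some $p_s^{(1)}$ is zero then the corresponding split sequence is identically zero, but this is still a legitimate degenerate input (all $\ll$-inequalities become vacuous). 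Applying property $\rm{I}$ to the $S$-data yields $(n_k,e_k^{(1)})$-chainable subsemigroups $H_k^{(1)}\subset S$ with I-morphisms $F_k^{(1)}$, sequences $\{a_{i,s}^{(k)}\}$, and witnesses $a_k\prec b_k$ for $k\leq K$; symmetrically, the $T$-data provides $(n'_l,e_l^{(2)})$-chainable $H_l^{(2)}\subset T$ with I-morphisms $F_l^{(2)}$, sequences $\{c_{i,s}^{(l)}\}$ and witnesses $c_l\prec d_l$ for $l\leq L$.

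Next, the key observation is that for any $(n,e)$-chainable subsemigroup $H\subset S$ with I-morphism $F\colon X_n\to H$, the set $H\oplus 0:=\{(h,0)\mid h\in H\}\subset S\oplus T$ is $(n,(e,0))$-chainable with I-morphism $\widetilde{F}(x):=(F(x),0)$. Since the canonical inclusion $S\hookrightarrow S\oplus T$ is a $\Cu$-morphism, additivity, preservation of $\prec$ as $\ll$, preservation of $\simeq$ as equality, and all three axioms of \autoref{chainable_subset} transfer directly; the analogous remark holds with the roles of $S$ and $T$ swapped. This yields chainable subsemigroups $\widetilde{H}_k^{(1)}:=H_k^{(1)}\oplus 0$ and $\widetilde{H}_l^{(2)}:=0\oplus H_l^{(2)}$ in $S\oplus T$, with I-morphisms $\widetilde{F}_k^{(1)}$ and $\widetilde{F}_l^{(2)}$.

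Finally, I would verify that the combined family $\{\widetilde{H}_k^{(1)}\}_{k\leq K}\cup\{\widetilde{H}_l^{(2)}\}_{l\leq L}$, together with the sequences $\{a_{i,s}^{(k)}\}$ and $\{c_{i,s}^{(l)}\}$, witnesses property $\rm{I}$ for the original data in $S\oplus T$. Condition (i) of \autoref{PropertyI} reads $p_s=(p_s^{(1)},0)+(0,p_s^{(2)})=\sum_k m_{k,s}(e_k^{(1)},0)+\sum_l m'_{l,s}(0,e_l^{(2)})$, with the same coefficients $m_{k,s}$ and $m'_{l,s}$ produced by the two applications. For condition (ii), adding coordinatewise the sandwiches $z_{i,s}^{(1)}\ll\sum_k F_k^{(1)}(a_{i-1,s}^{(k)})\ll z_{i-1,s}^{(1)}$ and $z_{i,s}^{(2)}\ll\sum_l F_l^{(2)}(c_{i-1,s}^{(l)})\ll z_{i-1,s}^{(2)}$ produces precisely the required sandwich $z_{i,s}\ll\sum_k\widetilde{F}_k^{(1)}(a_{i-1,s}^{(k)})+\sum_l\widetilde{F}_l^{(2)}(c_{i-1,s}^{(l)})\ll z_{i-1,s}$; the condition $a_{0,s}^{(k)}\prec m_{k,s}(-\infty,\infty)$ is purely internal to $X_{n_k}$ and carries over verbatim. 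Condition (iii) is also a condition internal to each $X_{n_k}$ (respectively $X_{n'_l}$), so the witnesses $a_k\prec b_k$ and $c_l\prec d_l$ already furnished carry over directly. No serious obstacle is expected: the argument is essentially bookkeeping exploiting the coordinatewise structure of the direct sum.
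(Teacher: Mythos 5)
Your proposal is correct and follows essentially the same route as the paper: split the data coordinatewise, apply property $\rm{I}$ in each summand, embed the resulting chainable subsemigroups via the coordinate inclusions $H_k\oplus 0$ and $0\oplus U_l$ (which remain chainable since the inclusions are $\Cu$-morphisms, cf.\ \autoref{morphisms}), and check conditions (i)--(iii) by coordinatewise addition. No gaps.
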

 \begin{proof}
  Given a finite number of decreasing sequences in $S\oplus T$ of the form 
  \[
   0=z_{l,s}\ll z_{l-1,s}\ll\cdots \ll z_{0,s}\ll z_{-\infty ,s}=p_{s}\ll p_{s},\, 1\leq s\leq M,
  \]
and a pair $r\ll t$ with $r=\sum_{s\leq M, i\in I}z_{i,s}$ and $t=\sum_{s\leq M, j\in J}z_{j,s}$, write $z_{i,s}=(v_{i,s},w_{i,s})$ with $v_{i,s}\in S$ and $w_{i,s}\in T$. Also set $q_{s}\in S_{c}$ and $t_{s}\in T_{c}$ such that $p_{s}=(q_s,t_s)$.

Thus, one gets the following decreasing sequences
\[
\begin{split}
 0=v_{l,s}\ll v_{l-1,s}\ll\cdots \ll v_{0,s}\ll v_{-\infty ,s}=q_{s}\ll q_{s},\, 1\leq s\leq M,\\
 0=w_{l,s}\ll w_{l-1,s}\ll\cdots \ll w_{0,s}\ll w_{-\infty ,s}=t_{s}\ll t_{s},\, 1\leq s\leq M,
 \end{split}
\]
in $S$ and $T$ respectively.

Using that $S$ satisfies property $I$, we obtain $(n_k,e_k)$-chainable subsemigroups $H_k$ of $S$ and elements $a_{i,s}^{(k)}\in X_{n_k}$ satisfying the conditions in \autoref{PropertyI} for the above sequences in $S$ and the pair $\sum_{s\leq M, i\in I}v_{i,s}$, $\sum_{s\leq M, j\in J}v_{j,s}$.

Similarly, we find $(m_l,f_l)$-chainable subsemigroups $U_l$ of $T$ and elements $b_{i,s}^{(l)}\in X_{m_l}$ satisfying the conditions in \autoref{PropertyI} for the sequences in $T$ and the pair $\sum_{s\leq M, i\in I}w_{i,s}$, $\sum_{s\leq M, j\in J}w_{j,s}$.

It is easy to check that $H_k\oplus 0$ and $0\oplus T_l$ are $(n_k,(e_k,0))$ and $(m_l,(0,f_l))$ chainable subsemigroups of $S\oplus T $ respectively. Moreover, it is also clear that these subsemigroups with the elements $a_{i,s}^{(k)}\in X_{n_k}$ and $b_{i,s}^{(l)}\in X_{m_l}$ satisfy the conditions of property $\rm{I}$ for our original sequences and pairs of elements.
 \end{proof}
 
 \begin{exa}\label{exa_prop_I}
 $ $
 \begin{enumerate}
  \item Let $S$ be the Cuntz semigroup of an AF-algebra. Then $S$ satisfies  property $\rm{I}$.
  \item The Cuntz semigroup of an AI-algebra satisfies property $\rm{I}$.
 \end{enumerate}
 
 Indeed, by combining \autoref{lma_I_limits} and \autoref{lma_I_sums} it suffices to show that $\overline{\NN}$ and $\LscI$ satisfy property $\rm{I}$. This follows from Examples \ref{N_has_I0} and \ref{Exa_Property_I0}, as both semigroups satisfy property $\rm{I}_{0}$.
 \end{exa}

\begin{thm}\label{thm:IiffAI}
  Let $S$ be a \CuSgp{}. Then, $S$ is $\Cu$-isomorphic to the Cuntz semigroup of an AI-algebra if and only if $S$ is countably based, compactly bounded and it satisfies (O5), (O6), weak cancellation, and property $\rm{I}$.
\end{thm}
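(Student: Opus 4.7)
The forward direction is essentially a collection of known facts: if $S \cong \Cu(A)$ for an AI-algebra $A$, then (O5), (O6) are \autoref{pgr:O5O6}, weak cancellation follows since $A$ has stable rank one, countably based follows from separability of $A$, and compactly bounded descends from the building block $\LscI$ (where any $f \ll g$ is bounded by $\sup(g) \cdot 1$, which is compact) through inductive limits via an easy cofinality argument. Property $\rm{I}$ is established in \autoref{exa_prop_I}.

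For the converse, by \autoref{MainTh} it suffices, given a $\Cu$-morphism $\varphi\colon \LscI^M \to S$, an integer $l \in \NN$, and a triple $x, x', y \in B_l^M$ with $x \ll x'$ and $\varphi(x') \ll \varphi(y)$, to produce $\Cu$-morphisms $\theta\colon \LscI^M \to \LscI^N$ and $\phi\colon \LscI^N \to S$ satisfying conditions (i)-(iii) there. The plan is to generalize the argument of \autoref{Easy_case}, replacing the single chainable subsemigroup supplied by property $\rm{I}_0$ by the finite family supplied by property $\rm{I}$, with $N$ equal to the number of those subsemigroups.

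Concretely, refine by setting $z_{i,s} = \varphi(\chi^s_{(i/2l,1]})$ for $0 \le i \le 2l$ and $z_{-\infty,s} = p_s = \varphi(1_s)$, so that each is a $\ll$-decreasing sequence with $z_{2l,s}=0$ and $z_{2i,s} = \varphi(\chi^s_{(i/l,1]})$. Writing $x' = \sum_s\bigl(k_s \cdot 1_s + \sum_i n_{i,s}\chi^s_{(i/l,1]}\bigr)$ and similarly for $y$, translate the relation $\varphi(x') \ll \varphi(y)$ into a pair $r \ll t$ of the form required in \autoref{PropertyI}, with multisets $I, J \subset \{-\infty, 0, 2, \ldots, 2l\}$. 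Apply property $\rm{I}$ to obtain finitely many $(n_k, e_k)$-chainable subsemigroups $H_k$ with I-morphisms $F_k\colon X_{n_k} \to H_k$, integers $m_{k,s}$ with $p_s = \sum_k m_{k,s}e_k$, chains $a^{(k)}_{2l-1,s} \prec \cdots \prec a^{(k)}_{0,s} \prec m_{k,s}(-\infty,\infty)$ satisfying $z_{i,s} \ll \sum_k F_k(a^{(k)}_{i-1,s}) \ll z_{i-1,s}$, and elements $a_k, b_k$ realizing $\sum_{s,i\in I}a^{(k)}_{i,s} \simeq a_k \prec b_k \simeq \sum_{s,j\in J}a^{(k)}_{j,s}$ for each $k$.

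Next, apply \autoref{Prec_Convert} to the finite family consisting of all the relations $a^{(k)}_{i,s} \prec a^{(k)}_{i-1,s}$, $a^{(k)}_{0,s} \prec m_{k,s}(-\infty,\infty)$, and $a_k \prec b_k$ (for each $k$), choosing a single rational $\epsilon < \min_k 1/(4n_k)$ and a common large $m$. This yields $\LscI$-valued functions $f^{(k)}_{i,s}$, $f_k$, $g_k$, and Cu-morphisms $\rho^{(k)}_m\colon \LscI \to S$ (from \autoref{rho}) whose images interleave the $F_k(a^{(k)}_{i,s})$ as in \autoref{Prec_Convert} (ii). Define $\phi\colon \LscI^N \to S$ by $\phi|_{L_k} = \rho^{(k)}_m$, and define $\theta = \bigoplus_{k}\theta^{(k)}$ componentwise where $\theta^{(k)}\colon \LscI^M \to \LscI$ is built on each $\LscI$-summand, via \autoref{lifting_th}, by $\chi^s_{(i/l,1]} \mapsto f^{(k)}_{2i,s}$ and $1_s \mapsto m_{k,s}\cdot 1$. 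Condition (iii) is then immediate from $\sum_k m_{k,s}e_k = p_s$; condition (i) follows from the interleaving $z_{2(i+1),s} \ll \sum_k F_k(a^{(k)}_{2i+1,s}) \ll \sum_k \rho^{(k)}_m(f^{(k)}_{2i,s}) \ll \sum_k F_k(a^{(k)}_{2i,s}) \ll z_{2i,s}$; and condition (ii) reduces, using the explicit formula $f^{(k)}_{i,s} = R_\epsilon(G(a^{(k)}_{i,s}))$ of \autoref{Prec_Convert}, additivity of the I-morphism $G$ from \autoref{Linear_span}, and \autoref{Retraction_eq}, to the identity $\sum_{s,i\in I}f^{(k)}_{2i,s} = f_k \ll g_k = \sum_{s,j\in J}f^{(k)}_{2j,s}$.

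The main obstacle is bookkeeping rather than any single deep estimate: one must ensure that a single $\epsilon$ and a single $m$ can be chosen so that \autoref{Prec_Convert} applies simultaneously to all $k$ and all the relations listed above, and that the $\simeq$-equivalences in property $\rm{I}$ (iii) actually translate into on-the-nose additive equalities of retracted functions in each $\LscI$-component. This is exactly what \autoref{Retraction_eq} and the additivity of $R_\epsilon$ in \autoref{Rmk_Retraction} guarantee, provided the common $\epsilon$ is chosen small enough relative to every $n_k$.
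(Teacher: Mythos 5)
Your proposal is correct and follows essentially the same route as the paper's proof: the forward direction via \autoref{exa_prop_I}, and the converse by reducing to \autoref{MainTh}, refining the partition to $2l$, invoking property $\rm{I}$ to get finitely many chainable subsemigroups, converting the $\prec$-chains into $\ll$-chains of retracted functions via \autoref{Prec_Convert} and \autoref{Retraction_eq}, and assembling $\phi=\oplus_k\rho_{N_k}$ and $\theta$ through \autoref{lifting_th}. The only cosmetic difference is that you organize $\theta$ as a sum over codomain components $\theta^{(k)}$ rather than over domain components $\theta_s$ as in the paper, which yields the same map.
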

\begin{proof}
  If $S$ is Cu-isomorphic to the Cuntz semigroup of an AI-algebra, then it is well known that $S$ is countably based, compactly bounded and satisfies (O5), (O6), and weak cancellation.
  
  It follows from \autoref{exa_prop_I} that $S$ satisfies property $\rm{I}$.
  
  \vspace{0.2cm}
  Conversely, assume that $S$ is countably based, compactly bounded and satisfies (O5), (O6), weak cancellation, and property $\rm{I}$. We will use \autoref{MainTh} to show that $S$ is the Cuntz semigroup of an AI-algebra.
  
  More explicitly, given a morphism $\varphi\colon \LscI^{M}\to S$, an integer $l\in\mathbb{N}$ and a triple $x, x', y\in B_{l}^{M}$  such that $x\ll x'$ with $\varphi (x')\ll \varphi (y)$, we will construct $\Cu$-morphisms 
  \[
   \theta\colon\LscI^{M}\to\LscI^{N},\andSep 
   \phi\colon \LscI^{N}\to S
  \]
  such that the following properties are satisfied:
  \begin{enumerate}[(i)]
 \item $\varphi (\chi^{s}_{((i+1)/l,1]})\ll \phi\theta(\chi^{s}_{(i/l,1]})$ and $\phi\theta(\chi^{s}_{((i+1)/l,1]})\ll \varphi (\chi^{s}_{(i/l,1]})$ for every $\chi^{s}_{(i/l,1]}\in C_{l}^{M}$.
 \item $\theta (x)\ll\theta (y)$.
 \item $\varphi (1_{s})=\phi\theta (1_{s})$ for every $s\leq M$.
\end{enumerate}

We generalize the proof of \autoref{Easy_case}:
\vspace{0.2cm}

For every $i\leq n$ and $s\leq M$, consider the elements $x_{i,s}=\varphi (\chi^{s}_{(i/l,1]})$ and set $x_{-\infty ,s}=\varphi (1_{s})$. Also,
 set $r=\varphi (x')$ and $t=\varphi (y)$.
 
 Let $I,J$ be the multisets of $\{-\infty ,0,\cdots ,l\}$ such that 
 \[
  x'=\sum_{s\leq M, i\in I}\chi_{(i/l,1]}^s,\andSep 
  y=\sum_{s\leq M, j\in J}\chi_{(j/l,1]}^s,
 \]
 where it is understood that $\chi_{(-\infty /l,1]}^s=1_s$. Thus, we can write $r,t$ as $r=\sum_{s\leq M, i\in I}x_{i,s}$ and $t=\sum_{j\leq M, j\in J}x_{j,s}$.
 
 Let $z_{i,s}=\varphi (\chi_{(i/2l,1]}^s)$ for every $i\leq 2l$, and let $p_s=x_{-\infty ,s}=\varphi (1_s)$, so that we have
 \[
  0=x_{l,s}=z_{2l,s}\ll z_{2l-1,s}\ll\cdots\ll z_{0,s}=x_{0,s}\ll x_{-\infty ,s}=p_s.
 \]
 
Since $z_{2i,s}=x_{i,s}$ for every $i\leq l$, we also get $r=\sum_{s\leq M, i\in I}z_{2i,s}$ and $t=\sum_{s\leq M,j\in J}z_{2j,s}$.

Apply property $\rm{I}$ to $(z_{i,s})_{i,s}$ and $r\ll t$ to obtain $(n_k,e_k)$-chainable subsemigroup $H_k$ and sequences $a_{2l,s}^{(k)}\prec\cdots\prec a_{0,s}^{(k)}$ in $X_{n_k}$ satisfying conditions (i)-(iii) in \autoref{PropertyI}. In particular, note that
\[
x_{i,s}=z_{2i,s}\ll \sum_{k}F_{k}(a_{2i-1,s}^{(k)})\ll z_{2i-1,s}\ll \sum_{k}F_{k}(a_{2(i-1),s}^{(k)})\ll z_{2(i-1),s}=x_{i-1,s}.
\]

Now fix $k<\infty $ and consider the finite family of $\prec$-relations
\[
\begin{split}
\{ a_{i,s}^{(k)}\prec a_{i-1,s}^{(k)} \}_{s,i}\cup \{ a_{0,s}^{(k)}\prec m_{k,s}(-\infty ,\infty ) \}_s \cup \{ a_k\prec b_k\}
\end{split}
\]
given by property $\rm{I}$.

By using the same argument as in \autoref{Easy_case}, we obtain an integer $N_{k}$ and elements $f_{i,s}^{(k)}$, $f_{k}$ and $g_{k}$ in $\LscI$ such that 
\[
\begin{split}
 f_{i,s}^{(k)}\ll f_{i-1,s}^{(k)}\ll m_{k,s}
 ,\\
 \rho_{N_{k}}(f_{i,s}^{(k)})\ll F_{k}(a_{i,s}^{(k)})\ll \rho_{N_{k}}(f_{i-1,s}^{(k)})\ll F_{k}(a_{i-1,s}^{(k)})
 ,\\
 f_{k}\ll g_{k}),\\
 \rho_{N_{k}}(f_{k})\ll F_{k}(a_{k})\ll 
 \rho_{N_{k}}(g_{k})\ll F_{k}(b_{k})
 ,
\end{split}
\]
where the $\Cu$-morphism $\rho_{N_{k}}\colon\LscI\to S$ is as defined in \autoref{rho} for $H_{k}$. Recall, in particular, that $\rho_{N_{k}}(1)=e_{k}$.

Combining \autoref{Prec_Convert} and \autoref{Retraction_eq}, we have 
\[
\sum_{(i,s)\in I}f_{2i,s}^{(k)}= f_{k}\ll g_{k}= \sum_{(j,s)\in J}f_{2j,s}^{(k)}.
\]

Let $\phi=\oplus_{k}\rho_{N_{k}}\colon\LscI^{N}\to S$, and define $f_{i,s}=\sum_{k}f_{i,s}^{(k)}1_{k}$ and $m_{s}=\sum_{k}m_{k,s}1_{k}$, where given $h\in\LscI$ we denote by $h1_{k}$ the function in $\LscI^{N}$ whose $k$-th component is $h$ and the rest are null.

Note that $f_{i,s}\ll f_{i-1,s}\ll m_{s}$ for each $i$ and $s$. Further, we have by construction that $\phi (1_{k})=e_{k}$.

By (ii) of property $\rm{I}$, we get
\[
 x_{i,s}\ll \phi (f_{2(i-1),s})\ll x_{i-1,s}
\]
for every $i,s$.

For each $s\leq M$, define $\theta_{s}\colon\LscI\to\LscI^{N}$ as the $\Cu$-morphism that sends $\chi_{(i/l,1]}$ to $f_{2i,s}$ and $1$ to $m_{s}$. As mentioned, $\theta_s$ exists by \autoref{lifting_th}, as $\{ f_{2i,s}\}_{i}$ is a $\ll$-decreasing sequence bounded by $1m_{s}$.

Let $\theta=\oplus_{s\leq M}\theta_{s}$. We have
\[
 \theta (x)\ll \theta (x')=\sum_{(i,s)\in I}f_{(2i,s)}=\sum_{k}f_{k}\ll \sum_{k}g_{k}=\sum_{(j,s)\in I}f_{(2j,s)}=\theta (y),
\]
which implies condition (ii) in \autoref{MainTh}.

Also, for every fixed $s$, we get
\[
\varphi (1_{s})=p_{s}= \sum_{k}m_{k,s}e_{k}= \sum_{k}m_{k,s}\phi (1_{k})=\phi (m_{s})=\phi (\theta (1)),
\]
which is condition (iii) in \autoref{MainTh}.

To see condition (i) simply note that, since $x_{i,s}\ll \phi (f_{2(i-1),s})\ll x_{i-1,s}$, we have by definition that
\[
 \varphi (\chi^{s}_{(i/l,1]})=x_{i,s}\ll \phi (\theta (\chi_{(i/l,1]}))\ll x_{i-1,s}=\varphi (\chi^{s}_{((i-1)/l,1]})
\]
as required.
\end{proof}

In \cite[Section~6]{Vila20} three new properties that every Cuntz semigroup of an AI-algebra satisfies are introduced. It would be interesting to know whether such properties lead to a characterization:

\begin{qst}\label{qst:SimplerCharac}
 Let $S$ be a countably based and compactly bounded \CuSgp{}. Is $S$ $\Cu$-isomorphic to the Cuntz semigroup of an AI-algebra if and only if $S$ satisfies (O5), (O6), weak cancellation, and the three properties defined in \cite[Definitions~6.1,~6.9~and~6.13]{Vila20}?
\end{qst}

Note that a similar characterization is indeed possible for unital commutative AI-algebras; see \cite[Theorem~5.19]{Vila20}.

\bibliographystyle{aomalphaMyShort}
\bibliography{References}

\end{document}